\DeclareFontFamily{OMX}{MnSymbolE}{}
\DeclareSymbolFont{MnLargeSymbols}{OMX}{MnSymbolE}{m}{n}
\DeclareFontShape{OMX}{MnSymbolE}{m}{n}{
    <-6>  MnSymbolE5
   <6-7>  MnSymbolE6
   <7-8>  MnSymbolE7
   <8-9>  MnSymbolE8
   <9-10> MnSymbolE9
  <10-12> MnSymbolE10
  <12->   MnSymbolE12
}{}
\DeclareFontShape{OMX}{MnSymbolE}{b}{n}{
    <-6>  MnSymbolE-Bold5
   <6-7>  MnSymbolE-Bold6
   <7-8>  MnSymbolE-Bold7
   <8-9>  MnSymbolE-Bold8
   <9-10> MnSymbolE-Bold9
  <10-12> MnSymbolE-Bold10
  <12->   MnSymbolE-Bold12
}{}
\let\llangle\@undefined
\let\rrangle\@undefined
\DeclareMathDelimiter{\llangle}{\mathopen}%
                     {MnLargeSymbols}{'164}{MnLargeSymbols}{'164}
\DeclareMathDelimiter{\rrangle}{\mathclose}%
                     {MnLargeSymbols}{'171}{MnLargeSymbols}{'171}
\theoremstyle{plain} 
\newtheorem{theorem}{Theorem}[section]
\newtheorem{proposition}[theorem]{Proposition}
\newtheorem{lemma}[theorem]{Lemma} 
\theoremstyle{definition} 
\newtheorem{definition}[theorem]{Definition}
\newtheorem{assumption}[theorem]{Assumption}
\theoremstyle{remark} 
\newtheorem{remark}[theorem]{Remark}
\definecolor{shadecolor}{rgb}{1,0.8,0.3}
\newcommand{\DETAIL}[1]{}
\newcommand{\IGNORE}[1]{}
\newcommand{\A}{{\mathrm{anti}}}
\newcommand{\B}{{\mathcal{B}}}
\newcommand{\C}{{\mathscr{C}}}
\newcommand{\D}{{\mathscr{D}}}
\newcommand{\E}{{\mathcal{E}}}
\newcommand{\F}{{\mathbf{F}}}
\renewcommand{\H}{{\mathscr{H}}}
\newcommand{\K}{{\mathcal{K}}}
\renewcommand{\L}{{\mathscr{L}}}
\newcommand{\M}{{\mathscr{M}}}
\newcommand{\N}{{\mathbbm{N}}}
\newcommand{\R}{{\mathbbm{R}}}
\renewcommand{\S}{{\mathrm{sym}}}
\newcommand{\T}{{\mathrm{T}}}
\newcommand{\W}{{\mathscr{W}}}
\newcommand{\AF}{{\mathfrak{A}}}
\newcommand{\AR}{{\ADM\SUB{\RHO}}}
\newcommand{\BL}{{\mathrm{BL}}}
\newcommand{\BM}{{\boldsymbol{m}}}
\newcommand{\BS}{{\boldsymbol{s}}}
\newcommand{\BT}{{\boldsymbol{t}}}
\newcommand{\BU}{{\boldsymbol{u}}}
\newcommand{\BV}{{\boldsymbol{v}}}
\newcommand{\BW}{{\boldsymbol{w}}}
\newcommand{\BX}{{\boldsymbol{x}}}
\newcommand{\BY}{{\boldsymbol{y}}}
\newcommand{\BZ}{{\boldsymbol{z}}}
\newcommand{\CB}{{\C_\mathrm{b}}}
\newcommand{\CC}{{\C_\mathrm{c}}}
\newcommand{\CR}{{C_\RHO}}
\newcommand{\DD}{{\mathrm{d}}}
\newcommand{\FS}{{\mathscr{F}}}
\newcommand{\FX}{{\mathfrak{x}}}
\newcommand{\GS}{\geqslant}
\newcommand{\HA}{{\TST\frac{1}{2}}}
\newcommand{\HE}{{\TST\frac{1}{8}}}
\newcommand{\ID}{{\mathrm{id}}}
\newcommand{\LS}{\leqslant}
\newcommand{\MK}{{\M_\mathrm{K}}}
\newcommand{\MU}{{\boldsymbol{\mu}}}
\renewcommand{\OE}{{\TST\frac{1}{8}}}
\newcommand{\OF}{{\TST\frac{1}{4}}}
\newcommand{\OH}{{\TST\frac{1}{2}}}
\newcommand{\OR}{{\OPT\SUB{\RHO}}}
\newcommand{\PK}{{\mathbf{P}}}
\newcommand{\PP}{{\mathbbm{p}}}
\newcommand{\PR}{{\SP\SUB{\RHO}(\R^{2d})}}
\newcommand{\RP}{{\mathscr{R}_+}}
\newcommand{\SP}{{\mathscr{P}}}
\newcommand{\VV}{{\mathbbm{v}}}
\newcommand{\WR}{{\WAS\SUB{\RHO}}}
\newcommand{\XX}{{\mathbbm{x}}}
\newcommand{\YY}{{\mathbbm{y}}}
\newcommand{\ZZ}{{\mathbbm{z}}}
\newcommand{\ACC}{{\mathrm{A}}}
\newcommand{\ADM}{{\mathrm{Adm}}}
\newcommand{\ALG}{{\mathscr{A}}}
\newcommand{\BAR}{{\mathrm{b}}}
\newcommand{\BBE}{{\mathbbm{E}}}
\newcommand{\BBX}{{\boldsymbol{X}}}
\newcommand{\BRX}{{\bar{x}}}
\newcommand{\BVS}{{\mathrm{BV}}}
\newcommand{\CMU}{{C_\MU}}
\newcommand{\DOM}{{\mathrm{dom}}}
\newcommand{\DST}{\displaystyle}
\newcommand{\EPS}{\varepsilon}
\newcommand{\ENT}{{\mathrm{Ent}}}
\newcommand{\ETA}{{\boldsymbol{\eta}}}
\newcommand{\INT}{{\mathcal{U}}}
\newcommand{\LEB}{{\mathcal{L}}}
\newcommand{\LIP}{{\mathrm{Lip}}}
\newcommand{\LOC}{{\mathrm{loc}}}
\newcommand{\MAT}[2][]{%
	\ifstrempty{#1}{%
		\mathrm{Mat}_{#2}(\R)}{%
		\mathrm{Mat}_{#2}(\R, #1)}}
\newcommand{\MON}{{\mathrm{Mon}}}
\newcommand{\ONE}{{\mathbbm{1}}}
\newcommand{\OPT}{{\mathrm{Opt}}}
\newcommand{\ORS}{{\mathbf{s}}}
\newcommand{\ORT}{{\mathbf{t}}}
\newcommand{\ORU}{{\mathbf{u}}}
\newcommand{\ORV}{{\mathbf{v}}}
\newcommand{\ORW}{{\mathbf{w}}}
\newcommand{\ORZ}{{\mathbf{z}}}
\newcommand{\PHI}{{\boldsymbol{\phi}}}
\newcommand{\RES}{{\mathbf{R}}}
\newcommand{\RHO}{\varrho}
\newcommand{\SUB}[1]{_{\raisebox{0.25ex}{\scriptsize{$#1$}}}}
\newcommand{\SYM}[2][]{%
	\ifstrempty{#1}{%
		\mathrm{Sym}_{#2}(\R)}{%
		\mathrm{Sym}_{#2}(\R, #1)}}
\newcommand{\TST}{\textstyle}
\newcommand{\UPS}{{\boldsymbol{\upsilon}}}
\newcommand{\WAS}{{\mathrm{W}}}
\newcommand{\BETA}{{\boldsymbol{\beta}}}
\newcommand{\BRHO}{{\bar{\RHO}}}
\newcommand{\BFX}{{\mathfrak{X}}}
\newcommand{\DIAM}{{\mathrm{diam}}}
\newcommand{\DIST}{{\mathrm{dist}}}
\newcommand{\DISS}{{\mathbf{D}}}
\newcommand{\HAUS}{{\mathcal{H}}}
\newcommand{\SKEW}[1]{{\mathrm{Skew}_{#1}(\R)}}
\newcommand{\WEAK}{\DOTSB\protect\relbar\protect\joinrel\rightharpoonup}
\newcommand{\ZETA}{{\boldsymbol{\zeta}}}
\newcommand{\ALPHA}{{\boldsymbol{\alpha}}}
\newcommand{\GAMMA}{{\boldsymbol{\gamma}}}
\newcommand{\OMEGA}{{\boldsymbol{\omega}}}
\newcommand{\GRAPH}{{\mathrm{graph}}}
\newcommand{\TRACE}{{\mathrm{tr}}}
\DeclareMathOperator{\DIM}{{\mathrm{dim}}}
\DeclareMathOperator{\OSC}{{\mathrm{osc}}}
\DeclareMathOperator{\SPT}{{\mathrm{spt}}}
\DeclareMathOperator{\REL}{\square}
\DeclareMathOperator{\TAN}{{\mathrm{Tan}}}
\DeclareMathOperator{\CONV}{{\mathrm{conv}}}
\DeclareMathOperator{\HULL}{{\mathrm{conv}}}
\DeclareMathOperator*{\ESUP}{ess\,sup}
\DeclareMathOperator{\INTR}{{\mathrm{int}}}
\DeclareMathOperator{\CCONV}{{\overline{\mathrm{conv}}}}
\numberwithin{equation}{section}
\title[Variational Time Discretization]{A Variational Time
Discretization for Compressible Euler Equations}
\author
  {Fabio Cavalletti}
\address
  {Fabio Cavalletti,
   Scuola Internazionale Superiore di Studi Avanzati (SISSA),
   Via Bonomea 265,
   34136 Trieste,
   Italy}
\email
  {cavallet@sissa.it}
\author
  {Marc Sedjro}
\address
  {Marc Sedjro,
   AIMS Tanzania,
   Plot No: 288, Makwahiya Street, 
   Regent Estate,
   Dar es Salaam,
   Tanzania}
\email
  {sedjro@aims.ac.tz}
\author
  {Michael Westdickenberg}
\address
  {Michael Westdickenberg,
   Lehrstuhl f\"{u}r Mathematik (Analysis),
   RWTH Aachen University,
   Templergraben 55,
   52062 Aachen, 
   Germany}
\email
  {mwest@instmath.rwth-aachen.de}
\date{\today}
\subjclass[2000]{35L65, 49J40, 82C40}
\keywords{Compressible Gas Dynamics, Optimal Transport}
\begin{document}

\begin{abstract}
We introduce a variational time discretization for the multi-dimen\-sional
gas dynamics equations, in the spirit of minimizing movements for
curves of maximal slope. Each timestep requires the minimization of a
functional measuring the acceleration of fluid elements, over the cone
of monotone transport maps. We prove convergence to measure-valued
solutions for the pressureless gas dynamics and the compressible Euler
equations. For one space dimension, we obtain sticky particle
solutions for the pressureless case.
\end{abstract}

\maketitle


\section{Introduction}

The compressible Euler equations model the dynamics of compressible
fluids like gases. They form a system of hyperbolic conservation laws
\begin{equation}
  \left.\begin{array}{r}
    \displaystyle
    \partial_t\RHO
      +\nabla\cdot(\RHO\BV) = 0
\\[1ex]
    \displaystyle
    \partial_t(\RHO\BV)
      + \nabla\cdot(\RHO\BV\otimes\BV)
        + \nabla p = 0
\\[1ex]
    \displaystyle
    \partial_t \EPS
      + \nabla\cdot\big( (\EPS+p)\BV \big) = 0
  \end{array}\right\}
  \quad\text{in $[0,\infty)\times\R^d$.}
\label{E:FULL}
\end{equation}
The unknowns $(\RHO,\BV, \EPS)$ depend on time $t\in[0,\infty)$ and
space $x\in\R^d$ and we assume that suitable initial data (to be
specified later) is given:
\[
  (\RHO,\BV,\EPS)(t=0,\cdot) =: (\BRHO, \bar{\BV}, \bar{\EPS}).
\]
We will think of $\RHO$ as a map from $[0,\infty)$ into the space of
non-negative, finite Borel measures, which we denote by $\M_+(\R^d)$.
The quantity $\RHO$ is called the density and it represents the
distribution of mass in time and space. The first equation in
\eqref{E:FULL} (the continuity equation) expresses the local
conservation of mass, where
\begin{equation}
  \BV(t,\cdot) \in \L^2\big( \R^d,\RHO(t,\cdot) \big)
  \quad\text{for all $t\in[0,\infty)$}
\label{E:VELOC}
\end{equation}
is the Eulerian velocity field taking values in $\R^d$. The second
equation in \eqref{E:FULL} (the momentum equation) expresses the local
conservation of momentum $\BM := \RHO\BV$. The pressure $p$ will be
discussed below. Notice that $\BM(t,\cdot)$ is a finite $\R^d$-valued
Borel measure absolutely continuous with respect to $\RHO(t,\cdot)$
for all $t\in[0,\infty)$, because of \eqref{E:VELOC}. The quantity
$\EPS$ is the total energy of the fluid and $\EPS(t,\cdot)$ is again a
measure in $\M_+(\R^d)$ for all times $t\in[0,\infty)$. It is
reasonable to assume $\EPS(t,\cdot)$ to be absolutely continuous with
respect to the density $\RHO(t,\cdot)$ (no energy in vacuum). The
third (the energy) equation in \eqref{E:FULL} expresses the local
conservation of energy.

Formally, the equations \eqref{E:FULL} imply that the total mass and
energy are preserved over time. Therefore, if the fluid has finite
mass and total energy initially, then this will be the case for all
positive times. We will make this assumption in the following. Without
loss of generality, we will also assume that the mass is equal to one,
which implies that $\RHO(t,\cdot)\in\SP(\R^d)$, the space of Borel
probability measures.

To obtain a closed system \eqref{E:FULL} it is necessary to prescribe
an equation of state, which relates the pressure $p$ to the density
$\RHO$ and the total energy $\EPS$. It is provided by thermodynamics.
The following three distinct situations are important:

\subsection{Pressureless gases}

The pressure $p$ vanishes and so the total energy reduces to just
the kinetic energy: $\EPS = \frac{1}{2} \RHO|\BV|^2$. The equations
\eqref{E:FULL} take the form
\begin{equation}
  \left.\begin{array}{r}
    \displaystyle
    \partial_t\RHO
      +\nabla\cdot(\RHO\BV) = 0
\\[1ex]
    \displaystyle
    \partial_t(\RHO\BV)
      + \nabla\cdot(\RHO\BV\otimes\BV) = 0
  \end{array}\right\}
  \quad\text{in $[0,\infty)\times\R^d$,}
\label{E:PGD}
\end{equation}
and the energy equation in \eqref{E:FULL} follows formally from the
continuity and momentum equations. The system \eqref{E:PGD} has been
proposed as a simple model describing the formation of galaxies in the
early stage of the universe. Its one-dimensional version is a building
block for semiconductor models. Since fluid elements do not interact
with each other because there is no pressure, the density
$\RHO(t,\cdot)$ may become singular with respect to the
$d$-dimensional Lebesgue measure $\LEB^d$. For adhesion (or: sticky
particle) dynamics this concentration effect is actually a desired
feature; see \cite{Zeldovich1970}: If fluid elements meet at the same
location, then they stick together to form larger compounds and so
$\RHO(t,\cdot)$ can have singular parts (in particular, Dirac
measures). Consequently \eqref{E:FULL} must be understood in the sense
of distributions. While mass and momentum are conserved, kinetic
energy may be destroyed since the collisions are inelastic. In
particular, the energy equation in \eqref{E:FULL} will typically be an
inequality only. We will call the assumption of adhesion dynamics an
entropy condition.

There are now numerous articles studying the pressureless gas dynamics
equations \eqref{E:PGD} in one space dimension and establishing global
existence of solutions. Frequently, a sequence of approximate
solutions is constructed by considering discrete particles, where the
initial mass distribution is approximated by a finite sum of Dirac
measures. The dynamics of these particles are described by a finite
dimensional system of ordinary differential equations between
collision times. Whenever multiple particles collide, the new velocity
of the bigger particle is determined from the conservation of mass and
momentum, and the choice of impact law. The general existence result
is obtained by letting the number of discrete particles go to
infinity. In order to pass to the limit, several approaches are
feasible. We only mention two: One approach relies on the observation
that the cumulative distribution function associated to the density
$\RHO$ satisfies a certain scalar conservation law (see
\cite{BrenierGrenier1998}) so the theory of entropy solutions of
scalar conservation laws can be applied. Another approach makes use of
the well-known theory of first-order differential inclusions, applied
to the cone of monotone transport maps from a reference measure space
to $\R$; see \cite{NatileSavare2009}. We refer the reader to
\cites{BouchutJames1995, BouchutJames1999,
BrenierGangboSavareWestdickenberg2013, ERykovSinai1996,
GangboNguyenTudorascu2009, Grenier1995, HuangWang2001, Moutsinga2008,
NguyenTudorascu2008, PoupaudRascle1997, Wolansky2007} for more
information.

For the multi-dimensional pressureless gas dynamics equations, global
existence of solutions to \eqref{E:PGD} has been considered in
\cite{Dermoune2005}. The global existence proof in \cite{Sever2001}
for sticky particle solutions seems to be incomplete, as the authors
in \cite{BressanNguyen2014} show that for certain choices of initial
data, sticky particle solutions cannot exist. This raises the question
of the correct solution concept for the equations \eqref{E:PGD}.

\subsection{Isentropic gases} 

In this regime, the thermodynamical entropy of the fluid is assumed to
be constant in space and time. Consequently, the pressure is a
function of the density only. We introduce the internal energy
\[
  \INT[\RHO] := \begin{cases}
      \DST \int_{\R^d} U\big( r(x) \big) \,dx
        & \text{if $\RHO = r \LEB^d$,}
\\
      \infty 
        & \text{otherwise,}
    \end{cases}
\]
where $U(r) := \kappa r^\gamma$ for $r\GS 0$. The constant $\gamma>1$
is called the adiabatic coefficient, and $\kappa>0$ is another
constant. The total energy is the sum of the kinetic energy introduced
above and the internal energy. Since we are only interested in
solutions of \eqref{E:FULL} with finite total energy, the density
$\RHO(t,\cdot)$ must be absolutely continuous with respect to the
Lebesgue measure for all $t\in[0,\infty)$. Let $r(t,\cdot)$ be its
Radon-Nikod\'{y}m derivative. Then $p(t,\cdot) = P\big( r(t,\cdot)
\big) \,\LEB^d$ for all $t\in[0,\infty)$, where
\[
  P(r) = U'(r)r-U(r)
  \quad\text{for $r\GS 0$.}
\]
This setup describes polytropic gases. Other choices of $U$ are
possible, for example $U(r) = \kappa r\log r$ for isothermal gases
(then $P(r) = \kappa r$). We consider
\begin{equation}
  \left.\begin{array}{r}
    \displaystyle
    \partial_t\RHO
      +\nabla\cdot(\RHO\BV) = 0
\\[1ex]
    \displaystyle
    \partial_t(\RHO\BV)
      + \nabla\cdot(\RHO\BV\otimes\BV) 
      + \nabla P(\RHO) = 0
  \end{array}\right\}
  \quad\text{in $[0,\infty)\times\R^d$}
\label{E:IEE}
\end{equation}
(with slight abuse of notation). As in the pressureless case, the
energy equation in \eqref{E:FULL} follows formally from the continuity
and the momentum equation.

It is well-known that a generic solution to the isentropic Euler
equations will not remain smooth, even for regular initial data.
Instead the solution will have jump discontinuities along
codimension-one submanifolds in space-time, which are called shocks.
Then the continuity and the momentum equation must be considered in
the sense of distributions, and the energy equation does no longer
follow automatically. A physically reasonable relaxation is to assume
that no energy can be created by the fluid: The energy equality in
\eqref{E:FULL} must be replaced by the inequality
\begin{equation}
  \partial_t \Big( \HA\RHO|\BV|^2 + U(\RHO) \Big) + \nabla \cdot 
    \bigg( \Big( \HA\RHO|\BV|^2 + U'(\RHO)\RHO \Big) \BV \bigg) 
      \LS 0
\label{E:ED}
\end{equation}
in distributional sense. Physically, strict inequality in \eqref{E:ED}
means that mechanical energy is transformed into heat, a form of
energy that is not accounted for by the model. Notice that a
differential inequality like \eqref{E:ED} contains some information on
the regularity of solutions: The space-time divergence of a certain
non-linear function of $(\RHO,\BV)$ is a non-positive distribution, and
thus a measure. In the one-dimensional case, it is even reasonable to
look for weak solutions of \eqref{E:IEE} that satisfy differential
inequalities like \eqref{E:ED} simultaneously for a large class of
non-linear functions of $(\RHO,\BV)$ that are called entopy-entropy
flux pairs. Such an assumption on the solutions is again an entropy
condition. Using the method of compensated compactness, it is then
possible the establish the global existence of weak (entropy)
solutions of \eqref{E:IEE}. We refer the reader to \cites{Chen2000,
DingChenLuo1987, DingChenLuo1988, ChenLeFloch2000,
ChenPerepelitsa2010, DiPerna1983, LeFlochWestdickenberg2007,
LionsPerthameSouganidis1996, LionsPerthameTadmor1994} for more
information.

In several space dimensions the only available entropy-entropy flux
pair is the total energy-energy flux. Using non-linear iteration
schemes like the ones introduced by Nash \cites{Nash1954, Nash1956} to
construct isometric imbeddings of Riemannian manifolds, one can
establish the existence of a large class of initial data for which
weak solutions of \eqref{E:IEE} exist globally in time. We refer the
reader to the ground-breaking results by De Lellis and Sz\'{e}kelyhidi
\cites{DeLellisSzekelyhidi2009, DeLellisSzekelyhidi2010} and
subsequent work \cites{Chiodaroli2014, ChiodaroliKreml2014,
ChiodaroliDeLellisKreml2014, Feireisl2014} by various authors. These
results give, in fact, much more precise information: One can show
that for suitable initial data there exist \emph{infinitely many} weak
solutions of \eqref{E:IEE}, even if one requires that solutions
satisfy an entropy condition in the form \eqref{E:ED}. This is related
to the fact that there is---in addition to energy dissipation through
shocks---an additional dissipation mechanism due to very high
oscillations of the velocity field, which is reminiscent of anomalous
dissipation in turbulence. Moreover, there is a precise threshold of
H\"{o}lder regularity $1/3$ between the energy conserving and the
energy dissipating regimes. For incompressible flows, this has been
conjectured based on physical considerations by Onsager
\cite{Onsager1949}. A mathematical proof of this conjecture has been
provided in a series of recent articles; see \cites{Buckmaster2015,
BuckmasterDeLellisSzekelyhidi2016, Isett2017} and references therein.
For related results for the compressible Euler equations see
\cite{FeireislGwiazdaSwierczewskaGwiazdaWiedemann2017}. The Cauchy
problem for \eqref{E:IEE} in several space dimensions, however, has
not been solved yet: In order to apply the above methods for
\emph{any} given initial data, it currently seems necessary to allow a
small increase in energy initially, which is in violation of
\eqref{E:ED}.

\subsection{Full Euler equations}

We consider a polytropic gas with adiabatic coefficient $\gamma>1$.
Then the pressure is given in terms of $(\RHO, \BV, \EPS)$ by the
formula
\begin{equation}
  p(t,\cdot) = (\gamma-1) \bigg( \EPS-\frac{1}{2}\RHO|\BV|^2 
    \bigg)(t,\cdot)
  \quad\text{for all $t\in[0,\infty)$.}
\label{E:PIE}
\end{equation}
Density and pressure define the specific thermodynamical entropy,
given as
\[
	S := \log\Big( \frac{p}{c\RHO^\gamma} \Big)
	\quad\text{with $c := \kappa(\gamma-1)>0$ and $\gamma>1$}
\]
in the case of polytropic gases. We assume that
\[
  S(t,\cdot) \in \L^1\big( \R^d, \RHO(t,\cdot) \big)
  \quad\text{for all $t\in[0,\infty)$,}
\]
so that the entropy density $\sigma = \RHO S$ is well-defined as a
measure.

\begin{definition}[Internal Energy]\label{D:INT} 
Let $U(r,S) := \kappa e^S r^\gamma$ for all $r\GS 0$ and $S\in\R$,
where $\kappa>0$ and $\gamma>1$ are constants. Then we define the
internal energy
\[
  \INT[\RHO,\sigma] := \begin{cases}
      \DST \int_{\R^d} U\big( r(x),S(x) \big) \,dx
        & \text{if $\RHO=r\LEB^d$ and $\sigma=\RHO S$,}
\\
      \infty
        & \text{otherwise,}
    \end{cases}
\]
for all pairs of measures $(\RHO,\sigma) \in \SP(\R^d)\times
\M_+(\R^d)$.
\end{definition}

Since we are only interested in solutions with finite energy, the
density must be absolutely continuous with respect to the Lebesgue
measure, thus $\RHO(t,\cdot) = r(t,\cdot) \,\LEB^d$. In this case, we
define $P(r,S) = U'(r,S)r-U(r,S)$ (here $'$ denotes differentiation
with respect to $r$), and the pressure term in \eqref{E:FULL2} takes
the form
\begin{equation}
  p(t,\cdot) = P\big( r(t,\cdot),S(t,\cdot) \big) \,\LEB^d
  \quad\text{for all $t\in[0,\infty)$.}
\label{E:PISIGMA}
\end{equation}
Moreover, combining \eqref{E:PIE} and \eqref{E:PISIGMA} with
\eqref{E:FULL}, we obtain that 
\begin{equation}
  \partial_t \sigma + \nabla\cdot(\sigma\BV) = 0
  \quad\text{in $[0,\infty)\times\R^d$.}
\label{E:ENTCON}
\end{equation}
Equivalently, the specific entropy $S$ must be constant along
characteristics:
\begin{equation}
  \partial_t S + \BV\cdot\nabla S = 0
  \quad\text{in $[0,\infty)\times\R^d$.}
\label{E:ENTCHAR}
\end{equation}
Formally, system \eqref{E:FULL} is equivalent to the one where the
energy equation is replaced by \eqref{E:ENTCON} (or even
\eqref{E:ENTCHAR}). But since the solutions to the compressible Euler
equations may become discontinuous in finite time, the physically
reasonable relaxation is that the specific entropy should be
non-decreasing forward in time, which expresses the second law of
thermodynamics. It follows that
\begin{equation}
  \inf_{x\in\R^d} S(t,x) \GS \inf_{x\in\R^d} \bar{S}(x)
  \quad\text{for all $t\in[0,\infty)$,}
\label{E:SINF}
\end{equation}
where $\bar{S}$ is the initial specific entropy. An Eulerian argument
in support of \eqref{E:SINF}, based on entropy inequalities, was given
in \cite{Tadmor1986}. We will assume that
\[
  \inf_{x\in\R^d} \bar{S}(x) \GS \alpha
\]
for some $\alpha\in\R$. Since the shift of $S$ by $\alpha$ can be
absorbed into the constant $\kappa>0$, we may assume without loss of
generality that $\alpha=0$, thus $S$ is non-negative. As for the
isentropic Euler equations, global existence and uniqueness of weak
solutions to the full system \eqref{E:FULL} are open problems, 
even in one space dimension.

\bigskip

In this paper, we will consider a variational time discretization for
the compressible gas dynamics equations that is motivated by
minimizing movements for curves of maximal slope on metric spaces; see
\cites{AmbrosioGigliSavare2008, DeGiorgi1993,
JordanKinderlehrerOtto1998}. For any given initial data with finite
mass and total energy, we prove that sequences of approximate
solutions generated by this scheme converge to a measure-valued
solution of \eqref{E:FULL}.

\begin{definition}
We denote by $\SP_2(\R^d)$ the space of Borel probability measures
with finite second moment, endowed with the $2$-Wasserstein distance;
see Definition~\ref{D:WAS} below. For a map $t \mapsto \RHO_t \in
\SP_2(\R^d)$, $t\in[0,T]$, we denote by
\[
	\|\RHO\|_{\LIP([0,T]; \SP_2(\R^d))} 
  		:= \sup_{\substack{t_1, t_2 \in [0,T] \\ t_1\neq t_2}} 
    		\frac{\WAS_2(\RHO_{t_1}, \RHO_{t_2})}{|t_2-t_1|}
\]
the Lipschitz seminorm, with $\WAS_2$ the Wasserstein distance; see
\eqref{E:WASDE}.
\end{definition}

\begin{definition} 
We denote by $\M_\ENT(\R^d)$ the space of non-negative Borel measures
with finite second moments and total variation equal to $\ENT \in
[0,\infty)$, endowed with as suitably rescaled Wasserstein distance;
see Definition~\ref{D:WAS}.
\end{definition}

We will assume that the total momentum vanishes initially, which
implies that the total momentum vanishes for all $t>0$. This is not a
restriction as the hyperbolic conservation law \eqref{E:FULL} is
invariant under transformations to a moving reference frame in the
absence of boundaries. The momentum map $t \mapsto \BM_t =
\RHO_t\BV_t$ takes values in a convex set of $\R^d$-valued Borel
measures whose total variations are uniformly bounded, as a
consequence of a bound on the total energy. On this set, the narrow
convergence of measures is metrized by the Monge-Kantorovich norm:

\begin{definition}\label{D:KANTOO}
We denote by $\LIP(\R^d; \R^N)$ the vector space of Lipschitz
continuous maps $\zeta \colon \R^d \longrightarrow \R^N$. The
Lipschitz constant of $\zeta \in \LIP(\R^d; \R^N)$ is
\[
  \|\zeta\|_{\LIP(\R^d)} 
    := \sup_{x_1 \neq x_2} \frac{|\zeta(x_1)-\zeta(x_2)|}{|x_1-x_2|}.
\]
We denote by $\BL(\R^d;\R^N)$ the subspace of bounded functions in
$\LIP(\R^d; \R^N)$. It is a Banach space when equipped with the
bounded Lipschitz norm
\[
	\|\zeta\|_{\BL(\R^d)} := \max\Big\{ \|\zeta\|_{\L^\infty(\R^d)},
		\|\zeta\|_{\LIP(\R^d)} \Big\}.
\]
Let $\BL_1(\R^d; \R^N)$ be the space of all $\zeta \in \BL(\R^d;
\R^N)$ with $\|\zeta\|_{\BL(\R^d)}\LS 1$.

We denote by $\MK(\R^d; \R^N)$ the space of $\R^N$-valued Borel
measures $\BM$ with zero mean and finite first moment, equipped with
the Monge-Kantorovich norm
\begin{equation}\label{E:KANT} 
  \|\BM\|_{\MK(\R^d)} := \sup\bigg\{ \int_{\R^d} \zeta(x) \cdot \BM(dx) 
    \colon \zeta \in \BL_1(\R^d; \R^N) \bigg\}.
\end{equation}
The Monge-Kantorovich norm is bounded above by the total variation.
\end{definition}

We refer the reader to \cites{Dudley1966,
CastaingRaynaudDeFitteValadier2004, ChitescuMikulescuNitaIoana2016,
HilleSzarekWormZiemlanska2017} for additional information. Notice that
the integral in \eqref{E:KANT} is well-defined because $\BM$ has
finite first moment, by assumption. By Cauchy-Schwarz inequality, this
holds true whenever $\BM = \RHO\BV$ with
\[
  \RHO \in \SP_2(\R^d)
  \quad\text{and}\quad
  \BV \in \L^2(\R^d,\RHO).
\]
%

\begin{remark}
We think of weak solutions of \eqref{E:FULL} as maps $t \mapsto
(\RHO_t, \BM_t, \sigma_t)$ taking values in a convex set of vector
measures with uniformly bounded total variations, equipped with the
Wasserstein distance/Monge-Kantorovich norm. Since the maps are
Lipschitz continuous, they are strongly differentiable almost
everywhere (a.e.) in time. In particular, the time derivative of the
momentum exists as an element in the closure of the space of vector
measures with respect to the Monge-Kantorovich norm, which is a proper
subset of the dual space $\BL(\R^d;\R^d)^*$; see
\cite{BouchitteButtazzoDePascale2010} for additional properties. The
usage of the Monge-Kantorovich norm is thus very well adapted to the
structure of the equations \eqref{E:FULL}, with the time derivative of
the momentum given as the divergence of a measure field taking values
in the symmetric, positive semidefinite matrices. Testing against
$\BL_1(\R^d; \R^d)$-functions, we can (mollify and) integrate by
parts. Since the derivative of the test function is bounded in norm,
we must control the size of the matrix field, for example with respect
to the $1$-Schatten norm (the sum over the absolute values of the
singular values). For symmetric, positive semidefinite matrices, this
is simply the trace of the matrix.
\end{remark}

\begin{definition}
Let $E\subset \R$ be some subset and $(X,d)$ a metric space. We denote
by $\BVS(E,X)$ the space of maps $f \colon I \longrightarrow E$ with
finite variation
\[
	V_E(f) := \sup\sum_{i=1}^m d\big( f(t_{i-1}),f(t_i) \big),
\]
where the $\sup$ is taken over all $t_0 \LS t_1 \LS \ldots \LS t_m$
contained in $E$.
\end{definition}

We refer the reader to \cite{FleischerPorter2001} for further
information on metric space-valued functions of bounded variation. In
particular, a version of Helly's compactness theorem
for sequences of $X$-valued maps is proved there. A function $f\colon E
\longrightarrow X$ has bounded variation if and only if it factors as
$g \circ \phi$ where $\phi \colon E \longrightarrow \R$, defined as
\[
	\phi(t) := V_{(-\infty,t]\cap E}(f)
	\quad\text{for all $t\in E$},
\]
is its total variation and $g \colon \phi(E) \longrightarrow X$ is
Lipschitz continuous. We will consider spaces $\BVS(E,X)$ with $E =
[0,T]$ and $X = \MK(\R^d; \R^d)$.

\begin{remark}\label{R:MOMOS}
Since we consider densities with finite second moment (thus momenta
with finite first moment), it is possible to consider test functions
with non-compact support. With $V$ any Banach space, let $\C_*(\R^d;
V)$ be the space of all continuous functions $\zeta \colon \R^d
\longrightarrow V$ with the property that $\lim_{|x|\rightarrow\infty}
\zeta(x) \in V$ exists. Then
\[
	\C_*(\R^d; V) = V + \C_0(\R^d; V),
\]
where $\C_0(\R^d; V)$ is the closure of the space of compactly
supported continuous $V$-valued maps with respect to the the
$\sup$-norm. We define
\begin{equation}
	\AF := \big\{ u\in\C^1(\R^d;\R^D) \colon
    \text{$\nabla u \in \C_*\big( \R^d;\MAT{D\times d} \big)$} \big\},
\label{E:CSTO}
\end{equation}
with $\MAT{D\times d}$ the space of real $(D\times d)$-matrices. We
will not explicitly indicate the di\-men\-sion $D$ as it will be clear
from the context. Functions in $\AF$ grow at most linearly at
infinity. Let $\C^1_c([0,\infty))\otimes\AF$ be the space of tensor
products
\[
	\eta\otimes\zeta(t,x) := \eta(t)\zeta(x)
	\quad\text{with $\eta \in \C^1_c\big( [0,\infty) \big)$ and 
		$\zeta \in \AF$.}
\]
We will assume \eqref{E:FULL2} to hold in duality with this space,
testing against all
\[
	\eta\otimes\zeta \in \C^1_c\big( [0,\infty) \big) \otimes \AF.
\]
Notice that all products of conserved quantities $(\RHO, \BM, \sigma)$
with $\eta\otimes\zeta$ are integrable since these measures have
finite first moments. On the other hand, the derivative $\nabla_x
\zeta$ is bounded and so the integrals involving the fluxes are
well-defined as well. For all $T>0$, the tensor product
$\C([0,T])\otimes V$ is dense in $\C([0,T]; V)$ with respect to the
$\sup$-norm, with $V$ any locally convex topoligical vector space.
\end{remark}

We can now state our main existence result.

\begin{theorem}[Global Existence]\label{T:GLOBAL}
Suppose that initial data
\[
  \BRHO \in \SP_2(\R^d),
  \quad
  \bar{\BV} \in \L^2(\R^d,\BRHO),
  \quad
  \bar{S} \in \L_+^\infty(\R^d, \BRHO)
\]
is given with vanishing total momentum and finite internal energy:
\[
  \int_{\R^d} \bar{\BV}(x) \,\BRHO(dx) = 0,
  \quad
  \INT[\BRHO, \bar{\sigma}] < \infty,
\]
where $\bar{\sigma} := \BRHO \bar{S}$. Let $\ENT := \int_{\R^d}
\bar{\sigma}(dx)$ be the initial total entropy.

For any $T>0$ there exist curves
\[
\begin{gathered}
  \RHO \in \LIP\big( [0,T]; \SP_2(\R^d) \big),
  \quad
  \sigma \in \LIP\big( [0,T]; \M_\ENT(\R^d) \big),
\\
  \BM \in \LIP\big( [0,T]; \MK(\R^d; \R^d) \big)
\end{gathered}
\]
with the following properties:
\begin{enumerate}
\item The initial data is attained:
\[
   \RHO(0,\cdot) = \bar{\RHO},
  \quad
  \BM(0,\cdot) = \bar{\RHO} \bar{\BV},
  \quad
  \sigma(0,\cdot) = \bar{\sigma}.
\]
\item We have $\BM =: \RHO\BV$ and $\sigma =: \RHO S$ with
\[
	\BV(t,\cdot) \in \L^2\big( \R^d, \RHO(t,\cdot) \big),
	\quad
	S(t,\cdot) \in \L_+^\infty\big( \R^d, \RHO(t,\cdot) \big)
\]
for all $t \in [0,T]$.
\item There exist two Young measures 
\[
	\nu^1, \nu ^2 \in \L^\infty_w\big( [0,T];
		\M_+(\dot\R^d\times\BFX) \big),
\]
where $\BFX$ is a suitable compactification of the set 
\[
	X := [0,\infty) \times \R^d \times [0,S_{\max}], 
	\quad 
	S_{\max} := \|S\|_{\L^\infty(\R^d,\RHO)},
\]
of admissible $(\RHO,\BV, S)$, such that
\end{enumerate}
\begin{equation}
  \left.\begin{array}{r}
    \partial_t\RHO
      +\nabla\cdot \lbrack \RHO\BV \rbrack = 0
\\[0.5em]
    \partial_t(\RHO\BV)
      + \nabla \cdot \lbrack \RHO\BV\otimes\BV \rbrack
        + \nabla \llbracket P(\RHO,S) \rrbracket = 0
\\[0.5em]
    \partial_t (\RHO S)
      + \nabla\cdot \lbrack \RHO\BV S \rbrack = 0
  \end{array}\right\}
  \quad\text{in $\Big( \C^1_c\big( [0,T) \big)\otimes \AF \Big)^*$.}
\label{E:FULL2}
\end{equation}
Here the brackets $\lbrack \cdot \rbrack$ and $\llbracket \cdot
\rrbracket$ denote the integration of $\nu^1$ and $\nu^2$,
respectively, against suitable functions of $(\RHO,\BV,S)$. We refer
the reader to Section~\ref{SS:YM} for details.
\end{theorem}

\begin{remark}

The two Young measures from Theorem~\ref{T:GLOBAL} play slightly
different roles: One is used to describe the kinematic aspects of the
flow (transport). It is generated by a sequence of approximate
solutions that are interpolated piecewise linearly in time. The other
one is used to describe the dynamical aspects (acceleration due to
pressure gradient). It is constructed using piecewise constant
interpolants. Since the approximations of the maps $t \mapsto (\RHO_t,
\BM_t, \sigma_t)$ are sufficiently regular in time, both Young
measures generate the same conserved quantities. In order to have
equality for the non-linear terms $\RHO\BV \otimes \BV$ and
$P(\RHO,S)$, however, one needs to control the time regularity of the
total energy. This will be considered elsewhere. It requires a more
refined time interpolation, like De Giorgi's variational interpolation
for minimizing movements; see Section~3.2 in
\cite{AmbrosioGigliSavare2008}. Notice that while the derivative of $t
\mapsto \BM_t$ is uniquely determined a.e.\ in time in the closure of
the space of measures with respect to the Monge-Kantorovich norm, the
matrix measure field representing it is not.
\end{remark}

We conclude this section by highlighting some aspects of our method.

\medskip

\textbf{Maximization of Entropy Production.}
\nopagebreak

\medskip

The recent results by De Lellis, Sz\'{e}kelyhidi, and others suggest
that non-unique\-ness of weak solutions of compressible Euler
equations in several space dimensions is a fact of life. The accepted
entropy conditions, in the form \eqref{E:ED} for the isentropic case,
for example, are insufficient to select a unique solution. It is
therefore natural to at least try to identify the ``extreme''
solutions among all possible weak solutions. Since the entropy
condition \eqref{E:ED} already implies that the total energy is
non-increasing in time, it appears promising to strengthen this
condition by requiring that total energy be dissipated \emph{at
maximal rate}, as suggested by Dafermos \cite{Dafermos1973}. It was
shown in \cite{ChiodaroliKreml2014}, however, that this entropy
condition seems to favor the highly oscillatory solutions of the
isentropic Euler equations, which are non-unique.

Instead of decreasing the total energy, we will balance the
dissipation of internal energy and minimizing the work done by the
system (defined in terms of acceleration of the fluid elements), which
amounts to changing the velocity a little as possible. We partition a
given time interval into subintervals of length $\tau>0$. The updates
in each timestep are obtained as the solutions of the above
minimization problem. A similar approach has been studied for
polyconvex elasticity in \cites{DemouliniStuartTzavaras2001,
DemouliniStuartTzavaras2012}.

Recall that by the first law of thermodynamics $T \,dS = dU - W$,
where $T$ denotes temperature, $dS$ and $dU$ are the infinitesimal
changes in thermodynamical entropy and internal energy, and $W$ is the
work done \emph{on} the system by its surroundings. Classically, the
work is given by the formula $W = -p \,dV$, with $p$ the pressure and
$dV$ the infinitesimal change of volume. Instead, we will utilize the
\emph{minimal work} functional, which we will introduce in the next
paragraph. Our method boils down to minimizing the sum $W+U$,
depending on some timestep $\tau>0$. Denoting by $W_\tau, U_\tau$ the
corresponding minimizers, we obtain the inequality
\[
	U_\tau + W_\tau \LS U_0 + 0,
\]
where the index $0$ refers to the fluid state obtained by not doing
any work. Defining $dU := U_0-U_\tau$, we observe that we are trying
to \emph{maximize} $dU-W$, which can formally be interpreted as
maximizing $T \,dS$, thus maximizing the entropy production. Similar
ideas have been explored in the recent paper
\cite{BrezinaFeireisl2017}.

\medskip

\textbf{Minimal Work Functional}
\nopagebreak

\medskip

Given $\RHO \in \SP_2(\R^d)$, we denote by $\PR$ the space of Borel
probability measures $\MU \in \SP_2(\R^{2d})$ whose first marginal
$\PP^1\#\MU = \RHO$. Here $\PP^1(x,\xi) := x$ for all $(x,\xi) \in
\R^{2d}$, and $\#$ is the push-forward. Any measure $\MU\in\PR$
describes the state of some fluid with density $\RHO \in \SP_2(\R^d)$
and velocity distribution $\mu_x$ for $\RHO$-a.e.\ $x\in\R^d$, where
$\MU(dx,d\xi) =: \mu_x(d\xi) \,\RHO(dx)$ denotes the disintegration of
$\MU$ with respect to $\RHO$. The special case $\MU(dx,d\xi) =
\delta_{\BU(x)}(d\xi) \,\RHO(dx)$ represents a monokinetic state where
all fluid elements located at the position $x\in\R^d$ have the same
velocity $\BU(x)\in\R^d$ and are therefore indistinguishable. The
velocity field $\BU \in \L^2(\R^d,\RHO)$, by construction. We will
occasionally use bold letters to denote elements in $\R^{2d}$ such as
\[
  \BX = (x,\xi),
  \quad
  \BY = (y,\upsilon),
  \quad\text{and}\quad
  \BZ = (z,\zeta),
\]
where $x,y,z \in \R^d$ represent positions and $\xi,\upsilon,\zeta \in
\R^d$ velocities.

In order to measure the ``distance'' between two state measures
$\MU^1,\MU^2 \in \PR$, we will use the minimal acceleration cost
introduced in \cite{GangboWestdickenberg2009}. It is defined as
follows: For a given timestep $\tau > 0$, consider a fluid element
with initial position/velocity $\BX \in \R^{2d}$. Assume that the
fluid element transitions into a new state $\BZ \in \R^{2d}$. The
transition is described by a smooth curve $X(\cdot| \BX,\BZ) \colon
[0,\tau] \longrightarrow \R^d$ such that
\[
  (X,\dot{X})(0) = (x,\xi)
  \quad\text{and}\quad
  (X,\dot{X})(\tau) = (z,\zeta)
\]
(with $X := X(\cdot|\BX,\BZ)$). Among all such curves there are the
ones that minimize the acceleration $\int_0^\tau |\ddot{X}(t)|^2
\,dt$. They are uniquely determined and given by
\[
  X(t|\BX,\BZ) = x + t\xi
    + \Big( 3(z-x)-\tau(\zeta+2\xi) \Big) \frac{t^2}{\tau^2} 
    - \Big( 2(z-x)-\tau(\zeta+\xi) \Big) \frac{t^3}{\tau^3} 
\]
for all $t\in[0,\tau]$. The minimal acceleration can be computed
explicitly, which allows us to define a cost measuring the
``distance'' between the two end states:
\begin{equation}\label{E:COSTF}
  a_\tau(\BX,\BZ)^2 
    := 3\bigg| \frac{z-x}{\tau}-\frac{\zeta+\xi}{2}\bigg|^2
      + \frac{1}{4} |\zeta-\xi|^2
\end{equation}
for all $\BX,\BZ \in \R^{2d}$. Note that $a_\tau(\BX,\BZ) = 0$ if and
only if $z=x+\tau\xi$ and $\zeta=\xi$.

The cost function \eqref{E:COSTF} can be rewritten in the following
form:
\begin{equation}\label{E:COSTF2}
  a_\tau(\BX,\BZ)^2 
    = \frac{3}{4\tau^2} |(x+\tau\xi)-z|^2 
    + \bigg| \zeta-\bigg( \xi-\frac{3}{2\tau} \Big( (x+\tau\xi)-z \Big)
      \bigg) \bigg|^2
\end{equation} 
for every $\BX, \BZ \in \R^{2d}$. The first term measures how much the
final position $z$ differs from $x+\tau\xi$, which would be the
position of the fluid element after a \emph{free transport}. The
second term measures the difference between $\zeta$ and the velocity
\begin{equation}
  V_\tau(\BX,z) := \xi - \frac{3}{2\tau}\Big( (x+\tau\xi)-z \Big),
\label{E:ZETOP}
\end{equation}
which is the velocity that minimizes the acceleration among all curves
that connect the initial position/velocity $\BX \in \R^{2d}$ to the
final position $z \in \R^d$. Notice that by minimizing the final
velocity $\zeta$ for fixed $(\BX, z)$, setting $\zeta$ equal to
\eqref{E:ZETOP}, we closely link velocity and transport. The minimal
work functional is then defined as an optimal transport problem with
cost function \eqref{E:COSTF} and for pairs of $\MU_1, \MU_2 \in
\SP_2(\R^{2d})$, in analogy to the Wasserstein distance; see
Definition~\ref{D:WAS}.

\medskip

\textbf{Non-Coercive Energy Functional and Monotone Maps.} 
\nopagebreak

\medskip

Our variational method amounts to minimizing the sum of minimal work
functional plus internal energy of the \emph{final} state (the one
after transport). We will be particularly interested in transport
\emph{maps}, where a fluid element at location $x\in\R^d$ is
transported wholly to a new position $\BT(x) \in \R^d$, without being
split up. More precisely, we consider $\R^d$-valued transport maps
$\BT \in \L^2(\R^d, \RHO)$. Taking into account Definition~\ref{D:INT}
and assuming that the specific thermodynamical entropy $S$ is simply
transported along with the flow (recall \eqref{E:ENTCHAR}), we can
formally write
\begin{equation}
	\INT[\BT\#\RHO,\BT\#\sigma] = \int_{\R^d} U\big( \RHO(x), S(x) \big)
		\, \det\big( \nabla\BT(x) \big)^{1-\gamma} \,dx
\label{E:INTFALSE}
\end{equation}
for the internal energy of the final state, using the change of
variables formula. Here we have identified $\RHO$ with its Lebesgue
density. Moreover, we have assumed that the transport map $\BT$ is
sufficiently regular and invertible. Our minimization scheme will
ensure that the internal energy of the new fluid state is finite. But
since the map $\BT \mapsto \INT[\BT\#\RHO,\BT\#\sigma]$ is not
coercive, it does not suggest any natural function space setting to
formulate the minimization problem. In order to be able to use the
direct method of the calculus of variations, we need compactness of
sublevel sets of the internal energy functional in a suitable
topology, and its lower semicontinuity with respect to this topology.
To achieve this, we make two choices:
\begin{itemize}
\item we assume that the transport maps are \emph{monotone}, and
\item in \eqref{E:INTFALSE} we replace the gradient $\nabla\BT(x)$
  by its \emph{symmetric part};
\end{itemize}
see Proposition~\ref{P:LSCCON2}. A monotone map is locally of bounded
variation. In particular, its variation (the total variation of its
derivative) over any convex set can be controled in terms of its
oscillation (the size of its range). This provides us with the
necessary compactness of sublevel sets. On the other hand, by using
only the symmetric part of the derivative $\nabla\BT(x)$, the
resulting energy functional becomes convex and lower semicontinuous
with respect to weak* convergenve in the space of functions with
bounded variation. We refer the reader to Section~\ref{S:VTD3} for
details.

\medskip

\textbf{The Geometry of Monotone Maps.} 
\nopagebreak

\medskip

In continuum mechanics, a configuration is a function that assigns to
each point of the body manifold (called the reference configuration)
its position in physical space $\R^d$, at any given time. These maps
are required to be injective because matter must not interpenetrate.
The space of configurations therefore cannot be a vector space
(subtracting a configuration from itself, we obtain the zero map,
which is not injective). Our assumption of monotonicity of the
transport maps is consistent with these considerations, but a bit
stronger than mere injectivity. Note, however, that we require
monotonicity of the transport maps in the limit of small timesteps
$\tau>0$ only, so that the minimizing $\BT$ will be a perturbation of
the identity map, which is monotone. Recall also that in the theory of
generalized gradient flows on the space of probability measures, which
utilizes the Wasserstein distance to determine the local geometry of
the problem, the optimal transport maps are \emph{cyclically monotone}
(gradients of convex functions), which is a stronger condition than
monotonicity. We do not wish to work with cyclically monotone maps
since the induced velocity fields (obtained as limits of difference
quotients between optimal transport maps and the identity) inherit the
gradient property. For the compressible Euler equations, this would
result in the regime of potential flows; see also
\cite{Westdickenberg2010}.

Our variational methods is phrased as a convex minimization problem
over the closed convex cone of monotone transport maps. As is usual in
optimization, such a constraint may result in the appearance of
Lagrange multipliers in the optimality conditions. For the
monotonicity constraint under consideration here, it turns out that
the representation of such Lagrange multipliers fits very neatly into
the overall structure of our problem. In fact, elements in the closed
convex cone that is polar to the cone of monotone maps can be
represented as divergences of measure fields taking values in the
positive semidefinite symmetric matrices; see Section~\ref{SS:PC}.
This is precisely the form of the flux terms in the gas dynamics
equations \eqref{E:FULL}.

In continuum mechanics, admissible velocities are elements of the
tangent cone to the manifold of configurations. Consequently, if we
consider the map $t \mapsto \RHO_t$ as a curve on the manifold of
probability measures, then the corresponding velocity $\BV$ should
represent a curve in the tangent bundle. In our variational time
discretization we update the velocity as follows: we first move the
current velocity using the optimal transport map, then we project onto
a suitably defined tangent cone to the cone of monotone maps at the
new configuration. This projection will turn out to be trivial in the
cases with pressure; in the pressureless case, the projection of
velocity will be related to the sticky particle condition. This
two-step update for the velocity is similar to the construction of the
parallel transport of tangent vector fields along the space of
probability measures, as developed in \cites{Gigli2004,
AmbrosioGigli2008}.

\medskip

\textbf{Measure-Valued Solutions.}
\nopagebreak

\medskip

The Young measures of Theorem~\ref{T:GLOBAL} are obtained as weak*
limits
\[
	\nu \in \L^\infty_w\big( [0,\infty);
		\M_+(\dot\R^d\times\BFX) \big)
\]
of sequences of analogous maps constructed from approximate solutions
$(\RHO_n, \BV_n, S_n)$ of \eqref{E:FULL}. The $(t,x)$-marginal of such
$\nu$ is the weak* limit $t \mapsto \mu_t(dx)$ of
\[
	\mu_{n,t} := \RHO_{n,t} + \Big( \HA \RHO_{n,t} |\BV_{n,t}|^2 
		+ U(\RHO_{n,t},S_{n,t}) \Big)
\]
(see \eqref{E:FEN}/\eqref{E:UPS}), which captures the space-time
distribution of mass and total energy. The Young measure $\nu$
captures both oscillations and concentration in the approximating
sequence. Notice that the concept of measure-valued solutions to
hyperbolic balance laws is fairly weak. On the other hand, in view of
the non-unique\-ness results by De Lellis and Sz{\'e}kelyhidi one may
wonder whether a distinguished weak solution of \eqref{E:FULL} can be
identified at all and what sets it apart from the other solutions. It
has therefore been suggested by some researchers that the solution
concept for \eqref{E:FULL} must be reconsidered, for example in favor
of measure-valued or statistical solutions; see
\cites{LimYuGlimmLiSharp2008, LimIwerksGlimmSharp2010,
FjordholmKaeppeliMishraTadmor2014}. It would be interesting to
investigate whether the non-linear iteration techniques introduces by
De Lellis, Sz{\'e}kelyhidi, and others can be used to promote
measure-valued solutions to weak ones, at least in regions where
the flow is expected to be laminar instead of turbulent/non-unique.
 
Our variational time discretization decreases the total energy, while
preserving the entropy. This may seem backwards from the physical
point of view. We would like to point out, however, that in turbulence
it is standard to assume that solutions of the incompressible
Navier-Stokes equations converge (in the high Reynolds number limit)
to velocity fields that dissipate kinetic energy, even though they
formally solve the incompressible Euler equations. Therefore the
incompressible Euler equations seem to only give an incomplete
description of the actual physical phenomena. It is natural to expect
that similar effects occur in the compressible models. 


\section{Notation}

In the following, we will always assume that $\R^D$ is equipped with
the Euclidean inner product, for which we write $x\cdot y$ or $\langle
x,y\rangle$ with $x,y\in\R^D$.

Let $\MAT{d}$ be the space of real $(d\times d)$-matrices and
\[
	\MAT[\square]{d} := \Big\{ A \in \MAT{d} \colon 
		\text{$v\cdot (Av) \REL 0$ for all $v\in\R^d$} \Big\}
\]
where $\REL$ stands for either $\GS$ or $>$. We will refer to the
elements of $\MAT[\GS]{d}$ (resp. $\MAT[>]{d}$) as positive
semi-definite (resp. positive definite) matrices. Notice that these
matrices are not assumed to be symmetric. The analogous spaces of
symmetric matrices will be denoted by $\SYM{d}$ and $\SYM[\REL]{d}$.
We have $A \in \MAT[\REL]{d}$ if and only if $A^\S \in \SYM[\REL]{d}$
where $A^\S := (A+A^\T)/2$ is the symmetric part of $A$. The
antisymmetric part of $A$ is defined as $A^\A := (A-A^\T)/2$ and we
will denote by $\SKEW{d}$ the space of antisymmetric real $(d\times
d)$-matrices. Recall that the Frobenius inner product of matrices is
defined as
\[
	A : B := \TRACE(A^\T B)
	\quad\text{for all $A,B \in \MAT{d}$.}
\]

The norms on these spaces will be the ones induced by the inner
products.

We denote by $\CB(\R^D)$ the space of bounded continuous functions on
$\R^D$ and by $\SP(\R^D)$ the space of Borel probability measures.
Weak convergence of sequences of probability measures is defined by
testing against functions in $\CB(\R^D)$. For any $1\LS p< \infty$ we
denote by $\SP_p(\R^D)$ the space of Borel probability measures with
finite $p$th moment, so that $\int_{\R^D} |x|^p \,\RHO(dx) < \infty$
for every $\RHO \in \SP_p(\R^D)$.

\begin{definition}[$p$-Wasserstein Distance]\label{D:WAS}
For any $\RHO^1, \RHO^2 \in \SP(\R^D)$ let
\[ 
  \ADM(\RHO^1,\RHO^2) := \Big\{ \GAMMA\in\SP(\R^{2D}) \colon
    \text{$\PP^k\#\GAMMA = \RHO^k$ with $k=1..2$} \Big\}
\]
be the space of admissible transport plans connecting $\RHO^1$ and
$\RHO^2$, where
\[
  \PP^k(x^1, x^2) := x^k
  \quad\text{for all $(x^1,x^2) \in \R^{2D} = (\R^D)^2$}
\]
and $k=1..2$, and $\#$ denotes the push-forward of measures. For any
$1\LS p<\infty$ the $p$-Wasserstein distance $\WAS_p(\RHO^1,\RHO^2)$
between $\RHO^1$, $\RHO^2$ is defined by
\begin{equation}
  \WAS_p(\RHO^1,\RHO^2)^p := \inf_{\GAMMA \in \ADM(\RHO^1,\RHO^2)}
    \int_{\R^{2D}} |x^1-x^2|^p \,\GAMMA(dx^1,dx^2).
\label{E:WASDE}
\end{equation}
\end{definition}

\begin{remark}\label{R:OPTW}
The $\inf$ in \eqref{E:WASDE} is actually attained, so the set
$\OPT(\RHO^1, \RHO^2)$ of transport plans $\GAMMA$ that minimize
\eqref{E:WASDE} (called optimal transport plans) is non-empty. For
$p=2$ the support of each $\GAMMA \in \OPT(\RHO^1, \RHO^2)$ is
contained in the subdifferential of a lower semicontinuous, convex map
(therefore it is cyclically monotone). If $\RHO^1$ is absolutely
continuous with respect to the Lebesgue measure $\LEB^D$, then each
optimal transport plan is induced by a map (its support lies on the
graph of a function):
\[
  \GAMMA = (\ID, \BT)\#\RHO^1
  \quad\text{for suitable $\BT \in \L^2(\R^D, \RHO^1)$.}
\]
We refer the reader to \cite{AmbrosioGigliSavare2008} for further
details.
\end{remark}

For any $n\in\N$ and $k=1\ldots n$, we define projections
\[
  \PP^k(x^1\ldots x^n) := x^k
  \quad\text{for all $(x^1\ldots x^n) \in \R^{nd} = (\R^d)^n$.}
\]
We will also use projections $\XX$ and $\YY^k$ defined by
\[
  \XX(x,y^1\ldots y^n) := x,
  \quad
  \YY^k(x,y^1\ldots y^n) := y^k
\]
for all $(x,y^1\ldots y^n) \in \R^{(n+1)d} = (\R^d)^{n+1}$ and
$k=1\ldots n$, with $n\in\N$. Sometimes it will be convenient to write
$\ZZ^k$ or $\VV^k$ in place of $\YY^k$ (same definition), depending on
whether the symbols represent positions or velocities, which will be
clear from the context. For $n=1$ we will usually write $\YY:=\YY^1$
etc.

\begin{definition}[Distance]
Let $\RHO\in\SP_2(\R^d)$ be given and
\[
  \PR := \Big\{ \GAMMA\in\SP_2(\R^{2d}) \colon
    \XX\#\GAMMA = \RHO \Big\}.
\]
We introduce a distance as follows: for any $\GAMMA^1,\GAMMA^2 \in
\PR$ we define
\[
  \WR(\GAMMA^1,\GAMMA^2)^2
    := \int_{\R^d} \WAS(\gamma^1_x,\gamma^2_x)^2 \,\RHO(dx),
\]
where $\GAMMA^k(dx,dy) =: \gamma^k_x(dy) \,\RHO(dx)$ with $k=1..2$
denotes the disintegration of $\GAMMA^k$, and where $\WAS$ is the
Wasserstein distance on $\SP_2(\R^d)$; see
\cites{AmbrosioGigliSavare2008, Gigli2004}.
\end{definition}

\begin{definition}[Transport Plans]
Let $\RHO\in\SP_2(\R^d)$ be given.
\begin{enumerate}
\renewcommand{\labelenumi}{(\roman{enumi}.)}
\item \emph{Admissible Plans.} For any $\GAMMA^1,\GAMMA^2\in\PR$ we
  define
\[ 
  \AR(\GAMMA^1,\GAMMA^2) := \Big\{ \ALPHA\in\SP_2(\R^{3d}) \colon
    \text{$(\XX,\YY^k)\#\ALPHA = \GAMMA^k$ with $k=1..2$} \Big\}. 
\]
\item \emph{Optimal Plans.} For any $\GAMMA^1,\GAMMA^2\in\PR$ we
  define
\begin{align*}
  \OR(\GAMMA^1,\GAMMA^2) := \Big\{
    & \ALPHA\in\AR(\GAMMA^1,\GAMMA^2) \colon
\\
    & \qquad \WR(\GAMMA^1,\GAMMA^2)^2
      = \int_{\R^{3d}} |y^1-y^2|^2
        \,\ALPHA(dx,dy^1,dy^2) \Big\}.
\end{align*}
\end{enumerate}
\end{definition}

\begin{theorem}\label{T:OPTMIN}
Let $\RHO\in\SP_2(\R^d)$ be given.
\begin{enumerate}
\renewcommand{\labelenumi}{(\roman{enumi}.)}
\item The function $\WR$ is a distance on $\PR$ and lower
  semicontinuous with respect to weak convergence in $\SP_2(\R^{2d})$.
  We have
\[
  \WR(\GAMMA^1,\GAMMA^2)^2
    = \min_{\ALPHA\in\AR(\GAMMA^1,\GAMMA^2)}
      \int_{\R^{3d}} |y^1-y^2|^2 \,\ALPHA(dx,dy^1,dy^2)
\]
for all $\GAMMA^1, \GAMMA^2\in\PR$, and thus $\OR(\GAMMA^1,\GAMMA^2)$
is non-empty.
\item The set $(\PR, \WR)$ is a complete metric space.
\end{enumerate}
\end{theorem}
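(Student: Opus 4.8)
\textbf{Proof proposal for Theorem~\ref{T:OPTMIN}.}

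The plan is to build everything on the fiberwise structure: a plan $\GAMMA\in\PR$ disintegrates as $\gamma_x(dy)\,\RHO(dx)$ with $\gamma_x\in\SP_2(\R^d)$ for $\RHO$-a.e.\ $x$, so that $\WR$ is literally an $\L^2(\R^d,\RHO)$-type average of the classical $2$-Wasserstein distance on the fibers. Several of the desired properties are then inherited from known facts about $(\SP_2(\R^d),\WAS)$. First I would establish the variational identity. Given $\ALPHA\in\AR(\GAMMA^1,\GAMMA^2)$, disintegrate $\ALPHA(dx,dy^1,dy^2)=\alpha_x(dy^1,dy^2)\,\RHO(dx)$; the marginal conditions $(\XX,\YY^k)\#\ALPHA=\GAMMA^k$ force $\alpha_x\in\ADM(\gamma^1_x,\gamma^2_x)$ for $\RHO$-a.e.\ $x$, hence $\int|y^1-y^2|^2\,\alpha_x\GS\WAS(\gamma^1_x,\gamma^2_x)^2$ and integrating against $\RHO$ gives ``$\GS$'' in the claimed formula. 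For the reverse inequality and attainment, I would select, for each $x$, an optimal plan $\alpha_x\in\OPT(\gamma^1_x,\gamma^2_x)$; the point requiring care is measurability of $x\mapsto\alpha_x$, which follows from a measurable selection theorem (the set-valued map $x\mapsto\OPT(\gamma^1_x,\gamma^2_x)$ has closed nonempty values and measurable graph, using that $x\mapsto\gamma^k_x$ is measurable and $\OPT$ depends upper-semicontinuously on the marginals). Gluing the $\alpha_x$ back with $\RHO$ produces $\ALPHA\in\AR(\GAMMA^1,\GAMMA^2)$ achieving the integral $\int_{\R^d}\WAS(\gamma^1_x,\gamma^2_x)^2\,\RHO(dx)$, so the infimum is attained and equals $\WR(\GAMMA^1,\GAMMA^2)^2$; in particular $\OR(\GAMMA^1,\GAMMA^2)\neq\emptyset$.

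Next, the metric axioms for $\WR$ on $\PR$. Symmetry and nonnegativity are immediate from the fiberwise definition. For $\WR(\GAMMA^1,\GAMMA^2)=0$: it forces $\WAS(\gamma^1_x,\gamma^2_x)=0$ for $\RHO$-a.e.\ $x$, hence $\gamma^1_x=\gamma^2_x$ a.e., hence $\GAMMA^1=\GAMMA^2$ by uniqueness of disintegration. The triangle inequality I would obtain by a gluing argument: pick optimal $\ALPHA^{12}$ between $\GAMMA^1,\GAMMA^2$ and $\ALPHA^{23}$ between $\GAMMA^2,\GAMMA^3$, both disintegrated over $\RHO$; for each $x$ glue $\alpha^{12}_x$ and $\alpha^{23}_x$ along their common $\gamma^2_x$-marginal into a measure on $\R^{3d}$ whose $(1,3)$-marginal lies in $\ADM(\gamma^1_x,\gamma^3_x)$, then apply Minkowski's inequality in $\L^2$ fiberwise and integrate — exactly the standard proof that $\WAS$ is a distance, performed under the integral sign. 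Lower semicontinuity of $\WR$ with respect to weak convergence in $\SP_2(\R^{2d})$: here I would use the dual/variational characterization. If $\GAMMA^k_n\to\GAMMA^k$ weakly in $\SP_2$ (all with first marginal $\RHO$), pick optimal $\ALPHA_n$; these have uniformly bounded second moments (controlled by those of $\GAMMA^1_n,\GAMMA^2_n$), so up to a subsequence $\ALPHA_n\WEAK\ALPHA$ with $(\XX,\YY^k)\#\ALPHA=\GAMMA^k$, and by weak lower semicontinuity of $\ALPHA\mapsto\int|y^1-y^2|^2\,d\ALPHA$ on $\SP_2(\R^{3d})$ we get $\liminf_n\WR(\GAMMA^1_n,\GAMMA^2_n)^2\GS\int|y^1-y^2|^2\,d\ALPHA\GS\WR(\GAMMA^1,\GAMMA^2)^2$. (One subtlety: weak convergence in $\SP_2$ — i.e.\ narrow convergence plus convergence of second moments — is what makes the moment bounds and the lower semicontinuity cooperate; I would state this precisely at the outset.)

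Finally, completeness of $(\PR,\WR)$. Let $(\GAMMA^n)$ be $\WR$-Cauchy. The key observation is $\WAS(\XX\#\GAMMA^n,\cdots)$ is irrelevant since all first marginals equal $\RHO$; what I control is the fiberwise behavior. From $\WR(\GAMMA^m,\GAMMA^n)^2=\int\WAS(\gamma^m_x,\gamma^n_x)^2\,\RHO(dx)\to 0$ one extracts a subsequence along which $\WAS(\gamma^{n_{k+1}}_x,\gamma^{n_k}_x)$ is summable for $\RHO$-a.e.\ $x$ (Borel–Cantelli / fast Cauchy subsequence in $\L^2(\R^d,\RHO)$), so $(\gamma^{n_k}_x)$ is Cauchy in the complete space $(\SP_2(\R^d),\WAS)$ and converges to some $\gamma_x$; measurability of $x\mapsto\gamma_x$ follows as an a.e.\ limit of measurable maps. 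Reassembling $\GAMMA(dx,dy):=\gamma_x(dy)\,\RHO(dx)$ gives a candidate in $\PR$ provided $\int\WAS(\gamma_x,\delta_0)^2\,\RHO(dx)<\infty$, i.e.\ $\GAMMA\in\SP_2(\R^{2d})$; this finiteness comes from Fatou applied to the convergent subsequence together with the uniform bound on $\int\WAS(\gamma^{n_k}_x,\delta_0)^2\,\RHO(dx)$ that Cauchyness provides. Then $\WR(\GAMMA^{n_k},\GAMMA)\to 0$ by dominated/Fatou convergence of the fiberwise distances, and a Cauchy sequence with a convergent subsequence converges. I expect the main obstacle to be the \emph{measurable selection / measurability of disintegration} issue that recurs in the identity, the triangle inequality, and completeness: one must be careful that the fiberwise optimal plans and fiberwise limits can be chosen measurably in $x$. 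This is standard (Kuratowski–Ryll-Nardzewski, or the measurable parametrization of optimal transport) but deserves an explicit lemma; everything else is the classical Wasserstein theory carried out under an integral.
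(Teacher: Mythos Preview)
Your proposal is correct and follows the natural fiberwise strategy. The paper itself does not give a proof at all: its entire argument for Theorem~\ref{T:OPTMIN} is the single sentence ``We refer the reader to Section~4.1 in \cite{Gigli2004}.'' What you have sketched is essentially the content of that reference --- disintegration over $\RHO$, measurable selection of fiberwise optimal plans, the gluing argument for the triangle inequality, and completeness via a fast Cauchy subsequence in $\L^2(\R^d,\RHO;\SP_2(\R^d))$ --- so your approach is not different from the paper's, just more explicit than the bare citation the paper provides.
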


\begin{proof}
We refer the reader to Section~4.1 in \cite{Gigli2004}.
\end{proof}

\begin{definition}[Barycentric Projection]\label{D:BAR}
For any $\RHO\in\SP_2(\R^d)$ and $\GAMMA \in \PR$ the barycentric
projection $\BAR(\GAMMA) \in \L^2(\R^d,\RHO)$ is defined as
\[
  \BAR(\GAMMA)(x) := \int_{\R^d} y \,\gamma_x(dy)
  \quad\text{for $\RHO$-a.e.\ $x\in\R^d$,}
\]
where $\GAMMA(dx,dy) =: \gamma_x(dy) \,\RHO(dx)$ is the disintegration
of $\GAMMA$.
\end{definition}

An important subset of $\PR$ consists of those measures $\GAMMA$ that
are induced by maps: there exists a $\BT\in\L^2(\R^d, \RHO)$ taking
values in $\R^d$ such that
\[
  \GAMMA(dx,dy) = \delta_{\BT(x)}(dy) \,\RHO(dx).
\]
In this case, the distance $\WR$ reduces to the $\L^2(\R^d,
\RHO)$-distance of the corresponding maps. If $\GAMMA^1,\GAMMA^2 \in
\PR$ and $\GAMMA^1 = (\ID,\BT)\#\RHO$ with $\BT \in \L^2(\R^d,\RHO)$,
then
\begin{equation}
  \WR(\GAMMA^1,\GAMMA^2)^2 = \int_{\R^{2d}} |\BT(x)-y^2|^2 
    \,\GAMMA^2(dx,dy^2);
\label{E:HYBRID}
\end{equation}
see Lemma~5.3.2 in \cite{AmbrosioGigliSavare2008}. If
$\WR(\GAMMA^n,\GAMMA) \longrightarrow 0$ as $n\rightarrow\infty$, with
$\GAMMA^n,\GAMMA^\infty \in \PR$ and $\GAMMA^n = (\ID,\BT^n)\#\RHO$
for some $\BT^n \in \L^2(\R^d,\RHO)$, then $\BT^n \longrightarrow \BT$
strongly in $\L^2(\R^d,\RHO)$ and $\GAMMA = (\ID, \BT)\#\RHO$. Indeed,
our assumption implies that the sequence $\{\BT^n\}_n$ is Cauchy in
$\L^2(\R^d,\RHO)$ and hence converges to a limit $\BT^\infty$, by
completeness. On the other hand, since $(\ID,\BT^n,\BT^\infty) \#\RHO
\in \AR(\GAMMA^n,\GAMMA^\infty)$ with $\GAMMA^\infty :=
(\ID,\BT^\infty)\#\RHO$, we have
\[
  \WR(\GAMMA^n,\GAMMA^\infty) 
    \LS \|\BT^n-\BT^\infty\|_{\L^2(\R^d,\RHO)}
    \longrightarrow 0
  \quad\text{as $n\rightarrow\infty$.}
\]
Then we use that $\WR(\GAMMA,\GAMMA^\infty) \LS \WR(\GAMMA^n,\GAMMA) +
\WR(\GAMMA^n,\GAMMA^\infty)$. We have the estimate
\[
  \|\BAR(\GAMMA^1)-\BAR(\GAMMA^2)\|_{\L^2(\R^d,\RHO)}
    \LS \WR(\GAMMA^1,\GAMMA^2),
\]
as follows easily from Theorem~\ref{T:OPTMIN} (i.) and Jensen
inequality.


\subsection*{Minimal Work}

As outlined in the Introduction, our approach relies on a functional
measuring the work done to the fluid, called minimal work functional.

\begin{definition}[Minimal Work]\label{D:MAC}
For any pair of measures $\MU^1,\MU^2\in\SP_2(\R^{2d})$ we denote by
$\ADM(\MU^1, \MU^2)$ the set of transport plans $\OMEGA \in
\SP(\R^{4d})$ with
\[
  (\PP^1,\PP^2)\#\OMEGA = \MU^1
  \quad\text{and}\quad
  (\PP^3,\PP^4)\#\OMEGA = \MU^2.
\]
The minimal work is the functional $\ACC_\tau$ defined by
\begin{equation}
  \ACC_\tau(\MU^1,\MU^2)^2 := \inf\Bigg\{ \int_{\R^{4d}}
    a_\tau(\BX^1,\BX^2)^2 \,\OMEGA(d\BX^1,d\BX^2) \colon
      \OMEGA\in\ADM(\MU^1,\MU^2) \Bigg\}.
\label{E:ACC}
\end{equation}
\end{definition}

Note that $\ACC_\tau$ is not a distance: It is not symmetric in its
arguments $\MU^1$ and $\MU^2$, which follows from the asymmetry of the
cost function \eqref{E:COSTF}. Moreover, it does not vanish if $\MU^1
= \MU^2$. Instead, we have the following relation:
\[
  \ACC_\tau(\MU^1,\MU^2) = 0
  \quad\Longleftrightarrow\quad
  \MU^2 = F_\tau\#\MU^1,
\]
where $F_\tau\colon\R^{2d} \longrightarrow \R^{2d}$ is the \emph{free
transport map} defined by
\[
  F_\tau(\BX) := (x+\tau\xi, \xi)
  \quad\text{for all $\BX\in\R^{2d}$.}
\]
The minimal work functional measures how much each fluid element
deviates from the straight path determined by its initial velocity;
see \cite{GangboWestdickenberg2009} for more details.

When minimizing the integral in \eqref{E:ACC} over all plans $\OMEGA
\in \SP_2(\R^{4d})$ with
\[
  (\PP^1,\PP^2,\PP^3)\#\OMEGA = : \BETA
  \quad\text{for given $\BETA \in \SP_2(\R^{3d})$,}
\]
then there exists a unique such minimizer, which takes the form
$\OMEGA = H_\tau\#\BETA$, with the map $H_\tau \colon \R^{3d}
\longrightarrow \R^{4d}$ defined for all $\BX\in\R^{2d}$ and
$z\in\R^d$ as
\[
  H_\tau(\BX,z) := \Big( \BX, z, V_\tau( \BX, z) \Big).
\]
This determines the final velocity in terms of the data $\BX$ and the
new position $z$.


\section{Energy Minimization: First Properties}

In preparation of our variational time discretization for
\eqref{E:FULL}, we first consider the metric projection onto the cone
of monotone transport plans.


\subsection{Monotone Transport Plans}

To every subset $\Gamma \subset \R^d\times\R^d$ we can associate a
set-valued map $u_\Gamma \colon \R^d \longrightarrow P(\R^d)$ (where
$P(\R^d)$ is the power set of $\R^d$) by
\[
  u_\Gamma(x) := \Big\{ y\in\R^d \colon (x,y) \in \Gamma \Big\}
  \quad\text{for all $x\in\R^d$.}
\]
For any set-valued map $u \colon \R^d \longrightarrow P(\R^d)$, we
denote by
\begin{align*}
  \DOM(u) &:= \Big\{ x\in\R^d \colon u(x) \neq \varnothing \Big\},
\\
  \GRAPH(u) &:= \Big\{ (x,y)\in\R^d\times\R^d \colon y\in u(x) \Big\}
\end{align*}
its domain and graph. A subset $\Gamma \subset \R^d\times\R^d$ is
called monotone if
\[
  \langle x_1-x_2, y_1-y_2 \rangle \GS 0
  \quad\text{for any pair of $(x_i,y_i) \in \Gamma$.}
\]
Such a set is called maximal monotone if for any monotone set $\Gamma'
\subset \R^d\times\R^d$ with $\Gamma \subset \Gamma'$ we have that
$\Gamma = \Gamma'$. Equivalently, if it is not possible to enlarge
$\Gamma$ without destroying the monotonicity. We will call any
set-valued map $u$ as above (maximal) monotone if the set $\GRAPH(u)$
is (maximal) monotone.

By Zorn's lemma, any monotone set (equivalently, any monotone
set-valued map) can be extended to a maximal monotone set (map).
Typically, this extension is not unique. A maximal monotone extension
can be obtained constructively as follows: Let $\Gamma \in
\R^d\times\R^d$ be monotone. Then (for all $(x,y), (x^*,y^*) \in
\R^d\times\R^d$)
\begin{enumerate}
\item define the Fitzpatrick function
\[
  F_\Gamma(x,y) := \sup \Big\{ \langle y',x \rangle 
    + \langle y,x' \rangle - \langle y',x' \rangle \colon 
      (x',y') \in \Gamma \Big\};
\]
\item compute its Fenchel conjugate
\[
  F_\Gamma^*(y^*,x^*) := \sup \Big\{ \langle y^*,x \rangle 
    + \langle y,x^* \rangle - F_\Gamma(x,y) \colon 
      (x,y) \in \R^d\times\R^d \Big\};
\]
\item compute the proximal average
\begin{multline*}
  N_\Gamma(x,y) := \inf \bigg\{ \HA F_\Gamma(x_1,y_1) 
    + \HA F_\Gamma^*(y_2,x_2) + \HE \|x_1-x_2\|^2 
    + \HE \|y_1-y_2\|^2 \colon
\\
   (x,y) = \HA (x_1,y_1) + \HA (x_2,y_2) \bigg\}.
\end{multline*}
\end{enumerate}
The function $N_\Gamma$ is lower semicontinuous, convex, and proper,
and the set
\begin{equation}
  \bar{\Gamma} := \Big\{ (x,y) \colon 
    N_\Gamma(x,y) = \langle y,x \rangle \Big\}
\label{E:MME}
\end{equation}
is a maximal monotone extension of $\Gamma$. We refer the reader to
\cites{BauschkeWang2009, Ghoussoub2008} for details.

\begin{remark}\label{R:FUNC}
For any maximal monotone set-valued function $u \colon \R^d
\longrightarrow P(\R^d)$ the image $u(x)$ of any $x\in\R^d$ is closed
and convex (possibly empty); see Proposition~1.2 of
\cite{AlbertiAmbrosio1999}. Therefore the dimension $\DIM u(x)$ is
well-defined. The singular sets
\[
  \Sigma^k(u) := \Big\{ x\in\R^d \colon \DIM u(x) \GS k \Big\},
  \quad\text{with $k=1\ldots d$,}
\]
are countably $\HAUS^{d-k}$-rectifiable; see Theorem~2.2 of
\cite{AlbertiAmbrosio1999} for details. Here $\HAUS^n$ denotes the
$n$-dimensional Hausdorff measure. In particular, the set of points
$x\in\DOM(u)$ for which $u(x)$ contains more than one point (that is,
the set $\Sigma^1(u)$) is negligible with respect to the Lebesgue
measure $\LEB^d$. Outside $\Sigma^1(u)$ the function $u$ is
continuous. This observation will allow us to think of a maximal
monotone map $u$ as a Lebesgue measurable, \emph{single-valued}
function (just redefine $u$ on the null set $\Sigma^1(u)$).
\end{remark}

\begin{definition}[Monotone Transport Plans]
For any $\RHO \in \SP_2(\R^d)$, we define
\begin{equation}
  \CR := \Big\{ \GAMMA\in\PR \colon 
    \text{$\SPT\GAMMA$ is a monotone subset of $\R^d\times\R^d$} \Big\}.
\label{E:CONE}
\end{equation}
\end{definition}

Our definition of monotonicity for measures in $\PR$ is motivated by
the optimal transport plans of Definition~\ref{D:WAS}: an
\emph{optimal} transport plan $\GAMMA$ is characterized by the
property that $\SPT\GAMMA$ must be a \emph{cyclically monotone} set;
see Section~6.2.3 in \cite{AmbrosioGigliSavare2008}. Then there exists
a lower semicontinuous, convex, proper function $\varphi$ with
\[
  \varphi(x) + \varphi^*(y) = \langle y,x \rangle
  \quad\text{for $\GAMMA$-a.e.\ $(x,y) \in \R^d\times\R^d$.}
\]
Here $\varphi^*$ denotes the Fenchel conjugate to $\varphi$. In our
setting, the cyclical monotonicity is replaced by monotonicity, and
$N_\Gamma(x,y)$ of \eqref{E:MME} plays the role of $\varphi(x) +
\varphi^*(y)$. In the terminology of \cite{Ghoussoub2008}, the
function $N_\Gamma$ is called a self-dual Lagrangian. The cone $\CR$
contains the set of optimal transport plans defined above.

Since we do not make any assumptions on $\RHO$, its support may be a
proper subset of $\R^d$ and have ``holes.'' Fortunately, the
monotonicity constraint enables us to work with objects that are
defined on a fixed convex open subset of $\R^d$:

\begin{definition}[Associated Maps]\label{D:ASSO}
Let $\RHO \in \SP_2(\R^d)$ be given. For every $\GAMMA \in \CR$ we
call $u$ a maximal monotone map associated to $\GAMMA$ if $u$ is the
maximal monotone set-valued map induced by a maximal monotone
extension of $\Gamma := \SPT\GAMMA$.
\end{definition}

\begin{lemma}\label{L:ASSO}
For any $\RHO \in \SP_2(\R^d)$ and $\GAMMA \in \CR$, the domain of a
maximal monotone map $u$ associated to $\GAMMA$ contains the convex
open set $\Omega := \INTR\CCONV\SPT\RHO$, where $\INTR$ denotes the
interior of a set, $\CONV$ the convex hull, and $\CCONV$ its closure.
\end{lemma}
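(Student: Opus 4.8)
The plan is to show that every point $\bar{x}\in\Omega$ lies in the domain of any maximal monotone map $u$ associated to $\GAMMA$. Recall that $u$ is induced by a maximal monotone extension $\bar{\Gamma}$ of $\Gamma := \SPT\GAMMA$, so it suffices to prove that for each $\bar{x}\in\Omega$ there is at least one $\bar{y}\in\R^d$ with $(\bar{x},\bar{y})$ lying in some monotone extension of $\Gamma$; maximality of $\bar{\Gamma}$ then forces $(\bar{x},\bar{y})\in\bar{\Gamma}$, hence $\bar{x}\in\DOM(u)$. Indeed, if no such $\bar{y}$ existed, then $\Gamma\cup\{(\bar{x},\bar{y})\}$ would fail to be monotone for \emph{every} $\bar{y}$, which we will contradict by an explicit construction.

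The key step is a coercivity/boundedness estimate coming from the fact that $\bar{x}$ is an \emph{interior} point of $\CONV\SPT\RHO$. Fix $\bar{x}\in\Omega$. Because $\SPT\GAMMA$ projects onto $\SPT\RHO$ under $\XX$, the set $\Gamma$ is nonempty and its $x$-projection has convex hull with interior containing $\bar{x}$; in particular one can choose finitely many points $(x_i,y_i)\in\Gamma$, $i=1\ldots d+1$, such that $\bar{x}$ lies in the interior of $\HULL\{x_1,\ldots,x_{d+1}\}$, i.e.\ $\bar{x} = \sum_i \lambda_i x_i$ with $\lambda_i>0$, $\sum_i\lambda_i=1$, and the $x_i-\bar x$ positively span $\R^d$. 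I then claim the candidate value $\bar{y}$ can be taken in the bounded set
\[
  K := \Big\{ w\in\R^d \colon \langle x_i-\bar{x}, w-y_i\rangle \GS 0 \ \text{for } i=1\ldots d+1 \Big\}.
\]
One checks $K$ is nonempty: the system of linear inequalities $\langle x_i-\bar x, w\rangle \GS \langle x_i-\bar x, y_i\rangle$ has a solution because monotonicity of $\Gamma$ gives, for any $w$ of the form $w = \sum_j \mu_j y_j$ with the $\mu_j$ suitably chosen (a Helly-type / LP-duality argument), the required sign; more directly, nonemptiness of $K$ is exactly the statement that the point $\bar x$ can be ``served'' by the monotone relation, and it follows from the Debrunner–Flor extension theorem (a compact monotone set can always be enlarged by adding any prescribed point $\bar x$ to its domain, with the new value lying in the convex hull of the existing range restricted to a neighborhood). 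Boundedness of $K$ follows because $\{x_i-\bar x\}$ positively spans $\R^d$: the constraints then bound $w$ from every direction.

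Granting $K\neq\varnothing$ and bounded, pick $\bar{y}\in K$. The remaining point is that $\Gamma\cup\{(\bar x,\bar y)\}$ is monotone: we need $\langle x-\bar x, y-\bar y\rangle\GS 0$ for \emph{all} $(x,y)\in\Gamma$, not just the chosen $d+1$ of them. This is where I would invoke the Debrunner–Flor lemma properly rather than the finite reduction: for a \emph{compact} monotone set $\Gamma_0$ and any $\bar x$, there exists $\bar y\in\CONV(\text{range}\,\Gamma_0)$ with $\Gamma_0\cup\{(\bar x,\bar y)\}$ monotone. Since $\SPT\GAMMA$ need not be compact, I would exhaust $\Gamma$ by compact monotone pieces $\Gamma_R := \Gamma\cap \bar B_R$, obtain $\bar y_R\in\CONV(\text{range}\,\Gamma_R)$ with $\Gamma_R\cup\{(\bar x,\bar y_R)\}$ monotone, note $\{\bar y_R\}$ stays in the bounded set $K$ constructed above (which depends only on the fixed $d+1$ points, all in $\Gamma_R$ for $R$ large), extract a convergent subsequence $\bar y_R\to\bar y$, and pass to the limit to get $\Gamma\cup\{(\bar x,\bar y)\}$ monotone. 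Then maximality of $\bar\Gamma\supseteq\Gamma$ gives $(\bar x,\bar y)\in\bar\Gamma$, so $\bar x\in\DOM(u)$. As $\Omega$ is open, it is automatic that $\Omega\subseteq\DOM(u)$, completing the proof.

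The main obstacle is the passage from the finite, combinatorial picture (the $d+1$ simplex vertices) to the full constraint set $\Gamma$: one must be careful that the bound on the candidate values $\bar y_R$ is uniform in $R$, and this relies precisely on having the $x_i-\bar x$ positively spanning $\R^d$, which in turn is exactly what $\bar x\in\INTR\CONV\SPT\RHO$ buys us. I expect the cleanest writeup to cite the Debrunner–Flor extension theorem (or Minty's surjectivity argument) directly for compact pieces and then do the exhaustion; the rest is routine.
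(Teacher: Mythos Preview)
Your argument has a genuine logical gap in its first paragraph: from ``$\Gamma \cup \{(\bar x,\bar y)\}$ is monotone'' you cannot conclude ``$(\bar x,\bar y) \in \bar\Gamma$'' by maximality of $\bar\Gamma$. Maximality of $\bar\Gamma$ only says that any monotone set \emph{containing $\bar\Gamma$} equals $\bar\Gamma$; it says nothing about monotone sets containing merely $\Gamma \subsetneq \bar\Gamma$. Concretely, take $d=1$, $\Gamma = \{(0,0)\}$, and let $\bar\Gamma$ be the maximal monotone extension with $\DOM(\bar\Gamma) = [0,\infty)$, namely $\bar\Gamma = \{(x,0):x>0\} \cup \big(\{0\}\times(-\infty,0]\big)$. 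Then $\Gamma \cup \{(-1,-1)\}$ is monotone, yet $-1 \notin \DOM(\bar\Gamma)$. The fix is straightforward: run your exhaustion on $\bar\Gamma$ rather than on $\Gamma$, i.e.\ set $\bar\Gamma_R := \bar\Gamma \cap \bar B_R$. Your $d{+}1$ anchor points $(x_i,y_i)$ lie in $\Gamma \subset \bar\Gamma$, hence in $\bar\Gamma_R$ for large $R$, so the bounded set $K$ is unchanged; passing to the limit now yields $\bar\Gamma \cup \{(\bar x,\bar y)\}$ monotone, and \emph{this} gives $(\bar x,\bar y) \in \bar\Gamma$ by maximality.

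A second, smaller issue: the claim ``$\SPT\GAMMA$ projects onto $\SPT\RHO$ under $\XX$'' is false in general; one only has that $X := \PP^1(\SPT\GAMMA)$ is a \emph{dense} subset of $\SPT\RHO$. This still yields $\INTR\CONV(X) = \Omega$ (closed convex hulls of a set and its closure coincide), so your $d{+}1$ points can indeed be found in $\Gamma$ --- but density needs a line of justification. It follows from inner regularity: if $B_\EPS(z) \cap X = \varnothing$ for some $z \in \SPT\RHO$, then $\RHO(B_\EPS(z)) = \GAMMA(B_\EPS(z)\times\R^d) \LS \GAMMA(\R^{2d}\setminus\SPT\GAMMA) = 0$, a contradiction.

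Once both gaps are closed your route is correct, but it is heavier than the paper's. The paper simply observes that $X \subset \DOM(u)$ and invokes the standard structural fact $\INTR\CONV\DOM(u) \subset \DOM(u)$ valid for \emph{every} maximal monotone map (Corollary~1.3 of Alberti--Ambrosio), reducing the whole lemma to the equality $\INTR\CONV(X) = \Omega$ just discussed. Your Debrunner--Flor plus exhaustion argument amounts to a hands-on reproof of that structural fact in this particular setting.
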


\begin{proof}
Let $\GAMMA \in \CR$ be given and consider any maximal monotone map
$u$ associated to $\GAMMA$. Then $\GRAPH(u)$ is a maximal monotone
extension of $\Gamma := \SPT\GAMMA$, which implies that the projection
$X := \PP^1(\Gamma)$ of $\Gamma$ onto $\R^d$ is contained in
$\DOM(u)$. Since
\[
  \INTR\CCONV\DOM(u) \subset \DOM(u) \subset \CCONV\DOM(u)
\]
(this is true for every maximal monotone set-valued function; see
Corollary~1.3 of \cite{AlbertiAmbrosio1999}) we conclude that the
convex open set $\INTR\CCONV(X) \subset \DOM(u)$. It therefore
suffices to show that $\INTR\CCONV(X) = \Omega$. Note that $\Omega$ is
independent of $\GAMMA$ and $u$.

To prove the claim, choose any $x \in X$ and $r > 0$. Then we can
estimate
\[
  \RHO\big( B_r(x) \big) 
    = \GAMMA\big( B_r(x) \times \R^d \big)
    \GS \GAMMA\big( B_r(x) \times B_r(y) \big) 
    > 0,
\]
for suitable $y\in\R^d$ with $(x,y) \in \Gamma = \SPT\GAMMA$. Since
$x\in X$ and $r>0$ were arbitrary, we get that $X \subset \SPT\RHO$,
which implies that $\INTR\CCONV(X) \subset \Omega$. 
\DETAIL{ 
The inclusion $\SPT\RHO \subset X$ may be false. Just consider the
case $\RHO := \LEB^1 |_{[0,1)}$ and
\[
  \GAMMA := (\ID,\BT)\#\RHO
  \quad\text{with $\DST\BT(x) := (1-x^2)^{-1/4}$ for $\RHO$-a.e.\ 
    $x\in\R$,}
\]
which is in $\L^2(\R,\RHO)$. Then $\SPT\RHO = [0,1] \neq [0,1) =
\PP^1(\SPT\GAMMA)$.
} 

Conversely, for every $x \in \Omega$ there exists a ball $B_r(x)
\subset \CCONV\SPT\RHO$ for some $r>0$. Pick an open $d$-cube $Q$
centered at $x$ as large as possible with $Q \subset B_{r/2}(x)$. Then
the closure $\overline{Q}$ is the convex hull of its corners $x_i \in
\partial B_{r/2}(x)$, which satisfy
\[
  x_i \in \CCONV\SPT\RHO
  \quad\text{for $i=1\ldots 2^d$.}
\]
Let $\ell>0$ denote the side length of $Q$ and $0 < \EPS < \ell/8$.
Then there exist
\[
  y_i \in B_\EPS(x_i) \cap \CONV\SPT\RHO
  \quad\text{for $i=1\ldots 2^d$.}
\]
Each $y_i$ can be written as a convex combination 
\[
  y_i = \sum_{k=1}^{N_i} \lambda_{i,k} z_{i,k}
  \quad
  \text{with $\lambda_{i,k} \in [0,1]$ and $\DST\sum_{k=1}^{N_i}
    \lambda_{i,k} = 1$,}
\]
for suitable $z_{i,k} \in \SPT\RHO$ and $N_i \in \N$. We now claim
that for any $z \in \SPT\RHO$ and $\EPS>0$ there exists $\bar{z} \in
B_\EPS(z) \cap X$. Assume for the moment that the claim is true. Then
for each $z_{i,k}$ we can find $\bar{z}_{i,k} \in B_\EPS(z_{i,k}) \cap
X$. We define convex combinations
\[
  \bar{y}_i := \sum_{k=1}^{N_i} \lambda_{i,k} \bar{z}_{i,k}
  \quad\text{for all $i=1\ldots 2^d$,}
\]
which satisfy $\|y_i-\bar{y}_i\| \LS \EPS$ and thus $\bar{y}_i \in
B_{2\EPS}(x_i)$ for all $i$. Consequently, the convex hull of these
$\bar{y}_i$ contains a $d$-cube centered at $x$ with side length
$\ell/2$, which in turn contains a ball $B_\delta(x)$ for $\delta>0$
small enough. By construction, this ball is a subset of the convex
hull of the $\bar{z}_{i,k} \in X$ from above, so that $x \in
\INTR\CCONV(X)$. This proves the lemma. To establish the claim, assume
that on the contrary, there exists $\EPS > 0$ with the property that
for all $\bar{z} \in B_\EPS(z)$ we have $\bar{z} \not\in X$. Then
\begin{align*}
  \RHO\big( B_\EPS(z) \big) 
    &= \GAMMA\big( B_\EPS(z)\times\R^d \big)
\\
    &\LS \GAMMA\Big( (\R^d\setminus X)\times\R^d \Big) 
      \LS \GAMMA\Big( (\R^d\times\R^d)\setminus\Gamma \Big) 
      = 0.
\end{align*}
The second equality follows from the fact that $\GAMMA$ (being a
finite Borel measure on a locally compact Hausdorff space with
countable basis) is inner regular; see \cite{Folland1999}. We conclude
that $z \not\in \SPT\RHO$, which is a contradiction.
\end{proof}


\subsection{Minimal Acceleration Cost}\label{SS:MAC}

Suppose that $\RHO \in \SP_2(\R^d)$ and $\MU \in \PR$ are given. For
any timestep $\tau>0$ we would like to minimize the acceleration
$\ACC_\tau(\MU,\GAMMA)$ over all $\GAMMA \in \PR$ with
\begin{enumerate}
\item the transport plan taking $\PP^1\#\MU$ to $\PP^1\# \GAMMA$ is
  monotone,
\item the velocity distribution of $\GAMMA$ is tangent to $\CR$
  at the new configuration
\end{enumerate} 
(where the tangency to $\CR$ is yet to be specified). As mentioned
above, this would be consistent with the usual setting in continuum
mechanics. Unfortunately, tangent cones often do not possess good
continuity properties, as can already be observed in convex polygons
in $\R^2$: the tangent cone at any point on an edge of the polygon is
a half-space. But the tangent cone collapses to a smaller set at a
corner. Consequently, the distance of a fixed point in $\R^2$ to the
tangent cone may jump upwards as the base point of the tangent cone
approaches a corner of the polygon.

We will therefore use an operator splitting: We first search for the
transport that minimizes the acceleration cost, not imposing any
restrictions on the final velocity, which will be determined a
posteriori by formula \eqref{E:ZETOP}. Then we project this new
velocity onto the tangent cone (to be defined) at the new
configuration. The second term in \eqref{E:COSTF2} now measures the
cost of realizing a feasible velocity.

As explained above, if the velocity distribution of the second measure
in \eqref{E:ACC} is not fixed, then the minimal acceleration cost
simplifies. We therefore consider the following minimization problem:
find the minimizer $\BETA_\tau \in \SP_2(\R^{3d})$ of
\begin{equation}
  \BETA \mapsto
    \frac{3}{4\tau^2} \int_{\R^{3d}} |(x+\tau\xi)-z|^2 \,\BETA(d\BX,dz)
\label{E:WOKF}
\end{equation}
among all $\BETA \in \SP_2(\R^{3d})$ with the following two
properties:
\begin{equation}\label{E:LOP1}
  \text{(1.)} \quad (\PP^1,\PP^2)\#\BETA = \MU,
  \qquad
  \text{(2.)} \quad (\PP^1,\PP^3)\#\BETA \in \CR.
\end{equation}
It will be convenient to define $\UPS_\tau := (\XX,\XX+\tau\VV)\#\MU$
and instead to minimize
\[
  \ALPHA \mapsto
   \frac{3}{4\tau^2} \int_{\R^{3d}} |y-z|^2 \,\ALPHA(dx,dy,dz)
\]
over all $\ALPHA \in \SP_2(\R^{3d})$ with the following two
properties:
\begin{equation}\label{E:LOP2}
  \text{(1.)} \quad (\PP^1,\PP^2)\#\ALPHA = \UPS_\tau,
  \qquad
  \text{(2.)} \quad (\PP^1,\PP^3)\#\ALPHA \in \CR.
\end{equation}
Notice that for every $\tau>0$ the push-forward under the map $(x,\xi)
\mapsto (x+\tau\xi,\xi)$ with $(x,\xi) \in \R^{2d}$ is an automorphism
between the spaces of measures $\ALPHA, \BETA \in \SP_2(\R^{3d})$
satisfying \eqref{E:LOP2} and \eqref{E:LOP1}, respectively. We observe
that (modulo the factor $3/4\tau^2$) we obtain exactly the
minimization that defines the distance $\WR$ (see
Theorem~\ref{T:OPTMIN}), where the second measure is allowed to range
freely over the set $\CR$. Therefore the minimization amounts to
finding the element in $\CR$ closest to $\UPS_\tau$ with respect to
the distance $\WR$, i.e., to computing the metric projection onto
$\CR$.


\subsection{Metric Projection}

In order to study the minimization problem introduced in the previous
section, we introduce on $\PR$ the analogues of scalar multiplication
and vector addition in Hilbert spaces. This will allow us to define
convexity of subsets of $\PR$ and metric projections onto such sets.

\begin{definition}[Addition/Multiplication]\label{D:AM}
Let $\RHO\in\SP_2(\R^d)$ be given.
\begin{enumerate}
\renewcommand{\labelenumi}{(\roman{enumi}.)}
\item \emph{Scaling.} For any $\GAMMA\in\PR$ and $s\in\R$
  let
\[
  s\GAMMA := (\XX,s\YY)\#\GAMMA \in \PR.
\]
\item \emph{Sum.} For any $\GAMMA^1,\GAMMA^2\in\PR$ let
\[
  \GAMMA^1\oplus\GAMMA^2 := \Big\{ (\XX,\YY^1+\YY^2)\#
    \ALPHA \colon \ALPHA\in\AR(\GAMMA^1,\GAMMA^2) \Big\}
      \subset\PR.
\]
\end{enumerate}
\end{definition}

If the plans are induced by functions, then the operations in
Definition~\ref{D:AM} reduce to the usual vector space structures on
the Hilbert space $\L^2(\R^d,\RHO)$. Note also that for all
$\GAMMA^1,\GAMMA^2\in\PR$ and $s\in\R$ we have the useful equality
\[
  \WR(s\GAMMA^1,s\GAMMA^2) = |s| \WR(\GAMMA^1,\GAMMA^2).
\]
We refer the reader to Section~4.1 in \cite{Gigli2004} for a proof.

\begin{definition}[Closed Convex Cone]\label{D:CCC}
A non-empty subset $C \subset \PR$ will be called a closed convex set
if it has the following two properties:
\begin{enumerate}
\item[(i.)] \textbf{Closed.} Consider $\GAMMA^k\in C$ and
  $\GAMMA\in\PR$ with
\[
  \WR(\GAMMA^k,\GAMMA) \longrightarrow 0
  \quad\text{as $k\rightarrow\infty$.}
\]
Then also $\GAMMA\in C$.
\item[(ii.)] \textbf{Convex.} For all $\GAMMA^1,\GAMMA^2 \in  C$ and
  $s\in[0,1]$ we have
\begin{equation}
  (1-s)\GAMMA^1 \oplus s\GAMMA^2 \subset C.
\label{E:CCIC}
\end{equation}
\end{enumerate}
The set $C$ is a closed convex cone if it also has the following
property:
\begin{enumerate}
\item[(iii.)] \textbf{Cone.} For all $\boldsymbol{\GAMMA}\in C$ and
  $s\GS 0$ we have $s\boldsymbol{\GAMMA} \in C$.
\end{enumerate}
\end{definition}

We consider metric projections onto closed convex sets in $\PR$. They
have similar properties like projections in Hilbert spaces.

\begin{proposition}[Metric Projection]\label{P:PROJ}
Let $\RHO\in\SP_2(\R^d)$ be given and $C \subset \PR$ a closed convex
set. For any $\UPS \in \PR$ there is a unique $\PP_C(\UPS) \in C$
with
\[
  \WR\big(\UPS,\PP_C(\UPS) \big) \LS \WR(\UPS,\ETA)
  \quad\text{for all $\ETA \in C$.}
\]
For every $\ETA\in C$ and all $\BETA\in\SP_2(\R^{4d})$ with
\begin{equation}\label{E:POL1}
\begin{aligned}
  (\XX,\YY^1,\YY^2)\#\BETA &\in \OR\big( \UPS,\PP_C(\UPS) \big),
\\
  (\XX,\YY^1,\YY^3)\#\BETA &\in \AR(\UPS,\ETA).
\end{aligned}
\end{equation}
we have the inequality
\begin{equation}\label{E:UNN}
  \int_{\R^{4d}} \langle y^1-y^2,y^2-y^3\rangle
    \,\BETA(dx, dy^1, dy^2, dy^3) \GS 0.
\end{equation}

Conversely, assume that there exists a $\ZETA \in \C$ with the
following property: for all $\ETA\in C$ there exists $\BETA \in
\SP_2(\R^{4d})$ with
\begin{equation}\label{E:POL2}
\begin{aligned}
  (\XX,\YY^1,\YY^2)\#\BETA &\in \AR(\UPS,\ZETA),
\\
  (\XX,\YY^1,\YY^3)\#\BETA &\in \OR(\UPS,\ETA),
\end{aligned}
\end{equation}
such that inequality \eqref{E:UNN} holds true. Then $\ZETA =
\PP_C(\UPS)$.

For any $\UPS^1, \UPS^2 \in \PR$ and any $\OMEGA \in \SP_2(\R^{5d})$
such that
\begin{equation}\label{E:OM1}
\begin{aligned}
  (\XX,\YY^1,\YY^2)\#\OMEGA &\in \OR\big( \UPS^1,\PP_C(\UPS^1) \big),
\\
  (\XX,\YY^3,\YY^4)\#\OMEGA &\in \OR\big( \UPS^2,\PP_C(\UPS^2) \big),
\end{aligned}
\end{equation}
we can estimate as follows:
\begin{equation}\label{E:COMM}
  \int_{\R^{5d}} |y^2-y^4|^2 \,\OMEGA(dx,dy^1,\ldots,dy^4)
    \LS \int_{\R^{5d}} |y^1-y^3|^2 \,\OMEGA(dx,dy^1,\ldots,dy^4). 
\end{equation}
In particular, we have the contraction $\WR(\PP_C(\UPS^1),
\PP_C(\UPS^2)) \LS \WR(\UPS^1, \UPS^2)$.
\end{proposition}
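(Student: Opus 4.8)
The plan is to follow the classical Hilbert-space projection argument, exploiting that by Definition~\ref{D:AM} the space $\PR$ carries analogues of scalar multiplication and addition, and that by Theorem~\ref{T:OPTMIN} it is a complete metric space on which $\WR(\UPS,\cdot)$ is lower semicontinuous. \emph{Existence and uniqueness.} First I would prove a parallelogram-type inequality: for $\UPS\in\PR$ and $\GAMMA^1,\GAMMA^2\in\PR$ there is an element $\GAMMA_*\in\OH\GAMMA^1\oplus\OH\GAMMA^2$ with
\[
  \WR(\UPS,\GAMMA_*)^2 \LS \OH\WR(\UPS,\GAMMA^1)^2 + \OH\WR(\UPS,\GAMMA^2)^2 - \OF\WR(\GAMMA^1,\GAMMA^2)^2 .
\]
To get this, glue two optimal plans $\OMEGA^k\in\OR(\UPS,\GAMMA^k)$ along their common $\UPS$-marginal into a measure $\BETA$ on $\R^{4d}$ whose coordinates carry the $\UPS$-, $\GAMMA^1$- and $\GAMMA^2$-velocities, let $\GAMMA_*$ be the plan obtained by averaging the last two velocity coordinates (it lies in $\OH\GAMMA^1\oplus\OH\GAMMA^2$ because the relevant marginal of $\BETA$ is in $\AR(\GAMMA^1,\GAMMA^2)$), and expand $|y^1-\OH y^2-\OH y^3|^2 = \OH|y^1-y^2|^2+\OH|y^1-y^3|^2-\OF|y^2-y^3|^2$: optimality of the two glued plans turns the first two integrals into $\WR(\UPS,\GAMMA^k)^2$ exactly, while admissibility gives $\int|y^2-y^3|^2\,\BETA\GS\WR(\GAMMA^1,\GAMMA^2)^2$, and this term enters with a favorable sign. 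Given a closed convex $C$, applying the inequality to a minimizing sequence $\GAMMA^k\in C$ for $\WR(\UPS,\cdot)$ (using $\OH\GAMMA^j\oplus\OH\GAMMA^k\subset C$ by \eqref{E:CCIC}) shows it is Cauchy, hence convergent by completeness, with limit in $C$ by closedness and attaining the infimum by lower semicontinuity; applying the same inequality to two minimizers forces them to coincide.

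\emph{Variational inequality and its converse.} For $\ETA\in C$ and any $\BETA$ as in \eqref{E:POL1}, and $t\in(0,1]$, set $\ETA_t := (\XX,(1-t)\YY^2+t\YY^3)\#\BETA$. I would check $\ETA_t\in(1-t)\ORP_C(\UPS)\oplus t\ETA\subset C$ (again \eqref{E:CCIC}), so that $\WR(\UPS,\ETA_t)^2\GS\WR(\UPS,\ORP_C(\UPS))^2$; since $(\XX,\YY^1,(1-t)\YY^2+t\YY^3)\#\BETA\in\AR(\UPS,\ETA_t)$, expanding $|y^1-(1-t)y^2-ty^3|^2 = |y^1-y^2|^2 + 2t\langle y^1-y^2,y^2-y^3\rangle + t^2|y^2-y^3|^2$, using optimality of the $(\XX,\YY^1,\YY^2)$-marginal to cancel the leading term, dividing by $t$ and letting $t\to0^+$ gives \eqref{E:UNN}. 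The converse is the algebraic mirror image: given $\ZETA\in C$ with the stated property, apply it with $\ETA=\ORP_C(\UPS)$; rewriting $\langle y^1-y^2,y^2-y^3\rangle$ in terms of $y^1-y^3$ and $y^3-y^2$, inequality \eqref{E:UNN} forces $\int|y^1-y^2|^2\,\BETA \LS \int|y^1-y^3|^2\,\BETA = \WR(\UPS,\ORP_C(\UPS))^2$, while admissibility gives $\int|y^1-y^2|^2\,\BETA\GS\WR(\UPS,\ZETA)^2$; thus $\ZETA\in C$ also attains the infimum, so $\ZETA=\ORP_C(\UPS)$ by the uniqueness just proved.

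\emph{Contraction.} Given $\OMEGA$ as in \eqref{E:OM1}, I would apply \eqref{E:UNN} to $(\XX,\YY^1,\YY^2,\YY^4)\#\OMEGA$, which is legitimate because its $(\XX,\YY^1,\YY^2)$-marginal lies in $\OR(\UPS^1,\ORP_C(\UPS^1))$ and its $(\XX,\YY^1,\YY^4)$-marginal lies in $\AR(\UPS^1,\ORP_C(\UPS^2))$ with $\ORP_C(\UPS^2)\in C$; symmetrically, apply \eqref{E:UNN} to $(\XX,\YY^3,\YY^4,\YY^2)\#\OMEGA$. Adding the two resulting inequalities and using $\langle y^1-y^2,y^2-y^4\rangle+\langle y^3-y^4,y^4-y^2\rangle=\langle y^1-y^3,y^2-y^4\rangle-|y^2-y^4|^2$ gives $\int|y^2-y^4|^2\,\OMEGA\LS\int\langle y^1-y^3,y^2-y^4\rangle\,\OMEGA$, and Cauchy–Schwarz yields \eqref{E:COMM}. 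For the final assertion, one first constructs, by iterated gluing, an $\OMEGA$ satisfying \eqref{E:OM1} whose $(\XX,\YY^1,\YY^3)$-marginal is $\WR$-optimal between $\UPS^1$ and $\UPS^2$; then \eqref{E:COMM} together with $\WR(\ORP_C(\UPS^1),\ORP_C(\UPS^2))^2\LS\int|y^2-y^4|^2\,\OMEGA$ (the $(\XX,\YY^2,\YY^4)$-marginal being admissible) gives the stated $1$-Lipschitz bound.

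\emph{Main obstacle.} The only genuinely structural input is the parallelogram inequality above, which plays the role of nonpositive curvature and works precisely because one glues \emph{optimal} plans to $\UPS$ while the cross term $\int|y^2-y^3|^2$ is bounded \emph{below} by $\WR(\GAMMA^1,\GAMMA^2)^2$ — the direction one needs. Everything else is careful bookkeeping with disintegrations and the gluing lemma: at each step one must verify that the push-forward measures have exactly the prescribed marginals, so that the interpolants land in $C$ via the set-valued $\oplus$, and that "optimal" marginals are replaced by merely "admissible" ones only where the sign of the error is favorable.
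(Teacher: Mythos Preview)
Your proposal is correct and follows essentially the same route as the paper's proof: the parallelogram inequality obtained by gluing two optimal plans along the common $\UPS$-marginal, the first-variation argument $\ETA_t=(\XX,(1-t)\YY^2+t\YY^3)\#\BETA$ for \eqref{E:UNN}, and the pair of applications of \eqref{E:UNN} to the two four-marginals of $\OMEGA$ for the contraction are all exactly what the paper does. The only cosmetic difference is in the converse: the paper shows $\WR(\UPS,\ZETA)\LS\WR(\UPS,\ETA)$ for \emph{every} $\ETA\in C$ directly from \eqref{E:UNN}, whereas you test only against $\ETA=\ORP_C(\UPS)$ and invoke uniqueness; both arguments are equivalent once existence and uniqueness of the projection are in hand.
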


\begin{proof}
The proof is similar to the one of Proposition~4.30 in
\cite{Gigli2004}.
\medskip

\textbf{Step~1.} Let $d := \inf\{ \WR(\UPS,\ETA) \colon \ETA\in C \}
\GS 0$ and consider a sequence of plans $\ETA^n\in C$ such that
$\WR(\UPS, \ETA^n) \longrightarrow d$ as $n\rightarrow\infty$. For any
pair of indices $m,n\in\N$ choose $\BETA^{m,n} \in \SP_2(\R^{4d})$
with the property that
\begin{align*}
  (\XX,\YY^1,\YY^2)\# \BETA^{m,n}
    & \in \OR(\UPS,\ETA^m),
\\
  (\XX,\YY^1,\YY^3)\# \BETA^{m,n}
    & \in \OR(\UPS,\ETA^n),
\end{align*}
and define the plans
\begin{alignat*}{2}
  \ALPHA^{m,n}
    & := (\XX,\YY^2,\YY^3)\#\BETA^{m,n}
    && \in\AR(\ETA^m,\ETA^n),
\\
  \ETA^{m,n}
    & := (\XX, \HA\YY^2+\HA\YY^3)\# \BETA^{m,n}
    && \in C.
\end{alignat*}
The last inclusion follows from convexity \eqref{E:CCIC}. We claim
that the sequence $\{\ETA^n\}_n$ is a Cauchy sequence with respect to
$\WR$. Indeed we have
\begin{align*}
  & \frac{1}{2}\WR(\ETA^m,\ETA^n)^2
\\
  & \quad
    \LS \int_{\R^{4d}} \frac{1}{2}|y^2-y^3|^2
      \,\BETA^{m,n}(dx,dy^1 \ldots dy^3)
\\
  & \quad
    = \int_{\R^{4d}} \Bigg( |y^1-y^2|^2 + |y^1-y^3|^2
      - 2\bigg| y^1-\frac{y^2+y^3}{2} \bigg|^2 \Bigg)
        \,\BETA^{m,n}(dx,dy^1 \ldots dy^3)
\\
  & \vphantom{\int_{\R^{4d}}\bigg(} \quad
    \LS \WR(\UPS,\ETA^m)^2
      + \WR(\UPS,\ETA^n)^2
      - 2\WR(\UPS,\ETA^{m,n})^2
\end{align*}
for all $m,n\in\N$. Notice that $\WR(\UPS,\ETA^{m,n}) \GS d$
because $\ETA^{m,n}\in C$. This yields
\begin{equation}
  \frac{1}{2}\WR(\ETA^m,\ETA^n)^2
    \LS \WR(\UPS,\ETA^m)^2
      + \WR(\UPS,\ETA^n)^2 - 2d^2.
\label{E:CAUCHY}
\end{equation}
Since by assumption the sequence $\{\ETA^n\}_n$ is minimizing, the
right-hand side of \eqref{E:CAUCHY} converges to zero as $m,n
\rightarrow\infty$, which proves our claim. Recall that $(\PR, \WR)$
is a complete metric space. It follows that there is a
$\PP_{C}(\UPS) \in C$ with the property that $\WR(\ETA^n,
\PP_{C}(\UPS))\longrightarrow 0$. By lower semicontinuity of the
distance $\WR$, we now have $\WR(\UPS,\PP_{C}(\UPS)) = d$. This
establishes the existence of a minimizer.
\medskip

\textbf{Step~2.} To prove uniqueness, assume that there exists
$\ETA\in C$ with $\WR(\UPS,\ETA) = d$. Now choose a plan
$\BETA\in\SP_2(\R^{4d})$ that satisfies
\begin{align*}
  (\XX,\YY^1,\YY^2)\# \BETA
    & \in \OR\big( \UPS,\PP_C(\UPS) \big),
\\
  (\XX,\YY^1,\YY^3)\# \BETA
    & \in \OR(\UPS,\ETA),
\end{align*}
and define the plans
\begin{alignat*}{2}
  \ALPHA
    & := (\XX,\YY^1,\HA\YY^2+\HA\YY^3)\#\BETA
    && \in\AR(\UPS,\bar{\ETA}),
\\
  \bar{\ETA}
    & := (\XX,\HA\YY^2+\HA\YY^3)\# \BETA
    && \in C.
\end{alignat*}
The last inclusion again follows from convexity \eqref{E:CCIC}. We can
then estimate
\begin{align}
  2\WR(\UPS,\bar{\ETA})^2
    & \LS \int_{\R^{4d}} 2\bigg|y^1-\frac{y^2+y^3}{2}
      \bigg|^2 \,\BETA(dx,dy^1 \ldots dy^3)
\nonumber\\
    & = \int_{\R^{4d}} \bigg( |y^1-y^2|^2 + |y^1-y^3|^2
      - \frac{1}{2}|y^2-y^3|^2 \bigg)
        \,\BETA(dx,dy^1 \ldots dy^3)
\nonumber\\
    & \vphantom{\int_{\R^{4d}}\bigg(}
      \LS \WR\big( \UPS,\PP_{C}(\UPS) \big)^2
        + \WR(\UPS,\ETA)^2
        - \frac{1}{2}\WR\big( \PP_{C}(\UPS),\ETA \big)^2.
\label{E:WASEST}
\end{align}
By our choice of $\ETA$, we obtain $\WR(\UPS,\bar{\ETA})^2 \LS d^2 -
\frac{1}{4}\WR(\PP_{C}(\UPS),\ETA)^2$, which shows that if
$\PP_{C}(\UPS)$ and $\ETA$ are different, then $\WR
(\UPS,\bar{\ETA}) < d$. This contradicts the definition of $d$ because
$\bar{\ETA}\in C$. Therefore the minimizer must be unique. 
\medskip

\textbf{Step~3.} For any $\ETA\in C$ consider now $\BETA \in
\SP_2(\R^{4d})$ with \eqref{E:POL1}. For every $s>0$ we define $\ETA_s
:= (\XX,(1-s)\YY^2+s\YY^3)\# \BETA \in C$; see \eqref{E:CCIC}. Then
\begin{align*}
  & \int_{\R^{4d}} |y^1-y^2|^2 \,\BETA(dx,dy^1,dy^2,dy^3)
    = \WR\big( \UPS,\PP_{C}(\UPS) \big)^2
\\
  & \qquad
    \LS \WR(\UPS,\ETA_s)^2
      \LS \int_{\R^{4d}} \big| y^1-\big( (1-s)y^2+sy^3 
        \big)\big|^2 \,\BETA(dx,dy^1,dy^2,dy^3),
\end{align*}
which implies the estimate
\begin{align}
  0 & \GS -2s \int_{\R^{4d}} \langle y^1-y^2,
        y^2-y^3 \rangle \,\BETA(dx,dy^1, dy^2,dy^3)
\nonumber\\
    & \quad -s^2 \int_{\R^{4d}} |y^2-y^3|^2 
        \,\BETA(dx,dy^1,dy^2,dy^3).
\label{E:SECOND}
\end{align}
Notice that the second integral on the right-hand side of
\eqref{E:SECOND} is finite. Dividing the inequality \eqref{E:SECOND} by
$-2s<0$ and letting $s\rightarrow 0$, we obtain \eqref{E:UNN}.

Conversely, let $\ZETA \in C$. Assume that for every $\ETA \in C$
there exists $\BETA \in \SP_2(\R^{4d})$ with \eqref{E:POL2} satisfying
\eqref{E:UNN}. Then we can estimate as follows:
\begin{align*}
  & \WR(\UPS,\ZETA)^2 - \WR(\UPS,\ETA)^2
    \LS \int_{\R^{4d}} \Big( |y^1-y^2|^2-|y^1-y^3|^2 \Big)
      \,\BETA(dx,dy^1,dy^2,dy^3)
\\
  & \qquad
    = -2 \int_{\R^{4d}} \langle y^1-y^2,y^2-y^3 \rangle
        \,\BETA(dx,dy^1,dy^2,dy^3)
\\
  & \qquad\quad
      -\int_{\R^{4d}} |y^2-y^3|^2 \,\BETA(dx,dy^1,dy^2,dy^3),
\end{align*}
which is non-positive, by assumption. Since $\ETA \in C$ was
arbitrary, the plan $\ZETA$ must be equal to the uniquely determined
metric projection $\PP_{C}(\UPS)$.
\medskip

\textbf{Step~4.} Consider now $\UPS^1, \UPS^2 \in \PR$ and their metric
projections onto $C$. For all $\ALPHA \in \AR(\UPS^1, \UPS^2)$ there
exists $\OMEGA \in \SP_2(\R^{5d})$ with $(\XX,\YY^1,\YY^3)\#\OMEGA =
\ALPHA$ and \eqref{E:OM1}. Since $(\XX,\YY^1,\YY^4)\#\OMEGA \in
\AR(\UPS^1, \PP_{C}(\UPS^2))$, we apply \eqref{E:UNN} and obtain
\begin{equation}
  \int_{\R^{5d}} \langle y^1-y^2,y^2-y^4 \rangle 
    \,\OMEGA(dx,dy^1,\ldots,dy^4) \GS 0.
\label{E:QW1}
\end{equation}
Similarly, since $(\XX,\YY^3,\YY^2)\#\OMEGA \in \AR(\UPS^2,
\PP_C(\UPS^1))$, we have
\begin{equation}
  \int_{\R^{5d}} \langle y^3-y^4,y^4-y^2 \rangle 
    \,\OMEGA(dx,dy^1,\ldots,dy^4) \GS 0.
\label{E:QW2}
\end{equation}
Adding \eqref{E:QW1} and \eqref{E:QW2} and using the Cauchy-Schwarz
inequality, we get \eqref{E:COMM}. The left-hand side of
\eqref{E:COMM} is always bigger than or equal to $\WR(\PP_C(\UPS^1),
\PP_C(\UPS^2))^2$. The right-hand side equals $\WR(\UPS^1,\UPS^2)^2$
whenever $\ALPHA \in \OR(\UPS^1,\UPS^2)$.
\end{proof}


\subsection{Non-Splitting Projections}\label{SS:RTM}

Under a suitable assumption on the closed convex set, the projections
in Proposition~\ref{P:PROJ} can be expressed in terms of maps.

\begin{assumption}\label{A:COM}
For any $\ETA \in \PR$ and $\ZETA \in C$ we consider the
disintegrations $\ETA(dx,dy) =: \eta_x(dy) \,\RHO(dx)$ and
$\ZETA(dx,dy) =: \zeta_x(dy) \,\RHO(dx)$. We assume that if
\begin{equation}\label{E:ASSUM}
  \SPT\eta_x \subset \CCONV(\SPT\zeta_x)
  \quad\text{for $\RHO$-a.e.\ $x\in\R^d$,}
\end{equation}
then also $\ETA \in C$. 
\end{assumption}

\begin{proposition}[Properties of $\PP_C(\UPS)$]\label{P:BOREL}
Let the closed convex set $C \subset \PR$ satisfy
Assumption~\ref{A:COM} and let $\PP_C(\UPS)$ be the metric
projection of $\UPS \in \PR$ onto $C$; see Proposition~\ref{P:PROJ}.
Then there exists a unique $\ORZ_\UPS \in \L^2(\R^{2d},\UPS)$ with
\begin{equation}\label{E:OPM}
  \OR\big( \UPS,\PP_C(\UPS) \big) 
    = \Big\{ (\XX,\YY,\ORZ_\UPS)\#\UPS \Big\}.
\end{equation}
\end{proposition}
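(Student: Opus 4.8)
The plan is to show that the optimal plan $\OR(\UPS,\ORP_C(\UPS))$ is concentrated on a graph over $\R^{2d}$, i.e. that the $y^2$-coordinate is $\UPS$-a.e.\ a function of $(x,y^1)$. The natural strategy is a contradiction-and-averaging argument exactly in the spirit of the uniqueness of metric projections (Step~2 of Proposition~\ref{P:PROJ}): if two optimal plans existed that were not equal, or if one optimal plan charged two distinct target points over the same source point, then a convex combination would be strictly closer to $\UPS$, violating optimality. The key point to exploit is Assumption~\ref{A:COM}, which guarantees that ``averaging inside the convex hull of the fibers of $\ORP_C(\UPS)$ stays in $C$''.

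First I would record that $\ORP_C(\UPS)$ itself is uniquely determined (Proposition~\ref{P:PROJ}), so the only issue is whether the \emph{transport} from $\UPS$ to $\ORP_C(\UPS)$ is unique and induced by a map. Take any two plans $\ALPHA^0,\ALPHA^1 \in \OR(\UPS,\ORP_C(\UPS))$ and glue them along their common $(\XX,\YY)$-marginal $\UPS$: there is $\BETA \in \SP_2(\R^{4d})$ with $(\XX,\YY,\YY^2)\#\BETA = \ALPHA^0$, $(\XX,\YY,\YY^3)\#\BETA = \ALPHA^1$, and (by gluing along the first two coordinates, which agree) both ``$y^2$'' and ``$y^3$'' disintegrate over $(x,y^1)$. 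Set $\bar\ETA := (\XX,\HA\YY^2+\HA\YY^3)\#\BETA$. Its fibers $\bar\eta_x$ are supported in the convex hull of $\SPT(\ORP_C(\UPS))_x$ (since both $\ALPHA^0,\ALPHA^1$ have second $\R^d$-marginal $\ORP_C(\UPS)$, and the disintegration respects this), so Assumption~\ref{A:COM} gives $\bar\ETA \in C$. Then the parallelogram computation as in \eqref{E:WASEST},
\[
  \WR(\UPS,\bar\ETA)^2 \LS \int_{\R^{4d}} \Big|y^1 - \tfrac{y^2+y^3}{2}\Big|^2 \,\BETA
    = d^2 - \tfrac14 \int_{\R^{4d}} |y^2-y^3|^2 \,\BETA,
\]
where $d := \WR(\UPS,\ORP_C(\UPS))$, forces $\int |y^2-y^3|^2\,\BETA = 0$, i.e.\ $y^2 = y^3$ $\BETA$-a.e. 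Hence $\ALPHA^0 = \ALPHA^1$, proving the optimal plan is unique; call it $\ALPHA$.

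It remains to see that the unique $\ALPHA$ is induced by a map $\ORZ_\UPS \in \L^2(\R^{2d},\UPS)$, i.e.\ that its disintegration $\alpha_{(x,y^1)}(dy^2)$ with respect to $\UPS$ is a.e.\ a Dirac mass. I would run the same averaging argument ``diagonally'': given $\ALPHA$, form $\BETA := $ the fiber-product of $\ALPHA$ with itself over $\UPS$, so $(\XX,\YY,\YY^2)\#\BETA = (\XX,\YY,\YY^3)\#\BETA = \ALPHA$ but $y^2,y^3$ are conditionally independent given $(x,y^1)$. Again $\bar\ETA := (\XX,\HA\YY^2+\HA\YY^3)\#\BETA \in C$ by Assumption~\ref{A:COM}, and since both ``halves'' are optimal, the parallelogram estimate yields $\int|y^2-y^3|^2\,\BETA = 0$; but the left side is $2\int \big(\text{Var of }\alpha_{(x,y^1)}\big)\,\UPS$, so each fiber has zero variance and is a Dirac. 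Uniqueness of $\ORZ_\UPS$ in $\L^2(\R^{2d},\UPS)$ is then immediate from uniqueness of $\ALPHA$. The square-integrability $\ORZ_\UPS \in \L^2(\R^{2d},\UPS)$ follows since $\ORP_C(\UPS) \in \PR \subset \SP_2$ and $\int|\ORZ_\UPS|^2\,d\UPS = \int |y^2|^2 \,\ORP_C(\UPS)(dx,dy^2) < \infty$.

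The main obstacle is the careful bookkeeping of which plans live over which marginals when gluing: one must verify that the glued measure $\BETA$ genuinely exists (standard measurable-selection/gluing lemma for the disintegration over $\UPS$, since all three relevant plans share the $(\XX,\YY)$-marginal $\UPS$) and, more delicately, that the averaged fibers $\bar\eta_x$ really sit inside $\CONV(\SPT(\ORP_C(\UPS))_x)$ $\RHO$-a.e., which is where Assumption~\ref{A:COM} is invoked and where a null-set argument over the disintegration variable is needed. Everything else is the now-familiar uniformly-convex (Hilbertian) projection argument transplanted to $(\PR,\WR)$ via Definition~\ref{D:AM} and Theorem~\ref{T:OPTMIN}.
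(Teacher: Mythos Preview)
Your argument is correct and rests on the same two ingredients as the paper's proof---Assumption~\ref{A:COM} to keep fiberwise averages inside $C$, and the parallelogram identity for $|\cdot|^2$---but you run the two steps in the opposite order. The paper first fixes a single $\ALPHA\in\OR(\UPS,\ORP_C(\UPS))$, replaces each conditional law $\alpha_{(x,y)}$ by its barycenter $\ORZ_\UPS(x,y)$, checks via Assumption~\ref{A:COM} that $(\XX,\ORZ_\UPS)\#\UPS\in C$, and invokes the strict case of Jensen's inequality to force $\alpha_{(x,y)}$ to be a Dirac; only afterwards does it average two candidate maps to get uniqueness. You instead first glue two optimal plans to prove the plan is unique, and then run the conditional-independence self-coupling of $\ALPHA$ with itself, whose $\int|y^2-y^3|^2\,d\BETA$ is precisely twice the integrated fiberwise variance---this is the Jensen gap in disguise. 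The two organizations are equivalent; your fiber-product device is a tidy way to reach the Dirac conclusion without writing Jensen explicitly, while the paper's barycenter step has the small advantage of producing the candidate map $\ORZ_\UPS$ directly before any uniqueness is known.
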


\begin{proof}
The proof is similar to the one of Propositions~4.32 in \cite{Gigli2004}.
\medskip

\textbf{Step~1.} Fix any $\ALPHA \in \OR(\UPS, \PP_C(\UPS))$ and
consider the disintegration
\begin{equation}\label{E:DIS}
  \ALPHA(dx,dy,dz) =: \alpha_{(x,y)}(dz) \,\UPS(dx,dy).
\end{equation}
Then we define the function
\begin{equation}
  \ORZ_\UPS(x,y) := \int_{\R^d} z \,\alpha_{(x,y)}(dz)
  \quad\text{for $\UPS$-a.e.\ $(x,y)\in\R^{2d}$,}
\label{E:DEFT}
\end{equation}
and the plans
\begin{alignat*}{2}
  \bar{\ALPHA}
    & := (\XX,\YY,\ORZ_\UPS)\# \UPS
    && \in\AR(\UPS,\bar{\UPS}),
\\
  \bar{\UPS}
    & := (\XX,\ORZ_\UPS)\# \UPS.
\end{alignat*}
We claim that $\bar{\UPS}\in C$. Notice first that clearly
\[
  \ORZ_\UPS(x,y) \subset \CCONV(\SPT\alpha_{(x,y)})
  \quad\text{for $\UPS$-a.e.\ $(x,y)\in\R^{2d}$.}
\]
Now consider the disintegrations
\begin{align*}
  \bar{\UPS}(dx,dz)
    & = : \bar{\upsilon}_x(dz) \,\RHO(dx),
\\
  \UPS(dx,dy)
    & = : \upsilon_x(dy) \,\RHO(dx),
\\
  \PP_C(\UPS)(dx,dz)
    & = : \PP_C(\upsilon)_x(dz) \,\RHO(dx).
\end{align*}
It follows that
\[
  \PP_C(\upsilon)_x(dz)
    = \int_{\R^d} \alpha_{(x,y)}(dz) \,\upsilon_x(dy)
  \quad\text{for $\RHO$-a.e.\ $x\in\R^d$,}
\]
and so $\SPT\alpha_{(x,y)} \subset \SPT\PP_C(\upsilon)_x$ for
$\upsilon_x$-a.e.\ $y\in\R^d$. This yields
\[
  \SPT\bar{\upsilon}_x \subset \CCONV\big( \SPT\PP_C(\upsilon)_x \big)
  \quad\text{for $\RHO$-a.e.\ $x\in\R^d$.}
\]
By Asssumption~\ref{A:COM}, this implies that $\bar{\UPS} \in C$.

Using that $\bar{\ALPHA} \in \AR(\UPS, \bar{\UPS})$, we now
estimate
\begin{align*}
  \WR(\UPS,\bar{\UPS})^2
    & \LS \int_{\R^{2d}} |y-\ORZ_\UPS(x,y)|^2 \,\UPS(dx,dy)
\\
    & = \int_{\R^{2d}} \bigg| \int_{\R^d} (y-z)
      \,\alpha_{(x,y)}(dz) \bigg|^2 \,\UPS(dx,dy)
\\
    & \LS \int_{\R^{2d}} \int_{\R^d} |y-z|^2
      \,\alpha_{(x,y)}(dz) \,\UPS(dx,dy)
     = \WR(\UPS,\PP_{C}(\UPS))^2.
\end{align*}
The first equality follows from \eqref{E:DEFT} and the second one from
\eqref{E:DIS}. For the second inequality we have used Jensen's
inequality. Recall that Jensen's inequality is strict unless the
probability measure is a Dirac measure, which implies that if
$\alpha_{(x,y)}$ is not a Dirac measure for $\UPS$-a.e.\
$(x,y)\in\R^{2d}$, then $\WR(\UPS,\bar{\UPS}) < \WR(\UPS,
\PP_C(\UPS))$. This contradicts the definition of $\PP_C(\UPS)$
since $\bar{\UPS}\in C$. We conclude that
\[
  \alpha_{(x,y)}(dz) = \delta_{\ORZ_\UPS(x,y)}(dz)
  \quad\text{for $\UPS$-a.e.\ $(x,y)\in\R^{2d}$,}
\]
and thus $\ALPHA=\bar{\ALPHA}$. The same argument works for all
$\ALPHA \in\OR(\UPS,\PP_C(\UPS))$, and so all optimal
transport plans between $\UPS$ and $\PP_C(\UPS)$ are induced
by maps.
\medskip

\textbf{Step~2.} To prove uniqueness, assume there exist two maps
$\ORZ^1, \ORZ^2 \in \L^2(\R^{2d},\UPS)$ such that $\ALPHA^k := (\XX,
\YY, \ORZ^k)\#\UPS \in \OR(\UPS, \PP_C(\UPS))$ for $k=1..2$.
Let
\begin{align*}
  \bar{\BETA}
    & := (\XX,\YY,\ORZ^1,\ORZ^2)\#\UPS,
\\
  \bar{\ALPHA}
    & := (\XX,\YY^1,\HA\YY^2+\HA\YY^3)\#\bar{\BETA}.
\end{align*}
We claim that $\bar{\ALPHA} \in \OR(\UPS,\PP_C(\UPS))$. If
this is true, and if $\ORZ^1$ and $\ORZ^2$ are different, then
$\bar{\ALPHA}$ is not induced by a map, in contradiction to what we
proved in Step~1. We can therefore define $\ORZ_\UPS$ unambiguously by
the property \eqref{E:OPM}.

To prove the claim, let $\bar{\UPS} := (\XX,\YY^2)\# \bar{\ALPHA} =
(\XX,\HA\YY^2+\HA\YY^3)\# \bar{\BETA}$ and note that
\[
  (\XX,\YY^2,\YY^3)\# \bar{\BETA}
    \in \AR\big( \PP_C(\UPS),\PP_C(\UPS) \big).
\]
Then $\bar{\UPS}\in C$ because of convexity \eqref{E:CCIC}. We have
$\bar{\ALPHA} \in \AR(\UPS,\bar{\UPS})$ and
\begin{align*}
  (\XX,\YY^1,\YY^2)\# \bar{\BETA}
    & \in \OR\big( \UPS,\PP_C(\UPS) \big),
\\
  (\XX,\YY^1,\YY^3)\# \bar{\BETA}
    & \in \OR\big( \UPS,\PP_C(\UPS) \big).
\end{align*}
Arguing as in estimate \eqref{E:WASEST}, we obtain that $\WR(\UPS,
\bar{\UPS}) \LS \WR(\UPS, \PP_C(\UPS)) = d$, which shows that
$\bar{\UPS}=\PP_C(\UPS)$, by uniqueness of the minimizer.
\end{proof}

\begin{remark}
Under Assumption~\ref{A:COM}, the third part of
Proposition~\ref{P:PROJ} simplifies as follows: for any plans $\UPS^1,
\UPS^2 \in \PR$ let $\ORZ^k \in \L^2(\R^{2d}, \UPS^k)$ be defined as
in \eqref{E:OPM} for $k=1..2$. Then we have the following inequality:
\begin{align*}
  & \int_{\R^{3d}} |\ORZ^1(x,y^1)-\ORZ^2(x,y^2)|^2 \,\ALPHA(dx,dy^1,dy^2)
\\
  & \qquad
    \LS \int_{\R^{3d}} |y^1-y^2|^2 \,\ALPHA(dx,dy^1,dy^2)
  \quad\text{for all $\ALPHA\in\AR(\UPS^1,\UPS^2)$.}
\end{align*}
\end{remark}

\begin{remark}\label{R:CONE}
If $C$ is a closed convex cone, then \eqref{E:UNN} implies the
following statement: for every $\ETA\in C$ and all $\ALPHA \in
\AR(\UPS,\ETA)$ we have that
\begin{gather}
  \int_{\R^{3d}} \langle y-\ORZ_\UPS(x,y),
    \ORZ_\UPS(x,y) \rangle \,\UPS(dx,dy) = 0,
\label{E:INDREI}\\
  \int_{\R^{3d}} \langle y-\ORZ_\UPS(x,y),
    z \rangle \,\ALPHA(dx,dy,dz) \LS 0.
\label{E:INVIER}
\end{gather}
Indeed note first that because of \eqref{E:OPM}, the inequality
\eqref{E:UNN} reads as follows:
\begin{equation}
  \int_{\R^{3d}} \langle y-\ORZ_\UPS(x,y), \ORZ_\UPS(x,y)-z
      \rangle \,\ALPHA(dx,dy,dz)
    \GS 0
\label{E:INE1}
\end{equation}
for all $\ETA, \ALPHA$ as above. We have $\ETA_0 := (\ID,0)\#\RHO \in
C$ since $C$ is a cone. Then
\[
  \ALPHA^1 := (\XX,\YY,0)\#\UPS 
    \in \AR(\UPS,\ETA_0).
\]
Using $\ALPHA^1$ in \eqref{E:INE1}, we obtain the inequality
\begin{equation}
  \int_{\R^{2d}} \langle y-\ORZ_\UPS(x,y), \ORZ_\UPS(x,y)
      \rangle \,\UPS(dx,dy)
    \GS 0.
\label{E:INEINS}
\end{equation}
On the other hand, we have $2\PP_{C}(\UPS)\in C$ since $C$ is a cone.
Then
\[
  \ALPHA^2 := (\XX,\YY,2\ORZ_\UPS)\#\UPS 
    \in \AR\big( \UPS,2\PP_C(\UPS) \big).
\]
Using $\ALPHA^2$ in \eqref{E:INE1}, we obtain the inequality
\begin{equation}
  \int_{\R^{2d}} \langle y-\ORZ_\UPS(x,y), -\ORZ_\UPS(x,y)
      \rangle \,\UPS(dx,dy)
    \GS 0.
\label{E:INZWEI}
\end{equation}
We now combine \eqref{E:INEINS} and \eqref{E:INZWEI}, and get
\eqref{E:INDREI} and thus \eqref{E:INVIER}.
\end{remark}


\subsection{Monotone Transport Plans}

Propositions~\ref{P:PROJ} and \ref{P:BOREL} apply to $\CR$.

\begin{proposition}[Monotone Transport Plans]\label{P:CONE}
Let $\RHO\in\SP_2(\R^d)$. Then
\[
  \CR := \Big\{ \GAMMA \in \PR \colon 
    \text{$\SPT\GAMMA$ is a monotone subset of $\R^d\times\R^d$} \Big\}
\]
(which is \eqref{E:CONE}) is a closed convex cone and
Assumption~\ref{A:COM} is satisfied.
\end{proposition}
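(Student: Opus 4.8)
The plan is to verify the three structural properties (closed, convex, cone) of $\CR$ and then check Assumption~\ref{A:COM}. Each property is really a statement about monotonicity of supports, so the strategy throughout is to pass between a measure $\GAMMA$ and the monotonicity of the set $\SPT\GAMMA \subset \R^d\times\R^d$, using the elementary inequality $\langle x_1-x_2, y_1-y_2\rangle \GS 0$ that characterizes monotone sets.

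\textbf{Closedness.} Suppose $\GAMMA^k \in \CR$ and $\WR(\GAMMA^k,\GAMMA)\to 0$ with $\GAMMA\in\PR$. By Theorem~\ref{T:OPTMIN}, the distance $\WR$ is lower semicontinuous with respect to weak convergence in $\SP_2(\R^{2d})$, and $\WR$-convergence implies weak convergence, so $\GAMMA^k \WEAK \GAMMA$. I would then argue that the support of a weak limit cannot escape the ``monotone region'': if $(x_1,y_1),(x_2,y_2)\in\SPT\GAMMA$, then for every $\EPS>0$ the product sets $B_\EPS(x_i,y_i)$ have positive $\GAMMA$-mass, hence positive $\GAMMA^k$-mass for $k$ large, so there are points $(x_i^k,y_i^k)\in\SPT\GAMMA^k$ within $\EPS$ of $(x_i,y_i)$; monotonicity of $\SPT\GAMMA^k$ gives $\langle x_1^k-x_2^k, y_1^k-y_2^k\rangle\GS0$, and letting $k\to\infty$ then $\EPS\to0$ yields $\langle x_1-x_2,y_1-y_2\rangle\GS0$. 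Thus $\SPT\GAMMA$ is monotone and $\GAMMA\in\CR$.

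\textbf{Cone property.} For $\GAMMA\in\CR$ and $s\GS0$, the scaled plan $s\GAMMA = (\XX,s\YY^1)\#\GAMMA$ has support contained in $\{(x,sy): (x,y)\in\SPT\GAMMA\}$, and $\langle x_1-x_2, sy_1-sy_2\rangle = s\langle x_1-x_2,y_1-y_2\rangle\GS0$, so $s\GAMMA\in\CR$.

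\textbf{Convexity, and Assumption~\ref{A:COM}.} These are the crux, and I expect Assumption~\ref{A:COM} to be the main obstacle since it subsumes convexity (taking $\ZETA^1,\ZETA^2\in C$ with $\zeta_x := \frac12 \zeta^1_x+\frac12\zeta^2_x$ in an appropriate disintegrated sense shows the convex-combination condition, but the assumption is stronger). The key point is to reduce monotonicity of a measure to a \emph{fiberwise} condition via the associated maximal monotone maps of Lemma~\ref{L:ASSO}/Remark~\ref{R:FUNC}. Concretely: if $\ZETA\in\CR$ with disintegration $\zeta_x$, I would use that $\SPT\ZETA$ is monotone to show that for $\RHO$-a.e.\ $x$ the fiber $\SPT\zeta_x$ is contained in the image $u(x)$ of some maximal monotone map $u$ associated to $\ZETA$, where $u(x)$ is closed and convex (Remark~\ref{R:FUNC}). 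Hence $\CONV(\SPT\zeta_x)\subset u(x)$. If now $\eta_x$ satisfies $\SPT\eta_x\subset\CONV(\SPT\zeta_x)\subset u(x)$ for $\RHO$-a.e.\ $x$, then every point $(x,y)\in\SPT\ETA$ lies (up to $\RHO$-null sets of base points, which can be removed by inner regularity as in the proof of Lemma~\ref{L:ASSO}) in $\GRAPH(u)$, which is a monotone set; therefore $\SPT\ETA$ is monotone and $\ETA\in\CR$. The delicate technical points will be (i) showing $\SPT\zeta_x\subset u(x)$ holds for a.e.\ $x$ — this follows because if it failed on a positive-$\RHO$-measure set of $x$, one could produce two points $(x,y),(x',y')$ in $\SPT\ZETA$ violating monotonicity, using that $\GRAPH(u)$ is a \emph{maximal} monotone extension of $\SPT\ZETA$; and (ii) the null-set bookkeeping when passing from a fiberwise statement back to a statement about $\SPT\ETA$, handled exactly as in Lemma~\ref{L:ASSO} via inner regularity of Borel measures. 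Convexity \eqref{E:CCIC} then follows as a special case: given $\GAMMA^1,\GAMMA^2\in\CR$, $s\in[0,1]$, and $\ALPHA\in\AR(\GAMMA^1,\GAMMA^2)$, the fibers of $(1-s)\GAMMA^1\oplus s\GAMMA^2$ are supported in $(1-s)\SPT\gamma^1_x + s\,\SPT\gamma^2_x \subset u(x)$ using convexity of $u(x)$ and that a common maximal monotone map can be chosen for both (or by directly checking the monotonicity inequality on the $3d$-dimensional level for $\ALPHA$), so Assumption~\ref{A:COM} applies and the convex combination lies in $\CR$.
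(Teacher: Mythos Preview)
Your arguments for closedness, the cone property, and Assumption~\ref{A:COM} are correct and essentially match the paper's. For closedness the paper phrases the approximation via Kuratowski convergence of supports under narrow convergence, but your Portmanteau-style argument is equivalent. For Assumption~\ref{A:COM} the paper argues exactly as you do: extend $\SPT\ZETA$ to a maximal monotone graph $u$, use that $u(x)$ is closed and convex, and conclude $\SPT\eta_x \subset u(x)$.

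There is, however, a gap in your convexity argument. You claim Assumption~\ref{A:COM} ``subsumes convexity'' and try to derive \eqref{E:CCIC} from it by invoking ``a common maximal monotone map'' for $\GAMMA^1$ and $\GAMMA^2$. This does not work: Assumption~\ref{A:COM} compares $\ETA$ against a \emph{single} reference $\ZETA \in \CR$, but given two distinct $\GAMMA^1, \GAMMA^2 \in \CR$ there is in general no $\ZETA \in \CR$ with $\SPT\gamma^1_x \cup \SPT\gamma^2_x \subset \CONV(\SPT\zeta_x)$ for $\RHO$-a.e.\ $x$. Equivalently, $\SPT\GAMMA^1 \cup \SPT\GAMMA^2$ need not be monotone, so the two supports need not admit a common maximal monotone extension. (Take $d=1$, $\RHO = \frac12\delta_0 + \frac12\delta_1$, $\GAMMA^1 = \frac12\delta_{(0,0)} + \frac12\delta_{(1,1)}$, $\GAMMA^2 = \frac12\delta_{(0,1)} + \frac12\delta_{(1,0)}$; wait, $\GAMMA^2 \notin \CR$. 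Better: $\GAMMA^1$ supported on $\{(0,0),(1,2)\}$ and $\GAMMA^2$ on $\{(0,3),(1,4)\}$; both monotone, union monotone too. But take $\GAMMA^2$ on $\{(0,1),(1,1)\}$: union contains $(0,1),(1,2)$ and $(0,0),(1,1)$, still monotone. Actually in $d=1$ a union of two monotone graphs over the same base is monotone iff they do not ``cross'', which can fail.) The point is that your route is circular: producing such a $\ZETA$ is tantamount to already knowing the convex combination lies in $\CR$.

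Your parenthetical fallback---``directly checking the monotonicity inequality on the $3d$-dimensional level for $\ALPHA$''---is precisely what is needed, and is what the paper does: for $(x_i,y_i)\in\SPT\GAMMA_s$ one finds nearby $(\hat x_i,\hat y_i^1,\hat y_i^2)\in\SPT\ALPHA$, observes $(\hat x_i,\hat y_i^k)\in\SPT\GAMMA^k$, and writes
\[
  \langle x_1-x_2, y_1-y_2 \rangle
    \GS (1-s)\langle \hat x_1-\hat x_2, \hat y_1^1-\hat y_2^1\rangle
      + s\langle \hat x_1-\hat x_2, \hat y_1^2-\hat y_2^2\rangle
      - O(\EPS),
\]
using monotonicity of each $\SPT\GAMMA^k$ separately. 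So convexity must be proved directly, independently of Assumption~\ref{A:COM}.
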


\begin{proof}
We proceed in three steps.
\medskip

\textbf{Step~1.} Consider first plans $\GAMMA^k$ and $\GAMMA$ as in
Definition~\ref{D:CCC}~(i.). Since
\[
    \WAS(\GAMMA^k,\GAMMA) \LS \WR(\GAMMA^k,\GAMMA)
\]
we have that $\GAMMA^k \longrightarrow \GAMMA$ with respect to the
Wasserstein distance, and thus narrowly; see Proposition~7.1.5 in
\cite{AmbrosioGigliSavare2008}. One can check that $\GAMMA \in \PR$.
Fix $(x_i, y_i)\in\SPT\GAMMA$ with $i=1..2$. Since narrow convergence
of probability measures implies Kuratowski convergence of their
supports (see Proposition~5.1.8 in \cite{AmbrosioGigliSavare2008}),
there exist
\[
  (x_i^k,y_i^k) \in \SPT\GAMMA^k
  \quad\text{such that}\quad
  \lim_{k\rightarrow\infty} (x_i^k,y_i^k) = (x_i,y_i)
\]
for $i=1..2$. Since $\SPT\GAMMA^k$ is monotone for all $k$, we obtain
that
\[
  \langle x_1-x_2, y_1-y_2 \rangle
    = \lim_{k\rightarrow\infty} \langle x_1^k-x_2^k, y_1^k-y_2^k 
      \rangle \GS 0.
\]
Since the $(x^i,y^i)$ were arbitrary, we conclude that $\GAMMA \in
\CR$. \medskip

\textbf{Step~2.} Let now $\GAMMA^1, \GAMMA^2 \in \CR$ and $s\in[0,1]$ be
given. For any $\ALPHA \in \AR(\GAMMA^1, \GAMMA^2)$ we define the
interpolation transport plan $\GAMMA_s := (\XX, (1-s)\YY^1+s\YY^2)\#
\ALPHA \in \PR$. Consider now any point $(x,y) \in \SPT\GAMMA_s$. By
the definition of support of a measure, for all $\EPS>0$ there exists
$(\hat{x}, \hat{y}^1, \hat{y}^2) \in \SPT\ALPHA$ such that
\begin{equation}
\label{E:BALL}
  \hat{x} \in B_\EPS(x)
  \quad\text{and}\quad
  (1-s)\hat{y}^1+s\hat{y}^2 \in B_\EPS(y).
\end{equation}
Indeed, assume there exists an $\EPS>0$ with the property that for all
$(\hat{x}, \hat{y}^1, \hat{y}^2) \in \SPT\ALPHA$ statement \eqref{E:BALL}
is wrong. Then $\GAMMA_s(B_\EPS(x)\times B_\EPS(y)) = 0$, which is a
contradiction to our choice $(x,y) \in \SPT\GAMMA_s$. Now $(\hat{x},
\hat{y}^1, \hat{y}^2) \in \SPT\ALPHA$ implies that
\[
  0 < \ALPHA\Big( B_r(\hat{x})\times B_r(\hat{y}^1)
      \times B_r(\hat{y}^2) \Big) 
    \LS \GAMMA^k\Big( B_r(\hat{x})\times B_r(\hat{y}^k) \Big)
\]
for all $r>0$, with $k=1..2$. We conclude that $(\hat{x},\hat{y}^k) \in
\SPT\GAMMA^k$.

We can now apply the above argument to a pair of points $(x_i,y_i) \in
\SPT\GAMMA_s$, with $i=1..2$. For any $\EPS>0$ we find $(\hat{x}_i,
\hat{y}_i^k) \in \SPT\GAMMA^k$, $i=1..2$, such that
\[
  \hat{x}_i \in B_\EPS(x_i)
  \quad\text{and}\quad
  (1-s)\hat{y}_i^1+s\hat{y}_i^2 \in B_\EPS(y_i).
\]
Since $\SPT\GAMMA^k$ is monotone, we obtain the estimate
\begin{align*}
  \langle x_1-x_2, y_1-y_2 \rangle
    & \GS (1-s) \langle \hat{x}_1-\hat{x}_2, 
        \hat{y}_1^1-\hat{y}_2^1 \rangle
      + s \langle \hat{x}_1-\hat{x}_2, 
        \hat{y}_1^2-\hat{y}_2^2 \rangle
      - 4(M+\EPS)\EPS
\\
    & \GS -4(M+\EPS)\EPS,
\end{align*}
with $M := \max_i\{|x_i|,|y_i|\}$. Since $\EPS>0$ and $(x_i,y_i) \in
\SPT\GAMMA_s$ were arbitrary, we get that $\SPT\GAMMA_s$ is monotone.
Since $\ALPHA \in \AR(\GAMMA^1, \GAMMA^2)$ was arbitrary, we obtain
\eqref{E:CCIC}. In a similar way, one proves that if $\GAMMA \in \CR$,
then also $s\GAMMA \in \CR$ for all $s\GS 0$. 
\medskip

\textbf{Step~3.} In order to prove Assumption~\ref{A:COM}, note
that if $\ZETA \in \CR$, then its support is contained in the graph of
a maximal monotone set-valued map $u$ (we may consider a suitable
extension if necessary). For $\RHO$-a.e.\ $x\in\R^d$ we have $\SPT
\zeta_x \subset u(x)$, which is a closed and convex set; see
\cite{AlbertiAmbrosio1999}. Then $\SPT\eta_x \subset u(x)$ as well
because of assumption \eqref{E:ASSUM}, which implies that the support
of $\ETA$ is monotone and hence $\ETA \in \CR$.
\end{proof}

\begin{remark}\label{R:REMQ}
Proposition~\ref{P:BOREL} implies that whenever $\UPS \in \PR$ is
induced by a map, i.e., there exists a $\BT \in \L^2(\R^d,\RHO)$
taking values in $\R^d$ such that
\[
  \UPS(dx,dy) = \delta_{\BT(x)}(dy) \,\RHO(dx),
\]
then the projection $\PP_\CR(\UPS)$ is induced by a map as well:
\[
  \PP_\CR(\UPS)(dx,dz)
    = \delta_{\ORZ_\UPS(x,\BT(x))}(dz) \,\RHO(dx).
\]
Notice that if $\RHO$ is absolutely continuous with respect to the
Lebesgue measure, then all monotone transport plans in $\CR$ are in
fact induced by maps. This follows in the same way as for optimal
transport plans (which are contained in the subdifferentials of convex
functions, thus monotone): the set of points where a (maximal)
monotone set-valued map is multi-valued is a Lebesgue null set; see
\cite{AlbertiAmbrosio1999}.
\end{remark}


\section{Energy Minimization: Pressureless Gases}

For our variational time discretization of the pressureless gas
dynamics equations \eqref{E:PGD}, we divide the time interval $[0,T]$
into subintervals of length $\tau>0$. For every timestep, we minimize
the work \ref{D:MAC} over the cone of monotone transport plans. As
explained in Section~\ref{SS:MAC}, this reduces to a metric
projection, which further simplifies to a minimization over a closed
convex cone in a Hilbert space: we may consider monotone transport
\emph{maps} instead of plans because of Proposition~\ref{P:BOREL}.


\subsection{Configuration Manifold}

Going back to our original setup, we will consider monotone transport
maps that are defined on measures $\MU \in \PR$ representing the
distribution of mass and \emph{velocity}, not representing transport
plans.

\begin{definition}[Configurations]\label{D:CONFIG}
For any $\RHO \in \SP_2(\R^d)$ and $\MU \in \PR$, let
\[
  \CMU := \Big\{ \ORT \in \L^2(\R^{2d},\MU) \colon
    (\XX,\ORT)\#\MU \in \CR \Big\}.
\]
\end{definition}

\begin{lemma}[Closed Convex Cone]
$\CMU$ is a closed convex cone in $\L^2(\R^{2d},\MU)$.
\end{lemma}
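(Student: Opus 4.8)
The plan is to regard $\C_\MU$ as the preimage of the cone $\CR$ under the map
\[
  \Psi\colon\L^2(\R^{2d},\MU)\longrightarrow\PR,
  \qquad
  \Psi(\ORT):=(\XX,\ORT)\#\MU,
\]
and to transfer the three defining properties of a closed convex cone through $\Psi$, using that $\CR$ is already known to be one by Proposition~\ref{P:CONE}. First I would check that $\Psi$ is well defined into $\PR$: the measure $(\XX,\ORT)\#\MU$ is a Borel probability on $\R^{2d}$ with first marginal $\XX\#\MU=\RHO$, and its second moment equals $\int_{\R^d}|x|^2\,\RHO(dx)+\|\ORT\|_{\L^2(\R^{2d},\MU)}^2<\infty$. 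I would also note that $\C_\MU$ is nonempty, since $0\in\C_\MU$: the measure $(\XX,0)\#\MU=(\ID,0)\#\RHO$ has support contained in $\R^d\times\{0\}$, which is a monotone set. The structural fact behind everything is that $\Psi$ intertwines the vector space operations of $\L^2(\R^{2d},\MU)$ with the operations of Definition~\ref{D:AM}: one has $\Psi(s\ORT)=s\,\Psi(\ORT)$ for every $s\in\R$ (both sides being the push-forward of $\MU$ under $(x,\xi)\mapsto(x,s\ORT(x,\xi))$), and, for $\ORT^1,\ORT^2\in\L^2(\R^{2d},\MU)$ and $s\in[0,1]$,
\[
  \Psi\big((1-s)\ORT^1+s\ORT^2\big)\in(1-s)\Psi(\ORT^1)\oplus s\Psi(\ORT^2),
\]
because $\ALPHA:=(\XX,\ORT^1,\ORT^2)\#\MU$ lies in $\AR(\Psi(\ORT^1),\Psi(\ORT^2))$, the rescaled plan $(\XX,(1-s)\YY^1,s\YY^2)\#\ALPHA$ lies in $\AR\big((1-s)\Psi(\ORT^1),s\Psi(\ORT^2)\big)$, and its push-forward under $(\XX,\YY^1+\YY^2)$ is $\Psi((1-s)\ORT^1+s\ORT^2)$.

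Granting this, the cone property is immediate: for $\ORT\in\C_\MU$ and $s\GS 0$ we have $\Psi(s\ORT)=s\,\Psi(\ORT)\in\CR$ because $\CR$ is a cone, hence $s\ORT\in\C_\MU$. For convexity, let $\ORT^1,\ORT^2\in\C_\MU$ and $s\in[0,1]$; then $\Psi(\ORT^k)\in\CR$, so by convexity of $\CR$ (see \eqref{E:CCIC} and Proposition~\ref{P:CONE}) the set $(1-s)\Psi(\ORT^1)\oplus s\Psi(\ORT^2)$ is contained in $\CR$, and therefore $\Psi((1-s)\ORT^1+s\ORT^2)\in\CR$, that is, $(1-s)\ORT^1+s\ORT^2\in\C_\MU$. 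Equivalently, this last inclusion is precisely the statement that $\SPT\GAMMA_s$ is monotone, established in Step~2 of the proof of Proposition~\ref{P:CONE} with $\ALPHA$ as above.

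Closedness is the only point requiring genuine care. Let $\ORT^k\longrightarrow\ORT$ in $\L^2(\R^{2d},\MU)$ with $\ORT^k\in\C_\MU$, and set $\GAMMA^k:=\Psi(\ORT^k)\in\CR$ and $\GAMMA:=\Psi(\ORT)\in\PR$. I would upgrade the $\L^2$-convergence of the maps to $\WR$-convergence of the induced measures: disintegrating $\MU(dx,d\xi)=\mu_x(d\xi)\,\RHO(dx)$, the measures $\GAMMA^k$ and $\GAMMA$ disintegrate over $\RHO$ with fibers $\ORT^k(x,\cdot)\#\mu_x$ and $\ORT(x,\cdot)\#\mu_x$, and the fiberwise transport plan $\big(\ORT^k(x,\cdot),\ORT(x,\cdot)\big)\#\mu_x$ yields
\[
  \WAS\big(\ORT^k(x,\cdot)\#\mu_x,\ORT(x,\cdot)\#\mu_x\big)^2
    \LS\int_{\R^d}|\ORT^k(x,\xi)-\ORT(x,\xi)|^2\,\mu_x(d\xi);
\]
integrating against $\RHO$ gives $\WR(\GAMMA^k,\GAMMA)^2\LS\|\ORT^k-\ORT\|_{\L^2(\R^{2d},\MU)}^2\longrightarrow 0$. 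Since $\CR$ is closed with respect to $\WR$ (Proposition~\ref{P:CONE}, in the sense of Definition~\ref{D:CCC}~(i.)) and $\GAMMA\in\PR$, it follows that $\GAMMA\in\CR$, that is, $\ORT\in\C_\MU$. The main obstacle, such as it is, sits right here: one must ensure that convergence of the maps in $\L^2$ transfers to convergence of the measures $\Psi(\ORT^k)$ in the strong fibered distance $\WR$ and not merely in the weaker Wasserstein distance on $\SP_2(\R^{2d})$ — which is exactly what the fiberwise estimate above supplies. (If one only extracted narrow convergence, one would instead have to invoke that $\CR$ is closed under narrow convergence, which is the content of Step~1 of the proof of Proposition~\ref{P:CONE}.) The cone and convexity parts, by contrast, are routine manipulations of push-forwards once Proposition~\ref{P:CONE} and Definition~\ref{D:AM} are at hand.
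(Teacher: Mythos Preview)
Your proof is correct and follows essentially the same approach as the paper: both pull back the closed convex cone structure of $\CR$ through the map $\ORT\mapsto(\XX,\ORT)\#\MU$, using the admissible plan $(\XX,\ORT^1,\ORT^2)\#\MU$ for convexity and the estimate $\WR(\GAMMA^k,\GAMMA)\LS\|\ORT^k-\ORT\|_{\L^2(\R^{2d},\MU)}$ for closedness. The paper obtains that estimate directly from $(\XX,\ORT^k,\ORT)\#\MU\in\AR(\GAMMA^k,\GAMMA)$ and Theorem~\ref{T:OPTMIN}, whereas you unpack it via fiberwise disintegration, but these are the same computation.
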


\begin{proof}
We observe first that for any $\ORT^1, \ORT^2 \in \CMU$, we have
that
\begin{equation}\label{E:ADMA}
  (\XX,\ORT^1,\ORT^2)\#\MU \in \AR(\GAMMA^1,\GAMMA^2)
\end{equation}
where $\GAMMA^n := (\XX,\ORT^n)\#\MU \in \CR$ with $n=1..2$. This
implies the estimate
\begin{equation}\label{E:INDUC}
  \WR(\GAMMA^1,\GAMMA^2) \LS \|\ORT^1-\ORT^2\|_{\L^2(\R^{2d},\MU)}.
\end{equation}
Consider now a sequence $\ORT^k \longrightarrow \ORT$ in
$\L^2(\R^{2d},\MU)$ and define $\GAMMA^k := (\XX,\ORT^k)\#\MU$. Let
$\GAMMA := (\XX,\ORT)\#\MU$ and notice that $\WR(\GAMMA^k,\GAMMA)
\longrightarrow 0$ because of \eqref{E:INDUC}. If now $\ORT^k \in
\CMU$ and thus $\GAMMA^k \in \CR$ for all $k$, then also $\GAMMA \in
\CR$ since $\CR$ is closed with respect to $\WR$; see
Proposition~\ref{P:CONE}. This proves that $\ORT \in \CMU$. For any
$s \in [0,1]$ we have
\[
  \GAMMA_s := \big( \XX,(1-s)\ORT^1+s\ORT^2 \big)\#\MU
    = \big( \XX,(1-s)\YY^1+s\YY^2 \big)\#\ALPHA^{1,2},
\]
where $\ALPHA^{1,2} := (\XX,\ORT^1,\ORT^2)\#\MU$ with $\ORT^n \in
\CMU$ and $n=1..2$. Using \eqref{E:ADMA}, we conclude that $\GAMMA_s
\in (1-s)\GAMMA^1\oplus s\GAMMA^2$ (see Definition~\ref{D:AM}), which
is in $\CR$, by Proposition~\ref{P:CONE}. Hence $(1-s)\ORT^1+s\ORT^2
\in \CMU$. The proof that $\CMU$ is a cone is analogous.
\end{proof}

\begin{remark}\label{R:SIM}
For every $\tau>0$ let $\ORZ_\tau$ be the unique map defined in
Proposition~\ref{P:BOREL} representing the metric projection of
$\UPS_\tau := (\XX,\XX+\tau\VV)\#\MU$ onto the closed convex cone
$\CR$. We define $\ORT_\tau(x,\xi) := \ORZ_\tau(x, x+\tau\xi)$
for $\MU$-a.e.\ $(x,\xi) \in \R^{2d}$ so that
\begin{equation}\label{E:ZTOT}
  (\XX,\YY,\ORZ_\tau)\#\UPS_\tau 
    = (\XX,\XX+\tau\VV,\ORT_\tau)\#\MU.
\end{equation}
Then $\ORT_\tau$ must be the uniquely determined metric projection of
$\XX+\tau\VV \in \L^2(\R^{2d},\MU)$ onto the cone $\CMU$ (see
\cite{Zarantonello1971} for more information about metric projections
in Hilbert spaces). Indeed for any $\ORS \in \CMU$ (for which
$\GAMMA_\ORS := (\XX,\ORS)\#\MU \in \CR$) we have
\begin{align*}
  \|(\XX+\tau\VV)-\ORT_\tau\|_{\L^2(\R^{2d},\MU)}   
    &= \|\YY-\ORZ_\tau\|_{\L^2(\R^{2d},\UPS_\tau)}
\\
    &= \WR\big( \UPS_\tau,\PP_\CR(\UPS_\tau) \big)
\\
    &\LS \WR(\UPS_\tau,\GAMMA_\ORS)
      \LS \|(\XX+\tau\VV)-\ORS\|_{\L^2(\R^{2d},\MU)}.
\end{align*}
The first equality follows from definition \eqref{E:ZTOT} and the
second one from \eqref{E:OPM}. The subsequent inequality is true
because $\PP_\CR(\UPS_\tau)$ is closest to $\UPS_\tau$ in $\CR$ with
respect to $\WR$, by definition. Finally, we have used that
$(\XX,\XX+\tau\VV,\ORS)\#\MU \in \AR(\UPS_\tau, \GAMMA_\ORS)$. We will
write $\ORT_\tau = \PP_\CMU(\XX+\tau\VV)$. The map $\ORT_\tau$
is uniquely determined by
\begin{align}
  \int_{\R^{2d}} \langle (x+\tau\xi)-\ORT_\tau(x,\xi), 
    \ORT_\tau(x,\xi) \rangle \,\MU(dx,d\xi) &= 0,
\label{E:OPI1}\\
  \int_{\R^{2d}} \langle (x+\tau\xi)-\ORT_\tau(x,\xi),
    \ORS(x,\xi) \rangle \,\MU(dx,d\xi) &\LS 0
  \quad\text{for all $\ORS \in \CMU$.}
\label{E:OPI2}
\end{align}
We just need to combine Lemma~1.1 in \cite{Zarantonello1971} with
Remark~\ref{R:CONE}.
\end{remark}

\begin{remark}\label{R:EQUIV}
A map $\ORT \in \L^2(\R^{2d},\MU)$ is in $\CMU$ if and only if the
following statement is true: There exists a Borel set $N_\ORT \subset
\R^{2d}$ with $\MU(N_\ORT) = 0$ such that
\begin{equation}\label{E:EQMONC}
  \langle \ORT(x_1,\xi_1)-\ORT(x_2,\xi_2), x_1-x_2 \rangle \GS 0
  \quad\text{for all $(x_i,\xi_i) \in \R^{2d}\setminus N_\ORT$}
\end{equation}
with $i=1..2$. Indeed consider any $\ORT \in \CMU$ and $\GAMMA_\ORT
:= (\XX,\ORT)\#\MU \in \CR$. Then
\begin{gather*}
  N_\ORT := \Big\{ (x,\xi)\in\R^{2d} \colon \big( x,\ORT(x,\xi) \big) 
    \not\in \SPT\GAMMA_\ORT \Big\}
  \quad\text{satisfies}
\\
  \MU(N_\ORT) 
    = \MU\Big( (\XX,\ORT)^{-1} ( \R^{2d}\setminus\SPT\GAMMA_\ORT ) \Big)
    = \GAMMA_\ORT( \R^{2d}\setminus\SPT\GAMMA_\ORT ) 
    = 0
\end{gather*}
since $\GAMMA_\ORT$ is inner regular (being a finite Borel measure on
a locally compact Hausdorff space with countable basis; see
\cite{Folland1999}). As $\SPT\GAMMA_\ORT$ is monotone,
\eqref{E:EQMONC} follows.

Conversely, suppose $\ORT \in \L^2(\R^{2d},\MU)$ satisfies
\eqref{E:EQMONC}. Let $\GAMMA_\ORT := (\XX,\ORT)\#\MU$. For any
$(x_i,y_i) \in \SPT\GAMMA_\ORT$ with $i=1..2$ and any $\EPS>0$ we have
\begin{align*}
  0 &< \GAMMA_\ORT\big( B_\EPS(x_i)\times B_\EPS(y_i) \big)
\\
    &= \MU\bigg( \Big\{ (x,\xi)\in\R^{2d} \colon \big( x,\ORT(x,\xi) \big)
      \in B_\EPS(x_i)\times B_\EPS(y_i) \Big\} \bigg),
\end{align*}
by definition of support. Therefore there exist $(\hat{x}_i,\hat{\xi}_i)
\in \R^{2d}$ such that
\[
  \big( \hat{x}_i, \ORT(\hat{x}_i,\hat{\xi}_i) \big) 
    \in B_\EPS(x_i)\times B_\EPS(y_i)
  \quad\text{for $i=1..2$.}
\]
We may assume that $(\hat{x}_i, \hat{\xi}_i) \not\in N_\ORT$, where
$N_\ORT$ is the null set in \eqref{E:EQMONC}. Then
\[
  \langle y_1-y_2,x_1-x_2 \rangle
    \GS \langle \ORT(\hat{x}_1,\hat{\xi}_1)-\ORT(\hat{x}_2,\hat{\xi}_2), 
      \hat{x}_1-\hat{x}_2 \rangle - M\EPS,
\]
with $M := 4\max_{i=1..2}\{|x_i|,|y_i|\} + 2\EPS$. Since $(x_i,y_i)
\in \SPT\GAMMA_\ORT$ and $\EPS>0$ are arbitrary, we conclude that
$\SPT\GAMMA_\ORT$ is monotone, and therefore $\ORT \in \CMU$.
\end{remark}

The cone $\CMU$ is the set of all possible configurations, with a
reference configuration determined by $\MU \in \PR$. Configurations do
not permit any interpenetration of matter since the maps are monotone.
They do admit, however, the concentration of mass if the transport is
not strictly monotone. The fluid element at location/velocity $(x,\xi)
\in \R^d\times\R^d$ will never split because its final position is a
\emph{function} of $(x,\xi)$.

\begin{definition}[Tangent Cone]
Let $\RHO \in \SP_2(\R^d)$ and $\MU \in \PR$ be given. The tangent
cone of $\CMU$ at the configuration $\ORT \in \CMU$ is defined
as
\[
  \TAN_\ORT \CMU := \overline{ \Big\{ \ORV \in \L^2(\R^{2d},\MU) \colon
    \text{there exists $\EPS>0$ with $\ORT+\EPS\ORV \in \CMU$} \Big\}
      }^{\L^2(\R^{2d},\MU)}.
\]
%
\end{definition}

The set $\TAN_\ORT \CMU$ is a closed convex cone with vertex at the
origin (that is, the zero map) containing $\CMU-\ORT$. We refer the
reader to \textsection{2} in \cite{Zarantonello1971} for additional
information on tangent cones (also: support cones) to closed convex
sets in Hilbert spaces.

For any $\ORW \in \L^2(\R^{2d},\MU)$ there exists a unique metric
projection onto the closed convex cone $\TAN_\ORT \CMU$, which we will
denote by $\PP_{\TAN_\ORT \CMU}(\ORW)$. It can be characterized by the
following property: for all $\ORU \in \L^2(\R^{2d},\MU)$ we have
\begin{align*}
  & \qquad\qquad \ORU = \PP_{\TAN_\ORT \CMU}(\ORW)
    \quad\text{if and only if}
\\
  & \int_{\R^{2d}} \langle \ORW(x,\xi)-\ORU(x,\xi), 
      \ORU(x,\xi)-\ORV(x,\xi) \rangle \,\MU(dx,d\xi) \GS 0
    \quad\text{for all $\ORV \in \TAN_\ORT \CMU$.}
\end{align*}
Since $\TAN_\ORT \CMU$ is a cone, the latter condition is equivalent
to
\begin{align*}
  \int_{\R^{2d}} \langle \ORW(x,\xi)-\ORU(x,\xi), \ORU(x,\xi) \rangle
      \,\MU(dx,d\xi) &= 0
\\
  \int_{\R^{2d}} \langle \ORW(x,\xi)-\ORU(x,\xi), \ORV(x,\xi) \rangle
      \,\MU(dx,d\xi) &\LS 0
    \quad\text{for all $\ORV \in \TAN_\ORT \CMU$.}
\end{align*}
We also recall the following fact: for any $\ORT \in \CMU$ we have
\begin{equation}\label{E:INITIL}
  \|\ORU\|_{\L^2(\R^{2d},\MU)}^{-1}
    \Big\| \Big( \PP_\CMU(\ORT+\ORU) - \ORT \Big) 
      - \PP_{\TAN_\ORT \CMU}(\ORU) 
    \Big\|_{\L^2(\R^{2d},\MU)} \longrightarrow 0
\end{equation}
as $\ORU\rightarrow 0$ over any locally compact cone of increments;
see Lemma~4.6 of \cite{Zarantonello1971}. The metric projection
$\PP_{\TAN_\ORT \CMU}$ is therefore the differential of
$\PP_\CMU$ at $\ORT \in \CMU$.

Let us collect some additional
properties of the tangent cone.

\begin{proposition}[Tangent Cone]\label{P:TANG}
With $\ORT \in \CMU$ and $\GAMMA_\ORT := (\XX,\ORT)\#\MU$, assume
\begin{equation}
  \langle y_1-y_2, x_1-x_2 \rangle \GS \alpha |x_1-x_2|^2
  \quad\text{for all $(x_i,y_i) \in \SPT\GAMMA_\ORT$}
\label{E:STRICT}
\end{equation}
and $i=1..2$, where $\alpha>0$ is some constant. Then we have:
\begin{enumerate}
\renewcommand{\labelenumi}{(\roman{enumi}.)}
\item Every $\BU \in \L^2(\R^d,\RHO)$ is contained in $\TAN_\ORT
  \CMU$.
\item For all $\ORV \in \TAN_\ORT \CMU$ we also have $-\ORV \in
  \TAN_\ORT \CMU$.
\end{enumerate}
In particular, the tangent cone $\TAN_\ORT \CMU$ is a closed
subspace of $\L^2(\R^{2d},\MU)$.
\end{proposition}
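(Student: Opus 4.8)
The plan is to prove the two itemized claims; since $\TAN_\ORT\C_\MU$ is by construction a closed convex cone, claim (ii) says it is symmetric under negation, and a closed convex cone with that property is automatically a closed linear subspace, so together with (i) the proposition follows. Everything is driven by the pointwise criterion of Remark~\ref{R:EQUIV}: a map $\ORS\in\L^2(\R^{2d},\MU)$ lies in $\C_\MU$ if and only if there is a $\MU$-null set $N$ with $\langle\ORS(x_1,\xi_1)-\ORS(x_2,\xi_2),x_1-x_2\rangle\GS0$ for all $(x_i,\xi_i)\notin N$; moreover, inspecting that proof, the null set $N_\ORT$ attached to the reference map $\ORT$ satisfies the \emph{strict} inequality $\langle\ORT(x_1,\xi_1)-\ORT(x_2,\xi_2),x_1-x_2\rangle\GS\alpha|x_1-x_2|^2$ off $N_\ORT$, because for such points $(x_i,\ORT(x_i,\xi_i))\in\SPT\GAMMA_{\ORT}$ and \eqref{E:STRICT} applies. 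This strict margin $\alpha>0$ is what makes the perturbation arguments work.

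For (i): let $\VF\colon\R^d\to\R^d$ be Lipschitz with constant $L$, regarded as an element of $\L^2(\R^{2d},\MU)$ through $(x,\xi)\mapsto\VF(x)$ — it belongs to $\L^2$ since $\RHO=\XX\#\MU$ has finite second moment. Taking $\EPS:=\alpha/L$ (any $\EPS>0$ if $L=0$), the bound off $N_\ORT$ becomes $\langle(\ORT+\EPS\VF)(x_1,\xi_1)-(\ORT+\EPS\VF)(x_2,\xi_2),x_1-x_2\rangle\GS(\alpha-\EPS L)|x_1-x_2|^2\GS0$, so $\ORT+\EPS\VF\in\C_\MU$ by Remark~\ref{R:EQUIV}, i.e.\ $\VF\in\TAN_\ORT\C_\MU$. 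Since bounded Lipschitz maps are dense in $\L^2(\R^d,\RHO)$ and $\TAN_\ORT\C_\MU$ is closed, $\L^2(\R^d,\RHO)\subseteq\TAN_\ORT\C_\MU$.

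For (ii): since $\TAN_\ORT\C_\MU$ is the closure of the set of increments $\ORV$ for which $\ORT+\EPS\ORV\in\C_\MU$ for some $\EPS>0$, by closedness it suffices to produce $-\ORV\in\TAN_\ORT\C_\MU$ for such an increment; write $\ORS:=\ORT+\EPS_0\ORV\in\C_\MU$, so $-\ORV=\EPS_0^{-1}(\ORT-\ORS)$ and it is enough to show $\ORT-\ORS\in\TAN_\ORT\C_\MU$. First approximate $\ORS$ in $\L^2(\R^{2d},\MU)$ by maps $\ORS_n\in\C_\MU$ which are in addition Lipschitz with respect to $x$, i.e.\ $\langle\ORS_n(x_1,\xi_1)-\ORS_n(x_2,\xi_2),x_1-x_2\rangle\LS L_n|x_1-x_2|^2$ off a null set; this is obtained by replacing the maximal monotone map associated to $\GAMMA_{\ORS}$ by its Yosida regularizations, which are monotone, globally Lipschitz, and converge back to it, the convergence being promoted to $\L^2(\MU)$ by dominated convergence with the minimal-section majorant (which lies in $\L^2(\RHO)$ since it is dominated pointwise by $|\ORS|$). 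Now fix $t_n>0$ so small that $(1+t_n)\alpha-t_nL_n\GS0$. Off $N_\ORT\cup N_{\ORS_n}$ we then get $\langle M_n(x_1,\xi_1)-M_n(x_2,\xi_2),x_1-x_2\rangle\GS\big((1+t_n)\alpha-t_nL_n\big)|x_1-x_2|^2\GS0$ for $M_n:=(1+t_n)\ORT-t_n\ORS_n=\ORT+t_n(\ORT-\ORS_n)$, so $M_n\in\C_\MU$ by Remark~\ref{R:EQUIV}; since $M_n=\ORT+t_n(\ORT-\ORS_n)$ with $t_n>0$, the increment $\ORT-\ORS_n$ lies in $\TAN_\ORT\C_\MU$ by definition. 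Letting $n\to\infty$ and using $\ORS_n\to\ORS$ together with closedness gives $\ORT-\ORS\in\TAN_\ORT\C_\MU$, hence $-\ORV\in\TAN_\ORT\C_\MU$, and (ii) follows.

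The delicate point is the Lipschitz-in-$x$ approximation in (ii). If $\RHO\ll\LEB^d$ it is immediate, because then every plan in $\CR$ is induced by a single-valued (indeed $\xi$-independent) monotone map, so $\ORS=\ORS(x)$ and the Yosida regularization applies verbatim. For general $\RHO$ one must take care that the map associated to $\GAMMA_{\ORS}$ may be genuinely multi-valued on a set of positive $\RHO$-measure — necessarily Lebesgue-null and $\HAUS^{d-1}$-rectifiable, by Remark~\ref{R:FUNC} — and the regularization has to be performed so as neither to destroy the $\xi$-dependence of $\ORS$ on that set nor to break monotonicity across distinct $x$-fibres; this bookkeeping is the only genuinely technical part, the rest being the soft Hilbert-space geometry already set up in Remark~\ref{R:EQUIV} and Proposition~\ref{P:PROJ}.
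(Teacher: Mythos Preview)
Your argument for (i) is correct and matches the paper's approach (density of Lipschitz/smooth maps plus the $\alpha$-margin). Your reduction for (ii) --- write $-\ORV=\EPS_0^{-1}(\ORT-\ORS)$ with $\ORS\in\C_\MU$ and show $\ORT-\ORS\in\TAN_\ORT\C_\MU$ --- is also the same as the paper's Step~3.

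The gap is in the approximation $\ORS_n\to\ORS$. The Yosida regularization $u_\lambda$ of the maximal monotone map $u$ associated to $\GAMMA_\ORS$ is a \emph{single-valued} function of $x$, and as $\lambda\to0$ it converges pointwise to the minimal-norm section $u^0(x)$, not to $\ORS(x,\xi)$. Your own dominated-convergence sentence betrays this: you are controlling $|u_\lambda|$ by $|u^0|$, and the limit you obtain is $u^0$, not $\ORS$. When the disintegration $\gamma_x$ of $\GAMMA_\ORS$ is not a Dirac on a set of positive $\RHO$-measure, $\ORS$ genuinely depends on $\xi$ there and $\ORS_n\to u^0\neq\ORS$ in $\L^2(\R^{2d},\MU)$. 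You acknowledge this as ``bookkeeping'', but it is not: any approximation satisfying your upper bound $\langle\ORS_n(x_1,\xi_1)-\ORS_n(x_2,\xi_2),x_1-x_2\rangle\LS L_n|x_1-x_2|^2$ together with the lower bound $\GS0$ (monotonicity) forces, at $x_1=x_2$, no constraint, but across nearby $x$-fibres it pinches the range, and there is no evident way to keep the full multi-valued range of $u(x)$ while imposing a uniform Lipschitz ceiling.

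The paper avoids this altogether by \emph{not} making the approximation Lipschitz. Instead it projects $-\ORS(x,\xi)$ onto the translated, dilated set $-(n+1)\BAR(\GAMMA)(x)+nu(x)$ and then replaces the barycenter $\BAR(\GAMMA)$ by a smooth $\YF^m$. The resulting $\ORS^{n,m}(x,\xi)$ lies in $-(n+1)\YF^m(x)+nu(x)$: the $nu(x)$ part is monotone in $x$ and contributes \emph{nonnegatively} to the monotonicity check (no Lipschitz bound needed), while only the single-valued shift $-(n+1)\YF^m$ is Lipschitz and is absorbed by the $\alpha$-margin of $\ORT$. This split ``monotone multi-valued part $+$ smooth single-valued shift'' is the missing idea; it preserves the $\xi$-dependence through the projection while still allowing the perturbation $\ORT+\EPS\ORS^{n,m}\in\C_\MU$ for small $\EPS$.
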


\begin{proof}
We divide the proof into three steps.
\medskip

\textbf{Step~1.} Note first that any finite Borel measure $\nu$ on a
locally compact Hausdorff space $\Omega$ with countable base is inner
regular. Therefore the space of all continuous functions with compact
support is dense in $\L^2(\Omega,\nu)$. We refer the reader to
\cite{Folland1999} for further details. If $\Omega$ is also a vector
space, then the same statement is true for \emph{smooth} functions with
compact support. For every $\BU \in \L^2(\R^d,\RHO)$ there exists thus a
sequence of smooth functions $\BU^m$ with compact support with $\BU^m
\longrightarrow \BU$ strongly in $\L^2(\R^d,\RHO)$ and therefore in
$\L^2(\R^{2d},\MU)$. We claim that $\ORT+\EPS\BU^m \in \CMU$ for
$\EPS>0$ sufficiently small. To prove this, let $N_\ORT$ be the null
set of Remark~\ref{R:EQUIV}. Then
\begin{align*}
  & \big\langle \big( \ORT(x_1,\xi_1)+\EPS\BU^m(x_1) \big) 
    - \big( \ORT(x_2,\xi_2)+\EPS\BU^m(x_2) \big), x_1-x_2 \big\rangle
\\
  & \vphantom{\Big(} \qquad 
    = \langle \ORT(x_1,\xi_1)-\ORT(x_2,\xi_2), x_1-x_2 \rangle
      - \EPS \langle \BU^m(x_1)-\BU^m(x_2), x_1-x_2 \rangle
\\
  & \qquad
    \GS \Big( \alpha-\EPS \|D\BU^m\|_{\L^\infty(\R^d)} \Big) 
      |x_1-x_2|^2 
    \GS 0
\end{align*}
for all $(x_i,\xi_i) \in \R^{2d}\setminus N_\ORT$ with $i=1..2$, for
$\EPS>0$ small. This proves part (i.). 
\medskip

\textbf{Step~2.} We now show that for every $\ORS \in \CMU$ we
also have $-\ORS \in \TAN_\ORT \CMU$. The argument is a modification
of the proof of Proposition~4.28 of \cite{Gigli2004}. We first define
the plan $\GAMMA := (\XX,\ORS)\#\MU \in \CR$. Since $\SPT\GAMMA$ is a
monotone subset of $\R^d\times \R^d$, there exists a maximal monotone
extension of it, which we denote by $\Gamma$. Let $u$ be the
corresponding maximal monotone set-valued map, defined as
\[
  u(x) := \{ y\in\R^d \colon (x,y)\in\Gamma \}
  \quad\text{for all $x\in\R^d$.}
\]
It is well-known that for every $x\in\R^d$ the image $u(x)$ is a
closed and convex subset of $\R^d$; see \cite{AlbertiAmbrosio1999}.
Consider the disintegration of the transport plan
\[
  \GAMMA(dx,dy) =: \gamma_x(dy) \,\RHO(dx).
\]
Then we have that $\gamma_x = \ORS(x,\cdot)\#\mu_x$ for $\RHO$-a.e.\
$x\in\R^d$, with $\MU(dx,d\xi) = \mu_x(d\xi) \,\RHO(dx)$ the
disintegration of $\MU$. Let $\check{\gamma}_x := (-\ORS(x,
\cdot))\#\mu_x$ and $-\GAMMA = (\XX,-\ORS)\#\MU$ so that
\[
  (-\GAMMA)(dx,dy) = \check{\gamma}_x(dy) \,\RHO(dx).
\]

For $\RHO$-a.e.\ $x\in\R^d$ we denote by $A_x \subset u(x)$ the closed
convex hull of $\SPT\gamma_x$. For such an $x$ there are two
possibilities: either $\gamma_x$ is a Dirac measure and $A_x =
\{\BAR(\GAMMA)(x)\}$ (recall Definition~\ref{D:BAR}), or $A_x$ (and
therefore $u(x)$) contains $\BAR(\GAMMA)(x)$ as an interior point with
respect to the relative topology. In the latter case, the subspace
\[
  L_x := \bigcup_{n\in\N} n\big( -\BAR(\GAMMA)(x) + u(x) \big)
\]
has the property that $\SPT\gamma_x \subset \BAR(\GAMMA)(x) + L_x$,
and hence $\SPT\check{\gamma}_x \subset -\BAR(\GAMMA)(x) + L_x$. Let
$\PP^n_x$ be the metric projection of $\R^d$ onto the closed convex
set
\begin{equation}\label{E:RANGE}
  -(n+1)\BAR(\GAMMA)(x) + nu(x)
\end{equation}
for $\RHO$-a.e.\ $x\in\R^d$ and $n\in\N$. 
Since projections are contractions, we have
\begin{align*}
  |\PP^n_x(y)-y|
    & \LS \big|\PP^n_x(y) - \PP^n_x\big( -\BAR(\GAMMA)(x) \big)\big|
      + \big|\big( -\BAR(\GAMMA)(x) \big) - y \big|
\\
    & \LS 2 \big|y - \big( -\BAR(\GAMMA)(x) \big)\big|
\end{align*}
for all $y\in\R^d$. We used that $-\BAR(\GAMMA)(x)$ is contained in
\eqref{E:RANGE}. We have
\begin{align*}
  & \int_{\R^d} \bigg( \int_{\R^d} \big|y - \big( -\BAR(\GAMMA)(x) 
      \big)\big|^2 \,\check{\gamma}_x(dy) \bigg) \,\RHO(dx)
\\
  & \qquad
    \LS 4 \int_{\R^d} \bigg( \int_{\R^d} |y|^2 
      \,\check{\gamma}_x(dy) \bigg) \,\RHO(dx)
    = 4 \int_{\R^{2d}} |\ORS(x,\xi)|^2 \,\MU(dx,d\xi),
\end{align*}
by definition of $\BAR(\GAMMA)(x)$ and Jensen's inequality. We now
define the maps
\[
  \ORS^n(x,\xi) := \PP^n_x\big(-\ORS(x,\xi) \big) 
  \quad\text{for $\MU$-a.e.\ $(x,\xi) \in \R^{2d}$.}
\]
Using dominated convergence, we get for $n\rightarrow\infty$ that
\begin{equation}\label{E:CONV1}
  \|\ORS^n-(-\ORS)\|_{\L^2(\R^{2d},\MU)}^2 =
    \int_{\R^d} \bigg( \int_{\R^d} |\PP^n_x(y)-y|^2 
      \,\check{\gamma}_x(dy) \bigg) \,\RHO(dx)
    \longrightarrow 0,
\end{equation}
because $\PP^n_x(y) \longrightarrow y$ for all $y\in -\BAR(\GAMMA)(x)
+ L_x$ and for $\RHO$-a.e.\ $x\in\R^d$ such that $\gamma_x$ is not a
Dirac measure. We used again that $\PP^n_x(-\BAR(\GAMMA)(x)) =
-\BAR(\GAMMA)(x)$.

As discussed in Step~1, there exists a sequence of smooth, compactly
supported functions $\BT^m$ such that $\BT^m \longrightarrow
\BAR(\GAMMA)$ in $\L^2(\R^d,\RHO)$. We now define
\begin{equation}\label{E:SNM}
  \ORS^{n,m}(x,\xi) := \ORS^n(x,\xi) 
    + (n+1) \big( \BAR(\GAMMA)(x)-\BT^m(x) \big).
\end{equation}
for $\MU$-a.e.\ $(x,\xi) \in \R^{2d}$. We get for $m\rightarrow\infty$
(with $n$ fixed) that
\begin{equation}\label{E:CONV2}
  \|\ORS^{n,m}-\ORS^n\|_{\L^2(\R^{2d},\MU)}^2
    = (n+1)^2 \int_{\R^d} |\BAR(\GAMMA)(x)-\BT^m(x)|^2 \,\RHO(dx)
    \longrightarrow 0.
\end{equation}
Combining \eqref{E:CONV1} and \eqref{E:CONV2}, we find
$\|\ORS^{n,m}-(-\ORS)\|_{\L^2(\R^{2d},\MU)} \longrightarrow 0$. We
claim that $\ORT+\EPS\ORS^{n,m} \in \CMU$ for $\EPS>0$ small. To
prove this, we observe first that
\[
  \ORS^{n,m}(x,\xi) \subset -(n+1)\BT^m(x) + nu(x)
  \quad\text{for $\MU$-a.e.\ $(x,\xi)\in\R^{2d}$,}
\]
by definition of $\PP^n_x$ and \eqref{E:SNM}. With $N_\ORT$ the null
set of Remark~\ref{R:EQUIV}, we have
\begin{align*}
  & \big\langle \big( \ORT(x_1,\xi_1)-\EPS(n+1)\BT^m(x_1) \big)
    - \big( \ORT(x_2,\xi_2)-\EPS(n+1)\BT^m(x_2) \big), x_1-x_2 \big\rangle
\\
  & \vphantom{\Big(} \qquad
    = \langle \ORT(x_1,\xi_1)-\ORT(x_1,\xi_1), x_1-x_2 \rangle
      -\EPS (n+1) \langle \BT^m(x_1)-\BT^m(x_2), x_1-x_2 \rangle
\\
  & \qquad
    \GS \Big( \alpha - \EPS(n+1)\|D\BT^m\|_{\L^\infty(\R^d)} \Big) 
      |x_1-x_2|^2 
    \GS 0
\end{align*}
for all $(x_i,\xi_i) \in \R^{2d}\setminus N_\ORT$ with $i=1..2$, for
$\EPS>0$ small. Since $u$ is monotone, the support of $(\XX,
\ORT+\EPS\ORS^{n,m})\#\MU$ is contained in a monotone subset of
$\R^d\times\R^d$.
\medskip

\textbf{Step~3.} We prove that if $\ORV \in \TAN_\ORT \CMU$ then
also $-\ORV \in \TAN_\ORT \CMU$. There exists a sequence of $\ORV^n
\in \L^2(\R^{2d}, \MU)$ with $\|\ORV^n-\ORV\|_{\L^2(\R^{2d},\MU)}
\longrightarrow 0$ as $n\rightarrow \infty$, and such that
$\ORT+\EPS^n\ORV^n \in \CMU$ for $\EPS^n>0$ small. We have the
following identity:
\[
  -\ORV^n = -\frac{1}{\EPS^n} (\ORT+\EPS^n\ORV^n) + \frac{1}{\EPS^n} \ORT.
\]
The first term on the right-hand side is in $\TAN_\ORT \CMU$ because
of Step~2; the second one is in $\CMU \subset \TAN_\ORT \CMU$.
Since the tangent cone is a closed convex cone, we conclude that
$-\ORV^n \in \TAN_\ORT \CMU$. Then we use that
$\|(-\ORV^n)-(-\ORV)\|_{\L^2(\R^{2d},\MU)} \longrightarrow 0$.
\end{proof}

\begin{remark}
We emphasize that, unlike the tangent cone built from optimal
transport maps/plans, which basically consists of \emph{gradient}
vector fields (see \cites{AmbrosioGigliSavare2008, Gigli2004}), the
tangent cone derived from monotone maps contains all of
$\L^2(\R^d,\RHO)$ if $\ORT$ is strictly monotone in the sense of
inequality\eqref{E:STRICT}. This condition is satisfied when $\ORT =
\XX$, for example. Generically, it can happen that the tangent cone is
a proper subset of $\L^2(\R^d,\RHO)$: If $d=1$ and $\ORT \in \CMU$
depends only on the spatial variable $x\in\R$ (so that $\ORT \in
\L^2(\R,\RHO)$), then a velocity $\BV \in \L^2(\R,\RHO)$ belongs to
$\TAN_\ORT \CMU$ only if $\BV$ is non-decreasing on each open interval
on which $\ORT$ is constant; see Lemma~3.6 in
\cite{CavallettiSedjroWestdickenberg2014} for more details.
\end{remark}


\subsection{Minimization Problem}\label{SS:EM}

We now introduce the main minimization problem for \eqref{E:PGD}. Both
mass and momentum will be conserved, by construction. But since
transport maps $\ORT \in \CMU$ are not required to be strictly
monotone (hence injective), it may happen that fluid elements with
distinct velocities are transported to the same location. We will then
use the barycentric projection to select an admissible velocity that
are consistent with the monotonicity constraint. This results in fluid
elements sticking together to form larger compounds.

\begin{definition}
For any $\RHO \in \SP_2(\R^d)$ and $\MU \in \PR$, and any $\ORT \in
\CMU$ let
\[
  \H_\MU(\ORT) := \Big\{ \BU\circ\ORT \colon 
    \BU \in \L^2(\R^d,\RHO_\ORT) \Big\},
  \quad
  \RHO_\ORT := \ORT\#\MU.
\]
\end{definition}

One can check that $\H_\MU(\ORT)$ is a closed subspace of
$\L^2(\R^{2d}, \MU)$ because
\[
  \|\BU\circ\ORT\|_{\L^2(\R^{2d},\MU)} 
    = \|\BU\|_{\L^2(\R^d,\RHO_\ORT)}
\]
for all $\BU \circ \ORT \in \H_\MU(\ORT)$; see Section~5.2 in
\cite{AmbrosioGigliSavare2008}. Consequently, there exists an
orthogonal projection onto this subspace, which we will denote by
$\PP_{\H_\MU(\ORT)}$.

\begin{definition}[Energy Minimization]\label{D:EM}
Let $\RHO \in \SP_2(\R^d)$, $\MU \in \PR$, and $\tau>0$ be given.
Then we consider the following three-step scheme:
\begin{enumerate}
\item Compute the metric projection $\ORT_\tau :=
  \PP_{\CMU}(\XX+\tau\VV)$ and define
\begin{equation}
  \ORW_\tau(x,\xi) := V_\tau\big( x,\xi, \ORT_\tau(x,\xi) \big)
  \quad\text{for $\MU$-a.e.\ $(x,\xi) \in \R^{2d}$;}
\label{E:ZEROT2}
\end{equation}
see Remark~\ref{R:SIM} and \eqref{E:ZETOP} for the definition of
$V_\tau$.
\item Compute the orthogonal projection $\ORU_\tau :=
  \PP_{\H_\MU(\ORT_\tau)}(\ORW_\tau)$.
\item Define the updated fluid state
\[
  \RHO_\tau := \ORT_\tau\#\MU,
  \quad 
  \MU_\tau := (\ORT_\tau, \ORU_\tau)\#\MU.
\]
\end{enumerate}
\end{definition}

Notice that $\MU_\tau$ is well-defined for any choice of $(\RHO, \MU,
\tau)$, and that $\ORU_\tau$ determines an Eulerian velocity field
$\BU_\tau \in \L^2(\R^d, \RHO_\tau)$ via $\ORU_\tau =: \BU_\tau \circ
\ORT_\tau$. We observe that $\BU_\tau$ is just the barycentric
projection of $\MU_* := (\ORT_\tau, \ORW_\tau)\#\MU$: Indeed we have
\[
  \int_{\R^{2d}} \big| \ORW_\tau(x,\xi)-\BU\big( \ORT_\tau(x,\xi) \big)
      \big|^2 \,\MU(dx,d\xi) 
    = \int_{R^{2d}} |\zeta-\BU(z,\zeta)|^2 \,\MU_*(dz,d\zeta)
\]
for all $\BU \in \L^2(\R^d,\RHO_\tau)$, and the barycentric projection
$\BAR(\MU_*)$ is the unique element in $\L^2(\R^d,\RHO_\tau)$ closest
to $\MU_*$ with respect to $\W_{\RHO_\tau}$ (recall \eqref{E:HYBRID}).
From Proposition~\ref{P:TANG}, we deduce that $\L^2(\R^d, \RHO_\tau)
\subset \TAN_\XX \C_{\MU_*}$. Step~(2) of Definition~\ref{D:EM} can
therefore be interpreted as the projection of the updated state
$\MU_*$ onto (a subspace of) the tangent cone at the new
configuration; see also Remark~\ref{R:EXACTSL}. A similar combination
of transporting the vector field, then projecting it onto the tangent
cone was used in \cite{AmbrosioGigli2008} to construct the parallel
transport along curves in $\SP_2(\R^D)$; see also \cite{Brenier2009}.

When $\RHO_\tau$ is absolutely continuous with respect to the Lebesgue
measure so that there is no concentration (no sticking together of
fluid elements), then the tangent cone at $\RHO_\tau$ consists only of
monokinetic states, as follows from Remark~\ref{R:REMQ}.

\begin{remark}
We emphasize that the minimization of work links the transport
$\ORT_\tau$ to the intermediate velocity $\ORW_\tau$ through the
optimal velocity \eqref{E:ZEROT2}. This makes it possible to express
the work equivalently in different form: Defining
\[
  \ORV_\tau(x,\xi) := \frac{\ORT_\tau(x,\xi)-x}{\tau}
  \quad\text{for $\MU$-a.e.\ $(x,\xi) \in \R^{2d}$,}
\]
we have the following identities, which will be used frequently:
\begin{equation}
  (x+\tau\xi)-\ORT_\tau(x,\xi)
    = \tau \big( \xi-\ORV_\tau(x,\xi) \big)
    = \frac{2\tau}{3} \big( \xi-\ORW_\tau(x,\xi) \big).
\label{E:IDTIES}
\end{equation}
In particular, the transport velocity can be written as a convex
combination
\begin{equation}
  \ORV_\tau(x,\xi) = \frac{2}{3} \ORW_\tau(x,\xi) + \frac{1}{3} \xi
  \quad\Longleftrightarrow\quad
  \ORW_\tau(x,\xi) = \frac{3}{2} \ORV_\tau(x,\xi) - \frac{1}{2} \xi
\label{E:WTQQ}
\end{equation}
for $\MU$-a.e.\ $(x,\xi)\in\R^{2d}$. Inserting \eqref{E:IDTIES} into
the work functional \eqref{E:WOKF} with $z=\ORT(x,\xi)$ (recall that
projections are non-splitting; see Section~\ref{SS:RTM}), we observe
that the minimi\-zation over $\CMU$ can be reformulated as a
minimization over velocities.
\end{remark}

\begin{remark}\label{R:EXACTSL}
We will be mostly interested in situations where the initial state is
\emph{monokinetic}, for which $\MU(dx, d\xi) = \delta_{\BU(x)}(d\xi)
\,\RHO(dx)$ for some velocity $\BU \in \L^2(\R^d, \RHO)$. In this
case, the transport maps in $\CMU$ are functions of $x\in\R^d$ alone
because
\begin{equation}
  \ORT(x,\xi) := \ORT\big( x,\BU(x) \big)
  \quad\text{for $\MU$-a.e.\ $(x,\xi)\in\R^{2d}$.}
\label{E:REDUC}
\end{equation}

For $d=1$, it was shown in \cites{NatileSavare2009,
CavallettiSedjroWestdickenberg2014} that the family of transport maps
\[
  \ORT_s := \PP_{\CMU}(\XX+s\VV)
  \quad\text{for all $s\GS 0$,}
\]
determines a weak solution of the pressureless gas dynamics system
\eqref{E:PGD} with initial data $(\RHO,\BU)$ in the following way: The
density at time $s$ is given by the push-forward $\RHO_s :=
\ORT_s\#\MU$. The (Lagrangian) velocity is defined by the formula
\[
  \ORV_s(x,\xi) := \lim_{h\rightarrow 0+} \frac{\ORT_{s+h}(x,\xi)-\ORT_s(x,\xi)}{h} 
  \quad\text{for $\MU$-a.e.\ $(x,\xi)\in\R^{2d}$.}
\]
Again this is a function of $x\in\R^d$ alone because of
\eqref{E:REDUC}. Using \eqref{E:INITIL}, we observe that $\ORV_s$ is,
in fact, the metric projection of the \emph{initial velocity} $\BU$
onto the tangent cone $\TAN_{\ORT_s} \CMU$. Here we used that $\xi =
\BU(x)$ for $\MU$-a.e.\ $(x,\xi) \in\R^{2d}$. Moreover, since the map
$s \mapsto \ORT_s$ is Lipschitz continuous in $\L^2(\R^{2d},\MU)$ and
therefore differentiable for a.e.\ $s\in\R$, we conclude for such $s$
that $\ORV_s$ can also be obtained as
\[
  \ORV_s(x,\xi) = \lim_{h\rightarrow 0+} \frac{\ORT_{s-h}(x,\xi)-\ORT_s(x,\xi)}{-h} 
  \quad\text{for $\MU$-a.e.\ $(x,\xi)\in\R^{2d}$.}
\]
and so $\ORV_s \in -\TAN_{\ORT_s}\CMU$ as well. This implies that 
\[
  \ORV_s \in \H_\MU(\ORT_s)
  \quad\text{for a.e.\ $s\in\R$,}
\]
and, in fact, $\ORV_s$ is the orthogonal projection of the initial
velocity onto this subspace; see \cites{NatileSavare2009,
CavallettiSedjroWestdickenberg2014}. Our minimization problem
preserves this structure: Note that
\begin{align*}
  & \int_{\R^2} \big\langle \xi-\BU_\tau\big( \ORT_\tau(x,\xi) \big), 
    \BV\big( \ORT_\tau(x,\xi) \big) \big\rangle \,\MU(dx,d\xi)
\\
  & \qquad
      = \int_{\R^2} \big\langle \xi-\ORW_\tau(x,\xi), 
        \BV\big( \ORT_\tau(x,\xi) \big) \big\rangle \,\MU(dx,d\xi)
\end{align*}
for all $\BV \in \D(\R)$. Using the identity \eqref{E:WTQQ} for
$\ORW_\tau$, for any $\alpha>0$ we can write
\begin{align*}
  & \int_{\R^2} \big\langle \xi-\ORW_\tau(x,\xi), 
      \BV\big( \ORT_\tau(x,\xi) \big) \big\rangle \,\MU(dx,d\xi)
\\
  & \qquad
    = \frac{3}{2\tau} \int_{\R^2} \big\langle (x+\tau\xi)-\ORT_\tau(x,\xi),
      (\BV+\alpha\,\ID)\circ\ORT_\tau(x,\xi) \big\rangle \,\MU(dx,d\xi)
\\
  & \qquad\quad
    - \frac{3\alpha}{2\tau} \int_{\R^2} \langle (x+\tau\xi)-\ORT_\tau(x,\xi),
      \ORT_\tau(x,\xi) \rangle \,\MU(dx,d\xi).
\end{align*}
The last integral vanishes because of \eqref{E:OPI1} in
Remark~\ref{R:SIM}. Since for large enough $\alpha$ the map
$\BV+\alpha\,\ID$ is strictly increasing, one can check that
$(\ID,(\BV+\alpha\,\ID)\circ \ORT_\tau)\#\MU \in \CMU$. Here we used
the assumption that $d=1$ (since compositions of monotone maps are
again monotone in one space dimension). Using inequality
\eqref{E:OPI2}, we have
\[
  \int_{\R^2} \big\langle \xi-\ORW_\tau(x,\xi), 
    \BV\big( \ORT_\tau(x,\xi) \big) \big\rangle \,\MU(dx,d\xi) \LS 0
  \quad\text{for all $\BV \in \D(\R)$,}
\]
which in particular implies equality. Since $\D(\R)$ is dense in
$\L^2(\R,\RHO_\tau)$ we get
\[
  \int_{\R} \big\langle \BU(x)-\BU_\tau\big( \BT_\tau(x) \big),
    \BV\big( \BT_\tau(x) \big) \big\rangle \,\RHO(dx) = 0
  \quad\text{for all $\BV \in \L^2(\R,\RHO_\tau)$,}
\]
writing $\BT_\tau := \ORT_\tau(x,\BU(x))$ for $\RHO$-a.e.\ $x\in\R^d$.
As explained above, this orthogonality, in combination with the
definition of $\BT_\tau$ as the metric projection of $\ID+\tau\BU$
onto mono\-tone maps, characterizes solutions of \eqref{E:PGD}
satisfying a stickyness condition, for a.e.\ $\tau>0$. So our
discretization already generates the \emph{exact} solution, not just
an approximations, for $d=1$ and monokinetic initial state $\MU =
(\ID,\BU)\#\RHO$.
\end{remark}

\begin{remark}
In \cite{BressanNguyen2014} the authors prove the non-existence of
sticky particle solutions to \eqref{E:PGD} for well-designed initial
data. In their construction the number of collisions grows unboundedly
the closer one gets to the initial time, and so the dynamics has
arbitrarily small time scales. Using our discretization, we can
construct a sequence of approximate solutions to \eqref{E:PGD}
starting from the initial data in \cite{BressanNguyen2014}. We will
show below that this approximation converges to a measure-valued
solution of \eqref{E:PGD}. The timestep $\tau>0$ in our discretization
introduces a minimal time scale below which the dynamics is not
completely resolved but is ``smeared out.'' It would be interesting to
know to which solution our discretization converges in the limit
$\tau\rightarrow 0$.
\end{remark}

\begin{remark}\label{R:MOMMY}
The constant map $\ORS(x,\xi)=\pm b$ for all $(x, \xi) \in\R^{2d}$,
where $b\in\R^d$ is some vector, is an element of $\CMU$. Using this
function in \eqref{E:OPI2}, we get
\begin{align*}
  \int_{\R^{2d}} \langle b, \zeta \rangle \,\MU_\tau(dz,d\zeta)
    &= \int_{\R^{2d}} \langle b, \ORU_\tau(x,\xi) \rangle \,\MU(dx,d\xi)
\\
    &= \int_{\R^{2d}} \langle b, \ORW_\tau(x,\xi) \rangle \,\MU(dx,d\xi)
      = \int_{\R^{2d}} \langle b, \xi \rangle \,\MU(dx,d\xi);
\end{align*}
see \eqref{E:WTQQ}. Recall that $\ORU_\tau$ is the orthogonal
projection of $\ORW_\tau$ onto $\H_\MU(\ORT)$, which contains $\ORS$.
We conclude that the minimization preserves the total momentum.

Similarly, we can use the test functions $\ORS(x,\xi) = \pm Ax$ for
all $(x,\xi) \in \R^{2d}$ with $A \in \SKEW{d}$ in \eqref{E:OPI2}
because these functions are monotone. It follows that
\[
  \int_{\R^{2d}} \langle \ORW_\tau(x,\xi), Ax \rangle \,\MU(dx,d\xi)
    = \int_{\R^{2d}} \langle \xi,Ax \rangle \MU(dx,d\xi);
\]
recall \eqref{E:IDTIES}. We decompose the left-hand side using $x =
\ORT_\tau(x,\xi) - \tau \ORV_\tau(x,\xi)$. The corresponding second
integral can be estimated as
\begin{align*}
  & \bigg| -\tau \int_{\R^{2d}} \langle \ORW_\tau(x,\xi), A\ORV_\tau(x,\xi)
    \rangle \,\MU(dx,d\xi) \bigg|
 \LS \tau \|A\| \|\ORW_\tau\|^{1/2}_{\L^2(\R^{2d},\MU))}
    \|\ORV_\tau\|^{1/2}_{\L^2(\R^{2d},\MU))}.
\end{align*}
We will see below that both $\L^2(\R^{2d},\MU)$-norms can be estimated
against the kinetic energy $\int_{\R^{2d}} |\xi|^2 \,\MU(dx,d\xi)$ of
the initial state, uniformly in $\tau$. Moreover, we get
\begin{align*}
  \int_{\R^{2d}} \langle \ORW_\tau(x,\xi), A\ORT_\tau(x,\xi) \rangle \,\MU(dx,d\xi)
    & = \int_{\R^{2d}} \langle \zeta,Az\rangle \,\MU_*(dz,d\zeta)
\\
    & = \int_{\R^d} \langle \BU_\tau(z), Az \rangle \,\RHO_\tau(dz),
\end{align*}
where we have used that $\BU_\tau$ is the barycentric projection of
$\MU_*$. We conclude that our minimization preserves total angular
momentum up to order $\tau$.
\end{remark}


\subsection{Polar Cone}\label{SS:PC}

In this section, we will give a representation of the elements in the
polar cone of $\CMU$. As we will see later, such elements appear as
stress tensors. In Remark~\ref{R:MOMOS}, we have defined the space
$\C_*(\R^d;\R^D)$ of all continuous functions $f \colon \R^d
\longrightarrow \R^D$ for which $\lim_{|x|\rightarrow\infty} f(x) \in
\R^D$ exists. We identify $\C_*(\R^d;\R^D)$ with the space
$\C(\dot\R^d; \R^D)$ of continuous functions on the one-point
compactification $\dot\R^d$ of $\R^d$: We adjoin to $\R^d$ a point
$\infty$ and define a distance (see \cite{Mandelkern1989})
\[
  d(x,y) := \begin{cases}
     \min\{ |x-y|, h(x)+h(y) \}
       & \text{if $x,y \in \R^d$,}
\\
     h(x) 
       & \text{if $x\in\R^d$ and $y=\infty$,}
\\
     0
       & \text{if $x,y=\infty$,}
   \end{cases}
\]
where $h(x) := 1/(1+|x|)$ for all $x\in\R^d$. Then $|x|\rightarrow
\infty$ is equivalent to $d(x,\infty) \rightarrow 0$. To any
$g\in\C_*(\R^d;\R^D)$ we associate $\dot g \in \C(\dot\R^d; \R^D)$
defined as
\[
  \dot g(x) := \begin{cases}
      g(x) 
        & \text{if $x\in\R^d$,}
\\
      \lim_{|x|\rightarrow\infty} g(x)
        & \text{if $x=\infty$.}
    \end{cases}
\]
Conversely, the restriction of any function in $\C(\dot\R^d; \R^D)$
to $\R^d$ induces a function in $\C_*(\R^d; \R^D)$. We will hence not
distinguish between the two spaces. Similarly, we define $\C_*(\R^d;
\MAT{l})$, $\C_*(\R^d; \SYM{l})$, and $\C_*(\R^d; \SYM[\GS]{l})$.

For any $u\in\C^1(\R^d;\R^d)$ we refer to the symmetric part $\nabla
u(x)^\S$ for all $x\in\R^d$ as its deformation tensor, which is an
element of $\C(\R^d;\SYM{d})$. Let
\begin{gather*}
  \C^1_*(\R^d;\R^d) := \big\{ u\in\C^1(\R^d;\R^d) \colon
    \text{$\nabla u \in \C_*\big( \R^d;\MAT{d} \big)$} \big\},
\\
  \MON(\R^d) := \big\{ u\in\C^1_*(\R^d;\R^d) \colon
    \text{$u$ is monotone} \big\}.
\end{gather*}
The cone $\MON(\R^d)$ contains, in particular, all linear maps
$u(x):=Ax$ for all $x\in \R^d$, with $A\in\MAT[\GS]{d}$. We will use
the following result from \cite{CavallettiWestdickenberg2014}:

\begin{theorem}[Stress Tensor]\label{T:STRESS}
Assume that there exist a measure $\F \in \M(\R^d;\R^d)$ with finite
first moment and a measure $\PK \in \M(\R^d;\SYM[\GS]{d})$ with
\begin{equation}
\label{E:EULA}
  G(u) := -\int_{\R^d} \langle u(x), \F(dx) \rangle
    - \int_{\R^d} \TRACE\big( \nabla u(x) \PK(dx) \big) \GS 0
\end{equation}
for all $u \in \MON(\R^d)$. Then there exists $\RES \in \M(\dot\R^d;
\SYM[\GS]{d})$ such that
\begin{gather}
  G(u) = \int_{\dot\R^d} \TRACE\big( \nabla u(x) \RES(dx) \big)
  \quad\text{for all $u \in \C^1_*(\R^d;\R^d)$,}
\nonumber\\
  \int_{\dot\R^d} \TRACE\big( \RES(dx) \big) 
    = -\int_{\R^d} \langle x, \F(dx) \rangle
      -\int_{\R^d} \TRACE\big( \PK(dx) \big).
\label{E:CONTROL2}
\end{gather}
\end{theorem}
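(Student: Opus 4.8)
The plan is to represent $G$ through the deformation operator and then invoke an extension theorem for positive linear functionals. Observe first that $G$ is a \emph{linear} functional on $\C^1_*(\R^d;\R^d)$, and that the map $u \mapsto \DEF(u) = \nabla u^\SYM$ is linear from $\C^1_*(\R^d;\R^d)$ into $\C_*(\R^d;\S^d)$. Its kernel consists exactly of the affine maps $u(x) = Ax+b$ with $A$ antisymmetric: differentiating $\partial_i u_j = -\partial_j u_i$ and using the symmetry of second derivatives forces $\nabla u$ to be constant and antisymmetric. Such maps satisfy $\langle u(x_1)-u(x_2),x_1-x_2\rangle = 0$, so that both $u$ and $-u$ belong to $\MON(\R^d)$; by linearity and the hypothesis $G\GS 0$ on $\MON(\R^d)$ we get $G \equiv 0$ on the kernel of $\DEF$. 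Hence $G$ factors as $G = \tilde G\circ\DEF$ for a well-defined linear functional $\tilde G$ on the subspace $\mathcal D := \DEF(\C^1_*(\R^d;\R^d)) \subset \C_*(\R^d;\S^d)$.

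Next I would establish positivity and majorization. A $C^1$ map $u$ is monotone if and only if $\DEF(u)(x)$ is positive semidefinite for every $x$ (differentiate $t\mapsto\langle u(x+th)-u(x),h\rangle$ for $h\in\R^d$). Consequently $\DEF(\MON(\R^d)) = \mathcal D \cap \C_*(\R^d;\S^d_+)$, and the hypothesis says precisely that $\tilde G\GS 0$ on this cone. Moreover, identifying $\C_*(\R^d;\S^d)$ with $\C(\gamma\R^d;\S^d)$ (continuous fields on the one-point compactification), every $M \in \C_*(\R^d;\S^d)$ is bounded, say $\lambda := \sup_x\|M(x)\| < \infty$; then $u(x):=\lambda x$ lies in $\C^1_*(\R^d;\R^d)$ with $\DEF(u) \equiv \lambda\ONE \in\mathcal D$ and $\lambda\ONE - M(x)\in\S^d_+$ for all $x$. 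Thus $\mathcal D$ is a majorizing subspace for the order on $\C(\gamma\R^d;\S^d)$ induced by the cone $\C(\gamma\R^d;\S^d_+)$.

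The core step is then the M.~Riesz extension theorem: since $\mathcal D$ is a majorizing subspace and $\tilde G$ is positive on it, $\tilde G$ extends to a positive linear functional $\hat G$ on all of $\C(\gamma\R^d;\S^d)$. From $-\|M\|_\infty\ONE \LS M \LS \|M\|_\infty\ONE$ (pointwise, in the order of symmetric matrices) and positivity one gets $|\hat G(M)| \LS \hat G(\ONE)\,\|M\|_\infty$, so $\hat G$ is bounded; the Riesz representation theorem for $\C(\gamma\R^d;\S^d)$ then produces a unique $\SIGMA\in\M(\gamma\R^d;\S^d)$ with $\hat G(M) = \int_{\gamma\R^d}\llangle M(x),\SIGMA(dx)\rrangle$. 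Testing against $M = \varphi\,(v\otimes v)$ with $0\LS\varphi\in\C(\gamma\R^d)$ and $v\in\R^d$ shows that each scalar measure $\langle\SIGMA(\cdot)v,v\rangle$ is nonnegative, i.e.\ $\SIGMA\in\M(\gamma\R^d;\S^d_+)$. Finally, for any $u\in\C^1_*(\R^d;\R^d)$ one has $G(u) = \tilde G(\DEF(u)) = \hat G(\DEF(u)) = \int_{\gamma\R^d}\llangle\DEF(u(x)),\SIGMA(dx)\rrangle$, which is the first identity; taking $u(x) = x$ (so $\DEF(u)\equiv\ONE$, and $\langle x,\F(dx)\rangle$ is integrable by the finite first moment of $\F$) yields $\int_{\gamma\R^d}\TRACE(\SIGMA(dx)) = G(u) = -\int_{\R^d}\langle x,\F(dx)\rangle - \int_{\R^d}\TRACE(\BH(dx))$, the second identity; compare \cite{CavallettiWestdickenberg2014}.

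I expect the main obstacle to be the interplay of the majorization property with the passage to the compactification $\gamma\R^d$: verifying that $\mathcal D$ is majorizing — which is exactly where the boundedness of $\C_*$-fields and the availability of the scalings $x\mapsto\lambda x$ in $\MON(\R^d)$ are used — so that the M.~Riesz extension theorem applies, and then ensuring that the representing measure of the positive functional $\hat G$ lands in $\M(\gamma\R^d;\S^d_+)$. Concentration of stress ``at infinity'' is genuinely possible, which is why one cannot hope to represent $G$ by a measure on $\R^d$ alone and the compactification is indispensable.
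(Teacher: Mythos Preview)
The paper does not prove this theorem; it simply quotes it from \cite{CavallettiWestdickenberg2014}, which you also cite at the end of your proposal. Your outline --- factoring $G$ through $\DEF$, checking that the range $\mathcal D$ is a majorizing subspace of $\C(\gamma\R^d;\S^d)$, applying the M.~Riesz extension theorem, and representing the resulting positive functional by a measure --- is correct and is the natural approach; it is in all likelihood the argument given in the cited reference.

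One minor technical point: in identifying the kernel of $\DEF$ you differentiate $\partial_i u_j = -\partial_j u_i$ and invoke symmetry of second derivatives, but functions in $\C^1_*(\R^d;\R^d)$ are only $C^1$. The conclusion (Killing fields on $\R^d$ are affine with skew-symmetric linear part) is of course still true and classical; you can recover it by mollification or by observing that $\DEF(u)=0$ forces $\Delta u = 0$ in the sense of distributions, hence $u$ is smooth. This does not affect the validity of your argument.
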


Notice that the integral in \eqref{E:EULA} is finite for any choice of
$u\in \C^1_*(\R^d;\R^d)$ since the first moment of $\F$ is finite, by
assumption. Recall that the trace of a symmetric matrix equals the sum
of its eigenvalues, which in the case of a positive semidefinite
matrix are all non-negative. Therefore \eqref{E:CONTROL2} controls the
size of $\RES$.

\begin{remark}\label{R:FINI}
The stress tensor $\RES$ does not actually assign any mass to the
remainder $\dot\R^d \setminus \R^d$, so Theorem~\ref{T:STRESS}
remains true if the compactification $\dot\R^d$ is replaced by
$\R^d$. In fact, recall that $\R^d$ (being a separable metric space)
is a Radon space, so that any finite Borel measure is inner regular.
Consider a non-negative, radially symmetric test function $\varphi \in
\D(\R^d)$ with $\int_{\R^d} \varphi(x) \,dx = 1$ and define
\[
  u_R := \nabla(\phi_R \star \varphi),
  \quad\text{with}\quad
  \phi_R(x) := \HA\max\{ |x|^2-R^2, 0\}
\]
for $x\in\R^d$ and $R>0$. The map $\phi_R \star \varphi$ is convex and
smooth (since the convolution preserves convexity), hence $u_R$ is
monotone and smooth. Notice that $u_R(x) = 0$ for all $|x| \LS R-c$,
with $c$ the (finite) diameter of $\SPT\varphi$. Moreover, we have
\[
  \int_{\R^d} \varphi(x-y) |y|^2 \,dy
    = |x|^2 + \bigg( \int_{\R^d} |z|^2 \varphi(z) \,dz \bigg)
\]
for all $x\in\R^d$, which implies that $u_R(x) = x$ and $Du_R(x) =
\ONE$ for $|x| \GS R+c$. In particular, we observe that $u_R \in
\MON(\R^d)$ for all $R>0$. Then
\begin{equation}
  \int_{|x|\GS R+c} \TRACE\big( \RES(dx) \big)
    \LS C \bigg( \int_{|x|\GS R-c} |x| \,|\F(dx)|
      + \int_{|x|\GS R-c} \TRACE\big( \PK(dx) \big) \bigg),
\label{E:ABSCH}
\end{equation}
with $C$ some finite constant depending on $\varphi$. The right-hand
side of \eqref{E:ABSCH} converges to zero as $R\rightarrow\infty$
since both measures $|\F|$ and $\TRACE(\PK)$ are inner regular and
the first moment of $\F$ is finite. We conclude that $\TRACE(\RES)
(\dot\R^d \setminus \R^d) = 0$.
\end{remark}

For $\MU \in \PR$ and $\tau>0$ let $\ORT_\tau$ be given by
Definition~\ref{D:EM}. Then
\[
  -\frac{3}{2\tau^2} \int_{\R^{2d}} \langle (x+\tau\xi)-\ORT_\tau(x,\xi), 
    \ORS(x,\xi) \rangle \,\MU(dx,d\xi) \GS 0
  \quad\text{for all $\ORS \in \CMU$,}
\]
which is \eqref{E:OPI2}. In particular, this inequality is true for
$\ORS = u$ with $u \in \MON(\R^d)$. Functions in $\MON(\R^d)$ have at
most linear growth and are therefore in $\L^2(\R^d,\RHO)$. Applying
Theorem~\ref{T:STRESS} (with $\PK \equiv 0$), we get $\RES_\tau
\in \M(\R^d; \SYM[\GS]{d})$ with
\begin{align}
  \int_{\R^d} \TRACE\big( \nabla u(x) \RES_\tau(dx) \big)
    &= -\frac{3}{2\tau^2} \int_{\R^{2d}} \langle (x+\tau\xi)
      -\ORT_\tau(x,\xi), u(x) \rangle \,\MU(dx,d\xi),
\label{E:STW1}\\
  \int_{\R^d} \TRACE\big( \RES_\tau(dx) \big) 
    &= -\frac{3}{2\tau^2} \int_{\R^{2d}} \langle (x+\tau\xi)
      -\ORT_\tau(x,\xi), x \rangle \,\MU(dx,d\xi);
\label{E:STW2}
\end{align}
see Remark~\ref{R:FINI}. The representation in Theorem~\ref{T:STRESS}
generalizes a similar description of the polar cone of monotone
maps obtained in \cite{NatileSavare2009} in one space dimension.
Using the identity \eqref{E:WTQQ}, we obtain the following identities:
\begin{align*}
  \int_{\R^d} \TRACE\big( \nabla u(x) \RES_\tau(dx) \big)
    &= -\frac{3}{2\tau} \int_{\R^{2d}}\langle \xi
      -\ORV_\tau(x,\xi), u(x) \rangle \,\MU(dx,d\xi)
\\
    &= -\frac{1}{\tau} \int_{\R^{2d}} \langle \xi
      -\ORW_\tau(x,\xi), u(x) \rangle \,\MU(dx,d\xi),
\end{align*}
with transport velocity $\ORV_\tau(x,\xi) := (\ORT_\tau(x,\xi)-x)
/\tau$ for $\MU$-a.e.\ $(x,\xi) \in \R^{2d}$, and
\begin{align*}
  \int_{\R^d} \TRACE\big( \RES_\tau(dx) \big) 
    &= -\frac{3}{2\tau} \int_{\R^{2d}} \langle \xi
      -\ORV_\tau(x,\xi), x \rangle \,\MU(dx,d\xi)
\\
    &= -\frac{1}{\tau} \int_{\R^{2d}} \langle \xi
      -\ORW_\tau(x,\xi), x \rangle \,\MU(dx,d\xi).
\end{align*}
%

\begin{remark}
In order to explore the significance of $\RES_\tau$, we consider
\[
  \MU(dx,d\xi) = \OF \delta_0(d\xi) \,\LEB^1 |_{(-1,1)}(dx)  
    + \OH \delta_1(d\xi) \,\delta_0(dx).
\]
For any $\tau>0$ the support of the transport plan $(\XX,
\XX+\tau\VV)\#\MU$ is not monotone. Then $\GAMMA_\tau := (\XX,
\ORT_\tau)\#\MU$, with $\ORT_\tau$ given by Definition~\ref{D:EM}, can
be computed as
\begin{align*}
  \GAMMA_\tau(dx,dy)
    &= \OF \Big( \delta_{\beta(\tau)\tau}(dy) 
      \,\LEB^1 |_{[0,\beta(\tau)\tau]}(dx) + \delta_x(dy) 
      \,\LEB^1 |_{(-1,1)\setminus[0,\beta(\tau)\tau]}(dx) \Big)
\\
    &\vphantom{\Big(}\quad
      + \OH \delta_{\beta(\tau)\tau}(dy) \,\delta_0(dx),
\end{align*}
where $\beta(\tau) \in [0,1]$ is the minimizer of the following
function:
\[
  \varphi_\tau(\beta) := \OH |1-\beta|^2 \tau^2 
    + \OF \int_0^{\beta\tau} |\beta\tau-x|^2 \,dx,
\]
which represents the $\L^2(\R^{2d},\MU)$-distance of $\XX+\tau\VV$ to
some map in $\CMU$ parameterized by $\beta$. One can check that
$\beta(\tau) := \frac{2}{\tau} (\sqrt{1+\tau}-1)$ and
$\beta(\tau)\longrightarrow 1$ as $\tau\rightarrow 0$. The induced
velocity distribution $\MU_\tau := (\XX,(\YY-\XX)/\tau)\# \GAMMA_\tau$
equals
\begin{align*}
  \MU_\tau(dx,d\xi) 
    &= \OF \Big( \delta_{\beta(\tau)-x/\tau}(d\xi) 
      \,\LEB^1 |_{[0,\beta(\tau)\tau]}(dx) + \delta_0(d\xi) 
        \,\LEB^1 |_{(-1,1)\setminus[0,\beta(\tau)\tau]}(dx) \Big)
\\
    &\vphantom{\Big(}\quad
      + \OH \delta_{\beta(\tau)}(d\xi) \,\delta_0(dx).
\end{align*}
%
%
The first $\xi$-moments of $\MU$ and $\MU_\tau$ determine the corresponding
momenta:
\begin{align*}
  \BM(dx) &\vphantom{\Big(}
    := \OH \delta_0(dx),
\\
  \BM_\tau(dx) &:= \OF \Big( \beta(\tau)-\frac{x}{\tau} \Big) 
     \,\LEB^1 |_{[0,\beta(\tau)\tau]}(dx) + \OH \beta(\tau) 
       \,\delta_0(dx).
\end{align*}
Therefore the change in momentum (which represents an acceleration) has
two parts: The velocity of the fluid element with mass $1/2$ located
at $x=0$ decreases, so the momentum is getting smaller. This momentum
is \emph{transfered} to fluid elements in the interval $[0, \beta(\tau)
\tau]$, which pick up speed. The transfer is described by the
derivative of the non-negative measure from Theorem~\ref{T:STRESS}. Let
$\RES_\tau := R_\tau\LEB^1$ with
\[
  R_\tau(x) := \begin{cases}
      \OH \big( 1-\beta(\tau) \big) 
          -\OF \Big( \beta(\tau)x-\DST\frac{x^2}{2\tau} \Big)
        & \text{if $x \in [0,\beta(\tau)\tau]$,}
\\
      \vphantom{\Big(} 0 
        & \text{otherwise.}
    \end{cases}
\]
Since $\OH(1-\beta(\tau)) = \OE\beta(\tau)^2\tau$ the measure
$\RES_\tau$ is non-negative, supported in $[0,\beta(\tau)\tau]$, and
it satisfies $\BM-\BM_\tau = \partial_x\RES_\tau$ in $\D'(\R)$. Note
further that $R_\tau$ vanishes as $\tau\rightarrow 0$, in any
$\L^p(\R)$ with $1\LS p<\infty$. Theorem~\ref{T:STRESS} suggests that a
similar structure can be found in higher space dimensions: the metric
projection onto $\CR$ may cause the transfer of momentum to
neighboring fluid elements, captured by the distributional divergence
$\nabla\cdot\RES_\tau$ of the stress tensor field $\RES_\tau$.
This transfer manifests itself also in the kinetic energy balance,
which we will consider next.
\end{remark}

\begin{proposition}[Energy Balance]\label{P:EB}
For any $(\RHO,\MU,\tau)$ as in Definition~\ref{D:EM} consider the
quantities $(\ORT_\tau, \ORW_\tau, \ORU_\tau, \MU_\tau)$ specified
there. Let $\RES_\tau \in \M(\R^d; \SYM[\GS]{d})$ be the stress tensor
field satisfying \eqref{E:STW1}/\eqref{E:STW2}. Then we have
\begin{align*}
  \E[\MU_\tau]
    & + \int_{\R^{2d}} \Big( {\TST\frac{1}{6}} |\ORW_\tau-\xi|^2
      + \HA |\ORU_\tau-\ORW_\tau|^2 \Big) \,\MU(dx,d\xi)
\\
    & + \int_{\R^d} \TRACE\big( \RES_\tau(dx) \big)
      = \E[\MU],
\end{align*}
with total/kinetic energy
\[
  \E[\MU] := \int_{\R^{2d}} \OH |\xi|^2 \MU(dx,d\xi).
\]
Recall that $\MU_\tau = (\ID,\BU_\tau)\#\RHO_\tau$ for some $\BU_\tau
\in \L^2(\R^d,\RHO_\tau)$ with $\RHO_\tau = \ORT_\tau\#\MU$.
\end{proposition}

\begin{proof}
Since $\ORU_\tau := \PP_{\H_\MU(\ORT_\tau)}(\ORW_\tau)$
(orthogonal projection), we have
\begin{align*}
  & \int_{\R^{2d}} \HA |\ORU_\tau(x,\xi)|^2 \,\MU(dx,d\xi)
    + \int_{\R^{2d}} \HA |\ORU_\tau(x,\xi)-\ORW_\tau(x,\xi)|^2 \,\MU(dx,d\xi)
\\
  & \qquad
    = \int_{\R^{2d}} \HA |\ORW_\tau(x,\xi)|^2 \,\MU(dx,d\xi).
\end{align*}
On the other hand, using definition \eqref{E:ZEROT2} of $\ORW_\tau$
and \eqref{E:IDTIES} we can write
\begin{align}
    \int_{\R^{2d}} \HA |\ORW_\tau(x,\xi)|^2 \,\MU(dx,d\xi)
  & + \frac{1}{6} \int_{\R^{2d}} |\ORW_\tau(x,\xi)-\xi|^2 
      \,\MU(dx,d\xi)
\nonumber\\
    = \int_{\R^{2d}} \HA |\xi|^2 \,\MU(dx,d\xi)
  & - \frac{3}{2\tau^2} \int_{\R^{2d}} \langle (x+\tau\xi)-\ORT_\tau(x,\xi),
      \ORT_\tau(x,\xi) \rangle \,\MU(dx,d\xi)
\nonumber\\
  & + \frac{3}{2\tau^2} \int_{\R^{2d}} \langle (x+\tau\xi)-\ORT_\tau(x,\xi),
      x \rangle \,\MU(dx,d\xi)
\label{E:PO2}
\end{align}
The second integral on the right-hand side of \eqref{E:PO2} vanishes
because of \eqref{E:OPI1}, the last one can be expressed in terms of
the stress tensor field $\RES_\tau$; see \eqref{E:STW2}.
\end{proof}


\section{Energy Minimization: Polytropic Gases}\label{S:VTD3}

We now modify the minimization problem of Section~\ref{SS:EM} for
polytropic gases. In this case, the density $\RHO$ must be absolutely
continuous with respect to the Lebesgue measure since otherwise the
internal energy would be infinite (see Definition~\ref{D:INT2}). We
need a lower semicontinuity result for the internal energy, suitably
redefined as a convex functional on the set of monotone transports.


\subsection{Gradient Young Measures}

We introduce Young measures to capture oscillations and concentrations
of weak* converging sequences of derivatives of functions of bounded
variations. They will be used in Section~\ref{SS:IE} to establish a
lower semicontinuity result for the internal energy. We follow the
presentation of \cites{KristensenRindler2010, Rindler2011}.

Let $\Omega \subset \R^d$ be a bounded Lipschitz domain and $\BT \in
\BVS(\Omega; \R^d)$. Let $B_d$ be the open unit ball in $\MAT{d}$ and
$\partial B_d$ its boundary. We associate to the derivative $D\BT$
(which is a measure) a triple $\upsilon = (\nu,\sigma,\mu)$ with
\begin{equation}
\label{E:DUSP}
  \nu \in \L^\infty_\mathrm{w}\Big( \Omega;\SP\big( \MAT{d} \big) \Big), 
  \quad
  \sigma \in \M_+(\bar{\Omega}),
  \quad
  \mu \in \L^\infty_\mathrm{w}\big( \bar{\Omega},\sigma; 
    \SP(\partial B_d) \big)
\end{equation}
as follows: Consider the Lebesgue-Radon-Nikod\'{y}m decomposition
\begin{equation}
\label{E:LRN}
  D\BT = \nabla\BT \,\LEB^d + D^s \BT,
  \quad
  D^s \BT \perp \LEB^d,
\end{equation}
and define $\nu_x := \delta_{\nabla\BT(x)}$ for a.e.\ $x\in\Omega$ and
$\sigma := |D^s \BT|$. Let further
\[
  D^s \BT = \frac{\DD D^s \BT}{\DD |D^s \BT|} \,|D^s \BT|,
  \quad
  p:=\frac{\DD D^s \BT}{\DD |D^s \BT|} \in \L^1(\Omega,|D^s \BT|;\partial B_d).
\]
be the polar decomposition of $D^s \BT$ and define $\mu_x =
\delta_{p(x)}$ for $|D^s \BT|$-a.e.\ $x\in\Omega$. Here
$\L^\infty_\mathrm{w}(\Omega;\SP(\MAT{d}))$ is the space of weakly
measurable maps from $\Omega$ into the space of probability measures
on $\MAT{d}$ (similar definition for $\L^\infty_\mathrm{w}
(\bar{\Omega}, \sigma; \SP(\partial B_d))$). We call $\upsilon =
(\nu,\sigma,\mu)$ an elementary Young measure associated to $Du$.

Consider now a sequence of uniformly bounded maps $\BT^k \in
\BVS(\Omega;\R^d)$. Extracting a subsequence, we may assume that
$\BT^k \longrightarrow \BT$ in $\L^1(\Omega;\R^d)$ and $D\BT^k \WEAK
D\BT$ weak* in $\M(\Omega; \MAT{d})$, for some $\BT \in
\BVS(\Omega;\R^d)$. In this case, we say that $\BT^k$ converges weak*
to $\BT$ in $\BVS(\Omega;\R^d)$. We denote by $\upsilon^k = (\nu^k,
\sigma^k, \mu^k)$ the elementary Young measure associated to $D\BT^k$
as above. Since the spaces in \eqref{E:DUSP} are contained in the dual
spaces to $\L^1(\Omega;\C_0(\MAT{d}))$, $\C(\bar{\Omega})$, and
$\L^1(\bar{\Omega},\sigma; \C(\partial B_d))$ respectively, one can
show that there exists a subsequence (which we do not relabel, for
simplicity) and a triple $\upsilon = (\nu,\sigma, \mu)$ as in
\eqref{E:DUSP} with the property that the
\begin{align}
  \llbracket f,\upsilon^k \rrbracket
    & := \int_\Omega \lbrack f(x,\cdot), \nu^k_x \rbrack \,dx
      + \int_{\bar{\Omega}} \lbrack f^\infty(x,\cdot), \mu^k_x \rbrack
        \,\sigma^k(dx)
\label{E:LANGLE}\\
    & := \int_\Omega \int_{\MAT{d}} f(x,M) \,\nu^k_x(dM) \,dx
      + \int_{\bar{\Omega}} \int_{\partial B_d} f^\infty(x,M) \,\mu^k_x(dM)
        \,\sigma^k(dx)
\nonumber
\end{align}
converge to $\llbracket f,\upsilon \rrbracket$ (defined analogously) as $k
\rightarrow \infty$, for $f \in \mathscr{R}(\Omega;\MAT{d})$ with
\[
  \mathscr{R}\big( \Omega;\MAT{d} \big) := \left\{
    \begin{aligned}
      f \colon & \bar{\Omega}\times\MAT{d} \longrightarrow \R \;\colon
\\
      & \begin{minipage}{45ex} 
          the map $f$ is a Carath\'{e}odory function with 
\\
         linear growth at infinity, and there exists 
\\
         $f^\infty \in \C\big( \bar{\Omega}\times\MAT{d} \big)$
        \end{minipage} 
    \end{aligned} \right\};
\]
see Corollary~2 and Proposition~2 in \cite{KristensenRindler2010}.
Recall that the map $f\colon \bar{\Omega}\longrightarrow\R^d$ is
called a Carath\'{e}odory function if it is $\LEB^d\times
\B(\MAT{d})$-measurable and if $M\mapsto f(x,M)$ is continuous for
a.e.\ $x\in\bar{\Omega}$. It is enough to check the measurability of
$x \mapsto f(x,M)$ for all $M\in\MAT{d}$ fixed; see Proposition~5.6 in
\cite{AmbrosioFuscoPallara2000}. The map $f$ has linear growth at
infinity if there exists $c\GS 0$ such that $|f(x,M)| \LS c(1+\|M\|)$
for a.e.\ $x\in\bar{\Omega}$ and all $M\in\MAT{d}$. We denote by
$f^\infty$ the recession function of $f$, defined as
\begin{equation}
\label{E:DEFREC}
  f^\infty(x,M) := \lim_{\substack{x' \rightarrow x \\ M' \rightarrow M \\
    t \rightarrow \infty}} \frac{f(x',tM')}{t}
  \quad\text{for a.e.\ $x\in\bar{\Omega}$ and all $M\in\MAT{d}$.}
\end{equation}
Note that the recession function is positively $1$-homogeneous in $M$,
if it exists. We call a triple $\upsilon = (\nu,\sigma,\mu)$ obtained
as a limit as above a gradient Young measure and denote the space of
gradient Young measures by $\mathscr{G}(\Omega;\MAT{d})$. Then
\[
  D\BT = \lbrack \ID,\nu \rbrack \,\LEB^d + \lbrack \ID,\mu \rbrack \,\sigma,
\]
by construction (cf.\ \eqref{E:LANGLE}). Moreover, we have
\[
  \|\nabla\BT^k\| \;\LEB^d + \bigg\| \frac{\DD D^s\BT^k}{\DD |D^s\BT^k|} \bigg\|
      \; |D^s\BT^k| 
    \WEAK \lbrack \|\cdot\|,\nu \rbrack \,\LEB^d 
      + \lbrack \|\cdot\|,\mu \rbrack \,\sigma
\]
weak* in $\M(\bar{\Omega})$ as $k\rightarrow\infty$, which implies
that $\lbrack \|\cdot\|,\nu \rbrack \in \L^1(\Omega)$. We used the
fact that the recession function of $f(x,M) := \varphi(x)\|M\|$ with
$\varphi\in \C(\bar{\Omega})$ coincides with $f$. We refer the reader
to \cite{KristensenRindler2010} for further information. 
\medskip

We apply this framework to sequences of \emph{monotone} functions
$\BT^k \in \BVS(\Omega;\R^d)$ (see Remark~\ref{R:FUNC}), in which case
the derivatives $D\BT^k$ are positive (that is, matrix-valued and
locally finite) measures; see Theorem~5.3 in
\cite{AlbertiAmbrosio1999}. Since the map $(M,v) \mapsto v\cdot(Mv)$
is continuous, the set $\MAT[>]{d}$ is open and convex; the set
$\MAT[\GS]{d}$ is a closed convex cone. Recall that a matrix $M$ is an
element of $\MAT[\GS]{d}$ (resp.\ $\MAT[>]{d}$) if and only if its
symmetric part $M^\S \in \SYM[\GS]{d}$ (resp.\ $\SYM[>]{d}$).

\DETAIL{ 
In fact, for all $A\in\MAT{d}$ we have the following identities:
\begin{align*}
  2 \langle v,Av\rangle 
    &= \langle v,Av\rangle + \langle A^\T v,v\rangle
    = \langle v,Av\rangle + \langle v,A^\T v\rangle
    = \langle v,(A+A^\T)v\rangle,
\\
  0 &= \langle v,Av\rangle - \langle A^\T v,v\rangle
    = \langle v,Av\rangle - \langle v,A^\T v\rangle
    = \langle v,(A-A^\T)v\rangle.
\end{align*}
} 

\begin{proposition}[Gradient Young Measures]\label{P:GYMP}
Let $\Omega \subset \R^d$ be a bounded Lipschitz domain and suppose
that $\BT^k \WEAK \BT$ weak* in $\BVS(\Omega;\R^d)$ with $\BT^k,\BT \in
\BVS(\Omega;\R^d)$ monotone. For all $k\in\N$ we denote by
$\upsilon^k$ the elementary gradient Young measure associated to
$D\BT^k$, as introduced above. Then there exists a subsequence (which we
do not relabel, for simplicity) and a gradient Young measure $\upsilon
\in \mathscr{G}(\Omega;\MAT{d})$ with the property that $\llbracket
f,\upsilon^k \rrbracket \longrightarrow \llbracket f,\upsilon \rrbracket$
for all $f \in \RP(\Omega;\MAT{d})$, where
\[
  \RP\big( \Omega;\MAT{d} \big) := \left\{
    \begin{aligned}
      f \colon & \bar{\Omega}\times\MAT{d} \longrightarrow \R \;\colon
\\
      & \begin{minipage}{45ex} 
          the map $f$ is a Carath\'{e}odory function with 
\\
         linear growth at infinity, and there exists 
\\
         $f^\infty \in \C\big( \bar{\Omega}\times\MAT[\GS]{d} \big)$
        \end{minipage} 
    \end{aligned} \right\}.
\]
\end{proposition}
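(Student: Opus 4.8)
The plan is to deduce the statement from the standard gradient Young measure compactness result quoted above, by exploiting the sign of the derivatives $Du^k$. First, since $\{u^k\}$ is bounded in $\BVS(\Omega;\R^d)$, Corollary~2 and Proposition~2 of \cite{KristensenRindler2010} apply: after passing to a subsequence (not relabeled) there is a gradient Young measure $\upsilon=(\nu,\sigma,\mu)\in\mathbf{GY}(\Omega;\RDD)$ with $\llangle f,\upsilon^k\rrangle\to\llangle f,\upsilon\rrangle$ for every $f$ in the smaller class $\mathbf{R}(\Omega;\RDD)$, whose recession function is continuous on all of $\bar\Omega\times\RDD$. It then remains to enlarge the admissible test class to $\RP(\Omega;\RDD)$, and this is where monotonicity of the $u^k$ enters.

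Second, I localize the supports of the Young measures inside $\MANN{d}$. Since each $u^k$ is monotone, $Du^k$ is a positive matrix-valued measure (Theorem~5.3 of \cite{AlbertiAmbrosio1999}), i.e.\ $Du^k(E)\in\MANN{d}$ for every Borel $E\subset\Omega$ (equivalently, its symmetric part lies in $\SYMNN{d}$); both the absolutely continuous and the singular part inherit this sign. Hence $\nabla u^k(x)\in\MANN{d}$ for $\LEB^d$-a.e.\ $x$, and, since $\MANN{d}$ is a closed cone, the polar $p^k(x):=\frac{\DD D^s u^k}{\DD |D^s u^k|}(x)$ lies in $\MANN{d}\cap\BDD$ for $|D^s u^k|$-a.e.\ $x$ (it is a norm-one Besicovitch limit of quotients of elements of $\MANN{d}$). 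Thus $\SPT\nu^k_x\subset\MANN{d}$ and $\SPT\mu^k_x\subset\MANN{d}\cap\BDD$ for all $k$; in particular $\llangle f,\upsilon^k\rrangle$ makes sense even when $f^\infty$ is only defined on $\MANN{d}$. To pass the support constraint to the limit, test $\upsilon^k$ against $f(x,A):=\varphi(x)\,\DIST(A,\MANN{d})$ with $\varphi\in\D(\Omega)$, $\varphi\GS 0$: because $\MANN{d}$ is a cone, $\DIST(\cdot,\MANN{d})$ is $1$-Lipschitz and positively $1$-homogeneous, so $f\in\mathbf{R}(\Omega;\RDD)$ with $f^\infty=f$, while $\llangle f,\upsilon^k\rrangle=0$ for all $k$ by the support property. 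Passing to the limit and letting $\varphi$ vary gives $\langle\DIST(\cdot,\MANN{d}),\nu_x\rangle=0$ for a.e.\ $x$ and $\langle\DIST(\cdot,\MANN{d}),\mu_x\rangle=0$ for $\sigma$-a.e.\ $x$, that is, $\SPT\nu_x\subset\MANN{d}$ for a.e.\ $x$ and $\SPT\mu_x\subset\MANN{d}\cap\BDD$ for $\sigma$-a.e.\ $x$.

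Third, I upgrade the test class by a retraction. Let $r\colon\RDD\to\MANN{d}$ be the nearest-point projection onto the closed convex cone $\MANN{d}$; it is $1$-Lipschitz, fixes $\MANN{d}$ pointwise, $r(0)=0$ (so $\|r(A)\|\LS\|A\|$), and $r(tA)=t\,r(A)$ for $t\GS 0$. Given $f\in\RP(\Omega;\RDD)$, set $\tilde f(x,A):=f(x,r(A))$. Then $\tilde f$ is a Carath\'{e}odory function with linear growth, and its recession function is $\tilde f^\infty(x,A)=f^\infty(x,r(A))$, which is continuous on all of $\bar\Omega\times\RDD$ since $f^\infty$ is continuous on $\bar\Omega\times\MANN{d}$ and $r$ is continuous; hence $\tilde f\in\mathbf{R}(\Omega;\RDD)$. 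On $\bar\Omega\times\MANN{d}$ one has $f=\tilde f$ and $f^\infty=\tilde f^\infty$, so because all of $\nu^k_x,\mu^k_x,\nu_x,\mu_x$ are supported in $\MANN{d}$, the pairing \eqref{E:LANGLE} is unaffected by the modification: $\llangle f,\upsilon^k\rrangle=\llangle\tilde f,\upsilon^k\rrangle$ for every $k$ and $\llangle f,\upsilon\rrangle=\llangle\tilde f,\upsilon\rrangle$. Applying the standard convergence to $\tilde f$ yields $\llangle f,\upsilon^k\rrangle\to\llangle f,\upsilon\rrangle$, which is the assertion.

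The step I expect to be the main obstacle is the identification $\tilde f^\infty(x,A)=f^\infty(x,r(A))$: one must verify that in the defining limit \eqref{E:DEFREC}, with $x'\to x$, $A'\to A$ and $t\to\infty$, the ratio $f(x',t\,r(A'))/t$ converges, using $r(A')\to r(A)\in\MANN{d}$ together with the assumed existence and continuity of $f^\infty$ near $r(A)$ within $\MANN{d}$ — this is precisely where the hypothesis $f\in\RP(\Omega;\RDD)$ is used, and the only place the larger function class matters. The remaining verifications — that $D^s u^k$ is positive with polar valued in $\MANN{d}$, and that \eqref{E:LANGLE} is insensitive to altering $f$ off $\MANN{d}$ — are routine once the supports have been localized.
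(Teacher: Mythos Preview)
Your proof is correct and proceeds along essentially the same initial lines as the paper --- both begin by testing against $f(x,A)=\varphi(x)\DIST(A,\MANN{d})$ (noting that $\DIST(\cdot,\MANN{d})$ is positively $1$-homogeneous, so $f^\infty=f$) to localize all the Young measures $\upsilon^k$ and $\upsilon$ inside $\MANN{d}$. The divergence comes in the final step.

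The paper does not introduce a retraction. Instead, after localizing the supports, it simply asserts that ``a careful inspection of the proof of Proposition~2 in \cite{KristensenRindler2010}'' shows the convergence extends to the larger test class: since that proof proceeds via weak* convergence of measures on a compactification of $\Omega\times\RDD$, and all measures in play are supported in $\Omega\times\MANN{d}$, one may work on (a compactification of) the smaller set, where only continuity of $f^\infty$ on $\MANN{d}$ is needed. Your approach is a genuine alternative: you treat the Kristensen--Rindler result as a black box and instead extend each $f\in\RP$ to $\tilde f:=f(\cdot,r(\cdot))\in\mathbf{R}$ via the metric projection $r$ onto $\MANN{d}$, then observe that the pairing \eqref{E:LANGLE} is unchanged because all measures live in $\MANN{d}$. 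This is cleaner and more self-contained --- it avoids sending the reader back into the compactification machinery of \cite{KristensenRindler2010} --- at the mild cost of having to verify the identity $\tilde f^\infty(x,A)=f^\infty(x,r(A))$, which you correctly flag as the crux and which follows from the positive homogeneity of $r$ together with the assumed existence and continuity of $f^\infty$ on $\bar\Omega\times\MANN{d}$ (note $r(A')\to r(A)$ within $\MANN{d}$, so the restricted recession limit suffices).
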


\begin{proof}
It suffices to check continuity of the recession function $f^\infty$
on the smaller set $\MAT[\GS]{d}$ because all gradient Young measures
considered above vanish outside of $\Omega\times\MAT[\GS]{d}$. Indeed,
consider any test function $f\in\mathscr{R}(\Omega; \MAT{d})$ of the
form $f(x,M) = \varphi(x) h(M)$, with $\varphi \in \CC(\Omega)$
non-negative, $h(M) := \DIST(M,\MAT[\GS]{d})$ for all $M\in\MAT{d}$.
Then the map $h$ is positively $1$-homogeneous. This follows
immediately from the fact that $\MAT[\GS]{d}$ is a cone. It can also
be derived from the following observation: Notice first that symmetric
and antisymmetric matrices in $\MAT{d}$ are orthogonal to each other
with respect to the Frobenius inner product. For given $M\in\MAT{d}$
let $M^\S=QP$ be a polar decomposition of its symmetric part (so that
$Q^\T Q=\ONE$ and $P=P^\T\GS 0$). Then
\[
  X_M := M^\A + (M^\S+P)/2
\]
is the unique element in $\MAT[\GS]{d}$ closest to $M$ in the Frobenius
norm, and
\[
  \DIST(M,\MAT[\GS]{d})^2 = \sum_{\lambda_i(M^\S)<0} \lambda_i(M^\S)^2,
\]
with $\lambda_i(M^\S)$ the (real) eigenvalues of $M^\S$; see
\cite{Higham1988}. Then the claim follows.

Since $h$ is positively $1$-homogeneous it is sufficient to consider
the limits $x'\rightarrow x$ and $M'\rightarrow M$ in \eqref{E:DEFREC}
to define the recession function of $f$. But $\varphi, h$ are
continuous, and hence $f^\infty$ coincides with $f$. In particular, this
proves that $f \in \mathscr{R}(\Omega; \MAT{d})$. Note that $h(M)=0$ if
and only if $M \in \MAT[\GS]{d}$. If $\BT^k$ is monotone, then
\begin{gather*}
  \nabla \BT^k(x) \in \MAT[\GS]{d}
  \quad\text{and}\quad
  p^k(x) \in \MAT[\GS]{d}
\end{gather*}
for a.e.\ $x\in\Omega$ and $|D^s\BT^k|$-a.e.\ $x\in\Omega$,
respectively, where
\[
  D^s\BT^k = \frac{\DD D^s\BT^k}{\DD |D^s\BT^k|} \,|D^s\BT^k|,
  \quad
  p^k := \frac{\DD D^s\BT^k}{\DD |D^s\BT^k|} 
    \in \L^1(\Omega,|D^s\BT^k|;\partial B_d)
\]
is the polar decomposition of $D^s\BT^k$. If $\upsilon^k$ is the
elementary gradient Young measure associated to $D\BT^k$, then
$\llbracket f,\upsilon^k \rrbracket = 0$ for all $k\in\N$ and $f$ as
above. Then the gradient Young measure $\upsilon$ generated by
$\{\upsilon^k\}_k$ satisfies $\llbracket f,\upsilon\rrbracket = 0$
because $\llbracket f,\upsilon^k \rrbracket \longrightarrow \llbracket
f,\upsilon \rrbracket$ for all $f \in \mathscr{R}(\Omega;\MAT{d})$.
Since $\varphi \in \CC(\Omega)$ non-negative was arbitrary, we get that
the gradient Young measure $\upsilon$ vanishes outside of $\Omega
\times \MAT[\GS]{d}$. A careful inspection of the proof of
Proposition~2 in \cite{KristensenRindler2010} now yields the result:
Convergence of the gradient Young measures follows from the weak*
convergence of
\[
	\nu^k \,\LEB^d + \mu^k \,\sigma^k \WEAK \nu \,\LEB^d + \mu \,\sigma
\]
on (a suitable compactification of) $\Omega\times\MAT{d}$, which
reduces to weak* convergence on $\Omega\times\MAT[\GS]{d}$ whenever
$\BT^k$ and $\BT$ are monotone.
\end{proof}

\DETAIL{ 
Note that if $u \in \BVS(\Omega;\R^d)$ is monotone, then for $|D^s
u|$-a.e.\ $x\in\Omega$ we have
\[
  p(x) := \frac{\DD D^s u}{\DD |D^s u|} 
    = a \otimes b
\]
for suitable vectors $a,b \in \R^d$; see Theorem~5.10 in
\cite{AlbertiAmbrosio1999}. Therefore the matrix $p(x)$ is rank-one.
If $x$ is a point on the codimension-one rectifiable jump set
$\mathscr{J}$ of $Du$, then $b$ is in fact the normal of $\mathscr{J}$
at $x$, and $a = u_+-u_-$, where $u_+ \in \R^d$ denotes the trace of
$u$ on that side of $\mathscr{J}$ that $b$ points to, and $u_- \in
\R^d$ denotes the trace of $u$ on the opposite side. A blow-up of $u$
around $x$ converges to a monotone map $u^\infty$ that jumps along a
hyperplane orthogonal to $b$ and is constant to $u_+$ resp.\ $u_-$ on
the two sides of the hyperplane. Choosing $n\in\R^d$ with $\|n\|=1$
and $\langle b,n\rangle > 0$, we then consider points $x_\pm := x\pm
n$. By monotonicity of $u^\infty$, we obtain
\begin{align*}
  0 \LS \langle u^\infty(x_+)-u^\infty(x_-), x_+-x_- \rangle 
    = 2 \langle u_+-u_-,n \rangle.
\end{align*}
This implies that $\langle a,n\rangle \GS 0$ for all $n\in\R^d$ with
$\langle b,n\rangle > 0$. As a consequence, we must have that $a =
\gamma b$ for some $\gamma \GS 0$. This also follows from $a\otimes b
\in \MAT[\GS]{d}$ as
\[
  0 \LS \langle v, (a\otimes b)v\rangle = (v\cdot a) (v\cdot b)
  \quad\text{for all $v\in\R^d$, $v\neq 0$.}
\]
It is sufficient to consider $v$ in the plane spanned by $a$ and $b$.
If now $a \neq \gamma b$, then there exists a vector $v$ with the
property that $v\cdot a > 0$ and $v\cdot n < 0$, which is a
contradiction. The same argument works for points in the Cantor set of
$Du$.
} 


\subsection{Internal Energy}\label{SS:IE}

We introduce a functional on the space of monotone $\BVS$-vector
fields that represents the internal energy. This functional will be
convex and lower semicontinuous with respect to weak* convergence in
$\BVS_\LOC(\Omega;\R^d)$.

Let us start with two auxiliary results. 

\begin{lemma}\label{L:HA}
For any $\gamma>1$, the map $h\colon \MAT{d} \longrightarrow
[0,\infty]$ defined by
\begin{equation}
\label{E:HA}
  h(M) := \begin{cases}
      \det(M^\S)^{1-\gamma} & \text{if $M \in \MAT[>]{d}$,}
\\ 
      \infty & \text{otherwise},
    \end{cases}
\end{equation}
is lower semicontinuous, proper, and convex. For all $M\in\MAT{d}$, we
have
\begin{equation}
\label{E:RECF}
  h^\infty(M) 
    := \lim_{t\rightarrow\infty} \frac{h(\ONE+tM)-h(\ONE)}{t} 
    = \begin{cases}
      0 
        & \text{if $M \in \MAT[\GS]{d}$,}
\\
      \infty 
        & \text{otherwise.}
    \end{cases}
\end{equation}
\end{lemma}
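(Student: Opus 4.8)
The plan is to reduce the statement to the well-known convexity and lower semicontinuity of the determinant power on positive definite symmetric matrices. One first observes that $h(A)$ depends on $A$ only through its symmetric part $A^\SYM := (A+A^\T)/2$, that $A \mapsto A^\SYM$ is a continuous linear map from $\RDD$ onto the space $\S^d$ of symmetric matrices, and that $A \in \MAP{d}$ if and only if $A^\SYM \in \SYMP{d}$. Hence $h = \phi \circ (\cdot)^\SYM$, where $\phi\colon \S^d \to [0,\infty]$ is given by $\phi(S) := \det(S)^{1-\gamma}$ for $S \in \SYMP{d}$ and $\phi(S) := \infty$ otherwise. Since composition with a continuous linear map preserves lower semicontinuity and convexity, it suffices to prove these two properties for $\phi$; properness is immediate from $h(\ONE) = 1$ and $h \GS 0$.

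For convexity of $\phi$, I would invoke the classical fact that $S \mapsto -\log\det S$ is convex on the open convex cone $\SYMP{d}$ (restrict to a segment $S_0 + tV$, use a congruence to reduce to $S_0 = \ONE$, and observe that $\log\det(\ONE + tV) = \sum_i \log(1+t\lambda_i)$ is concave in $t$). Because $\gamma - 1 > 0$, the function $S \mapsto -(\gamma-1)\log\det S$ is then convex on $\SYMP{d}$, and composing it with the nondecreasing convex function $r \mapsto e^r$ shows that $\det(S)^{1-\gamma} = \exp\big(-(\gamma-1)\log\det S\big)$ is convex on $\SYMP{d}$. Extending by $+\infty$ off the convex set $\SYMP{d}$ preserves convexity, so $\phi$ and hence $h$ is convex.

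For lower semicontinuity, note that $\phi$ is continuous on the open set $\SYMP{d}$, so it remains to treat $S_k \to S$ with $S \notin \SYMP{d}$ and show $\phi(S_k) \to \infty$. Along any subsequence, the indices with $S_k \notin \SYMP{d}$ contribute $\phi(S_k) = \infty$; for the remaining indices with $S_k \in \SYMP{d}$, the limit $S$ is positive semidefinite but not positive definite, so $\det S = 0$, whence $\det S_k \to 0$ with $\det S_k > 0$ and $\phi(S_k) = (\det S_k)^{-(\gamma-1)} \to \infty$ since $\gamma - 1 > 0$.

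Finally, for the recession function in \eqref{E:RECF}: by convexity the difference quotient $t \mapsto \big(h(\ONE + tA) - h(\ONE)\big)/t$ is nondecreasing, so the limit exists in $(-\infty,\infty]$. If $A \in \MANN{d}$, then $(\ONE + tA)^\SYM = \ONE + tA^\SYM$ is positive definite for every $t \GS 0$ with $\det(\ONE + tA^\SYM) = \prod_i(1 + t\lambda_i) \GS 1$, where $\lambda_i \GS 0$ are the eigenvalues of $A^\SYM$; hence $h(\ONE + tA) = \det(\ONE + tA^\SYM)^{1-\gamma} \in (0,1]$, so the difference quotient is bounded in modulus by $1/t \to 0$ and $h^\infty(A) = 0$. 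If $A \notin \MANN{d}$, then $A^\SYM$ has an eigenvalue $\lambda < 0$, so for $t > 1/|\lambda|$ the symmetric matrix $\ONE + tA^\SYM$ has the negative eigenvalue $1 + t\lambda$; therefore $\ONE + tA \notin \MAP{d}$, $h(\ONE + tA) = \infty$, and the difference quotient is eventually $+\infty$, giving $h^\infty(A) = \infty$. The only genuinely non-routine input is the convexity, resting on the concavity of $\log\det$ on positive definite matrices; the step requiring the most care is the lower semicontinuity check along sequences approaching the boundary of $\SYMP{d}$.
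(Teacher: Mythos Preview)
Your proof is correct and follows the same overall architecture as the paper's: reduce to the symmetric part via the linear map $A\mapsto A^\SYM$, invoke a classical convexity/concavity fact about the determinant on $\SYMP{d}$, and handle the recession function by splitting on whether $A^\SYM$ has a negative eigenvalue. The details differ in two places. For convexity, the paper proves that $S\mapsto\det(S)^{1/d}$ is concave on $\SYMP{d}$ (via a direct second-derivative computation along segments) and then composes with the convex nonincreasing map $r\mapsto r^{d(1-\gamma)}$; you instead use the convexity of $S\mapsto -\log\det S$ and compose with $\exp$. Both are standard and equally efficient. For the recession bound $\det(\ONE+tA^\SYM)\GS 1$ when $A\in\MANN{d}$, the paper deduces it from the concavity of $\det^{1/d}$, whereas you compute it directly from the eigenvalue product $\prod_i(1+t\lambda_i)$, which is arguably more elementary. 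One small point worth making explicit in your lower-semicontinuity argument: when you write ``the limit $S$ is positive semidefinite but not positive definite,'' this is because any subsequence lying in $\SYMP{d}$ has its limit in the closed cone $\SYMNN{d}$; your conclusion $\det S=0$ then follows, but the justification deserves a word since a general $S\notin\SYMP{d}$ need not be positive semidefinite.
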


\begin{proof}
Since $M \mapsto \det(M^\S)$ is continuous, the function $h$ is lower
semicontinuous. It is proper because $h(\ONE) = 1$. In order to prove
the convexity of $h$, we observe that $S \mapsto \det(S)^{1/d}$ is
concave for all symmetric, positive definite $S\in\MAT{d}$. Indeed,
pick any two such matrices $S^0$ and $S^1$. For all $t\in[0,1]$ we can
write
\[
  \det\big((1-t)S^0+t S^1\big)^{1/d} 
    = \big( \det(S^0) \det(\ONE+tB) \big)^{1/d},
\]
where $B := C^{-1}(S^1-S^0)C^{-1}$ and $C:=\sqrt{S^0}$. The matrix $C$
exists and is invertible since $S^0$ is symmetric and positive
definite, by assumption. Then we compute
\begin{gather}
  \frac{d}{dt} \det(\ONE+tB)^{1/d}
    = \det(\ONE+tB)^{1/d}
      \bigg\{ \frac{1}{d} \TRACE(D) \bigg\},
\nonumber\\
  \frac{d^2}{dt^2} \det(\ONE+tB)^{1/d}
    = \det(\ONE+tB)^{1/d}
      \bigg\{ \frac{1}{d^2} \TRACE(D)^2 - \frac{1}{d} \TRACE(D^2) \bigg\},
\label{E:HES}
\end{gather}
where $D := B(\ONE+tB)^{-1}$. The matrix $D$ is symmetric.
Therefore
\[
  \TRACE(D)^2 
    = (\lambda_1+\cdots+\lambda_d)^2
    \LS d (\lambda_1^2+\cdots+\lambda_d^2) 
    = d \TRACE(D^2),
\]
where $\lambda_1, \ldots, \lambda_d$ are the real eigenvalues of $D$.
Hence \eqref{E:HES} is non-positive for every $s\in[0,1]$. The
composition of a concave function with a convex, non-increasing map is
convex. Therefore the map $S \mapsto \det(S)^{1-\gamma}$ is convex for
all symmetric, positive definite $S \in \MAT{d}$. Finally, the
composition of any convex function with the linear map $M \mapsto
M^\S$ is again convex. Then the result follows.

To prove \eqref{E:RECF}, we use that the map $t\mapsto (h(\ONE +
tM)-h(\ONE))/t$ is non-decreasing (hence $\lim_{t\rightarrow\infty} =
\sup_{t>0}$), by convexity of $h$. If $M \not\in \MAT[\GS]{d}$, then
there exists $v \in \R^d$, $\|v\| = 1$, such that $\langle v,Mv\rangle
< 0$. For sufficiently large $t>0$, we get
\[
  \langle v,(\ONE+tM)v \rangle = 1 + t \langle v,Mv \rangle < 0,
\]
and thus $h(\ONE+tM) = \infty$. This proves \eqref{E:RECF} for
$M \not\in \MAT[\GS]{d}$.

If $M \in \MAT[\GS]{d}$, then $\ONE+tM \in \MAT[>]{d}$ for all
$t>0$, because
\[
  \langle v,(\ONE+tM)v\rangle = 1 + t\langle v,Mv\rangle \GS 1
\]
for all $v\in\R^d$, $\|v\| = 1$. By convavity of $M \mapsto
\det(M^\S)^{1/d}$, we obtain
\begin{align*}
  \det(\ONE+tM^\S)^{1/d} 
    & = \det\Bigg( (1+t) \bigg( \frac{1}{1+t}\ONE 
      + \frac{t}{1+t} M^\S \bigg) \Bigg)^{1/d}
\\
    & \GS (1+t) \bigg( \frac{1}{1+t} \det(\ONE)^{1/d} 
      + \frac{t}{1+t} \det(M^\S)^{1/d} \bigg)
    \GS 1
\end{align*}
for $t>0$. Notice that $\det(M^\S) \GS 0$. This implies $\det(\ONE +
tM^\S)^{1-\gamma} \LS 1$ (recall that $\gamma>1$, by assumption), and
so \eqref{E:RECF} follows for $M\in\MAT[\GS]{d}$ as well.
\end{proof}

\begin{lemma}\label{L:INFCONF}
For any $n\in\N$, we define the $\inf$-convolution
\begin{equation}
\label{E:PHIN}
  h_n(M) := \inf_{B \in \MAT{d}} \Big\{ n\|M-B\| + h(B) \Big\}
\end{equation}
for all $M\in\MAT{d}$, which has the following properties:
\begin{enumerate}
\item The map $h_n$ is lower semicontinuous, proper, and convex.
\item For all $M\in\MAT{d}$, we have $h_n(M) \longrightarrow h(M)$
  monotonically from below.
\item The map $h_n$ is Lipschitz continuous with Lipschitz constant
  $n$.
\item The map $h_n$ has linear growth at infinity:
\begin{equation}
\label{E:LINGRO}
  h_n(M) \LS 1+n\sqrt{d} + n\|M\| 
  \quad\text{for all $M\in\MAT{d}$.}
\end{equation}
\item For all $M \in \MAT{d}$, we have that 
\begin{equation}
\label{E:HNI}
  h_n^\infty(M) 
    := \lim_{t\rightarrow\infty} \frac{h_n(\ONE+tM)-h_n(\ONE)}{t}
    = n \, \DIST(M,\MAT[\GS]{d}).
\end{equation}
\end{enumerate}
\end{lemma}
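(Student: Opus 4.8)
The plan is to prove the five properties of the $\inf$-convolution $h_n$ more or less in the order listed, since each one is either standard or reduces quickly to Lemma~\ref{L:HA}. First I would record the elementary facts about $\inf$-convolutions of a proper, lsc, convex function with the Lipschitz function $n\|\cdot\|$: property (iii.) follows because for any $A_1,A_2$ and any $B$, one has $n\|A_1-B\| \LS n\|A_2-B\| + n\|A_1-A_2\|$, so taking the infimum over $B$ gives $h_n(A_1) \LS h_n(A_2) + n\|A_1-A_2\|$, and symmetrically; this also shows $h_n$ is real-valued wherever it is finite, and finiteness is clear by plugging in $B=\mathbbm{1}$, which yields $h_n(A) \LS n\|A-\mathbbm{1}\| + h(\mathbbm{1}) = n\|A-\mathbbm{1}\| + 1 \LS 1 + n\sqrt{d} + n\|A\|$ (using $\|\mathbbm{1}\| = \sqrt{d}$), giving (iv.) directly. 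For (i.), convexity of $h_n$ is the standard fact that the $\inf$-convolution of two convex functions is convex (the infimal convolution's epigraph is the sum of the epigraphs, up to closure); lower semicontinuity is automatic for a finite convex function on $\RDD$, and properness was just checked. It is worth noting that $h_n \GS 0$ because $n\|A-B\| \GS 0$ and $h(B) \GS 0$ everywhere.

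Next, for (ii.) I would argue monotonicity in $n$ is immediate from the definition (a larger penalty coefficient can only increase the infimum), so $h_n(A)$ is nondecreasing in $n$, and $h_n(A) \LS h(A)$ always (take $B=A$). It remains to show $h_n(A) \to h(A)$. When $h(A) < \infty$ this is the classical Moreau--Yosida-type statement; I would take a minimizing sequence $B_k$ for $h_n(A)$, note that $n\|A-B_k\|$ stays bounded (since $h(B_k)\GS 0$ and $h_n(A)$ is finite), so after passing to a subsequence $B_k \to B_\infty$, and then lower semicontinuity of $h$ plus the fact that $\|A - B_\infty\| = 0$ forces... actually the cleaner route: by convexity and the growth of $h$, pick an affine minorant $h(B)\GS \langle L, B\rangle + c$; then $h_n(A) \GS \inf_B \{ n\|A-B\| + \langle L,B\rangle + c\}$, which is $\langle L, A\rangle + c$ once $n \GS \|L\|$, hence $h_n(A)$ is bounded below and $\liminf_n h_n(A) \GS h^{**}(A) = h(A)$ by the standard duality $(\inf\text{-conv with }n\|\cdot\|)^* = h^* + \iota_{\{\|\cdot\|\LS n\}}$ and taking biconjugates. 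When $h(A) = \infty$ one has to check $h_n(A)\to\infty$; if instead $h_n(A)$ stayed bounded along a subsequence, the same affine-minorant estimate is too weak, so I would instead use that $h$ restricted to any compact neighborhood of $A$ is bounded below and that a minimizing $B_{n,k}$ must converge to $A$ as $n\to\infty$ (else $n\|A-B\|\to\infty$), contradicting lsc of $h$ at $A$ where $h(A)=\infty$.

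For (v.), the recession function, I would combine the already-established linear growth with the general principle that for a proper lsc convex $g$ with linear growth, $g^\infty(A) = \lim_{t\to\infty} g(\mathbbm{1}+tA)/t$ equals the support-function-type limit, and that the recession function of an $\inf$-convolution $g = g_1 \,\square\, g_2$ is $g_1^\infty \,\square\, g_2^\infty$ — but since $(n\|\cdot\|)$ has recession function $n\|\cdot\|$ itself and $\inf$-convolving with a finite sublinear function that is everywhere defined reduces to a pointwise infimum over translations, one gets $h_n^\infty(A) = \inf_{B} \{ n\|A-B\| + h^\infty(B)\}$. By \eqref{E:RECF} of Lemma~\ref{L:HA}, $h^\infty(B) = 0$ if $B\in\MANN{d}$ and $+\infty$ otherwise, so the infimum collapses to $\inf_{B\in\MANN{d}} n\|A-B\| = n\,\DIST(A,\MANN{d})$, which is exactly \eqref{E:HNI}. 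To make this rigorous without invoking the general recession-of-infimal-convolution lemma, I would instead directly estimate $h_n(\mathbbm{1}+tA)$: the upper bound $h_n(\mathbbm{1}+tA) \LS n\|\mathbbm{1}+tA - (\mathbbm{1}+tX_A)\| + h(\mathbbm{1}+tX_A)$ where $X_A$ is the nearest point of $\MANN{d}$ to $A$ (so $\mathbbm{1}+tX_A\in\MANN{d}\subset\overline{\MAP{d}}$, and by continuity one can perturb slightly into $\MAP{d}$ controlling $h$), giving $\LS nt\,\DIST(A,\MANN{d}) + o(t)$; and the lower bound from the fact that any competitor $B$ with $n\|\mathbbm{1}+tA-B\|$ small forces $B/t$ close to $A$, while $h(B)$ finite forces $B\in\MAP{d}$, hence $B/t$ asymptotically in $\MANN{d}$, so $n\|\mathbbm{1}+tA-B\|/t \to \DIST(A,\MANN{d})$ at best.

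The main obstacle I anticipate is property (v.): showing the limit is \emph{exactly} $n\,\DIST(A,\MANN{d})$ rather than just bounded by it requires care because $h$ is finite only on the open cone $\MAP{d}$, so when the nearest point $X_A\in\partial\MAP{d}$ one cannot simply evaluate $h$ there, and must produce a recovery sequence interior to $\MAP{d}$ with $h(\mathbbm{1}+tX_A^{(t)}) = o(t)$ — this uses the bound $h(\mathbbm{1}+sA^\SYM)^{1/d} \GS 1$ type estimate from the proof of Lemma~\ref{L:HA} showing $h \LS 1$ along rays into $\MANN{d}$, so in fact $h$ is bounded (by $1$) on $\mathbbm{1}+t\MANN{d}$ after the symmetrization, making the $o(t)$ claim trivial. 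The other slightly delicate point is the $h(A)=\infty$ case of (ii.), where the naive affine-minorant bound is insufficient; the argument via convergence of near-minimizers to $A$ together with lsc of $h$ resolves it. Everything else is routine convex analysis.
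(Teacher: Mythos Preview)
Your proposal is correct and follows essentially the same approach as the paper: the core of (v.) is exactly the identity $h_n^\infty = (n\|\cdot\|)^\infty \,\square\, h^\infty$, which together with \eqref{E:RECF} collapses to $n\,\DIST(A,\MANN{d})$. The only difference is that the paper outsources (i.), (ii.), (iii.), and (v.) to references---Corollaries~9.2.1 and 9.2.2 in Rockafellar and Lemma~1.61 in Ambrosio--Fusco--Pallara---checking only the technical hypotheses those results require (namely the recession conditions $n\|A\| + h^\infty(-A) > 0$ for $A\neq 0$, and the analogous condition for (v.)), whereas you sketch direct elementary arguments; your route is more self-contained, the paper's is shorter.
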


\begin{proof}
Statement~(1) follows from Corollary~9.2.2 in \cite{Rockafellar1997}:
Notice first that the norm and $h$ are lower semicontinuous, convex,
and proper; see Lemma~\ref{L:HA}. The recession function of the norm
is the norm itself, and it holds
\[
  n\|M\| + h^\infty(-M) > 0
  \quad\text{for all $M\in\MAT{d}, M\neq 0$.}
\]

Statements~(2) and (3) follow from Lemma~1.61 in
\cite{AmbrosioFuscoPallara2000}.

To prove \eqref{E:LINGRO}, we just choose $B = \ONE$ in
\eqref{E:PHIN} and use the triangle inequality.

Finally, statement~(5) follows from Corollary~9.2.1 in
\cite{Rockafellar1997}. We must prove that for all pairs of matrices
$M_1,M_2\in\MAT{d}$ with the property that
\begin{equation}
\label{E:ROCKCON}
  n\|M_1\| + h^\infty(M_2) \LS 0
  \quad\text{and}\quad
  n\|-M_1\| + h^\infty(-M_2) > 0,
\end{equation}
it holds $M_1+M_2\neq 0$. The first condition in \eqref{E:ROCKCON} is
only satisfied if 
\[
  M_1=0
  \quad\text{and}\quad
  M_2 \in \MAT[\GS]{d},
\]
because of \eqref{E:RECF}. Then the second condition requires $-M_2
\not\in \MAT[\GS]{d}$, and thus there exists $v\in\R^d$, $v\neq 0$, with
$\langle v,-M_2v\rangle < 0$ (consistent with $M_2 \in \MAT[\GS]{d}$).
This is only possible if $M_2 \neq 0$, so the claim follows. We then
obtain that the recession function \eqref{E:HNI} of the
$\inf$-convolution \eqref{E:PHIN} is given by
\[
  h_n^\infty(M) = \inf_{B \in \MAT{d}} \Big\{ n\|M-B\| + h^\infty(B) \Big\}
\]
for all $M \in \MAT{d}$, which implies the result because of
\eqref{E:RECF}.
\end{proof}

We can now prove the following lower semicontinuity result.

\begin{proposition}[Internal Energy]\label{P:LSCCON2}
Let $\Omega \subset \R^d$ be open and convex, and $h$ given by
\eqref{E:HA}. For $U \in \L^1(\Omega)$ non-negative and $\BT \in
\BVS_\LOC(\Omega; \R^d)$ we define
\begin{equation}
\label{E:DEFU}
  \INT[\BT] := \begin{cases}
      \DST \int_\Omega U(x) h\big( \nabla\BT(x) \big) \,dx
        & \text{if $\BT$ monotone,}
\\
      +\infty 
        & \text{otherwise,}
  \end{cases}
\end{equation}
using again the decomposition \eqref{E:LRN}. Then the following is
true:
\begin{enumerate}
\item The functional $\INT$ is convex.
\item For any $\BT^k \WEAK \BT$ weak* in $\BVS_\LOC(\Omega;\R^d)$ with
  $\BT^k,\BT \in \BVS_\LOC(\Omega;\R^d)$ monotone, there exists a
  subsequence (not relabeled) such that
\[
  \INT[\BT] \LS \liminf_{k\rightarrow\infty} \INT[\BT^k].
\]
\end{enumerate}
\end{proposition}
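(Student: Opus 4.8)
The plan is to reduce the lower semicontinuity statement to the abstract Young measure machinery of Section~\ref{SS:GYM}, via the $\inf$-convolutions $h_n$ from Lemma~\ref{L:INFCONF}. Convexity (part~(1)) is immediate: since $h$ is convex by Lemma~\ref{L:HA} and $u \mapsto \nabla u$ is linear on the space of monotone maps (the singular part $D^s u$ being irrelevant to the integrand in \eqref{E:DEFU}), for monotone $u^0, u^1$ and $s \in [0,1]$ the convex combination $(1-s)u^0 + s u^1$ is again monotone and Jensen/pointwise convexity gives $\INT[(1-s)u^0+su^1] \LS (1-s)\INT[u^0] + s\INT[u^1]$; if either endpoint has infinite energy there is nothing to prove.

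For part~(2), fix a sequence $u^k \WEAK u$ weak* in $\BVS_\LOC(\Omega;\R^d)$ with all maps monotone. Working on an exhausting sequence of bounded convex Lipschitz subdomains and using a diagonal argument, I may assume $\Omega$ itself is a bounded Lipschitz domain and that $\sup_k \|u^k\|_{\BVS(\Omega)} < \infty$. Apply Proposition~\ref{P:GYMP} to extract a subsequence (not relabeled) and a gradient Young measure $\upsilon = (\nu,\sigma,\mu) \in \mathbf{GY}(\Omega;\RDD)$, supported in $\Omega\times\MANN{d}$ (resp.\ $\bar\Omega\times\MANN{d}$), with $\llangle f,\upsilon^k\rrangle \to \llangle f,\upsilon\rrangle$ for all $f\in\RP(\Omega;\RDD)$; and by \eqref{E:DUDEC}, $Du = \langle\ID,\nu\rangle\,\LEB^d + \langle\ID,\mu\rangle\,\sigma$, so in particular $\nabla u(x) = \langle\ID,\nu_x\rangle$ for a.e.\ $x$, where $\nabla u$ is the absolutely continuous part. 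Now fix $n\in\N$ and $\varphi\in\C_c(\Omega)$ with $0\LS\varphi\LS 1$, and set $f_n(x,A) := \varphi(x)\,\tilde e(x)\,h_n(A)$, where $\tilde e$ is a truncation/approximation of $e$ that is continuous and bounded (so that $f_n$ is Carath\'eodory); since $h_n$ has linear growth \eqref{E:LINGRO} and recession function $h_n^\infty(x,\cdot) = \varphi(x)\tilde e(x)\,n\,\DIST(\cdot,\MANN{d})$ which is continuous and vanishes on $\MANN{d}$, one has $f_n\in\RP(\Omega;\RDD)$. Because the Young measures are concentrated on $\MANN{d}$, the recession term contributes zero, and therefore
\[
  \int_\Omega \varphi\,\tilde e\, \langle h_n,\nu_x\rangle\,dx
    = \llangle f_n,\upsilon\rrangle
    = \lim_{k\to\infty} \int_\Omega \varphi(x)\,\tilde e(x)\,h_n\big(\nabla u^k(x)\big)\,dx.
\]

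Next I push three limits through, in order. First, by Jensen's inequality applied to the convex function $h_n$ and the probability measure $\nu_x$, together with $\langle\ID,\nu_x\rangle = \nabla u(x)$, the left side is bounded below by $\int_\Omega \varphi\,\tilde e\, h_n(\nabla u)\,dx$. Second, on the right, since $h_n \LS h$ pointwise (Lemma~\ref{L:INFCONF}(2)), $\int_\Omega \varphi\,\tilde e\,h_n(\nabla u^k)\,dx \LS \int_\Omega \varphi\,\tilde e\,h(\nabla u^k)\,dx \LS \INT[u^k]$ once $\varphi\LS1$ and $\tilde e\LS e$; taking $\liminf_k$ gives $\int_\Omega \varphi\,\tilde e\,h_n(\nabla u)\,dx \LS \liminf_k \INT[u^k]$. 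Third, let $n\to\infty$ and invoke monotone convergence $h_n(\nabla u(x))\uparrow h(\nabla u(x))$ (Lemma~\ref{L:INFCONF}(2)) to obtain $\int_\Omega \varphi\,\tilde e\,h(\nabla u)\,dx \LS \liminf_k\INT[u^k]$; finally let $\varphi\uparrow 1$ and $\tilde e\uparrow e$ by monotone convergence again. If along the way the liminf of $\INT[u^k]$ is $+\infty$ there is nothing to prove, and if it is finite the estimates show $u$ is monotone with $h(\nabla u)\in\L^1_\LOC$, so $\INT[u]$ is defined by the first branch of \eqref{E:DEFU} and we conclude $\INT[u]\LS\liminf_k\INT[u^k]$.

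The main obstacle is the interplay between the weight $e$ and the Carath\'eodory requirement on the integrand: $e$ is merely $\L^1$, not continuous, whereas the class $\RP(\Omega;\RDD)$ used in Proposition~\ref{P:GYMP} requires a Carath\'eodory function whose recession function is \emph{continuous} on $\bar\Omega\times\MANN{d}$. The resolution is the truncation/approximation $\tilde e$ together with the extra localizing factor $\varphi\in\C_c(\Omega)$, plus two rounds of monotone convergence at the end; one must be a little careful that $\tilde e h_n$ is genuinely Carath\'eodory with linear growth and that its recession function is $\tilde e(x)\varphi(x)\,h_n^\infty(A)$, which is where \eqref{E:HNI} is used. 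A secondary technical point is the passage from a general open convex $\Omega$ to a bounded Lipschitz domain; this is handled by exhaustion and a diagonal extraction, using that $\INT$ is a supremum (over $\varphi$, $\tilde e$, and subdomains) of the localized functionals, each of which is already known to be weak*-lower-semicontinuous by the argument above.
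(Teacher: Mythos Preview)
Your overall strategy matches the paper's: exhaust $\Omega$ by bounded Lipschitz subdomains, truncate via the $\inf$-convolutions $h_n$, pass to gradient Young measures via Proposition~\ref{P:GYMP}, and close with monotone convergence. There is, however, a genuine gap at the Jensen step.

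You assert that from \eqref{E:DUDEC} one obtains $\nabla u(x) = \langle\ID,\nu_x\rangle$ for a.e.\ $x$. This is false in general: the concentration measure $\sigma$ may carry an absolutely continuous part with respect to $\LEB^d$, so the correct identity (this is \eqref{E:PARTS} in the paper) is
\[
  \nabla u \;=\; \langle\ID,\nu\rangle \;+\; \langle\ID,\mu\rangle\,\frac{\DD\sigma}{\DD\LEB^d}
  \quad\text{a.e.}
\]
Ordinary Jensen applied to the probability measure $\nu_x$ therefore only gives $h_n(\langle\ID,\nu_x\rangle)\LS\langle h_n,\nu_x\rangle$, not $h_n(\nabla u(x))\LS\langle h_n,\nu_x\rangle$. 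The paper handles this by invoking the Jensen-type inequalities for \emph{gradient} Young measures (Theorem~9 in \cite{KristensenRindler2010}), which yield
\[
  f_n(\cdot,\nabla u) \;\LS\; \langle f_n,\nu\rangle \;+\; \langle f_n^\infty,\mu\rangle\,\frac{\DD\sigma}{\DD\LEB^d}
  \quad\text{a.e.,}
\]
and then uses that $f_n^\infty\equiv 0$ on $\bar\Omega\times\MANN{d}$ (since $h_n^\infty$ vanishes on $\MANN{d}$ by \eqref{E:HNI}) to drop the second term. Your argument can be repaired either by citing that result, or by observing directly that the extra summand $\langle\ID,\mu_x\rangle\,\frac{\DD\sigma}{\DD\LEB^d}(x)$ lies in $\MANN{d}$ and that $h_n$ (like $h$) is nonincreasing along $\MANN{d}$-directions, whence $h_n(\nabla u)\LS h_n(\langle\ID,\nu\rangle)\LS\langle h_n,\nu\rangle$. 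As written, the step is unjustified.

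A minor point: the detour through a \emph{continuous} approximation $\tilde e$ of $e$ is unnecessary. A Carath\'eodory integrand only needs measurability in $x$, and since $h_n^\infty\equiv 0$ on $\MANN{d}$ the recession function of $(e(x)\wedge n)\,\varphi(x)\,h_n(A)$ is identically zero on $\bar\Omega\times\MANN{d}$ regardless of the regularity of $e$; this is precisely why the paper can (and does) work with the bare truncation $e\wedge n$.
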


\begin{remark}
Notice that in \eqref{E:DEFU} we only consider the part of $D\BT$ that
is absolutely continuous with respect to $\LEB^d$ and disregard the
singular component. The intuition is that (for each direction) only
increasing jumps are allowed in the transport map $\BT$, which
correspond to the formation of vacuum, which is admissible.
\end{remark}

\begin{proof}[Proof of Proposition~\ref{P:LSCCON2}]
We proceed in two steps.
\medskip

\textbf{Step~1.} Consider $\BT^k \in \BVS_\LOC(\Omega;\R^d)$ with
$k=0..1$. For any $s\in(0,1)$ we define $\BT^s := (1-s) \BT^0 + s \BT^1 \in
\BVS_\LOC(\Omega;\R^d)$. If $\INT[\BT^k] = +\infty$ for $k=0$ or $k=1$,
then there is nothing to prove, so we may assume that both terms are
finite. This requires that both $\BT^k$ are monotone and $\nabla\BT^k(x)
\in \MAT[>]{d}$ for $U\LEB^d$-a.e.\ $x\in\Omega$. It follows that $\BT^s$
is monotone as well and $\nabla\BT^s(x) \in \MAT[>]{d}$ for
$U\LEB^d$-a.e.\ $x\in\Omega$. Then
\[
  h\big( \nabla\BT^s(x) \big) 
    \LS (1-s) h\big( \nabla\BT^0(x) \big) + s h\big( \nabla\BT^1(x) \big);
\]
see Lemma~\ref{L:HA}. Multiplying by $U(x)$ and integrating in
$\Omega$, we obtain
\[
  \INT[\BT^s] \LS (1-s) \INT[\BT^0] + s \INT[\BT^1]
\]
for all $s\in[0,1]$. This proves the convexity of the functional.
\medskip

\textbf{Step~2.} We first introduce a sequence of bounded open convex
sets
\[
  \Omega_n := \Big\{ x\in B_n(0) \colon 
    \DIST(x,\R^d\setminus\Omega)>1/n \Big\},
\]
which are bounded Lipschitz domains. We have $\Omega_{n-1} \subset
\Omega_n$ for all $n \in \N$.
\DETAIL{ 
To prove the convexity of $\Omega_n$ choose points $x_0,x_1 \in
\Omega_n$ and $\lambda \in [0,1]$, and define $x := (1-\lambda)x_0 +
\lambda x_1$. Let $\delta := \min\{ \DIST(x_i, \R^d\setminus\Omega)
\colon i=0..1 \}$, which is larger than $1/n$. By convexity of
$\Omega$, any point in the set $[x_0,x_1] + B_\delta(0)$ is contained
in $\Omega$. In particular, we have $B_\delta(x) \subset \Omega$.
Therefore $\DIST(x,\R^d\setminus \Omega) > 1/n$. \\[-1ex]
} 

We then choose a sequence of cut-off functions $\varphi_n \in
\CC(\Omega;[0,1])$ with $\varphi_n(x) = 1$ for all $x\in\Omega_{n-1}$
and $\varphi_n(x) = 0$ for all $x\not\in\Omega_n$. For all $n\in\N$ we
define
\begin{equation}
\label{E:DEFFN}
  f_n(x,M) := \big( U(x)\wedge n \big) \varphi_n(x) h_n(M)
  \quad\text{for all $(x,M) \in \Omega \times \MAT{d}$,}
\end{equation}
where $h_n$ is given by \eqref{E:PHIN}. Because of
Lemma~\ref{L:INFCONF}, the map $f_n$ is a Carath\'{e}odory function
with linear growth at infinity. In fact, we can estimate
\[
  0 \LS f_n(x,M) \LS n(1+n\sqrt{d} + n\|M\|)
  \quad\text{for all $(x,M) \in \Omega \times \MAT{d}$.}
\]
We prove that $f_n^\infty(x,M) = 0$ for all $(x,M) \in
\Omega\times\MAT[\GS]{d}$: Note first that
\[
  \bigg| \frac{f_n(x',tM')}{t}-0 \bigg| 
     \LS n h_n(tM')/t
     \LS n \Big\{ h_n(tM)/t + n\|M'-M\| \Big\},
\]
uniformly in $x'\in\Omega$. Recall that $h_n$ is Lipschitz continuous
with Lipschitz constant $n$. Since $h_n(M) < \infty$ for all $M \in
\MAT{d}$, by Theorem~8.5 in \cite{Rockafellar1997} we have
\[
  h_n(tM)/t \longrightarrow h_n^\infty(M)
  \quad\text{as $t\rightarrow \infty$,}
\]
which vanishes for $M\in\MAT[\GS]{d}$; see Lemma~\ref{L:INFCONF}. Then
$f_n^\infty \in \C(\Omega\times\MAT[\GS]{d})$, and so $f_n \in
\RP(\Omega_n; \MAT{d})$ for all $n\in\N$. By construction, it holds
\begin{equation}
\label{E:SUPFN}
  f_n(x,M) \LS f_{n+1}(x,M)
  \quad\text{and}\quad
  U(x)h(M) = \sup_n f_n(x,M)
\end{equation}
for all $(x,M) \in \Omega \times \MAT{d}$. We used again
Lemma~\ref{L:INFCONF}.

Let us fix $n\in\N$ for a moment. Extracting a subsequence if
necessary, we may assume that the sequence of elementary gradient
Young measures $\upsilon^k$ generated by $D\BT^k|\Omega_n$ converges to
$\upsilon=(\nu,\sigma,\mu) \in \mathscr{G}(\Omega_n;\MAT{d})$ in the
sense that
\begin{equation}
\label{E:CONVO}
  \llbracket f,\upsilon^k \rrbracket \longrightarrow \llbracket f,\upsilon \rrbracket
  \quad\text{for all $f\in\RP\big( \Omega_n;\MAT{d} \big)$;}
\end{equation}
see Proposition~\ref{P:GYMP}. It holds $D\BT = \lbrack \ID,\nu \rbrack
\,\LEB^d + \lbrack \ID,\mu \rbrack \,\sigma$. Comparing this identity
with the Lebesgue-Radon-Nikod\'{y}m decomposition \eqref{E:LRN}, we
find
\[
  \nabla\BT = \lbrack \ID,\nu\rbrack 
    + \lbrack \ID,\mu\rbrack \frac{\DD \sigma}{\DD \LEB^d}
  \quad\text{a.e.}\quad\text{and}\quad
  D^s u = \lbrack \ID,\mu\rbrack \,\sigma^s,
\]
where $\sigma^s \perp \LEB^d$ is the singular part of $\sigma$. Note
that $\lbrack \ID,\mu_x \rbrack \in \MAT{d}$ may not have unit length for
$\sigma^s$-a.e.\ $x\in\Omega$. The polar decomposition of $D^s u$ is
given by
\[
  |D^s u| = |\lbrack\ID,\mu\rbrack| \,\sigma^s
  \quad\text{and}\quad
  \frac{\DD D^s u}{\DD |D^s u|} 
    = \frac{\lbrack\ID,\mu\rbrack}{|\lbrack\ID,\mu\rbrack|}
  \quad\text{$|D^s u|$-a.e.}
\]

We now apply the convergence \eqref{E:CONVO} to the function $f_n$
defined in \eqref{E:DEFFN}, whose restriction to $\Omega_n$ belongs to
$\RP(\Omega_n; \MAT{d})$. We observe that
\[
  f_n^\infty(\cdot,\lbrack\ID,\mu\rbrack) \,\sigma^s
    = f_n^\infty\bigg( \cdot,\frac{\DD D^s u}{\DD |D^s u|} \bigg) \,|D^s u|
\]
because the map $M \mapsto f_n^\infty(x,M)$ is positively
$1$-homogeneous for $x\in\Omega_n$. Then the following Jensen-type
inequalities hold (see Theorem~9 in \cite{KristensenRindler2010}):
\begin{gather*}
  f_n(\cdot, \nabla u) 
    \LS \lbrack f_n,\nu \rbrack
    + \lbrack f_n^\infty,\mu\rbrack \frac{\DD\sigma}{\DD\LEB^d} 
  \quad\text{a.e.,}
\\
  \vphantom{\frac{1}{2}}
  f_n^\infty(\cdot,\lbrack \ID,\mu\rbrack) 
    \LS \lbrack f_n^\infty,\mu\rbrack
  \quad\text{$\sigma^s$-a.e.}
\end{gather*}
because the map $M \mapsto f_n(x,M)$ is convex for $x\in\Omega$. We
can then estimate
\begin{align*}
  & \lim_{k\rightarrow\infty} \int_{\Omega_n} f_n(\cdot,\nabla u^k) 
    + \int_{\Omega_n} f_n^\infty\bigg( \cdot,\frac{\DD D^s u^k}{\DD |D^s u^k|} 
      \bigg) \,|D^s u^k|
\\
  & \qquad
    = \int_{\Omega_n} \lbrack f_n,\nu\rbrack
    + \int_{\Omega_n} \lbrack f_n^\infty,\mu\rbrack \,\sigma
\\
  & \qquad
    = \int_{\Omega_n} \bigg( \lbrack f_n,\nu\rbrack
      + \lbrack f_n^\infty,\mu\rbrack \frac{\DD \sigma}{\DD \LEB^d} \bigg)
    + \int_{\Omega_n} \lbrack f_n^\infty,\mu\rbrack \,\sigma^s
\\
  & \qquad 
    \GS \int_{\Omega_n} f_n(\cdot,\nabla u) 
    + \int_{\Omega_n} f_n^\infty\bigg( \cdot,\frac{\DD D^s u}{\DD |D^s u|} 
      \bigg) \,|D^s u|.
\end{align*}
Clearly the integrals can be extended to all of $\Omega$ because $f_n$
vanishes outside of $\Omega_n$. Moreover, we have shown that the
recession function $f_n^\infty$ vanishes. Hence
\begin{equation}
\label{E:LSCN}
  \int_{\Omega} f_n(x,\nabla u(x)) \,dx 
    \LS \liminf_{k\rightarrow\infty} \int_{\Omega} U(x) h(\nabla u^k(x)) \,dx,
\end{equation}
where we also used \eqref{E:SUPFN}. By a standard diagonal argument
(successively extracting subsequences if necessary), we may assume
that \eqref{E:LSCN} holds for all $n\in\N$. We then use
\eqref{E:SUPFN} and the monotone convergence theorem to obtain the
result.
\end{proof}

We finish the section with an estimate on determinants of square
matrices.

\begin{lemma}\label{L:DETS}
Suppose $S$ is a real, positive semidefinite, symmetric $(d\times
d)$-matrix. For any real skew-symmetric $(d\times d)$-matrix $A$ we
have
\[
  \det(S+A) \GS \det S \GS 0.
\]
\end{lemma}

\begin{proof}
We divide the proof into two steps.
\medskip

\textbf{Step~1.} We will first prove that if $\det S = 0$, then
$\det(S+A) \GS 0$. Recall that the determinants of square matrices
equal the product of their eigenvalues. Non-real eigenvalues of $S+A$
can only occur in complex conjugate pairs because $S$, $A$ are real
matrices. Since the product of two complex conjugate numbers is
non-negative, it remains to prove that every \emph{real} eigenvalue of
$S+A$ must be non-negative. Let $\lambda \in \R$ be an eigenvalue with
corresponding eigenvector $v$. Note that if $v$ is complex, then its
complex conjugate $\bar{v}$ is another eigenvector to the same
eigenvalue $\lambda$. Taking the sum $v+\bar{v}$ if necessary, we may
therefore assume that $v \in \R^d$. We have
\[
  (S+A)v = \lambda v,
  \quad
  \|v\| > 0.
\]
We take the inner product with $v$ and obtain (since $A$ is
skew-symmetric)
\[
  \lambda \|v\|^2 = \langle (S+A)v, v\rangle
    = \langle Sv, v\rangle.
\]
The right-hand side is non-negative because $S$ is positive
semidefinite. Hence $\lambda \GS 0$. From this, we conclude that
$\det(S+A) \GS \det S$ whenever $\det S = 0$. 
\medskip

\textbf{Step~2.} Consider now $\det S \neq 0$. Since $S$ is positive
semidefinite and symmetric, all eigenvalues of $S$ (which are real)
are positive. Therefore $\det S > 0$ and $\langle Sv, v\rangle > 0$
for every $v\in\R^d$ with $v\neq 0$. We claim that $\det(S+tA) > 0$
for every $t\in\R$. In fact, assume this is false. Then zero is an
eigenvalue of $S+tA$, with corresponding eigenvector $v \in \R^d$ (see
above). We have $(S+tA)v = 0$ and $v \neq 0$. We get
\[
  0 < \langle Sv, v\rangle = \langle (S+tA)v, v\rangle = 0,
\]
using again that $A$ is skew-symmetric. This contradiction proves the
claim.

For all $t\in\R$, we can now define $f(t) := \log\det(S+tA)$. We
compute
\[
  f'(t) = \TRACE\big( (S+tA)^{-1} A \big).
\]
Notice that $t(S+tA)^{-1}A = \ONE - (S+tA)^{-1}S$. Since $S$ is
symmetric, there exists an orthogonal matrix $Q$ such that $Q^{-1}SQ =
\Lambda$, where $\Lambda := \mathrm{diag}(\lambda_1,\ldots,\lambda_d)$
contains the eigenvalues $\lambda_i>0$ of $S$, $i=1\ldots d$. Let
$e_i$ denote the $i$th standard basis vector of $\R^d$. Since the
trace is invariant under changes of basis, we obtain
\begin{align*}
  t \TRACE\big( (S+tA)^{-1} A \big) 
    &= \TRACE\big( \ONE - Q^{-1}(S+tA)^{-1}SQ \big)
\\
    &= \sum_{i=1}^d \big( 1 - \langle Q^{-1}(S+tA)^{-1}SQ e_i,
      e_i \rangle \big).
\end{align*}
We denote by $v_i$ the $i$th column vector of $Q$ (hence $v_i =
Qe_i$), which is a normalized eigenvector of $S$ corresponding to the
eigenvalue $\lambda_i$. As $Q^{-1}=Q^\T$, we have
\begin{equation}
  t \TRACE\big( (S+tA)^{-1} A \big) 
    = \sum_{i=1}^d ( 1 - \lambda_i \langle w_i, v_i \rangle ),
  \quad
  w_i := (S+tA)^{-1}v_i.
\label{E:TRV}
\end{equation}
Since the eigenvectors $v_1, \ldots, v_d$ form an orthonormal basis of
$\R^d$, there is a unique expansion $w_i = \sum_{k=1}^d \alpha_i^k
v_k$ with $\alpha_i^k := \langle w_i,v_k \rangle$. Using this
expansion, we get
\begin{equation}
  \alpha_i^i 
    = \langle w_i, (S+tA)w_i \rangle
    = \langle w_i, Sw_i \rangle
    = \sum_{k=1}^d \lambda_k (\alpha_i^k)^2
\label{E:GHT}
\end{equation}
for $i=1\ldots d$. Recall that the eigenvalues $\lambda_k$ are all
positive and $A$ is skew-symmetric. We conclude that $\alpha_i^i \GS
0$. Moreover, rewriting \eqref{E:GHT} in the form
\[
  \alpha_i^i (1-\lambda_i \alpha_i^i) 
    = \sum_{k\neq i} \lambda_k(\alpha_i^k)^2 \GS 0,
\]
we obtain that $1-\lambda_i\langle w_i,v_i \rangle \GS 0$ for each $i$.
Using this estimate in \eqref{E:TRV}, we conclude that $f'(t) \GS 0$
for all $t>0$, and so the map $t\mapsto f(t)$ is non-decreasing for
such $t$. In particular, we have that $\det(S+A) = \exp f(1) \GS \exp
f(0) = \det S > 0$.
\end{proof}

\begin{remark}\label{R:DISSC}
Lemma~\ref{L:DETS} can be made more precise if $\det(S)>0$: We first write
\begin{align*}
  \det(S+A)^\beta-\det(S)^\beta
    &= \int_0^1 \frac{d}{dt} \det(S+tA)^\beta \,dt
\\
    &= \beta \int_0^1 \det(S+tA)^\beta \, \TRACE\big( (S+tA)^{-1} A \big) \,dt
\end{align*}
for $\beta\in\R$. Notice that all terms are well-defined, and the
integrand on the right-hand side is non-negative. Since $S$ is positive
definite and symmetric, we can compute its root, which is the unique
$R\in\SYM[>]{d}$ such that $R^2=S$. Then
\begin{align*}
  \det(S+tA) &= \det(S) \det(\ONE+tC),
\\
  \TRACE\big( (S+tA)^{-1}A \big) &= \TRACE\big( (\ONE+tC)^{-1}C \big),
\end{align*}
with $C := R^{-1}AR^{-1}$ skew-symmetric. We obtain the following
identity:
\[
  \bigg( \frac{\det(S+A)}{\det(S)} \bigg)^\beta-1
    = \beta \int_0^1 \det(\ONE+tC)^\beta \, 
      \TRACE\big( (\ONE+tC)^{-1} C \big) \,dt,
\]
where the integral on the right-hand side is non-negative.
\end{remark}


\subsection{Minimization Problem}\label{SS:MP}

We now introduce the main minimization problem for \eqref{E:FULL}. We
represent the state of the fluid by $(\RHO,\MU,\sigma)$, with $\RHO
\in \SP_2(\R^d)$ the density, $\MU \in \PR$ the velocity distribution,
and $\sigma \in \M_+(\R^d)$ the thermodynamic entropy. We assume that
$\INT[\RHO,\sigma] < \infty$, which implies that $\RHO = r\LEB^d$ and
$\sigma = \RHO S$ for suitable Borel functions $r$, $S$; see
Definition~\ref{D:INT}. In the isentropic case, $S$ will be constant
in time and space. We want to minimize the sum of the internal energy
of the transported fluid and the acceleration cost of the transport,
over the cone $C_\MU$ of monotone maps; see Definition~\ref{D:CONFIG}.
The following observation will be useful:

\begin{lemma}\label{L:PROP}
Let $\RHO \in \SP_2(\R^d)$ and $\MU \in \PR$, where $\RHO \ll \LEB^d$.
To every $\ORT \in C_\MU$ we can associate a function $\BT \in
\L^2(\R^d,\RHO)$ defined on all of $\R^d$ that satisfies
\begin{equation}
  \ORT(x,\xi) = \BT(x)
  \quad\text{for $\MU$-a.e.\ $(x,\xi) \in \R^{2d}$.}
\label{E:COINC}
\end{equation}
The map $\BT$ is monotone on $\Omega := \INTR\CCONV\SPT\RHO$ (hence
$\BT \in \BVS_\LOC(\Omega;\R^d)$) so that
\[
  \langle \BT(x_1)-\BT(x_2), x_1-x_2 \rangle \GS 0
  \quad\text{for all $x_1,x_2 \in \Omega$.}
\]
\end{lemma}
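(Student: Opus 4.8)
The plan is to identify $\ORT$, up to a $\MU$-null set, with a single-valued branch of a maximal monotone set-valued map depending on $x$ alone. First I would set $\GAMMA_\ORT := (\XX,\ORT)\#\MU$, which lies in $\CR$ by the definition of $\C_\MU$, so that $\Gamma := \SPT\GAMMA_\ORT$ is a monotone subset of $\R^d\times\R^d$. Fixing a maximal monotone extension of $\Gamma$ and letting $u$ be the associated maximal monotone set-valued map (Definition~\ref{D:ASSO}), Lemma~\ref{L:ASSO} gives $\Omega := \INTR\CONV\SPT\RHO \subset \DOM(u)$; hence $u(x)$ is a nonempty closed convex subset of $\R^d$ for every $x \in \Omega$, and by Remark~\ref{R:FUNC} the map $u$ is single-valued and continuous on $\Omega$ outside the $\LEB^d$-negligible set $\Sigma^1(u)$.

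Next I would define $\TF \colon \R^d \to \R^d$ by taking $\TF(x)$ to be the unique point of $u(x)$ when $x \in \DOM(u)\setminus\Sigma^1(u)$, taking $\TF(x)$ to be the element of $u(x)$ of least norm when $x \in \Sigma^1(u)$ (a Lebesgue measurable selection of the closed-valued multifunction $u$), and setting $\TF(x) := 0$ when $x \notin \DOM(u)$; this $\TF$ is Lebesgue measurable and satisfies $\TF(x) \in u(x)$ for every $x \in \Omega$. To obtain \eqref{E:COINC} I would argue as in Remark~\ref{R:EQUIV}: $\GAMMA_\ORT$ is inner regular (a finite Borel measure on $\R^{2d}$; see \cite{Folland1999}), so $\GAMMA_\ORT(\R^{2d}\setminus\Gamma) = 0$ and therefore
\[
  \MU\Big( \big\{ (x,\xi) \colon (x,\ORT(x,\xi)) \notin \Gamma \big\} \Big)
    = \GAMMA_\ORT(\R^{2d}\setminus\Gamma) = 0;
\]
since $\Gamma \subset \GRAPH(u)$ this means $\ORT(x,\xi) \in u(x)$ for $\MU$-a.e.\ $(x,\xi)$. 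On the other hand $\XX\#\MU = \RHO$, and for $\RHO$-a.e.\ $x$ we have $x \in \Omega \subset \DOM(u)$ and $x \notin \Sigma^1(u)$ — the former because $\RHO$, being absolutely continuous, does not charge the $\LEB^d$-null boundary of the ($d$-dimensional) convex set $\CONV\SPT\RHO$, the latter by Remark~\ref{R:FUNC} — so for $\RHO$-a.e.\ $x$ the set $u(x)$ is the singleton $\{\TF(x)\}$. Combining, $\ORT(x,\xi) = \TF(x)$ for $\MU$-a.e.\ $(x,\xi)$, whence $\int_{\R^d} |\TF(x)|^2 \,\RHO(dx) = \int_{\R^{2d}} |\ORT(x,\xi)|^2 \,\MU(dx,d\xi) < \infty$ and $\TF \in \L^2(\R^d,\RHO)$.

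Monotonicity is then immediate: for $x_1,x_2 \in \Omega$ the points $(x_i,\TF(x_i))$ belong to the monotone set $\GRAPH(u)$, so $\langle \TF(x_1)-\TF(x_2), x_1-x_2 \rangle \GS 0$. Finally, since $\TF$ agrees $\LEB^d$-a.e.\ on $\Omega$ with the single-valued representative of $u$ (they differ at most on the $\LEB^d$-null set $\Sigma^1(u)$), it inherits membership in $\BVS_\LOC(\Omega;\R^d)$ from the $\BVS_\LOC$-regularity of maximal monotone maps on the interior of their domain; see \cite{AlbertiAmbrosio1999}.

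The one point demanding care is that \eqref{E:COINC} pins $\TF$ down only $\RHO$-a.e.\ (hence $\LEB^d$-a.e.\ on $\Omega$), whereas the monotonicity inequality is required at \emph{every} pair of points of $\Omega$; this is why the values of $\TF$ on the null set $\Sigma^1(u)$ must be chosen \emph{inside} $u(x)$ rather than redefined arbitrarily, so that $\GRAPH(\TF|_\Omega)$ stays inside the monotone set $\GRAPH(u)$. The remaining verifications — that such a selection is Lebesgue measurable, and that $u$ is single-valued for $\RHO$-a.e.\ $x$ (which reduces to $\Sigma^1(u)$ being $\LEB^d$-null, from Remark~\ref{R:FUNC}, combined with $\RHO \ll \LEB^d$) — are routine and use only the structure theory recalled earlier.
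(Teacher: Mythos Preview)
Your proof is correct and follows essentially the same route as the paper's: both build $\GAMMA_\ORT=(\XX,\ORT)\#\MU$, pass to a maximal monotone extension $u$ with $\Omega\subset\DOM(u)$ via Lemma~\ref{L:ASSO}, define $\TF$ as a single-valued selection of $u$ on $\Omega$ (the paper uses the center of mass of $u(x)$ on $\Sigma^1(u)$ where you use the element of least norm---either works), and then combine $\ORT(x,\xi)\in u(x)$ $\MU$-a.e.\ with the fact that $u$ is single-valued $\RHO$-a.e.\ to get \eqref{E:COINC}. Your closing remark about why the selection on $\Sigma^1(u)$ must stay inside $u(x)$ to secure pointwise monotonicity on all of $\Omega$ is exactly the right observation and is implicit in the paper's construction as well.
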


\begin{proof}
For $\ORT \in \CMU$ let $u$ be any maximal monotone map associated
to $\GAMMA := (\XX,\ORT)\#\MU$, which is in $C_\RHO$; see
Definition~\ref{D:ASSO}. As shown in Lemma~\ref{L:ASSO}, the domain of
$u$ contains the convex open set $\Omega$. As $\RHO \ll \LEB^d$, the
set $\Omega$ must be non-empty and $\RHO(\R^d\setminus\Omega) = 0$
(since the boundary of $\CCONV\SPT\RHO$ is a Lipschitz manifold of
codimension one, which is a Lebesgue null set and hence
$\RHO$-negligible). Consequently, the maximal monotone map $u$
associated to $\GAMMA$ is defined $\RHO$-a.e. The map $u$ is
single-valued for all $x \in \Omega\setminus\Sigma^1(u)$ (see
Remark~\ref{R:FUNC}), and $\Sigma^1(u)$ is a Lebesgue null set and
hence $\RHO$-negligible. We now define a (single-valued) function
$\BT$ on all of $\R^d$ as follows:
\[
  \BT(x) := \begin{cases}
      z & \text{if $x \in \Omega\setminus\Sigma^1(u)$ and $u(x) =: 
        \{z\}$,}
\\
      \bar{z} & \text{if $x \in \Omega\cap\Sigma^1(u)$ and $\bar{z}$ 
        is the center of mass of $u(x)$,}
\\
      0 & \text{if $x \in \R^d\setminus\Omega$.}
    \end{cases}
\]
Then $\BT$ is monotone on $\Omega$ because $\BT(x) \in u(x)$ for every
$x \in \Omega$. Recall that $u(x)$ is a closed convex set (possibly
empty) for all $x \in \R^d$; see Proposition~1.2 in
\cite{AlbertiAmbrosio1999}.

As shown in Remark~\ref{R:EQUIV}, there exists a Borel set $N_\ORT
\subset \R^{2d}$ such that $\MU(N_\ORT) = 0$ and $(x,\ORT(x,\xi)) \in
\SPT\GAMMA$ for all $(x,\xi) \in \R^{2d}\setminus N_\ORT$. This
implies that $\ORT(x,\xi) \in u(x)$ for such $(x,\xi)$, since
$\GRAPH(u)$ is an extension of $\SPT\GAMMA$. Therefore
\begin{gather*}
  \Big\{ (x,\xi) \in \R^{2d} \colon \ORT(x,\xi) \neq \BT(x) \Big\}
    \subset N_\ORT \cup (E\times\R^d),
\\
  \text{where}\quad
    E := (\R^d\setminus\Omega) \cup \big( \Omega\cap\Sigma^1(u) \big).
\end{gather*}
Since $\MU(N_\ORT) = 0$ and $\MU(E\times\R^d) = \RHO(E) = 0$,
statement \eqref{E:COINC} follows. Now
\[
  \int_{\R^d} |\BT(x)|^2 \,\RHO(dx)
    = \int_{\R^{2d}} |\BT(x)|^2 \,\MU(dx,d\xi)
    = \int_{\R^{2d}} |\ORT(x,\xi)|^2 \,\MU(dx,d\xi),
\]
which is finite. The $\BVS_\LOC(\Omega;\R^d)$-regularity of $\BT$
follows from Theorem~5.3 in \cite{AlbertiAmbrosio1999}.
\end{proof}

Lemma~\ref{L:PROP} shows that instead of minimizing over $\CMU$ it
is sufficient to consider a minimization over the following convex
cone in $\L^2(\R^d,\RHO)$ (we refer the reader to the proof of
Proposition~\ref{P:EXISTENCE} for topological properties):

\begin{definition}[Configurations]\label{D:CONFIG2}
Let $\RHO \in \SP_2(\R^d)$ satisfy $\RHO \ll \LEB^d$. We denote by
$\CR$ the set of all Borel maps $\BT \colon \R^d \longrightarrow
\R^d$ with the following properties:
\begin{enumerate}
\item $\BT$ is monotone on $\Omega := \INTR\CCONV\SPT\RHO$ (hence
  $\BT \in \BVS_\LOC(\Omega;\R^d)$),
\item $\BT \in \L^2(\R^d,\RHO)$. 
\end{enumerate}
\end{definition}

If $\MU \in \PR$ and $\ORT \in \CMU$ are given, and $\tau>0$, then
\begin{align}
  & \int_{\R^{2d}} |(x+\tau\xi)-\ORT(x,\xi)|^2 \,\MU(dx,d\xi)
\label{E:NINDE}\\
  & \qquad
    = \tau^2 \int_{\R^{2d}} |\xi-\BU(x)|^2 \,\MU(dx,d\xi)
    + \int_{\R^d} \big| \big( x+\tau\BU(x) \big)-\BT(x) \big|^2 \,\RHO(dx),
\nonumber
\end{align}
for every map $\BT \in \CR$ satisfying \eqref{E:COINC}. Here $\BU$
is the barycentric projection $\BAR(\MU)$ of $\MU$ (equivalently, the
orthogonal projection of $\MU$ onto the space of functions in
$\L^2(\R^{2d},\MU)$ that depend only on the spatial variable
$x\in\R^d$). Notice that the first term on the right-hand side of
\eqref{E:NINDE} does not depend on $\ORT$ or $\BT$.

For any smooth, strictly monotone map $\BT \colon \R^d \longrightarrow
\R^d$, the internal energy of the fluid transported by $\BT$ is given
(after a change of variables) by
\begin{align}
  \INT[\BT\#\RHO, \BT\#\sigma] 
    &= \int_{\R^d} U\Bigg( \bigg( \frac{r}{\det(\nabla\BT)} \bigg) \circ
      \BT^{-1}(z), S\circ \BT^{-1}(z) \Bigg) \,dz
\nonumber\\
    & \vphantom{\Bigg(}
      = \int_{\R^d} U\bigg( \frac{r(x)}{\det\big( \nabla\BT(x) \big)}, 
        S(x) \bigg) \det\big( \nabla\BT(x) \big) \,dx
\nonumber\\
    & \vphantom{\Bigg(}
      = \int_{\R^d} U\big( r(x),S(x) \big) 
        \; \det\big( \nabla\BT(x) \big)^{1-\gamma} \,dx.
\label{E:INTNEW}
\end{align}
Since the matrix $\nabla\BT$ may not be symmetric, the
functional $\BT \mapsto \INT[\BT\#\RHO, \BT\#\sigma]$ is not convex if
$d\GS 2$. In order to obtain a \emph{convex} minimization problem, we
modify the functional by replacing $\nabla\BT$ by the deformation,
i.e., its symmetric part.

\begin{definition}[Internal Energy]\label{D:INT2} 
Suppose that $(\RHO,\sigma) \in \SP_2(\R^d)\times\M_+(\R^d)$ with
$\RHO = r \LEB^d$, $\sigma = \RHO S$, and $\INT[\RHO, \sigma] <
\infty$. For any $\BT \in \CR$ let
\begin{equation}
  D\BT = \nabla\BT \,\LEB^d + D^s\BT,
  \quad
  D^s\BT \perp \LEB^d
\label{E:DECF}
\end{equation}
be the Lebesgue-Radon-Nikod\'{y}m decomposition of its derivative.
Then
\begin{equation}
  \INT[\BT|\RHO,\sigma] 
    := \int_{\R^d} U\big( r(x),S(x) \big) \;h\big( \nabla\BT(x) \big) \,dx
  \quad\text{for $\BT \in \CR$.}
\label{E:INT2}
\end{equation}
Recall that $h(\nabla\BT)$ only depends on the symmetric part of
$\nabla\BT$; see \eqref{E:HA}.
\end{definition}

\begin{remark}\label{R:ABSO} 
We have $U(r,S) \in \L^1(\R^d)$ as $\INT[\RHO,\sigma] < \infty$. In
\eqref{E:INT2} we may restrict the integration to $\Omega :=
\INTR\CCONV\SPT \RHO$ because the measures $\nu := U(r,S) \LEB^d$ and
$\RHO$ are mutually absolutely continuous, and $\RHO(\R^d
\setminus\Omega) = 0$ if $\RHO \ll \LEB^d$.

\DETAIL{ 
Defining $H(r,S) := \kappa e^S r^{\gamma-1}$, we have $\nu = H(r,S)
\RHO$ with $H(r,S) \in \L^1(\R^d, \RHO)$; see Definition~\ref{D:INT}.
Conversely, for any $A\subset\R^d$ Borel with $\nu(A) = 0$ we get
\begin{align*}
  \RHO(A) 
    &= \lim_{k\rightarrow\infty} \RHO(A\cap B_k(0))
\\
    &\LS \limsup_{k\rightarrow\infty}
      \Big( \LEB^d(B_k(0) )\Big)^{(\gamma-1)/\gamma}
      \bigg( \frac{1}{\kappa} \nu(A\cap B_k(0)) \bigg)^{1/\gamma}
        = 0,
\end{align*}
using H\"{o}lder inequality and the fact that $S(x)\GS 0$ for
$\RHO$-a.e.\ $x\in\R^d$.
} 
\end{remark}

\begin{remark}
Using only the symmetric part of $\nabla\BT$ can be justified by the
expectation that the map $\BT$ will be a perturbation of the identity,
whose derivative is the identity matrix everywhere, which is
symmetric. Using only $\nabla\BT$ instead of the derivative $D\BT$
means that the formation of vacuum does not cost any energy.
\end{remark}

The following lemma allows us to control \eqref{E:INTNEW} in terms of
\eqref{E:INT2}.

\begin{lemma}\label{L:CONTROL}
Suppose that density/entropy $(\RHO,\sigma) \in \SP_2(\R^d) \times
\M_+(\R^d)$ are given with $\RHO =: r \LEB^d$, $\sigma =: \RHO S$, and
$\INT[\RHO,\sigma] < \infty$. For any $\BT \in \CR$ with
$\INT[\BT|\RHO, \sigma] < \infty$ there exists a Borel set $\Sigma
\subset \R^d$ with $\RHO(\Sigma) = 0$ and $\BT |_{\R^d
\setminus\Sigma}$ injective. Then
\begin{equation}
  \INT[\BT\#\RHO,\BT\#\sigma] \LS \INT[\BT|\RHO,\sigma].
\label{E:CONTROL}
\end{equation}
\end{lemma}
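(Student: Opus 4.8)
The plan is to compare the left-hand side $\INT[\TF\#\RHO,\TF\#\sigma]$, which a priori involves the image measures and hence the \emph{full} derivative $D\TF$ (including the singular part), with the right-hand side $\INT[\TF|\RHO,\sigma]$, which by Definition~\ref{D:INT2} only sees the absolutely continuous part $\nabla\TF$. Since $\TF$ is monotone on $\Omega := \INTR\CONV\SPT\RHO$, by Theorem~5.3 in \cite{AlbertiAmbrosio1999} we have $\TF \in \BVS_\LOC(\Omega;\R^d)$ and its derivative $D\TF$ is a matrix-valued measure taking values in $\MANN{d}$. The key observation is that the singular part $D^s\TF$ cannot decrease volume: it only contributes jumps in the directions along which matter separates (formation of vacuum), and by Remark~\ref{R:FUNC} together with the rank-one structure of the jump part (Theorem~5.10 in \cite{AlbertiAmbrosio1999}), these jumps are "positive". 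So the effective Jacobian of $\TF$ is at least $\det(\nabla\TF)$ pointwise a.e.

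First I would establish the existence of the Borel set $\Sigma$ with $\RHO(\Sigma)=0$ outside which $\TF$ is injective. Since $\INT[\TF|\RHO,\sigma]<\infty$, we have $\nabla\TF(x) \in \MAP{d}$ for $\RHO$-a.e.\ $x$ (otherwise $h(\nabla\TF(x))=\infty$ on a set of positive $U(r,S)\LEB^d$-measure, hence positive $\RHO$-measure by Remark~\ref{R:ABSO}); in particular $\det(\nabla\TF(x))>0$ a.e.\ on $\Omega$. Then I would invoke the area formula / Lusin-type approximation for $\BVS$ or approximately differentiable monotone maps: a monotone map is differentiable a.e.\ (Alberti-Ambrosio), and on the set where the approximate differential is nondegenerate the map is locally injective up to a null set. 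Collecting the (Lebesgue, hence $\RHO$-) null sets where $\TF$ fails to be approximately differentiable, where $\det\nabla\TF=0$, and the singular set $\Sigma^1(u)$ of the associated maximal monotone map, I define $\Sigma$ to be their union; outside $\Sigma$ the map $\TF$ is injective (distinct points with distinct single values, and strict monotonicity of the pointwise values follows from nondegeneracy of the differential, or one argues directly via the monotone structure).

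Next I would prove the inequality \eqref{E:CONTROL} itself. Decompose $\RHO = r\LEB^d$ and apply the area formula for $\BVS$ (or $BV$) monotone maps to the push-forward: writing $z = \TF(x)$, we get
\[
  \INT[\TF\#\RHO,\TF\#\sigma]
    = \int_{\R^d} U\big( r(x),S(x)\big)\, J(x)^{1-\gamma}\,dx,
\]
where $J(x)$ is the generalized Jacobian of $\TF$ at $x$ — which, for a $\BVS$ monotone map, satisfies $J(x) \GS \det(\nabla\TF(x))$ a.e., because the singular part $D^s\TF$ only adds volume (this is precisely where the rank-one "positive jump" structure of monotone $\BVS$ maps enters, and where Lemma~\ref{L:DETS} is the relevant algebraic input: adding a positive-semidefinite symmetric perturbation — morally what the jump contributes to the deformation — only increases the determinant). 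Since $\gamma>1$ the map $t\mapsto t^{1-\gamma}$ is decreasing on $(0,\infty)$, so $J(x)^{1-\gamma} \LS \det(\nabla\TF(x))^{1-\gamma}$ a.e.; multiplying by the nonnegative integrand $U(r(x),S(x))$ and integrating yields
\[
  \INT[\TF\#\RHO,\TF\#\sigma]
    \LS \int_{\R^d} U\big( r(x),S(x)\big)\, \det\big(\nabla\TF(x)\big)^{1-\gamma}\,dx.
\]
Finally, $\det(\nabla\TF(x))^{1-\gamma} = \det(\nabla\TF(x)^\SYM)^{1-\gamma} = h(\nabla\TF(x))$ on the set where $\nabla\TF(x)\in\MAP{d}$ — here I use that for a matrix $A\in\MANN{d}$, $\det A = \det A^\SYM$ when $A$ is already symmetric, but more carefully one needs $\det A \GS \det A^\SYM$ for $A\in\MANN{d}$, which is exactly Lemma~\ref{L:DETS} with $S=A^\SYM$ and $\frac{1}{2}(A-A^\T)$ the skew part; combined with $1-\gamma<0$ this gives $\det(\nabla\TF)^{1-\gamma}\LS \det(\nabla\TF^\SYM)^{1-\gamma}=h(\nabla\TF)$ a.e. Hence the right-hand side is $\LS \INT[\TF|\RHO,\sigma]$, which proves \eqref{E:CONTROL}.

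The main obstacle I anticipate is making the area-formula step rigorous for a general monotone $\BVS_\LOC$ map — such maps need not be Sobolev, and one must justify both the change of variables for the push-forward of $\RHO$ and the lower bound $J(x) \GS \det(\nabla\TF(x))$ for the generalized Jacobian. The cleanest route is probably to approximate $\TF$ by smooth strictly monotone maps $\TF_k$ (e.g.\ via mollification, which preserves monotonicity), for which \eqref{E:INTNEW} holds with exact equality $\INT[\TF_k\#\RHO,\TF_k\#\sigma] = \int U(r,S)\,\det(\nabla\TF_k)^{1-\gamma}$, then pass to the limit using lower semicontinuity of $\INT[\,\cdot\,\#\RHO,\,\cdot\,\#\sigma]$ under weak convergence of measures on one side and Proposition~\ref{P:LSCCON2} (convexity and $\BVS$-weak* lower semicontinuity of $\TF\mapsto\INT[\TF|\RHO,\sigma]$, with $e = U(r,S)$) controlling $\liminf_k \int U(r,S)\,h(\nabla\TF_k)$ from below by $\INT[\TF|\RHO,\sigma]$ on the other; one must be careful that mollification genuinely produces maps in $\C_\RHO$ with $\nabla\TF_k \to \nabla\TF$ in the appropriate sense and that the singular part does not create spurious mass in the limit, but the monotone structure (positivity of $D\TF$) is exactly what prevents cancellation.
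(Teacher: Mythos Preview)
Your overall architecture is right --- injectivity on a set of full $\RHO$-measure, then a change of variables, then Lemma~\ref{L:DETS} --- but two of the three steps are off in ways that matter.

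\textbf{Injectivity.} Your argument ``nondegenerate approximate differential $\Rightarrow$ local injectivity $\Rightarrow$ (somehow) global injectivity up to a null set'' does not close. Approximate differentiability with $\det\nabla\TF(x)\neq 0$ does not by itself give even local injectivity (there is no inverse function theorem at this regularity), and local injectivity would not give global injectivity anyway. The paper's argument uses the monotone structure directly and is worth internalizing: extend $\TF$ to a maximal monotone map $u$; if $\TF(x)=\TF(x')=y$ with $x\neq x'$, then $x,x'\in u^{-1}(y)$, and since $u^{-1}$ is again maximal monotone its values are closed \emph{convex} sets, so the whole segment $[x,x']$ lies in $u^{-1}(y)$. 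Hence $\TF$ is constant on $[x,x']$, and if $\TF$ is (classically) differentiable at $x$ --- which monotone maps are a.e.\ by \cite{AlbertiAmbrosio1999} --- then $\nabla\TF(x)\cdot(x'-x)=0$, forcing $\det\nabla\TF(x)=0$. Thus the non-injectivity set is contained in $\{\det\nabla\TF=0\}$, which has $\RHO$-measure zero by $\INT[\TF|\RHO,\sigma]<\infty$ and Remark~\ref{R:ABSO}. This is the key geometric step you are missing.

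\textbf{Change of variables.} You introduce a ``generalized Jacobian'' $J$ with $J\GS\det(\nabla\TF)$ coming from the singular part $D^s\TF$. This is a misconception: once $\TF$ is injective and approximately differentiable on a set of full measure, the push-forward of the absolutely continuous measure $\RHO=r\LEB^d$ is computed by the area formula with the \emph{approximate} Jacobian, which is exactly $\det(\nabla\TF)$ --- the singular part of $D\TF$ does not enter at all (see Lemma~5.5.3 in \cite{AmbrosioGigliSavare2008}). So \eqref{E:INTNEW} holds with \emph{equality}, and the only inequality in the whole proof is the one from Lemma~\ref{L:DETS}, namely $\det\big(\DEF(\TF(x))\big)\LS\det(\nabla\TF(x))$, which you do identify correctly at the end. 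Your step~2 is vacuous (it holds with equality), and the worry about $D^s\TF$ ``adding volume'' is a red herring. Consequently the mollification/lower-semicontinuity detour you sketch as a fallback is unnecessary; the direct route already works once the injectivity argument is fixed.
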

 
\begin{proof}
We have $\RHO(\R^d \setminus\Omega) = 0$ with $\Omega :=
\INTR\CCONV\SPT\RHO$. Choose a maximal monotone set-valued map $u$
whose graph is an extension of $\Gamma := (\ID,\BT)\#\RHO$. Then $u(x)
= \{ \BT(x) \}$ for a.e.\ $x\in \Omega$ and $u$ is differentiable
a.e.: there is a $(d\times d)$-matrix $A(x)$ with
\begin{equation}
  \lim_{\substack{x'\rightarrow x\\ y \in u(x')}}
    \frac{y-\BT(x)-A(x)\cdot (x'-x)}{|x'-x|} = 0;
\label{E:DIFFQ}
\end{equation}
see Theorem~3.2 in \cite{AlbertiAmbrosio1999}. It follows that the
function $\BT$ is approximately differentiable a.e.\ in $\Omega$ (see
Definition~3.70 in \cite{AmbrosioFuscoPallara2000}) and $A$ coincides
with the absolutely continuous part $\nabla\BT$ of the derivative
$D\BT$; see Theorem~3.83 in \cite{AmbrosioFuscoPallara2000} and
\eqref{E:DECF}.

\DETAIL{ 
Notice that \eqref{E:DIFFQ} implies that for all $\EPS>0$ there exists
$\delta>0$ with the following property: for all $x'\in\R^d$ with
$|x'-x| \LS \delta$ we can estimate
\[
  |\BT(x')-\BT(x)-A(x)\cdot(x'-x)| \LS \EPS |x'-x|.
\]
We obtain
\[
  \fint_{B_r(x)}
    \frac{|\BT(x')-\BT(x)-A(x)\cdot(x'-x)|}{r} \,dx' \LS \EPS
\]
for all $0<r<\delta$. Since $\EPS>0$ was arbitrary, we get
\[
  \lim_{r\rightarrow 0} \fint_{B_r(x)}
    \frac{|\BT(x')-\BT(x)-A(x)\cdot(x'-x)|}{r} \,dx' = 0.
\]
} 

Let $D$ be the set of $x\in\Omega$ for which $u(x)$ is single-valued
and $u$ is differentiable at $x$ in the sense of \eqref{E:DIFFQ}. Then
$\LEB^d(\Omega \setminus D) = 0$. We define
\[
  N := \Big\{ x\in D \colon \text{there exists $x'\in\Omega$, 
    $x'\neq x$, with $\BT(x) \in u(x')$} \Big\}.
\]
For given $x\in N$ consider any $x'\in\Omega$, $x'\neq x$, such that
$\BT(x) \in u(x')$. By choice of $u$, we get $x, x' \in u^{-1}(y)$
with $y := \BT(x)$. Since the inverse map $u^{-1}$ is also maximal
monotone, the set $u^{-1}(y)$ is closed and convex, containing with
$x$ and $x'$ also the segment connecting the two points. Since $\BT$
is differentiable at $x$, we obtain
\[
  0 = \lim_{\substack{t\rightarrow 0 \\ y \in u(x_t)}}
     \frac{y-\BT(x)-\nabla \BT(x)\cdot(x_t-x)}{|x_t-x|}
    = -\nabla\BT(x) \cdot \xi,
\]
where $x_t := (1-t)x + tx'$ for $t\in[0,1]$ and $\xi :=
(x'-x)/|x'-x|$. Indeed notice that $\BT(x) \in u(x_t)$ for all $t \in [0,1]$.
Hence $\xi \neq 0$ is an eigenvector of the $(d\times d)$-matrix
$\nabla\BT(x)$, to the eigenvalue zero. Since $x\in N$ was arbitrary,
we obtain
\[
  N \subset \Big\{ x\in D \colon \det\big( \nabla\BT(x) \big) = 0 \Big\} =: M.
\]

Let $\nu := U(r,S)\LEB^d$. Since $\nu \ll \RHO$, we have that
$\nu(\R^d \setminus \Omega) = 0$. Since $\INT[\RHO,\sigma] < \infty$
implies that $U(r,S) \in \L^1(\R^d)$, we obtain $\nu(\Omega \setminus
D) = 0$. Finally, the assumption $\INT[\BT|\RHO,\sigma] < \infty$
requires that $\nu(M) = 0$. We conclude that the set
\[
  \Sigma := (\R^d\setminus\Omega) \cup (\Omega\setminus D) \cup M
\]
is $\nu$-negligible, hence $\RHO(\Sigma) = 0$; see Remark~\ref{R:ABSO}.
Then $\BT |_{\R^d\setminus\Sigma}$ is injective, which implies in
particular that $\BT\#\sigma = (S\circ\BT^{-1}) \, \BT\#\RHO$. Applying
Lemma~5.5.3 in \cite{AmbrosioGigliSavare2008} we conclude that the
equality \eqref{E:INTNEW} is true for $\BT$ (with suitable
modifications on sets of measure zero). We now use Lemma~\ref{L:DETS}
to obtain the estimate
\[
  0 < \det\big( \nabla\BT^\S(x) \big) 
    \LS \det\big( \nabla\BT(x) \big)
  \quad\text{for $\nu$-a.e.\ $x\in\R^d$}.
\]
Then inequality \eqref{E:CONTROL} follows from the definition
\eqref{E:INT2}.
\end{proof}

\begin{remark}\label{R:DISSINTQ}
Using Remark~\ref{R:DISSC}, we can give a more precise version of
\eqref{E:CONTROL}:
\begin{align}
  & \INT[\BT\#\RHO,\BT\#\sigma] - \INT[\BT|\RHO,\sigma]
\label{E:DIFIN}\\
  & \qquad = - \int_{\R^d} P(r,S) \,
    \bigg( \det(\nabla\BT^\S)^{1-\gamma} \int_0^1
      \det(\ONE+tC)^{1-\gamma} \mathbf{T}(t, C) \,dt \bigg) \,dx,
\nonumber
\end{align}
where $C(x) := R(x)^{-1} \nabla\BT^\A(x) R(x)^{-1}$ with $R(x) \in
\SYM[>]{d}$ such that
\begin{gather*}
  R(x)^2 = \nabla\BT^\S(x)
  \quad\text{for $\RHO$-a.e. $x\in\R^d$.}
\end{gather*}
For suitable $M \in\MAT{d}$ we defined
\begin{equation}
  \mathbf{T}(t,M) := \TRACE\big( (\ONE+tM)^{-1}M \big)
  \quad\text{for all $t\GS 0$.}
\label{E:DEFM}
\end{equation}
Note that the difference \eqref{E:DIFIN} vanishes if and only if
$\nabla\BT^\A(x) = 0$ for $\RHO$-a.e.\ $x\in\R$, i.e., if $\BT$ is not
only monotone, but \emph{optimal} in the sense of Remark~\ref{R:OPTW}.
\end{remark}

\begin{proposition}[Existence of Minimizers]\label{P:EXISTENCE}
Consider some triple $(\RHO,\MU,\sigma)$, with density $\RHO \in
\SP_2(\R^d)$, velocity distribution $\MU \in \PR$, and entropy $\sigma
\in \M_+(\R^d)$. Assume that $\RHO =: r \LEB^d$, $\sigma =: \RHO S$,
and $\INT[\RHO,\sigma] < \infty$. Given any timestep $\tau>0$, there
exists a unique $\BT_\tau \in \CR$ that minimizes the functional
\begin{equation}
  \Psi_\tau[\BT|\MU,\sigma] := \frac{3}{4\tau^2} \int_{\R^{2d}} 
    |(x+\tau\xi)-\BT(x)|^2 \,\MU(dx,d\xi) + \INT[\BT|\RHO,\sigma]
\label{E:OBJI}
\end{equation}
with $\BT \in \CR$. This minimum is finite, which implies in
particular that $\INT[\BT_\tau| \RHO,\sigma] < \infty$. For all Borel
maps $\BV \colon \R^d\longrightarrow\R^d$ with the property that
$\BT_\tau+\EPS\BV \in \CR$ for some $\EPS>0$, we have the
following inequality: let $P(r,S) := U'(r,S)r-U(r,S)$ for $r,S \GS 0$
(where $'$ denotes differentiation with respect to $r$). Then
\begin{align}
  & -\frac{3}{2\tau^2} \int_{\R^{2d}} \langle (x+\tau\xi)-\BT_\tau(x),
    \BV(x) \rangle \,\MU(dx,d\xi)
\label{E:EL}\\
  & \qquad
    -\int_{\R^d} P\big( r(x),S(x) \big) \; \det\big( \nabla \BT^\S_\tau(x) 
      \big)^{1-\gamma} \TRACE\Big( \big(
        \nabla\BT^\S_\tau(x) \big)^{-1} 
        \nabla \BV(x) \Big) \,dx \GS 0.
\nonumber
\end{align}
In particular, inequality \eqref{E:EL} is true for $\BV \in \CR$ since
$\CR$ is a convex cone.
\end{proposition}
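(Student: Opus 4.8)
The plan is to produce $\TF_\tau$ by the direct method of the calculus of variations and then to read off \eqref{E:EL} as the first--variation (Euler--Lagrange) inequality for the \emph{convex} minimization problem \eqref{E:OBJI}; the substantive part is the computation of the first variation of the internal--energy term. For existence: by \eqref{E:NINDE} the quadratic part of $\Psi_\tau[\,\cdot\,|\MU,\sigma]$ equals, up to an additive constant depending only on $\MU$, the functional $\TF\mapsto\tfrac{3}{4\tau^2}\|\TF-(\ID+\tau\BU)\|_{\L^2(\R^d,\RHO)}^2$ with $\BU:=\BAR(\MU)$, which is strictly convex, coercive and strongly continuous on $\L^2(\R^d,\RHO)$; by Proposition~\ref{P:LSCCON2}~(1) the term $\INT[\,\cdot\,|\RHO,\sigma]$ is convex, and by Proposition~\ref{P:LSCCON2}~(2) it is lower semicontinuous along sequences of monotone maps converging weak$^*$ in $\BVS_\LOC(\Omega;\R^d)$, $\Omega:=\INTR\CONV\SPT\RHO$. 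I would take a minimizing sequence $\{\TF^k\}\subset\C_\RHO$; the quadratic bound gives $\sup_k\|\TF^k\|_{\L^2(\R^d,\RHO)}<\infty$, and since each $\TF^k$ is monotone on $\Omega$ it has locally finite total variation there controlled by its local $\L^1$--norm (Theorem~5.3 in \cite{AlbertiAmbrosio1999} and the fact that on $\INTR\CONV\SPT\RHO$ the value of a monotone map at a point is pinned down by its values at finitely many nearby points of $\SPT\RHO$). Passing to a subsequence, $\TF^k\to\TF_\tau$ weak$^*$ in $\BVS_\LOC(\Omega;\R^d)$ and $\RHO$--a.e.; the limit is monotone on $\Omega$ and, by Fatou, lies in $\L^2(\R^d,\RHO)$, so $\TF_\tau\in\C_\RHO$, and lower semicontinuity of both terms makes it a minimizer. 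Strict convexity of the quadratic part (with convexity of $\INT$ and of the cone $\C_\RHO$) forces uniqueness. Testing with $\ID\in\C_\RHO$, for which $\Psi_\tau[\ID|\MU,\sigma]=\tfrac34\int_{\R^{2d}}|\xi|^2\,\MU(dx,d\xi)+\INT[\RHO,\sigma]<\infty$, shows the minimum is finite; in particular $\INT[\TF_\tau|\RHO,\sigma]<\infty$, so by Lemma~\ref{L:HA} $\nabla\TF_\tau(x)\in\MAP{d}$, i.e.\ $\DEF(\TF_\tau(x))\in\SYMP{d}$, for $\RHO$--a.e.\ $x$.

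For the first variation, let $\VF$ be Borel with $\TF_\tau+\EPS\VF\in\C_\RHO$ for some $\EPS>0$. Since $\C_\RHO$ is a convex cone, $\TF_\tau+t\VF=(1-t/\EPS)\TF_\tau+(t/\EPS)(\TF_\tau+\EPS\VF)\in\C_\RHO$ for all $t\in[0,\EPS]$, and $\VF\in\L^2(\R^d,\RHO)$. Minimality gives $\Psi_\tau[\TF_\tau+t\VF|\MU,\sigma]\GS\Psi_\tau[\TF_\tau|\MU,\sigma]$ for $t\in(0,\EPS]$, so dividing by $t$ and letting $t\downarrow0$ I get $\tfrac{d}{dt}\big|_{t=0^+}\Psi_\tau[\TF_\tau+t\VF|\MU,\sigma]\GS0$. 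The quadratic part is differentiable and contributes the right--derivative $-\tfrac{3}{2\tau^2}\int_{\R^{2d}}\langle(x+\tau\xi)-\TF_\tau(x),\VF(x)\rangle\,\MU(dx,d\xi)$, finite by Cauchy--Schwarz, which is the first term of \eqref{E:EL}. The crux is then to differentiate $t\mapsto\INT[\TF_\tau+t\VF|\RHO,\sigma]=\int_{\R^d}U(r,S)\,h(\nabla\TF_\tau+t\nabla\VF)\,dx$ at $t=0^+$.

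At $\RHO$--a.e.\ $x$ the matrix $\DEF(\TF_\tau(x))$ is symmetric positive definite, so $h$ is finite and smooth near $\nabla\TF_\tau(x)$ (the effective domain $\MAP{d}$ is open), and the chain rule through $A\mapsto A^\SYM$ together with Jacobi's formula $\tfrac{d}{ds}\det(S+sT)|_{s=0}=\TRACE(\COF(S)^\T T)$ gives
\[
  Dh\big(\nabla\TF_\tau(x)\big)[\nabla\VF(x)]
    =(1-\gamma)\,\det\big(\DEF(\TF_\tau(x))\big)^{-\gamma}\,
      \TRACE\Big(\COF\big(\DEF(\TF_\tau(x))\big)^{\T}\,\nabla\VF(x)\Big),
\]
where I used that the cofactor of a symmetric matrix is symmetric and $\TRACE(SM)=\TRACE(SM^\SYM)$ for symmetric $S$ to drop the symmetrization on $\nabla\VF$. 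By convexity of $h$ (Lemma~\ref{L:HA}) the difference quotient $t\mapsto t^{-1}\big(h(\nabla\TF_\tau+t\nabla\VF)-h(\nabla\TF_\tau)\big)$ is nondecreasing in $t>0$ and pointwise $\GS Dh(\nabla\TF_\tau)[\nabla\VF]$; moreover $\DEF(\TF_\tau+\EPS\VF)\succeq0$ forces $(\nabla\VF)^\SYM\succeq-\tfrac1\EPS\DEF(\TF_\tau)$, hence $U(r,S)\,Dh(\nabla\TF_\tau)[\nabla\VF]\GS-\tfrac{d(\gamma-1)}{\EPS}\,U(r,S)\,h(\nabla\TF_\tau)\in\L^1(\R^d)$ since $\INT[\TF_\tau|\RHO,\sigma]<\infty$. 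Subtracting this $\L^1$ lower bound and applying monotone convergence to the resulting nonnegative nonincreasing family yields
\[
  \tfrac{d}{dt}\Big|_{t=0^+}\INT[\TF_\tau+t\VF|\RHO,\sigma]
    =\int_{\R^d}U\big(r(x),S(x)\big)\,Dh\big(\nabla\TF_\tau(x)\big)[\nabla\VF(x)]\,dx\ \in(-\infty,+\infty].
\]
Because $P(r,S)=U'(r,S)r-U(r,S)=(\gamma-1)U(r,S)$ (as $U(r,S)=\kappa e^S r^\gamma$), the integrand equals $-P(r,S)\det(\DEF(\TF_\tau))^{-\gamma}\TRACE(\COF(\DEF(\TF_\tau))^\T\nabla\VF)$, so adding the two right--derivatives reproduces exactly the left--hand side of \eqref{E:EL}, and $\GS0$ is the claim. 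Finally, if $\VF\in\C_\RHO$ then $\TF_\tau+\VF=2(\tfrac12\TF_\tau+\tfrac12\VF)\in\C_\RHO$ because $\C_\RHO$ is a convex cone, so \eqref{E:EL} applies with $\EPS=1$.

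The step I expect to be the main obstacle is this first variation of the internal energy: establishing differentiability of $h$ at $\nabla\TF_\tau$, justifying the exchange of $\tfrac{d}{dt}$ with the integral (only the one--sided control from convexity of $h$ and from $\INT[\TF_\tau|\RHO,\sigma]<\infty$ is available, so the expression in \eqref{E:EL} is a priori defined only in $(-\infty,+\infty]$ and the inequality encodes its finiteness), and carrying out the $\COF$/$\det$ matrix calculus. The compactness extraction in the existence part is routine once one uses $\BVS_\LOC$--precompactness of bounded families of monotone maps.
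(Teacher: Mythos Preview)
Your overall strategy matches the paper's: direct method for existence, then the one--sided Euler--Lagrange inequality via convexity of $h$ and monotone convergence. The first--variation computation is essentially the paper's Step~3, and you are in fact more explicit than the paper about why the limit and the integral can be interchanged.

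There is, however, a real gap in the existence argument. You assert that a minimizing sequence $\{\TF^k\}\subset\C_\RHO$ bounded in $\L^2(\R^d,\RHO)$ is automatically precompact weak$^*$ in $\BVS_\LOC(\Omega;\R^d)$, with total variation ``controlled by its local $\L^1$--norm''. Alberti--Ambrosio does not give this: for monotone maps the local total variation is controlled by the \emph{oscillation} (local $\L^\infty$), not by the local $\L^1$--norm; see \eqref{E:BVB}. An $\L^2(\R^d,\RHO)$ bound alone does not yield uniform local $\L^\infty$ bounds, because $\RHO=r\LEB^d$ may vanish on open subsets of $\Omega=\INTR\CONV\SPT\RHO$ (the cone on which a monotone map is large could lie entirely where $r=0$). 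The paper closes this gap with a Mazur step: from weak $\L^2(\R^d,\RHO)$ convergence it passes to convex combinations $\SF^n$ converging \emph{strongly} in $\L^2(\R^d,\RHO)$, hence $\RHO$--a.e.\ on a subsequence; for each compact $\Omega_m\subset\Omega$ it then locates finitely many points of $\SPT\RHO$ where the sequence converges (so is bounded), whose convex hull contains $\Omega_{m+1}$, and the Alberti--Ambrosio pointwise estimate then gives uniform $\L^\infty(\Omega_m)$ and hence $\BVS(\Omega_m)$ bounds. Your remark that values are ``pinned down by finitely many nearby points of $\SPT\RHO$'' is exactly the right intuition, but without the Mazur step you have no uniform control at any fixed finite set of points.

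A smaller point: your claimed lower bound $U\,Dh(\nabla\TF_\tau)[\nabla\VF]\GS-\tfrac{d(\gamma-1)}{\EPS}U\,h(\nabla\TF_\tau)$ has the sign reversed. The constraint $\DEF(\TF_\tau+\EPS\VF)\succeq0$ only gives $\DEF(\VF)\succeq-\tfrac1\EPS\DEF(\TF_\tau)$, hence a lower bound on $\TRACE\big(\COF(\DEF\TF_\tau)\,\DEF(\VF)\big)$; after multiplying by $(1-\gamma)<0$ this becomes an \emph{upper} bound on $Dh(\nabla\TF_\tau)[\nabla\VF]$, not a lower one. The monotone--convergence step must therefore be organized differently (e.g.\ apply it to the nonnegative nondecreasing family $U\,(q_{\EPS_0}-q_\EPS)$ with $q_\EPS$ the difference quotient).
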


\begin{proof}
We proceed in three steps.
\medskip

\textbf{Step~1.} We observe first that the infimum $\beta := \inf_{\BT
\in \CR} \Psi_\tau[\BT|\MU,\sigma]$ is non-negative. Furthermore
$\beta$ is finite because we may choose $\BT = \ID \in \CR$ to obtain
\[
  0 \LS \beta \LS \frac{3}{4} \int_{\R^{2d}} |\xi|^2 \,\MU(dx,d\xi) 
    + \INT[\RHO,\sigma] < \infty.
\]
We consider a sequence of $\BT^k \in \CR$ such that $\Psi_\tau[\BT^k|
\MU,\sigma] \longrightarrow \beta$ as $k\rightarrow \infty$. Without
loss of generality, we may assume that $\Psi_\tau[\BT^k| \MU,\sigma]
\LS \beta+1$ for all $k\in\N$. Then
\begin{align*}
  & \int_{\R^d} |\BT^k(x)|^2 \,\RHO(dx)
\\
  & \qquad
    \LS 2 \int_{\R^{2d}} |(x+\tau\xi)-\BT^k(x)|^2 \,\MU(dx,d\xi)
      + 2 \int_{\R^{2d}} |x+\tau\xi|^2 \,\MU(dx,d\xi)
\\
  & \qquad
    \LS \frac{8\tau^2}{3} (\beta+1)
      + 4 \bigg\{ \int_{\R^d} |x|^2 \,\RHO(dx)
        + \tau^2 \int_{\R^{2d}} |\xi|^2 \,\MU(dx,d\xi) \bigg\}
      < \infty.
\end{align*}
Therefore the sequence $\{\BT^k\}_k$ is precompact with respect to
weak convergence in $\L^2(\R^d,\RHO)$: there exists a subsequence
(still denoted by $\{\BT^k\}_k$) and $\BT \in \L^2(\R^d,\RHO)$ such
that $\BT^k \WEAK \BT$ weakly. By Mazur's lemma, there exists a map $K
\colon \N\longrightarrow\N$ with $K(n) \GS n$ for all $n\in\N$, and a
sequence of non-negative numbers
\[
  \{ \lambda^n_k \colon k=n\ldots K(n) \}
\]
with $\sum_{k=n}^{K(n)} \lambda_k^n = 1$, with the property that
\[
  \BS^n := \sum_{k=n}^{K(n)} \lambda_k^n \BT^k \longrightarrow \BT
  \quad\text{strongly in $\L^2(\R^d,\RHO)$}
\]
as $n\rightarrow\infty$. Notice that $\BS^n \in \CR$ since $\CR$ is a
convex cone. We apply Proposition~\ref{P:LSCCON2} (the convexity of
the quadratic term in \eqref{E:OBJI} is easy to check) to estimate
\[
  \beta \LS \Psi_\tau[\BS^n|\MU,\sigma] 
    \LS \sum_{k=n}^{K(n)} \lambda^n_k \Psi_\tau[\BT^k|\MU,\sigma]
    \longrightarrow \beta.
\]
Consequently, we obtain a strongly convergent minimizing sequence.
Without loss of generality, we may assume that $\Psi_\tau[\BS^n|
\MU,\sigma] \LS \beta+1$ for all $n\in\N$. Extracting another
subsequence if necessary, we may even assume the existence of a Borel
set $N \subset \R^d$ with $\RHO(\R^d\setminus N)=0$ such that
$\BS^n(x) \longrightarrow \BT(x)$ for all $x \in \R^d\setminus N$.
\medskip

\textbf{Step~2.} It remains to establish the lower semicontinuity of
the functional \eqref{E:OBJI}. The quadratic part is clearly lower
semicontinuous with respect to weak convergence in $\L^2(\R^d,\RHO)$.
For the internal energy part, we will prove that the sequence
$\{\BS^n\}_n$ is weak* precompact in $\BVS_\LOC(\Omega;\R^d)$. Then we
apply Proposition~\ref{P:LSCCON2}.

For
all $m\in\N$, we define the convex compact sets
\[
  \Omega_m := \Big\{ x\in\R^d \colon
    \text{$|x|\LS m$ and $\DIST(x,\R^d\setminus\Omega) \GS 1/m$} \Big\}.
\]
Then $\bigcup_{m\in\N} \Omega_m = \Omega$. Let us fix $m$ for the
moment. For each $x\in\Omega_{m+1}$ there exist finitely many points in
$\Omega$ with the property that $x$ is in the interior of the
convex hull of these points. Therefore we can even find an open ball
centered at $x$ that is contained in the convex hull of these points.
The collection of balls obtained in this way form an open covering of
$\Omega_{m+1}$. By compactness of $\Omega_{m+1}$, we may choose a
finite subcovering. This proves the following statement: there exist
finitely many points $x_m^i \in \Omega$, $i=1\ldots I_m$ for
some $I_m\in\N$, with the property that
\[
  \Omega_{m+1} \subset \HULL X_m,
  \quad\text{where}\quad
  X_m := \{ x_m^i \colon i=1\ldots I_m \}.
\]
By adapting the argument in the proof of Lemma~\ref{L:ASSO}, we can
write each $x_m^i \in X_m$ as a convex combination of points
$z_m^{i,j} \in \Omega\setminus N$ with $j=1\ldots J_m^i$ for some
$J_m^i \in \N$. Recall that $\RHO(\R^d\setminus N) = 0$ and $\BS^n(x)
\longrightarrow \BT(x)$ for all $x\in\R^d\setminus N$. Thus
\begin{equation}
  \Omega_{m+1} \subset \HULL Z_m,
  \quad\text{where}\quad
  Z_m := \{ z_m^{i,j} \colon j=1\ldots J_m^i, i=1\ldots I_m \}.
\label{E:OMEGA}
\end{equation}
Since the sequence $\{\BS^n(z_m^{i,j})\}_n$ converges, it must be
bounded. Let
\[
  \beta_m^n 
    := \max_{i=1\ldots I_m} \max_{j=1\ldots J_m^i} |\BS^n(z_m^{i,j})|.
\]
Then $\{\beta_m^n\}_n$ is uniformly bounded for every $m\in\N$. We now
observe that
\[
  \sup_{x\in\Omega_m} |\BS^n(x)| \LS \frac{\beta_m^n \DIAM(Z_m)}
    {\DIST(\Omega_m, \R^d\setminus\Omega_{m+1})},
\]
which is bounded uniformly in $n$; see Proposition~1.2 in
\cite{AlbertiAmbrosio1999} and \eqref{E:OMEGA}. We conclude that
$\{\BS^n\}_n$ is uniformly bounded in $\L^\infty(\Omega_m; \R^d)$ for
all $m\in\N$. Since
\begin{equation}
  \int_{\Omega_m} |D\BS^n| \LS c_d \, \DIAM(\Omega_m)^{d-1} 
    \OSC(\BS^n,\Omega_m),
\label{E:BVB}
\end{equation}
where $c_d>0$ is a constant depending only on the space dimenension,
and where
\[
  \OSC(\BS^n,A) := \sup_{x_1,x_2\in A} |\BS^n(x_1)-\BS^n(x_2)|
  \quad\text{for all $A\subset\R^d$}
\]
denotes the oscillation of $\BS^n$ over $A$, we obtain that the
sequence $\{\BS^n\}_n$ is uniformly bounded in $\BVS(\Omega_m; \R^d)$
for all $m\in\N$, thus precompact in $\BVS_\LOC(\Omega;\R^d)$. We
refer the reader to Proposition~5.1 and Remark~5.2 in
\cite{AlbertiAmbrosio1999} for a proof of \eqref{E:BVB}.

Extracting another subsequence if necessary (not relabeled), we find
that $\BS^n \WEAK \BS$ weak* in $\BVS_\LOC(\Omega;\R^d)$ for a
suitable function $\BS \in \BVS_\LOC(\Omega;\R^d)$. One can now check that
$\BS$ is again a monotone map on $\Omega$ (possibly after redefining
$\BS$ on a set of measure zero). Moreover, we have $\BS(x) = \BT(x)$
for $\RHO$-a.e.\ $x\in\Omega$, by construction. Defining
$\BT_\tau(x) := \BS(x)$ for $x\in\Omega$, and $\BT_\tau(x) := 0$ for
$x\in\R^d\setminus\Omega$, we have that
\[
  \BT_\tau \in \CR
  \quad\text{and}\quad
  \Psi_\tau[\BT_\tau|\MU,\sigma] \LS \liminf_{n\rightarrow\infty}
    \Psi_\tau[\BS^n|\MU,\sigma].
\]
In particular, we get $\Psi_\tau[\BT_\tau|\MU,\sigma] = \beta$, thus
$\BT_\tau$ is a minimizer. Its uniqueness follows from the strict
convexity of the first term in \eqref{E:OBJI}, which is quadratic in
$\BT$. 
\medskip

\textbf{Step~3.} Consider $\BV \in \L^2(\R^d,\RHO)$ such that
$\BT_\tau+\EPS\BV \in \CR$ for $\EPS>0$ small. Since $\BT_\tau \in
\CR$, we have that $\BV \in \BVS_\LOC(\Omega;\R^d)$ as well; see
Definition~\ref{D:CONFIG2}. Then
\[
  \Psi_\tau[\BT_\tau+\EPS\BV|\MU,\sigma] - \Psi_\tau[\BT_\tau|\MU,\sigma] 
    \GS 0.
\]
We divide by $\EPS>0$ and consider the limit $\EPS\rightarrow 0$. We
obtain that
\begin{align*}
  & \lim_{\EPS\rightarrow 0+} \frac{1}{\EPS} \Bigg\{ 
      \frac{3}{4\tau^2} \int_{\R^{2d}} \big| (x+\tau\xi)-\big( 
        \BT_\tau(x)+\EPS\BV(x) \big) \big|^2 \,\MU(dx,d\xi) 
\\
  & \hspace{8em}
    - \frac{3}{4\tau^2} \int_{\R^{2d}} |(x+\tau\xi)-\BT_\tau(x)|^2 
      \,\MU(dx,d\xi) \Bigg\}
\\
  & \qquad 
    = -\frac{3}{2\tau^2} \int_{\R^{2d}} \langle (x+\tau\xi)-\BT_\tau(x),
      \BV(x) \rangle \,\MU(dx,d\xi).
\end{align*}
Since $\INT[\BT_\tau|\RHO,\sigma] < \infty$, we can further write
\begin{align*}
  & \lim_{\EPS\rightarrow 0+} \frac{\INT[\BT_\tau+\EPS\BV|\RHO,\sigma]
    -\INT[\BT_\tau|\RHO,\sigma]}{\EPS}
\\ 
  & \qquad 
    = \lim_{\EPS\rightarrow 0+} \int_{\R^d} U\big( r(x),S(x) \big) \; 
      \frac{1}{\EPS} \Big\{ \det\big( \nabla\BT^\S_\tau(x)  
        + \EPS\nabla\BV^\S(x) \big)^{1-\gamma}
\\
   & \hspace{21em}
      - \det\big( \nabla\BT^\S_\tau(x) \big)^{1-\gamma} \Big\} \,dx.
\end{align*}
We can restrict the integration to $\Omega$ where $\nabla\BT_\tau$,
$\nabla\BV$ are well-defined; see Remark~\ref{R:ABSO}. Since $A
\mapsto \det(A^\S)^{1-\gamma}$ is convex (see
Proposition~\ref{P:LSCCON2}), the term in curly brackets is
non-decreasing for a.e.\ $x\in\R^d$. By monotone convergence, it
follows that
\begin{align*}
  & \lim_{\EPS\rightarrow 0+} 
    \frac{\INT[\BT_\tau+\EPS\BV|\RHO,\sigma]-\INT[\BT_\tau|\RHO,\sigma]}{\EPS}
\\
  & \qquad 
    = -\int_{\R^d} P\big( r(x),S(x) \big) \; \det\big( \nabla\BT^\S_\tau(x) 
      \big)^{1-\gamma} \TRACE\Big( \big( \nabla\BT^\S_\tau(x) \big)^{-1}
        \nabla\BV^\S(x) \Big) \,dx.
\end{align*}
We now can replace $\nabla\BV^\S(x)$ by $\nabla\BV(x)$ since the
antisymmetric part of the derivative cancels in the inner product with
a symmetric matrix.
\end{proof}

\begin{remark}
Instead of using Mazur's lemma to get strong
$\L^2(\R^d,\RHO)$-convergence (and thus convergence pointwise a.e., up
to a subsequence), in Step~2 we can also use narrow convergence of the
transport plans $(\ID,\BT^k)\#\RHO$ together with Kuratowski
convergence of their supports; see Proposition~5.1.8 in
\cite{AmbrosioGigliSavare2008}.
\end{remark}

\begin{remark}\label{R:INJE}
Since $\INT[\BT_\tau| \RHO,\sigma] < \infty$, we can apply
Lemma~\ref{L:CONTROL} to conclude that $\BT_\tau$ is essentially
injective and $\INT[\RHO_\tau, \sigma_\tau] < \infty$, where
$(\RHO_\tau, \sigma_\tau) := \BT_\tau\# (\RHO, \sigma)$. It follows
that $\RHO_\tau$ must be absolutely continuous with respect to the
Lebesgue measure and $\sigma_\tau = \RHO_\tau S_\tau$ with transported
entropy $S_\tau := S\circ\BT_\tau^{-1}$; recall
Definition~\ref{D:INT}.
\end{remark}

\begin{remark}\label{R:MOM2}
Using the test functions $\BV = \pm\BT_\tau$ in \eqref{E:EL}, we
obtain
\begin{align}
  & \frac{3}{2\tau^2} \int_{\R^{2d}} \langle (x+\tau\xi)-\BT_\tau(x),
    \BT_\tau(x) \rangle \,\MU(dx,d\xi)
\label{E:HGAA}\\
  & \qquad
    +\int_{\R^d} P\big( r(x),S(x) \big) \; \det\big( \nabla\BT^\S_\tau(x) 
      \big)^{1-\gamma} \TRACE\Big( \big( \nabla\BT^\S_\tau(x) \big)^{-1} 
        \nabla\BT_\tau(x) \Big) \,dx = 0.
\nonumber
\end{align}
This is the analogue of equality \eqref{E:OPI1} from the pressureless
case. As a consequence, we can rewrite \eqref{E:EL} in the following
form (cf. \eqref{E:OPI2}): for all $\BS \in \CR$ we have
\begin{align}
  & \frac{3}{2\tau^2} \int_{\R^{2d}} \langle (x+\tau\xi)-\BT_\tau(x),
    \BS(x) \rangle \,\MU(dx,d\xi)
\label{E:ELL}\\
  & \qquad
    +\int_{\R^d} P\big( r(x),S(x) \big) \; \det\big( \nabla\BT^\S_\tau(x) 
      \big)^{1-\gamma} \TRACE\Big( \big( \nabla\BT^\S_\tau(x) \big)^{-1}
        \nabla\BS(x) \Big) \,dx \LS 0.
\nonumber
\end{align}
Using in \eqref{E:ELL} the constant maps $\BS(x)=\pm b$ for all $x \in
\R^d$, where $b\in\R^d$ is some vector, we conclude that the
minimization in Proposition~\ref{P:EXISTENCE} again preserves the
total momentum; see Remark~\ref{R:MOMMY} for more details. Similarly,
using $\BS(x) := \pm Ax$ with $A\in\SKEW{d}$, we obtain global
conservation of angular momentum. Notice that in this case, the trace in
\eqref{E:ELL} vanishes since $(\nabla\BT^\S_\tau(x))^{-1}$ is
symmetric.
\end{remark}

\begin{remark}
In \eqref{E:HGAA} we can replace $\nabla\BT_\tau(x)$ by the
deformation $\nabla\BT^\S_\tau(x)$ since the antisymmetric part cancels
in the trace. By Cramer's rule, we obtain
\begin{align}
  & -\frac{3}{2\tau^2} \int_{\R^{2d}} \langle (x+\tau\xi)-\BT_\tau(x),
    \BT_\tau(x) \rangle \,\MU(dx,d\xi)
\label{E:HGAA2}\\
  & \qquad
    = d \int_{\R^d} P\big( r(x),S(x) \big) \; \det\big( \nabla\BT^\S_\tau(x) 
      \big)^{1-\gamma} \,dx
    = d(\gamma-1) \; \INT[\BT_\tau|\RHO,\sigma].
\nonumber
\end{align}
\end{remark}

\begin{definition}\label{D:NOTAT}
For $(\RHO,\MU,\sigma,\tau)$ as in Proposition~\ref{P:EXISTENCE}, let
$\BT_\tau$ denote the unique minimizer considered there. We define
$\ORT_\tau, \ORW_\tau, \ORU_\tau \in \L^2(\R^{2d}, \MU)$ as follows:
\begin{equation}
  \ORT_\tau(x,\xi) := \BT_\tau(x),
  \quad
  \ORU_\tau(x,\xi) := \ORW_\tau(x,\xi) 
    := V_\tau\big( x,\xi,\BT_\tau(x) \big)
\label{E:NEWD}
\end{equation}
for $\MU$-a.e.\ $(x,\xi)\in\R^{2d}$, with $V_\tau$ given by
\eqref{E:ZETOP}. Then
\[
  (\RHO_\tau, \sigma_\tau) := \BT_\tau\#(\RHO,\sigma),
  \quad 
  \MU_\tau := (\ORT_\tau, \ORU_\tau)\#\MU.
\]
\end{definition}

\begin{remark}\label{R:MONOK}
The definition of $\ORT_\tau$ in \eqref{E:NEWD} is natural in view of
Proposition~\ref{L:PROP}. If $\MU = (\ID,\BU)\#\RHO$ for some $\BU \in
\L^2(\R^d,\RHO)$ and $\MU_* := (\ORT_\tau, \ORW_\tau)\#\MU$, then
\[
  \int_{\R^{2d}} \varphi(z,\zeta) \,\MU_*(dz,d\zeta) 
    = \int_{\R^d} \varphi\big( \BT_\tau(x), W(x) 
      \big) \,\RHO(dx)
\]
for all $\varphi \in \CB(\R^{2d})$, with $W := \frac{3}{2} V -
\frac{1}{2}\BU$ and $V := (\BT_\tau-\ID)/\tau$. Let
\begin{equation}
  \BU_\tau(z) := W\big( \BT_\tau^{-1}(z) \big)
  \quad\text{for $\RHO_\tau$-a.e.\ $z\in\R^d$.}
\label{E:UTAU}
\end{equation}
The velocity $\BU_\tau$ is well-defined because $\BT_\tau$ is
essentially injective; see Remark~\ref{R:INJE}. It follows that $\MU_*
= (\ID, \BU_\tau)\#\RHO_\tau$ and $\BU_\tau \in \L^2(\R^d,\RHO_\tau)$.
We would like to emphasize the fact that the minimization preserves
the monokinetic structure of the fluid (recall that the velocity
update in \eqref{E:NEWD} is a consequence of the minimization of the
work functional). Since the tangent cone over the cone of monotone
maps at $\RHO_\tau$ equals $\L^2(\R^d,\RHO_\tau)$, no additional
projection is necessary (unlike in the pressureless gas case; see
Step~(2) in Definition~\ref{D:EM}). We can therefore put $\ORU_\tau =
\ORW_\tau$.
\end{remark}

\begin{proposition}[Stress Tensor]\label{P:SIGMA}
Suppose that $\tau>0$ and $(\RHO,\MU,\sigma)$ are given, with density
$\RHO \in \SP_2(\R^d)$, velocity distribution $\MU \in \PR$, and
entropy $\sigma \in \M_+(\R^d)$. Assume that $\RHO =: r \LEB^d$,
$\sigma =: \RHO S$, and $\INT[\RHO,\sigma] < \infty$. Consider the
unique minimizer $\BT_\tau \in \CR$ from
Proposition~\ref{P:EXISTENCE}. There exists $\RES_\tau \in
\M(\R^d; \SYM[\GS]{d})$ with
\begin{multline}
  \int_{\R^d} \TRACE\big( \nabla u(x) \RES_\tau(dx) \big)
    = -\frac{3}{2\tau^2} \int_{\R^{2d}} \langle (x+\tau\xi)
      -\BT_\tau(x), u(x) \rangle \,\MU(dx,d\xi)
\label{E:ID2}\\
    -\int_{\R^d} P\big( r(x),S(x) \big) \; \det\big( \nabla\BT^\S_\tau(x)
      \big)^{1-\gamma} \TRACE\Big( \big(
        \nabla\BT^\S_\tau(x) \big)^{-1} 
        \nabla u(x) \Big) \,dx
\end{multline}
for all $u \in \C^1_*(\R^d; \R^d)$. In particular, we have the control
\begin{multline}
  \int_{\R^d} \TRACE\big( \RES_\tau(dx) \big)
    = -\frac{3}{2\tau^2} \int_{\R^{2d}} \langle (x+\tau\xi)
      -\BT_\tau(x), x \rangle \,\MU(dx,d\xi)
\label{E:SIGID2}\\
    -\int_{\R^d} P\big( r(x),S(x) \big) \; \det\big( \nabla\BT^\S_\tau(x)
      \big)^{1-\gamma} \TRACE\Big( \big(
        \nabla\BT^\S_\tau(x) \big)^{-1} \Big) \,dx.
\end{multline}
\end{proposition}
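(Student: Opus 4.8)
The plan is to produce $\SIGMA_\tau$ by applying the stress-tensor result Theorem~\ref{T:STRESS} to the variational inequality \eqref{E:EL} of Proposition~\ref{P:EXISTENCE}, after recognizing its left-hand side as a functional $G$ of the type treated there. Concretely, I would introduce the $\R^d$-valued measure $\F$ defined through
\[
  \int_{\R^d} \langle u(x), \F(dx) \rangle := \frac{3}{2\tau^2} \int_{\R^{2d}} \langle (x+\tau\xi)-\TF_\tau(x), u(x) \rangle \,\MU(dx,d\xi)
\]
and the matrix field
\[
  \BH(dx) := P\big( r(x),S(x) \big)\,\det\big( \DEF(\TF_\tau(x)) \big)^{-\gamma}\,\COF\big( \DEF(\TF_\tau(x)) \big)\,\LEB^d(dx).
\]
Since $\COF(\DEF(\TF_\tau))$ is symmetric, $\llangle \COF(\DEF(\TF_\tau)),\DEF(u)\rrangle = \TRACE(\COF(\DEF(\TF_\tau))^\T\nabla u)$ for any $u\in\C^1_*(\R^d;\R^d)$, so the functional $G(u) := -\int\langle u,\F\rangle - \int\llangle\DEF(u),\BH\rrangle$ coincides exactly with the left-hand side of \eqref{E:EL} evaluated at $\VF=u$.

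I would then check the hypotheses of Theorem~\ref{T:STRESS}. The measure $\F$ is finite and has finite first moment, since by Cauchy--Schwarz
\[
  \int_{\R^d} |x|\,|\F|(dx) \LS \frac{3}{2\tau^2}\bigg( \int_{\R^d}|x|^2\,\RHO(dx) \bigg)^{1/2}\bigg( \int_{\R^{2d}}|(x+\tau\xi)-\TF_\tau(x)|^2\,\MU(dx,d\xi) \bigg)^{1/2}<\infty,
\]
using $\RHO\in\SP_2(\R^d)$, $\MU\in\PR$ and $\TF_\tau\in\C_\RHO\subset\L^2(\R^d,\RHO)$. For $\BH$, the fact that $\INT[\TF_\tau|\RHO,\sigma]<\infty$ forces $\nabla\TF_\tau(x)\in\MAP{d}$, hence $\DEF(\TF_\tau(x))$ symmetric positive definite, for $U(r,S)\LEB^d$-a.e.\ $x$; since $P(r,S)=(\gamma-1)U(r,S)\GS0$ and the cofactor of a symmetric positive definite matrix is again symmetric positive definite, $\BH$ takes values in $\SYMNN{d}$ (and by Remark~\ref{R:ABSO} one may replace ``$U(r,S)\LEB^d$-a.e.'' by ``$\RHO$-a.e.'' throughout). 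The one delicate point is that $\BH$ is a \emph{finite} measure: because $\BH\GS0$ its mass equals $\int\TRACE(\BH)=\int P(r,S)\det(\DEF(\TF_\tau))^{-\gamma}\TRACE(\COF(\DEF(\TF_\tau)))\,dx$, and using the admissible test map $\VF=\ID\in\C_\RHO$ in \eqref{E:EL} (for which $\nabla\ID=\ONE$ and $\TRACE(\COF(\DEF(\TF_\tau))^\T\ONE)=\TRACE(\COF(\DEF(\TF_\tau)))$) gives
\[
  \int_{\R^d} P(r,S)\,\det(\DEF(\TF_\tau))^{-\gamma}\,\TRACE\big( \COF(\DEF(\TF_\tau)) \big)\,dx \LS -\frac{3}{2\tau^2}\int_{\R^{2d}}\langle(x+\tau\xi)-\TF_\tau(x),x\rangle\,\MU(dx,d\xi),
\]
whose right-hand side is finite, again by Cauchy--Schwarz. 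Finally, $G(u)\GS0$ for every $u\in\MON(\R^d)$: such $u$ is monotone with at most linear growth, hence lies in $\C_\RHO$, and then $G(u)$ is precisely the nonnegative left-hand side of \eqref{E:EL}.

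With the hypotheses verified, Theorem~\ref{T:STRESS} yields $\SIGMA\in\M(\gamma\R^d;\SYMNN{d})$ with $G(u)=\int_{\gamma\R^d}\llangle\DEF(u),\SIGMA\rrangle$ for all $u\in\C^1_*(\R^d;\R^d)$, together with the trace identity \eqref{E:CONTROL2}. By Remark~\ref{R:FINI}, $\TRACE(\SIGMA)$ puts no mass on $\gamma\R^d\setminus\R^d$, so $\SIGMA_\tau:=\SIGMA\lfloor_{\R^d}\in\M(\R^d;\SYMNN{d})$ satisfies $\int_{\R^d}\llangle\DEF(u),\SIGMA_\tau\rrangle=G(u)$ for all $u\in\C^1_*(\R^d;\R^d)$, which upon inserting the definitions of $\F$ and $\BH$ is exactly \eqref{E:ID2}. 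Identity \eqref{E:SIGID2} then follows by taking $u=\ID$ in \eqref{E:ID2}, using $\DEF(\ID)=\ONE$, $\llangle\ONE,\SIGMA_\tau\rrangle=\TRACE(\SIGMA_\tau)$ and $\TRACE(\COF(\DEF(\TF_\tau))^\T\ONE)=\TRACE(\COF(\DEF(\TF_\tau)))$, or equivalently by reading \eqref{E:CONTROL2} off the chosen $\F$ and $\BH$. I expect the only genuine obstacle to be the finiteness of $\BH$, which is exactly what the test map $\VF=\ID$ supplies; everything else is a routine rewriting of \eqref{E:EL} together with appeals to Theorem~\ref{T:STRESS} and Remark~\ref{R:FINI}.
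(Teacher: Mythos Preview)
Your proposal is correct and follows essentially the same route as the paper: define $\F$ and $\BH$ so that the left-hand side of \eqref{E:EL} becomes $G(u)$, use $\VF=\ID$ in \eqref{E:EL} to show $\BH$ is a finite measure, note $\MON(\R^d)\subset\C_\RHO$ to get $G\GS 0$ on $\MON(\R^d)$, then invoke Theorem~\ref{T:STRESS} and Remark~\ref{R:FINI}. The paper writes $\F$ in terms of the barycentric projection $\BU=\BAR(\MU)$ rather than as an integral against $\MU$, but since the test functions depend only on $x$ this is the same measure.
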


\begin{proof}
Since every $u \in \MON(\R^d)$ has at most linear growth, we have $u
\in \L^2(\R^d, \RHO)$. Thus $\MON(\R^d) \subset \CR$ and $\BV := u
\in \MON(\R^d)$ is admissible in \eqref{E:EL}. Let
\[
  \PK(dx) := P\big( r(x),S(x) \big) \; \det\big( \nabla\BT^\S_\tau(x)
    \big)^{1-\gamma} \big( \nabla\BT^\S_\tau(x) \big)^{-1} \,dx.
\]
The inverse matrix $(\nabla\BT^\S_\tau(x))^{-1}$ is symmetric and
positive definite for a.e.\ $x\in\Omega$ because $\BT_\tau$ is
monotone there. Consequently, its norm can be controlled by the trace.
Using $\BV=\ID$ (which is an element of $\CR$) in \eqref{E:EL}, we
obtain the estimate
\[
  0 \LS \int_{\R^d} \TRACE\big( \PK(dx) \big)
  \LS -\frac{3}{2\tau^2} \int_{\R^{2d}} \langle (x+\tau\xi)-\BT_\tau(x),
    x \rangle \,\MU(dx,d\xi),
\]
which is finite. Thus $\PK \in \M(\R^d;\SYM[\GS]{d})$. If we define
\[
  \F(dx) := -\frac{3}{2\tau^2} \Big( \big( x+\tau\BU(x) \big)
      -\BT_\tau(x) \Big) \,\RHO(dx),
\]
where $\BU := \BAR(\MU)$ denotes the barycentric projection of $\MU$
(which is in $\L^2(\R^d,\RHO)$), then $\F \in \M(\R^d;\R^d)$ has
finite first moment because $\RHO \in \SP_2(\R^d)$. We then apply
Theorem~\ref{T:STRESS} to obtain the representation
\eqref{E:ID2}/\eqref{E:SIGID2}; see also Remark~\ref{R:FINI}.
\end{proof}

\begin{proposition}[Energy Balance]\label{P:EB2}
Let $\tau>0$ and $(\RHO,\BU,\sigma)$ are given, with density $\RHO \in
\SP_2(\R^d)$, Eulerian velocity $\BU \in \L^2(\R^d,\RHO)$, and entropy
$\sigma \in \M_+(\R^d)$. Suppose that $\RHO =: r \LEB^d$, $\sigma =:
\RHO S$, and $\INT[\RHO,\sigma] < \infty$. Let $\BT_\tau \in \CR$
denote the unique minimizer from Proposition~\ref{P:EXISTENCE} (where
$\MU := (\ID,\BU)\#\RHO$) and $\RES_\tau \in \M(\R^d; \SYM[\GS]{d})$
the stress tensor field in Proposition~\ref{P:SIGMA}. Consider
$(\RHO_\tau, \BU_\tau, \sigma_\tau)$ and $\BW_\tau$ as defined in the
Remarks~\ref{R:INJE}/\ref{R:MONOK}. Then the following energy equality
holds:
\begin{align}
  \E[\RHO_\tau, \BU_\tau, \sigma_\tau]
    & + \int_{\R^d} {\TST\frac{1}{6}} \RHO |\BW_\tau-\BU|^2 
\label{E:ENIN2}\\
    & + \int_{\R^d} \bigg( 
      P(r,S) \, \DISS^2\big( \nabla\BT_\tau-\ONE \big) \Big) \,dx 
        + \TRACE\big( \RES_\tau(dx) \big) \bigg)
    = \E[\RHO,\BU,\sigma],
\nonumber
\end{align}
with total energy (recall Definition~\ref{D:INT})
\[
  \E[\RHO,\BU,\sigma]
    := \int_{\R^d} \OH \RHO|\BU|^2 + \INT[\RHO, \sigma].
\]
For all matrices $\ONE+S\in\SYM[>]{d}$ and $A\in\SKEW{d}$ we have
\begin{align*}
  \DISS^2(S+A)
    & := \int_0^1 \det(\ONE+tS)^{1-\gamma} \Big( (\gamma-1) \mathbf{T}(t,S)^2
        + \mathbf{T}_2(t,S) \Big) t \,dt 
\\
    & \hphantom{:} + \det(\ONE+S)^{1-\gamma} \int_0^1 \det(\ONE+tC)^{1-\gamma}
      \mathbf{T}(t,C) \,dt \GS 0.
\end{align*}
Here $C := R^{-1}AR^{-1}$ and $R\in\SYM[>]{d}$ is uniquely determined
by $\ONE+S =: R^2$. Recall \eqref{E:DEFM} for the definition of
$\mathbf{T}$. For suitable $M \in\MAT{d}$ we define
\[
  \mathbf{T}_2(t,M) := \TRACE\Big( \big( (\ONE+tM)^{-1}M \big)^2 \Big)
  \quad\text{for all $t\GS 0$.}
\]
Notice that all terms in curly brackets in \eqref{E:ENIN2} are
non-negative.
\end{proposition}

\begin{proof}
Let us first consider the kinetic energy. Because of
\eqref{E:IDTIES}/\eqref{E:UTAU}, we have
\begin{align*}
  & \int_{\R^d} \OH|\BU_\tau(x)|^2 \,\RHO_\tau(dx)
    + \frac{1}{6} \int_{\R^d} |\BW_\tau(x)-\BU(x)|^2 \,\RHO(dx)
\\
  & \qquad
    = \int_{\R^d} \OH|\BU(x)|^2 \,\RHO(dx)
    - \frac{3}{2\tau^2} \int_{\R^{2d}} \langle (x+\tau\xi)-\BT_\tau(x),
      \BT_\tau(x)-x \rangle \,\MU(dx,d\xi)
\end{align*}
Combining \eqref{E:HGAA} with the representation \eqref{E:SIGID2}, we
find that
\begin{align}
  & - \frac{3}{2\tau^2} \int_{\R^{2d}} \langle (x+\tau\xi)-\BT_\tau(x),
      \BT_\tau(x)-x \rangle \,\MU(dx,d\xi)
\label{E:BHG}\\
  & \qquad
    = \tau \int_{\R^d} P\big( r(x),S(x) \big) \; \det\big( \nabla\BT^\S_\tau(x)
      \big)^{1-\gamma} \TRACE\Big( \big( \nabla\BT^\S_\tau(x) \big)^{-1} 
        \nabla \BV_\tau(x) \Big) \,dx
\nonumber\\
  & \qquad\quad
    - \int_{\R^d} \TRACE\big( \RES_\tau(dx) \big).
\nonumber
\end{align}
Let $\BT(s,x) := x+s\tau\BV_\tau(x)$ for $s\in[0,1]$. Taylor expanding
around $s=1$, we get
\begin{align}
  & \det\big( \nabla\BT^\S_\tau(x) \big)^{1-\gamma} = 1
\nonumber\\
  &\quad
    - \tau(\gamma-1) \det\big( \nabla\BT^\S_\tau(x) \big)^{1-\gamma} \;
      \TRACE\Big( \big( \nabla\BT^\S_\tau(x) \big)^{-1} \nabla\BV_\tau(x) \Big)
\label{E:REPLA}\\
  & \quad
      - \int_0^1
        \det\big( \nabla\BT^\S(s,x) \big)^{1-\gamma}
        \Bigg\{ (\gamma-1)^2 \,
          \bigg( \TRACE\Big( \big( \nabla\BT^\S(s,x)
            \big)^{-1} \tau\nabla\BV^\S_\tau(x) \Big) \bigg)^2
\nonumber\\
  & \quad\hphantom{-\int_0^\tau \det\big( \nabla\BT^\S(s,x) 
        \big)^{-\gamma} } \;
      + (\gamma-1) \, \TRACE\bigg( \Big( \big( \nabla\BT^\S(s,x)
        \big)^{-1} \tau\nabla\BV^\S_\tau(x) \Big)^2 \bigg) \Bigg\}
          s \,ds
\nonumber
\end{align}
for a.e.\ $x\in\Omega$. We now multiply by $U(r(x),S(x))$ and
integrate in $x\in \R^d$. Then the integral of \eqref{E:REPLA} equals
the negative of the first term on the right-hand side of
\eqref{E:BHG}. Combining all terms and using Remark~\ref{R:DISSINTQ},
we conclude the proof.
\end{proof}

\begin{remark}[Bregman Divergence]\label{R:RESA}
We observe that the function 
\begin{align}
  D_\INT(S) 
    & := \Big( 1-\det(\ONE+S)^{1-\gamma} \Big) 
      -(\gamma-1) \det(\ONE+S)^{1-\gamma} \TRACE\Big( (\ONE+S)^{-1}S \Big)
\label{E:DINTP}\\
    & \hphantom{:} = \int_0^1 \det(\ONE+tS)^{1-\gamma} \Big( (\gamma-1) 
      \mathbf{T}(t,S)^2 + \mathbf{T}_2(t,S) \Big) t \,dt \GS 0,
\nonumber
\end{align}
defined for every $S \in \SYM{d}$ with $\ONE+S$ positive definite, is
the Bregman divergence for $0$ and $S$ associated to the convex
function $S \mapsto \det(\ONE+S)^{1-\gamma}$. 
\end{remark}

The following result will be useful to control the momentum equation
of \eqref{E:FULL}.

\begin{lemma}\label{L:COMML}
For every $\EPS>0$ there exists a constant $C_\EPS>0$ with the
following property: For all $S \in \SYM{d}$ with $\ONE+S$ positive
definite, we have
\begin{equation}
  \sup_{z\in\R^d, |z|=1} 
    \big| \big\langle z, \big( \ONE-\det(\ONE+S)^{1-\gamma} (\ONE+S)^{-1}
        \big)z \big\rangle \big|
    \LS \EPS + C_\EPS D_\INT(S),
\label{E:DISSL}
\end{equation}
where $D_\INT$ is defined in \eqref{E:DINTP}. 

Similar, for any $\EPS>0$ there exists $C_\EPS>0$ such that
\begin{equation}
  \sup_{z\in\R^d, |z|=1} 
    \big| \big\langle z, \big( \BW\otimes(\BV-\BW) \big) z \big\rangle \big|
      \LS \EPS |\BW|^2 + C_\EPS D_\K|\BW-\BU|^2
\label{E:DISSK}
\end{equation}
for all $\BV, \BW \in \R^d$ and $\BU := 3\BV-2\BW$.
\end{lemma}

\begin{proof} 
Notice first that the map $S \mapsto \ONE - \det(\ONE+S)^{1-\gamma}
(\ONE+S)^{-1}$ vanishes for $S=0$ and is continuous there.
Consequently, for any $\EPS>0$ there exists a $\delta>0$ such that the
left-hand side of \eqref{E:DISSL} is less than $\EPS$ for all $S \in
\SYM{d}$ with $\|S\| < \delta$.

For any $S \in \SYM{d}$ with $\ONE+S$ positive definite, we rewrite
\begin{align*}
  & \ONE - \det(\ONE+S)^{1-\gamma} (\ONE+S)^{-1}
\\
  & \quad
    = \Big( 1-\det(\ONE+S)^{1-\gamma} \Big) \ONE
    + \det(\ONE+S)^{1-\gamma} \Big( \ONE-(\ONE+S)^{-1} \Big)
\end{align*}
Because of the spectral theorem, there exist real eigenvalues
$\lambda_i$ and a corresponding system of orthonormal eigenvalues $e_i
\in \R^d$, $i=1\ldots d$, such that $S = \sum_{i=1}^d \lambda_i
e_i\otimes e_i$. We also have the identity $\sum_{i=1}^d e_i\otimes
e_i = \ONE$. We can then write
\begin{align*}
  & \ONE - \det(\ONE+S)^{1-\gamma} (\ONE+S)^{-1}
\\
  & \quad
    = \bigg( 1-\prod_{i=1}^d (1+\lambda_i)^{1-\gamma} \bigg) \ONE
    + \prod_{i=1}^d (1+\lambda_i)^{1-\gamma} 
      \sum_{i=1}^d \frac{\lambda_i}{1+\lambda_i} e_i\otimes e_i.
\end{align*}
Multiplying from left and right by a vector $z\in\R^d$ with $|z|=1$,
we obtain
\begin{align*}
  & \big\langle z, \big( \ONE - \det(\ONE+S)^{1-\gamma} (\ONE+S)^{-1} 
    \big) z \big\rangle
\\
  & \quad
    = \bigg( 1-\prod_{i=1}^d (1+\lambda_i)^{1-\gamma} \bigg)
    + \prod_{i=1}^d (1+\lambda_i)^{1-\gamma} 
      \sum_{i=1}^d c_i^2 \frac{\lambda_i}{1+\lambda_i},
\end{align*}
where $c_i := z\cdot e_i$ and $\sum_{i=1}^d c_i^2 = 1$. Similarly, we
can rewrite \eqref{E:DINTP} as
\begin{align*}
  D_\INT(S) 
    & = \bigg( 1-\prod_{i=1}^d (1+\lambda_i)^{1-\gamma} \bigg)
      - (\gamma-1) \prod_{i=1}^d (1+\lambda_i)^{1-\gamma} 
        \sum_{i=1}^d \frac{\lambda_i}{1+\lambda_i}.
\\
    & = \int_0^1 \prod_{i=1}^d (1+t\lambda_i)^{1-\gamma} \Bigg\{ 
      (\gamma-1) \Bigg( \sum_{i=1}^d \frac{\lambda_i}{1+t\lambda_i} \Bigg)^2
      + \sum_{i=1}^d \bigg( \frac{\lambda_i}{1+t\lambda_i} \bigg)^2 \Bigg\} t\,dt,
\end{align*}
from which we conclude that $D_\INT(S) = 0$ if and only if all
eigenvalues $\lambda_i$ vanish, thus $S=0$. Recall that $\gamma-1>0$,
by assumption. In particular, we have $D_\INT(S)>0$ for all $S \in
\SYM{d}$ such that $\|S\|\GS \delta$. By continuity and compactness,
for any $\gamma < 1$ there exists a constant $c_\gamma > 0$ with
$D_\INT(S) \GS c_\gamma$ for all $\gamma \GS \|S\| \GS \delta$.

To simplify the notation, we will write
\[
  d(\lambda) := \prod_{i=1}^d (1+\lambda_i)^{1-\gamma},
  \quad
  S_c(\lambda) := \sum_{i=1}^d c_i^2 \frac{\lambda_i}{1+\lambda_i}
\]
for all $\lambda := (\lambda_1, \ldots, \lambda_d)$ with $\lambda_i >
-1$ and $c := (c_1, \ldots, c_d)$. We claim that
\begin{equation}
  F(\lambda) := \frac{1-d(\lambda) \Big( 1-S_c(\lambda) \Big)}
    {1-d(\lambda) \Big( 1+(\gamma-1) S_{(1, \ldots, 1)}(\lambda) \Big)}
\label{E:QUOT}
\end{equation}
is uniformly bounded away from $\lambda = 0$. Then the estimate
\eqref{E:DISSL} follows.

In order to prove the claim, we first observe that the level sets of
$d(\lambda)$ generate a partition of the orthant $(-1,\infty)^d$ into
hyperboloids. For simplicity, we only consider the case $d=2$. The
general case can be handled similarly. We introduce a coordinate
system adapted to $(-1,\infty)^2$ as follows: For all $(\alpha,\beta)
\in (0,\pi/2)^2$ let
\[
  \lambda_1(\alpha,\beta) := \sqrt{\tan(\alpha)\cot(\beta)}-1,
  \quad
  \lambda_2(\alpha,\beta) := \sqrt{\tan(\alpha)\tan(\beta)}-1.
\]
Notice that with this choice $\beta$ parameterizes the level curves of
$d(\lambda) = \tan(\alpha)^{1-\gamma}$. Expressed in these
coordinates, the function \eqref{E:QUOT} takes the form
\begin{equation}
  F(\alpha, \beta) = \frac{
    1-\tan(\alpha)^{1-\gamma} \bigg( 
      c_1^2 \sqrt{\frac{\tan(\beta)}{\tan(\alpha)}}
        + c_2^2 \sqrt{\frac{\cot(\beta)}{\tan(\alpha)}} 
      \bigg)
  }{
    1-\tan(\alpha)^{1-\gamma} \Bigg( 
      (2\gamma-1) - (\gamma-1) \bigg( 
        \sqrt{\frac{\tan(\beta)}{\tan(\alpha)}}
          + \sqrt{\frac{\cot(\beta)}{\tan(\alpha)}}
      \bigg) 
    \Bigg)
  },
\label{E:FALP}
\end{equation}
where we have used that $c_1^2+c_2^2 = 1$. For any $\alpha \in
(0,\pi/2)$ fixed, we find that
\begin{equation}
  \lim_{\beta\rightarrow 0} F(\alpha,\beta) = \frac{c_2^2}{1-\gamma},
  \quad
  \lim_{\beta\rightarrow \pi/2} F(\alpha,\beta) = \frac{c_1^2}{1-\gamma}.
\label{E:BETAL}
\end{equation}
Similarly, for any $\beta \in (0,\pi/2)$ fixed, we have
\[
  \lim_{\alpha\rightarrow 0} F(\alpha,\beta) 
    = \frac{c_1^2 \sqrt{\tan(\beta)} + c_2^2 \sqrt{\cot(\beta)}}
      {(1-\gamma) \Big( \sqrt{\tan(\beta)} + \sqrt{\cot(\beta)} \Big)},
  \quad
  \lim_{\alpha\rightarrow \pi/2} F(\alpha,\beta) = 1.
\]
Notice that $\lim_{\alpha\rightarrow 0} F(\alpha,\beta)$ converges to
the limits in \eqref{E:BETAL} as $\beta \rightarrow 0$ or $\pi/2$.

We now consider the limit $\alpha\rightarrow 0$ with $\beta(\alpha) :=
k\alpha$ for $k>0$. We find that
\[
  \lim_{\alpha\rightarrow 0} F(\alpha,k\alpha) = \frac{c_2^2}{1-\gamma}
  \quad\text{for any $k>0$,}
\]
hence $F(\alpha,\beta)$ can be continuously extended to
$(\alpha,\beta) = (0,0)$ by $c_2^2/(1-\gamma)$. Recall that
$\tan(\theta) \approx \theta$ for small $\theta$. Similarly, we
compute the limit
\[
  \lim_{\beta\rightarrow 0} F(\pi/2-k\beta,\beta) = 1
  \quad\text{for any $k>0$.}
\]
We used that $\tan(\pi/2-k\beta) = \cot(k\beta)$. The behavior of the
map $(\alpha, \beta) \mapsto F(\alpha, \beta)$ at the other corners of
the domain $(0,\pi/2)^2$ can be studied analogously. We conclude that
$F(\alpha, \beta)$ remains bounded for $(\alpha, \beta)$ near the
boundary of $(0, \pi/2)^2$, uniformly in $c=(c_1,c_2)$. It is
continuous up to the boundary expect for the points $(0,\pi/2)$ and
$(\pi/2,\pi/2)$. As long as we stay away from the unique root of the
denominator in \eqref{E:FALP}, the function $F$ is uniformly bounded
as claimed, by continuity. 

The estimate \eqref{E:DISSK} follows from Young inequality.
\end{proof}


\section{Measure-Valued Solutions}

In this section, we use the minimizations in
Sections~\ref{SS:EM}/\ref{SS:MP} to define approximate solutions to
the compressible gas dynamics equations \eqref{E:FULL}, for suitable
initial data and timestep $\tau>0$. We establish uniform bounds and
prove that a subsequence converges to a measure-valued solution of
\eqref{E:FULL} in the limit $\tau\rightarrow 0$. We will cover the
pressureless case and the Euler case simultaneously, with the
understanding that for the pressureless case the internal energy is
set to zero. Similarly, the specific entropy is considered constant in
all cases other than the full Euler case.


\subsection{Approximate Solutions}

We will construct approximate solutions to \eqref{E:FULL} on time
intervals $[0,\infty)$ by successively applying the variational
minimization step introduced in the previous sections and then
utilizing a suitable interpolation between discrete times. Consider
initial density, velocity distribution, and entropy
\[
  \BRHO \in \SP_2(\R^d),
  \quad
  \bar{\MU} \in \SP_{\BRHO}(\R^{2d}),
  \quad
  \bar{\sigma} \in \M_+(\R^d).
\]
Suppose $\INT[\BRHO,\bar{\sigma}] < \infty$ so that $\BRHO =
:\bar{r}\LEB^d$ and $\bar{\sigma} = :\BRHO \bar{S}$ for suitable
Borel functions $\bar{r}, \bar{S}$; see Definition~\ref{D:INT}. Assume
further that $\bar{\MU} =: (\ID, \bar{\BV})\#\BRHO$ with
\begin{equation}\label{E:ZEMOM}
  \bar{\BV} \in \L^2(\R^d,\BRHO)
  \quad\text{satisfying}\quad
  \int_{\R^d} \bar{\BV}(x) \,\BRHO(dx) = 0.
\end{equation}
Notice that since the hyperbolic conservation law \eqref{E:PGD} is
invariant under transformations to a moving reference frame, the
assumption \eqref{E:ZEMOM} is not restrictive.

For later use, let us introduce the initial total energy
\begin{equation}
  \bar{\E} := \int_{\R^d} \HA\BRHO|\bar{\BV}|^2
    + \INT[\BRHO,\bar{\sigma}] < \infty.
\label{E:EINI}
\end{equation}

In order to simplify the notation, in this section we will not indicate the
dependence of various quantities on the timestep $\tau>0$, which will
be arbitrary, but fixed for the following construction. Let $s^k :=
k\tau$ for all $k\in\N_0$. We define
\[
  \RHO^0 := \BRHO,
  \quad
  \MU^0 := \bar{\MU},
  \quad
  \sigma^0 := \bar{\sigma}.
\]
Then we proceed recursively: For any $k\in\N_0$ we define
\begin{alignat*}{8}
  \ORT^{k+1}   &:= \ORT_\tau,   & \quad &
  \ORW^{k+1}   &:= \ORW_\tau,   & \quad &
  \ORU^{k+1}   &:= \ORU_\tau,
\\
  \RHO^{k+1}   &:= \RHO_\tau,   & \quad &
  \MU^{k+1}    &:= \MU_\tau,    & \quad &
  \sigma^{k+1} &:= \sigma_\tau,
\end{alignat*}
with $(\ORT_\tau, \ORW_\tau, \ORU_\tau)$ and $(\RHO_\tau, \MU_\tau,
\sigma_\tau)$ taken from Definitions~\ref{D:EM}/\ref{D:NOTAT}, for the
choice
\[
  \RHO := \RHO^k,   
  \quad
  \MU := \MU^k,    
  \quad
  \sigma := \sigma^k.
\]

By induction in $k$, we observe first that $\MU^k$ is monokinetic for
every $k\in\N_0$. For $k=0$ this follows from our assumption on the
initial data. For $k\GS 1$ we refer the reader to
Definition~\ref{D:EM} and Remark~\ref{R:MONOK}, respectively. Thus
\[
  \BU^k \in \L^2(\R^d,\RHO^k)
  \quad\text{such that}\quad
  \MU^k =: (\ID, \BU^k)\#\RHO^k.
\]
is well-defined. Similarly, from Propositions~\ref{P:EB}/\ref{P:EB2}
and \eqref{E:EINI}, we obtain that
\[
  \E[\RHO^k,\BU^k,\sigma^k]
    = \int_{\R^d} \OH \RHO^k|\BU^k|^2 + \INT[\RHO^k, \sigma^k]
      \LS \bar{\E}
\]
for every $k\in\N_0$. Therefore the following maps are all
well-defined as well:
\[
  \RHO^k =: r^k \LEB^d,
  \quad
  \sigma^k =: \RHO^k S^k.
\]

For $\RHO^k$-a.e.\
$x\in\R^d$ and $k\in\N_0$, we now define
\[
  (\BT^{k+1},W^{k+1},U^{k+1})(x) 
    := (\ORT^{k+1},\ORW^{k+1},\ORU^{k+1})\big( x,\BU^k(x) \big),
\]
which are in $\L^2(\R^d, \RHO^k)$. Rewriting
Propositions~\ref{P:EB}/\ref{P:EB2}, we obtain
\begin{align}
  \E[\RHO^{k+1}, \BU^{k+1}, \sigma^{k+1}]
    & + \int_{\R^d} \Big( {\TST\frac{1}{6}} |W^{k+1}-\BU^k|^2
      + \HA |U^{k+1}-W^{k+1}|^2 \Big) \,\RHO^k(dx)
\nonumber\\
  & + \int_{\R^d} \bigg(
        P(r^k,S^k) \, \DISS^2\big( \nabla\BT^{k+1}-\ONE \big) \Big) \,dx 
    + \TRACE\big( \RES^{k+1}(dx) \big) \bigg)
\nonumber\\
  & \vphantom{\int_{\R^d}} \qquad
    = \E[\rho^k, \BU^k, \sigma^k]
  \quad\text{for all $k\in \N_0$.}
\label{E:DISPP}
\end{align}
Here $\RES^{k+1}$ is the residual tensor corresponding to the
minimizer $\BT^{k+1}$.


\subsection{Interpolation in Time}\label{SS:I1}

In the previous section, we introduced approximate solutions of
\eqref{E:FULL2} at discrete times $s^k := k \tau$ for any timestep
$\tau > 0$. Here we want to interpolate in time to define
functions/measures of time and space.

For this, we could use the path of minimal acceleration
\[
  Y_s(x,\xi) := x + s\xi - \bigg( \frac{s^2}{\tau}-\frac{s^3}{3\tau^2}
    \bigg) \frac{3}{2\tau} \Big( (x+\tau\xi)-\ORT_\tau(x,\xi) \Big)
\]
for location $x\in\R^d$, velocity $\xi\in\R^d$, and $s\in[0,\tau]$,
suitably shifted in $s$. This would be the natural choice in view of
the derivation of the work functional, which featured in our
minimization problem. Instead we prefer to apply the convex
inter\-polation that we have already utilized to derive the
displacement convexity of the internal energy and hence the energy
inequality in Proposition~\ref{P:EB2}. One can show that the
differences between both the positions and velocities of the minimal
acceleration paths and the convex interpolations remain bounded and
vanish as $\tau \rightarrow 0$. Let
\[
  \BT_s(x) 
    := x + (s-s^k) \, V^{k+1}(x),
  \quad 
  V^{k+1}(x) 
    := \DST\frac{\BT^{k+1}(x)-x}{\tau}
\]
for $\RHO^k$-a.e.\ $x\in\R^d$ and $s\in[s^k,s^{k+1})$. Notice that
$\BT_s$ is strictly monotone, therefore invertible for any $s\in[s^k,
s^{k+1})$ since $\BT^{k+1}$ is monotone. Moreover, in the cases with
pressure the map $\BT^{k+1}$ is essentially injective; see
Lemma~\ref{L:CONTROL}. We can therefore track the path of each fluid
element starting from a generic position $\BRX \in \R^d$. By composing
the transport maps of successive timesteps, we define the
transport/velocity
\begin{equation}
  X_s := \BT_s \circ \BT^k \circ \cdots \circ \BT^1,
  \quad
  \Xi_s := \begin{cases}
      W^{k+1} \circ \BT^k \circ \cdots \circ \BT^1
        & \text{if $s \in (s^k, s^{k+1})$,}
\\
      \BU^k \circ \BT^k \circ \cdots \circ \BT^1
        & \text{if $s = s^k$,}
    \end{cases}
\label{E:XXI}
\end{equation}
where $k := \lfloor s/\tau \rfloor$ (the largest integer not bigger
than $s/\tau$). Since $\BT^{l+1}$ is defined only $\RHO^l$-a.e., we
may have to discard a $\RHO^l$-null set, for every $l \in \N_0$. The
preimages of these null sets under the preceding transport maps,
however, can be traced back to a $\BRHO$-negligible set, hence $X_s$
and $\Xi_s$ are well-defined $\BRHO$-a.e. The map $s \mapsto
X_s(\BRX)$ is Lipschitz continuous for $\BRHO$-a.e.\ $\BRX \in \R^d$
because $V^{k+1}$ is finite $\RHO^k$-a.e. To simplify the notation,
let $\BBX_s := (X_s, \Xi_s)$ for all $s\GS 0$. We now define
\begin{equation}
  (\RHO_s, \sigma_s) 
    := \BT_s\#(\RHO^k,\sigma^k),
  \quad
  \MU_s 
    := \begin{cases}
      (\BT_s,W^{k+1})\#\RHO^k
        & \text{if $s\in(s^k,s^{k+1})$,}
\\
      \MU^k
        & \text{if $s=s^k$.}
    \end{cases}
\label{E:INTPO}
\end{equation}
Using the transport/velocity \eqref{E:XXI}, we can express $\MU_s$ by
following the characteristic lines back to the initial data. More
precisely, we have $\MU_s = \BBX_s\# \BRHO$ for $s\GS 0$.

It follows from the proof of Proposition~\ref{P:EB2} that in the cases
with pressure
\[
  \INT[\RHO_s,\sigma_s]
    \LS \INT[\BT_s|\RHO^k,\sigma^k]
    \LS \big( 1-\ell^k(s) \big) \; \INT[\RHO^k, \sigma^k]
      + \ell^k(s) \; \INT[\BT^{k+1}|\RHO^k, \sigma^k]
\]
for every $s\in[s^k,s^{k+1}]$ and $k\in\N_0$. Here $\ell^k(s) :=
(s-s^k)/\tau$. Applying this estimate recursively, we conclude that
$\INT[\RHO_s, \sigma_s]$ remains finite for all $s\GS 0$, thus
\[
  \RHO_s =: r_s\LEB^d,
  \quad
  \sigma_s = \RHO_s S_s
  \quad\text{with}\quad
  S_s := S^k\circ \BT_s^{-1}
\]
for $s \in [s^k,s^{k+1})$. The specific entropy $S_s$ is simply
transported along with the flow. The velocity distribution $\MU_s$ is
monokinetic for all $s\GS 0$, which defines
\[
  \BW_s \in \L^2(\R^d,\RHO_s)
  \quad\text{such that}\quad
  \MU_s =: (\ID, \BW_s)\#\RHO_s.
\]
The case $s=s^k$ has been discussed above. For $s\in(s^k,s^{k+1})$ the
claim follows from $\MU^k$ mono\-kinetic and the invertibility of
$\BT_s$. In the transition from $\BU^k$ to $W^{k+1}$ the kinetic
energy may increase; it remains bounded by $\E[\rho^k, \BU^k,
\sigma^k]$. Over the course of the time interval, the total energy
then decreases so that \eqref{E:DISPP} holds. This explains the
additional factor $2$ on the right-hand side of the following energy
bound:
\begin{equation}
  \E[\RHO_s, \BW_s, \sigma_s] 
    = \int_{\R^d} \OH \RHO_s|\BW_s|^2 + \INT[\RHO_s, \sigma_s]
      \LS 2\bar{\E}
  \quad\text{for all $s\GS 0$.}
\label{E:MNPP}
\end{equation}
For any $s \in [s^k,s^{k+1})$, we define the transport velocity
\begin{equation}
  \BV_s := V^{k+1}\circ \BT_s^{-1}
  \quad\text{so that}\quad
  \dot X_s = \BV_s\circ X_s.
\label{E:UTAUMM2}
\end{equation}
Because of \eqref{E:WTQQ}, we have
\begin{equation}
  \int_{\R^d} |\BV_s(x)|^2 \,\RHO_s(dx)
    \LS \frac{2}{3} \int_{\R^d} |W^{k+1}(x)|^2 \,\RHO^k(dx)
     +\frac{1}{3} \int_{\R^d} |\BU^k(x)|^2 \,\RHO^k(dx),
\label{E:ENBB}
\end{equation}
which can be bounded in terms of $\bar{\E}$ for all $s\GS 0$; see
\eqref{E:MNPP}.

The momentum $\BM_s := \RHO_s \BW_s$ has zero mean: We can write
\[
  \int_{\R^d} \BV_s(z) \,\RHO_s(dz)
    = \int_{\R^d} W^{k+1}(x) \,\RHO^k(dx)
    = \int_{\R^d} \BU^k(x) \,\RHO^k(dx)
\]
for all $s\in(s^k,s^{k+1})$; see Remarks~\ref{R:MOMMY}/\ref{R:MOM2} and
\eqref{E:WTQQ}. Similarly, we have
\begin{align*}
  \int_{\R^d} \BV_s(z) \,\RHO_s(dz)
    & = \int_{\R^d} U^{k+1}(x) \,\RHO^k(dx)
      = \int_{\R^d} W^{k+1}(x) \,\RHO^k(dx)
\\
    & = \int_{\R^d} \BU^k(x) \,\RHO^k(dx)
\end{align*}
for $s=s^{k+1}$. We have used that the barycentric projection
preserves the momentum and that $U^{k+1}=W^{k+1}$ in the cases with
pressure. Applying this identity recursively and using assumption
\eqref{E:ZEMOM}, we obtain the result. 


\subsection{Regularity in Time}

In the following, we will use the subscript $\tau$ to indicate
explicitly the dependence of various quantities on the timestep
$\tau>0$.

\begin{lemma}\label{L:UBDE}
For suitable initial data $(\BRHO, \bar{\BV}, \bar{\sigma})$ and
$\tau>0$, consider approximate solutions $(\RHO_\tau, \BV_\tau,
\BW_\tau, \sigma_\tau)$ as defined in Section~\ref{SS:I1}. For any
$T>0$ it holds
\[ 
  \sup_{\tau>0} \|\RHO_\tau\|_{\LIP([0,T]; \SP_2(\R^d))}
    \LS (2\bar{\E})^{1/2}.
\]
The second moments remain finite: for all $s\in[0,T]$ we have
\begin{equation}
  \sup_\tau \bigg( \int_{\R^d} |x|^2 \RHO_{\tau,s}(dx) \bigg)^{1/2}
    \LS \bigg( \int_{\R^d} |x|^2 \BRHO(dx) \bigg)^{1/2}
      + s (2\bar{\E})^{1/2}.
\label{E:MOM2}
\end{equation}
For any sequence $\tau_n \longrightarrow 0$, there exist a subsequence
(not relabeled, for simplicity of notation) and a map $\RHO \in
\LIP([0,T]; \SP_2(\R^d))$ such that
\begin{equation}
	\RHO_{\tau_n,s} \WEAK \RHO_s
	\quad\text{narrowly as $n\rightarrow\infty$, for all $s\in[0,T]$.}
\label{E:WEKMM}
\end{equation}

An analogous statement holds for the entropy density $\sigma_\tau$.
Moreover, for the limit map $\sigma \in \LIP([0,T]; \M_\ENT(\R^d))$ we
can write $\sigma_s =: \RHO_s S_s$ with
\[
	S_s \in \L_+^\infty(\R^d, \RHO_s)
	\quad\text{for all $s\in[0,T].$}
\]
\end{lemma}

\begin{proof}
We divide the proof into three steps.

\medskip

\textbf{Step~1.} Consider first $s_1<s_2$ with $s_1, s_2 \in
[s_\tau^k, s_\tau^{k+1})$ for some $k \in \N_0$. Since $\BT_{\tau,s}
(x) = x + (s-s_\tau^k) V_\tau^{k+1}(x)$ for $\RHO_\tau^k$-a.e.\
$x\in\R^d$ and $s\in[s_\tau^k, s_\tau^{k+1})$, we get
\begin{align}
  \WAS_2(\RHO_{\tau,s_2}, \RHO_{\tau,s_1})^2 
    &\LS \int_{\R^d} |\BT_{\tau,s_2}(x)-\BT_{\tau,s_1}(x)|^2 
      \,\RHO_\tau^k(dx)
\nonumber\\
    &= (s_2-s_1)^2 \int_{\R^d} |V_\tau^{k+1}(x)|^2 
      \,\RHO_\tau^k(dx).
\label{E:WAEST}
\end{align}
For every $s \in (t_\tau^k, t_\tau^{k+1})$, we have
\[
  \int_{\R^d} |V_\tau^{k+1}(x)|^2 \,\RHO_\tau^k(dx)
    = \int_{\R^d} |\BV_{\tau,s}(x)|^2  \,\RHO_{\tau,s}(dx)
\]
(see \eqref{E:INTPO}/\eqref{E:UTAUMM2}), which is bounded uniformly in
$\tau,k$ because of \eqref{E:ENBB} and \eqref{E:MNPP}. The estimate
\eqref{E:WAEST} remains true also for $s_2 = s_\tau^{k+1}$, by
continuity.

\medskip

\textbf{Step~2.} Consider now $0\LS s_1<s_2$ with the property that
there exists at least one $k\in\N$ with $s_1\LS s_\tau^k<s_2$. We use
the triangle inequality to estimate
\[
  \WAS_2(\RHO_{\tau,s_2}, \RHO_{\tau,s_1})
    \LS \WAS_2(\RHO_{\tau,s_2}, \RHO_\tau^{k_2})
      + \sum_{k=k_1+1}^{k_2-1} \WAS_2(\RHO_\tau^{k+1}, \RHO_\tau^k)
      + \WAS_2(\RHO_\tau^{k_1+1}, \RHO_{\tau,s_1}),
\]
where $k_i := \lfloor s_i/\tau \rfloor$ for $i=1..2$. For each term,
we can now apply the estimate from Step~1. Summing up all
contributions, we obtain the inequality
\[
  \WAS_2(\RHO_{\tau,s_2}, \RHO_{\tau,s_1})
    \LS |s_2-s_1| (2\bar{\E})^{1/2}
  \quad\text{for all $0\LS s_1<s_2$.}
\]
To control the second moments, we write
\begin{align*}
  & \bigg( \int_{\R^d} |z|^2 \,\RHO_{\tau,s_2}(dz) \bigg)^{1/2}
    = \bigg( \int_{\R^{2d}} |z|^2 \,\GAMMA(dx,dz) \bigg)^{1/2}
\\
  &\qquad
    \LS \bigg( \int_{\R^{2d}} |x|^2 \,\GAMMA(dx,dz) \bigg)^{1/2}
      + \bigg( \int_{\R^{2d}} |z-x|^2 \,\GAMMA(dx,dz) \bigg)^{1/2}
\\
  &\qquad
    = \bigg( \int_{\R^d} |x|^2 \,\RHO_{\tau,s_1}(dx) \bigg)^{1/2}
      + \WAS_2(\RHO_{\tau,s_2}, \RHO_{\tau,s_1}),
\end{align*}
with $\GAMMA \in \SP_2(\R^{2d})$ an optimal transport plan connecting
$\RHO_{\tau,s_1}$ and $\RHO_{\tau,s_2}$.

The uniform bound \eqref{E:MOM2} implies that the family
$\{\RHO_{\tau,s}\}_\tau$ is tight, thus precompact with respect to
narrow convergence, for any $s\in[0,T]$. We can then apply
Arzel\`{a}-Ascoli theorem to conclude; see Proposition~3.3.1 in
\cite{AmbrosioGigliSavare2008}, for example.

\medskip

\textbf{Step~3.} The statement for $\sigma_\tau$ follows analogously.
Note that the specific entropy $S_{\tau,s}$ is simply transported
along the flow and hence bounded in $\L_+^\infty(\R^d,
\RHO_{\tau,s})$. This implies, in particular, that $\sigma_s$ must be
absolutely continuous with respect to $\RHO_s$.
\end{proof}

\begin{lemma}\label{L:UBDE2}
For suitable initial data $(\BRHO, \bar{\BV}, \bar{\sigma})$ and
$\tau>0$, consider approximate solutions $(\RHO_\tau, \BV_\tau,
\BW_\tau, \sigma_\tau)$ as defined in Section~\ref{SS:I1}. Let 
$T>0$ be given. For any sequence $\tau_n \longrightarrow 0$, there exist a
subsequence (not relabeled, for simplicity of notation) and a map $\BM
\in \LIP([0,T]; \MK(\R^d; \R^d))$ with the property that
\begin{equation}
	\|\BM_{\tau_n,s}-\BM_{s}\|_{\MK(\R^d)} \longrightarrow 0
	\quad\text{as $n\rightarrow \infty$, for a.e.\ $s\in[0,T]$.}
\label{E:CONVO2}
\end{equation}
Here $\BM_{\tau,s} := \RHO_{\tau,s} \BW_{\tau,s}$ for all $\tau,s$. If
the subsequence $\tau_n \longrightarrow 0$ is such that statement
\eqref{E:WEKMM} of Lemma~\ref{L:UBDE} holds as well, then we have, for
a.e.\ $s \in[0,T]$, that

\begin{equation}
	\BM_s =: \RHO_s \BV_s
	\quad\text{with}\quad
	\BV_s \in \L^2(\R^d,\RHO_s).
\label{E:ABSMOM}
\end{equation}
\end{lemma}

\begin{proof}
We divide the proof into three steps.

\medskip

\textbf{Step~1.} Consider first $s_1<s_2$ with $s_1, s_2 \in
[s_\tau^k, s_\tau^{k+1})$ for some $k \in \N_0$. Since $\BT_{\tau,s}
(x) = x + (s-s_\tau^k) V_\tau^{k+1}(x)$ for $\RHO_\tau^k$-a.e.\
$x\in\R^d$ and $s\in[s_\tau^k, s_\tau^{k+1})$, we get
\begin{align*}
	& \int_{\R^d} \zeta(z) \cdot 
		\Big( \BM_{\tau,s_2}(dz)-\BM_{\tau,s_1}(dz) \Big)
\\
	& \qquad
		= \int_{\R^d} \Big( \zeta\big( \BT_{\tau,s_2}(x) \big)
			-\zeta\big( \BT_{\tau,s_1} \big) \Big) \cdot W_\tau^{k+1}(x)
				\,\RHO_\tau^k(dx)
\\
	& \qquad
		\LS |s_2-s_1| \bigg( \int_{\R^d} |V_\tau^{k+1}(x)|^2
				\,\RHO_\tau^k(dx) \bigg)^{1/2}
			\bigg( \int_{\R^d} |W_\tau^{k+1}(x)|^2
				\,\RHO_\tau^k(dx) \bigg)^{1/2},
\end{align*}
for any $\zeta \in \BL_1(\R^d; \R^d)$. We have used that the Lipschitz
constant of $\zeta$ is bounded by $1$; see Definition~\ref{D:KANTOO}.
Consider now $\varphi \in \C^1_c(\R^d)$ with $\eta(\R^d) \subset
[0,1]$ and
\[
	\text{$\varphi(x) = 1$ if $|x| \LS 1$,}
	\quad
	\text{$\varphi(x) = 0$ if $|x| \GS 2$.}
\]
For any $R,\EPS>0$ we define the rescaled cut-off function/mollifier
\[
	\eta_R(x) := \varphi(x/R),
	\quad
	\varphi_\EPS(x) := \EPS^{-d} \varphi(x/\EPS)
\]
for all $x\in\R^d$. Then we can decompose
\begin{align}
	& \int_{\R^d} \zeta(x) \cdot \Big( W_\tau^{k+1}(x)-\BU_\tau^k(x) \Big) 
		\,\RHO_\tau^k(dx)
\nonumber\\
	& \qquad
		= \int_{\R^d} \big( 1-\eta_R(x) \big) \zeta(x) \cdot 
			\Big( W_\tau^{k+1}(x)-\BU_\tau^k(x) \Big) \,\RHO_\tau^k(dx)
\label{E:OPKL}\\
	& \qquad\quad
		+ \int_{\R^d} \Big( \eta_R(x)\zeta(x)-\zeta_{R,\EPS}(x) \Big) 
			\cdot \Big( W_\tau^{k+1}(x)-\BU_\tau^k(x) \Big) \,\RHO_\tau^k(dx)
\nonumber\\
	& \qquad\quad
		+ \int_{\R^d} \zeta_{R,\EPS}(x)
			\cdot \Big( W_\tau^{k+1}(x)-\BU_\tau^k(x) \Big) \,\RHO_\tau^k(dx)
\nonumber
\end{align}
with $\zeta_{R,\EPS} := (\eta_R\zeta) \star \varphi_\EPS$. The first
term on the right-hand side of \eqref{E:OPKL} satisfies
\begin{align}
	& \bigg| \int_{\R^d} \big( 1-\eta_R(x) \big) \zeta(x) \cdot 
		\Big( W_\tau^{k+1}(x)-\BU_\tau^k(x) \Big) \,\RHO_\tau^k(dx) \bigg|
\label{E:AL1}\\
	& \qquad
		\LS \frac{C}{R} \bigg( \int_{\R^d} |x|^2 \,\RHO_\tau^k(dx) \bigg)^{1/2}
\nonumber\\
	& \qquad\qquad \times \Bigg\{ 
		\bigg( \int_{\R^d} |W_\tau^{k+1}|^2 \,\RHO_\tau^k(dx) \bigg)^{1/2}
			+ \bigg( \int_{\R^d} |\BU_\tau^k|^2 \,\RHO_\tau^k(dx) \bigg)^{1/2}
				\Bigg\}
\nonumber
\end{align}
with constant $C$ depending on the $\sup$-norm of $\zeta$. Recall that
the second moment of $\RHO_\tau^k$ is bounded uniformly in $\tau, k$,
as shown in Lemma~\ref{L:UBDE}. Moreover, the terms in curly brackets
are uniformly bounded because of \eqref{E:ENBB} and \eqref{E:MNPP}. We
conclude that \eqref{E:AL1} vanishes as $R\rightarrow\infty$,
uniformly in $\tau, k$. We observe that
\[
	\| \eta_R\zeta-\zeta_{R,\EPS} \|_{\L^\infty(\R^d)}
		\longrightarrow 0
	\quad\text{as $\EPS \rightarrow 0$,}
\]
by standard properties of mollication. Notice that $\eta_R\zeta$ has
compact support in $\R^d$. Arguing as above, we find that the second
term in \eqref{E:OPKL} also converges to zero, uniformly in $\tau, k$
and $R$, as $\EPS \rightarrow 0$. Finally, we have the identity
\begin{align}
	& \int_{\R^d} \zeta_{R,\EPS}(x)
		\cdot \Big( W_\tau^{k+1}(x)-\BU_\tau^k(x) \Big) \,\RHO_\tau^k(dx)
\label{E:AL2}\\
	& \qquad 
      = \tau \int_{\R^d} \nabla\zeta_{R,\EPS}(x) : \Big( \PK_\tau^k(x) \,dx 
        + \RES_\tau^{k+1}(dx) \Big),
\nonumber
\end{align}
with $\RES_\tau^{k+1}$ is the residual tensor corresponding to
$\BT_\tau^{k+1}$ and pressure term
\[
	\PK_\tau^k(dx) := P\big( r_\tau^k(x),S_\tau^k(x) \big) 
		\; \det\big( \nabla\BT^{k+1,\S}_\tau(x)
    \big)^{1-\gamma} \big( \nabla\BT^{k+1,\S}_\tau(x) \big)^{-1} \,dx.
\]
We use \eqref{E:SIGID2} (with $x = \BT_\tau^{k+1}(x)-\tau
V_\tau^{k+1}(x)$) and \eqref{E:HGAA2} to get
\begin{align*}
	& \int_{\R^d} \Big( \TRACE\big( \PK_\tau^k(x) \big) \,dx 
  		+ \TRACE\big( \RES_\tau^{k+1}(dx) \big) \Big)
    		\LS d(\gamma-1) \; \INT[\BT_\tau^{k+1}|\RHO_\tau^k,\sigma_\tau^k]
\\
	&\qquad 
    	+ \bigg( \int_{\R^d} |W_\tau^{k+1}(x)-\BU_\tau^k(x)|^2 
      		\,\RHO_n^k(dx) \bigg)^{1/2} \bigg( \int_{\R^d} 
        		|V_\tau^{k+1}(x)|^2 \,\RHO_\tau^k(dx) \bigg)^{1/2},
\end{align*}
which can be bounded in terms of $\bar{\E}$, thus uniformly in $\tau,
k$, because of the energy balance \eqref{E:DISPP}. The $\sup$-norm of
$\nabla\zeta_{R,\EPS}$ in \eqref{E:AL2} can be bounded uniformly in
$R\GS 1$ and $\EPS>0$, by choice of $\eta_R$ and because $\zeta \in
\BL_1(\R^d; \R^d)$.

Collecting all terms and letting first $R \rightarrow \infty$, then
$\EPS \rightarrow 0$, we conclude that
\begin{equation}
	\|\BM_{\tau,s_2}-\BM_{\tau,s_1}\|_{\MK(\R^d)} 
		\LS C \big( |s_2-s_1|+\tau \big)
	\quad\text{for all $s_1, s_2\in[0,T]$,}
\label{E:LOCL}
\end{equation}
for some constant $C$ that can bounded in terms of $\bar{\E}$, hence
uniformly in $\tau$. The additional $\tau$ on the right-hand side of
\eqref{E:LOCL} occurs since the jumps in $\BM_\tau$ at discrete times
$t_\tau^k$ are always of order $\tau$, not fractions of timesteps.

\medskip

\textbf{Step~2.} Using \eqref{E:LOCL}, we conclude that for any choice
of times $t_0 \LS t_1 \LS \ldots \LS t_m$ contained in $[0,T]$, we can
bound the variation uniformly in $\tau$ as
\[
	\sum_{i=1}^m \|\BM_{\tau,t_{i-1}}-\BM_{\tau,t_i}\|_{\MK(\R^d)}
		\LS C\big( (t_m-t_0) + \tau \big).
\]
Therefore the map $t \mapsto \BM_{\tau,t} \in \MK(\R^d; \R^d)$ is of
uniform bounded variation.

By Cauchy-Schwarz inequality, for each $s\GS 0$ we can estimate
\[
	\int_{\R^d} |\BM_{\tau,s}(dx)| 
		\LS \bigg( \int_{\R^d} |\BW_{\tau,s}(x)|^2 \,\RHO_{\tau,s}(dx) 
			\bigg)^{1/2},
\]
which is uniformly bounded because of \eqref{E:MNPP}. The
Monge-Kantorovich norm can be controled by the total variation, which
implies that $\{\BM_{\tau,s}\}_\tau$ is precompact in the dual space
$\BL(\R^d; \R^d)^*$, for all $s\GS 0$. On the other hand, a sequence
of $\R^d$-valued measures with uniformly bounded total variation,
converging in Monge-Kantorovich norm, has as limit again a measure;
see Theorem~3.2 in \cite{HilleSzarekWormZiemlanska2017}. We now apply
Helly's theorem in the form of Theorem~2.3 in
\cite{FleischerPorter2001} to obtain \eqref{E:CONVO2}.

\medskip

\textbf{Step~3.} The limit map $s \mapsto \BM_s \in \MK(\R^d; \R^d)$
satisfies the inequality
\[
	\|\BM_{s_2}-\BM_{s_1}\|_{\MK(\R^d)} 
		\LS C |s_2-s_1|
	\quad\text{for all $s_1, s_2\in[0,T]$}
\]
(recall \eqref{E:CONVO2} with $\tau\rightarrow 0$) and is therefore
Lipschitz continuous, as claimed. Moreover, if $\RHO$ and $\BM$ are
obtained from the same sequence $\tau_n\rightarrow 0$, then by lower
semicontinuity of the kinetic energy functional with respect to narrow
convergence of density and momentum, we conclude that $\BM_s$ must be
absolutely continuous with respect to $\RHO_t$, which proves the
decomposition \eqref{E:ABSMOM} for all $s\in[0,T]$.
\end{proof}


\subsection{Compactification}

We will need compactifications of the state space.

\begin{lemma}\label{L:COMPACT}
Let $X$ be a completely regular space and $\FS \subset \C(X,I)$, with
$I := [0,1]$, a set of continuous functions that separates points and
closed sets: for every closed set $E \subset X$ and every $x \in
X\setminus E$, there exists $\Phi \in \FS$ with $\Phi(u) \not\in
\overline{\Phi(E)}$. Then there exist a compact Hausdorff space
$\BFX$ and an embedding $e\colon X\longrightarrow\BFX$ such that
$e(X)$ is dense in $\BFX$. Moreover, for any $\Phi \in \FS$, the
composition $\Phi\circ e^{-1}\colon e(X) \longrightarrow \R$ has a
continuous extension to all of $\BFX$. If $\FS$ is countable, then
$\BFX$ is metrizable.
\end{lemma}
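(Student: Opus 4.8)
The plan is to use the classical embedding into a Tychonoff cube. First I would form the product space $I^\FS = \prod_{\Phi\in\FS} I$ with the product topology, and define the evaluation map $e\colon X\longrightarrow I^\FS$ by $e(x):=(\Phi(x))_{\Phi\in\FS}$; write $\pi_\Phi\colon I^\FS\longrightarrow I$ for the coordinate projections. By Tychonoff's theorem $I^\FS$ is compact and Hausdorff, and $e$ is continuous because $\pi_\Phi\circ e=\Phi$ is continuous for each $\Phi\in\FS$ and the product topology is initial with respect to the $\pi_\Phi$. I would then set $\BFX:=\overline{e(X)}$, the closure of $e(X)$ in $I^\FS$: as a closed subspace of a compact Hausdorff space it is itself compact Hausdorff, and $e(X)$ is dense in $\BFX$ by construction. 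For each $\Phi\in\FS$ the restriction $\pi_\Phi|_{\BFX}$ is continuous and satisfies $\pi_\Phi|_{e(X)}\circ e=\Phi$, so $\pi_\Phi|_{\BFX}$ is exactly the desired continuous extension of $\Phi\circ e^{-1}$ to $\BFX$.

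The heart of the argument is to check that $e$ is a topological embedding onto $e(X)$. For injectivity: given $x\neq y$, the set $\{y\}$ is closed in $X$, so the hypothesis applied with $E:=\{y\}$ produces $\Phi\in\FS$ with $\Phi(x)\notin\overline{\Phi(\{y\})}=\{\Phi(y)\}$, hence $e(x)\neq e(y)$. For openness onto the image: fix an open set $U\subset X$ and a point $x\in U$; applying the separation hypothesis to the closed set $E:=X\setminus U$ and the point $x\notin E$ yields $\Phi\in\FS$ with $\Phi(x)\notin\overline{\Phi(E)}$. Then $V:=I\setminus\overline{\Phi(E)}$ is open in $I$ and contains $\Phi(x)$, so $W:=\pi_\Phi^{-1}(V)\cap e(X)$ is an open neighbourhood of $e(x)$ in $e(X)$, and $W\subset e(U)$ since $e(z)\in W$ forces $\Phi(z)\notin\Phi(E)$, i.e.\ $z\notin E$, i.e.\ $z\in U$. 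Thus $e(U)$ is open in $e(X)$; together with continuity and injectivity this shows $e\colon X\longrightarrow e(X)$ is a homeomorphism, hence $e\colon X\longrightarrow\BFX$ is an embedding with dense image.

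Finally, if $\FS$ is countable, enumerate $\FS=\{\Phi_n\}_{n\in\N}$; then $I^\FS$ is a countable product of copies of the compact metric space $I$, so it is metrizable, e.g.\ by $d\big((a_\Phi),(b_\Phi)\big):=\sum_{n\in\N} 2^{-n}\,|a_{\Phi_n}-b_{\Phi_n}|$, which induces the product topology, and therefore the subspace $\BFX$ is metrizable as well. I do not expect a genuine obstacle here — this is the standard diagonal/embedding lemma — and the only step requiring care is the verification that $e$ is an embedding, which is precisely where the ``separates points and closed sets'' hypothesis is used (in both the injectivity and the openness of $e$); everything else is routine point-set topology.
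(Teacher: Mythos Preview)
Your proof is correct and follows essentially the same route as the paper: both embed $X$ into the Tychonoff cube $I^\FS$ via the evaluation map, take $\BFX$ to be the closure of the image, and use the coordinate projections to extend each $\Phi\in\FS$. The paper simply cites Folland (Propositions~4.53 and~4.56) for the embedding and extension steps, whereas you have written out the injectivity and openness arguments explicitly.
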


\begin{proof}
Consider the product space $I^\FS$, which is compact in the product
topology, by Tykhonov's theorem. Let the map $e\colon X
\longrightarrow I^\FS$ be defined by
\[
  \pi_\Phi\big(e(u)\big) := \Phi(u)
  \quad\text{for all $u \in X$ and $\Phi\in\FS$,}
\]
where $\pi_\Phi\colon I^\FS \longrightarrow I$ denotes the projection
onto the $\Phi$-component. Since $\FS$ separates points and closed
sets, the map $e$ is in fact an embedding (a homeomorphism between $X$
and its image, with $e(X)$ given the relative topology of $I^\FS$). We
refer the reader to Proposition~4.53 of \cite{Folland1999} for a
proof. We now define $\BFX$ to be the closure of $e(X)$ in $I^\FS$.
Being a closed subset of a compact Hausdorff space, the set $\BFX$ is
itself compact and Hausdorff. The set $e(X)$ is dense in $\BFX$, by
construction. We denote by $\ALG$ the smallest closed subalgebra in
$\CB(X)$ containing $\FS$. For any $\Phi\in\ALG$, there exists a
continuous extension of $\Phi\circ e^{-1}$ to all of $\BFX$; see
Proposition~4.56 in \cite{Folland1999}. If $\FS$ is countable, then
the set $I^\FS$ is metrizable. Therefore, since every subset of a
metrizable space is metrizable, we obtain that $\BFX$ is metrizable.
We refer the reader to Section~4.8 of \cite{Folland1999} for
additional information on compactifications.
\end{proof}

For simplicity of notation, we will identify $X$ with its image
$e(X)$. Then every function $\Phi\in\ALG$ can be extended as a
continuous function on $\BFX$. Notice that such an extension is
uniquely determined because $e(X)$ is dense in $\BFX$. We denote by
$\C(\BFX)$ the space of all extensions obtained this way, and we will
use the same symbols to indicate functions in $\ALG$ and their
extensions in $\C(\BFX)$.


\subsection{Young Measures}\label{SS:YM}

We will use Young measures to capture the behavior of weakly
convergent sequences of approximate solutions of the compressible
Euler equations \eqref{E:FULL}. Recall that we assumed the specific
entropy $S$ to be non-negative and bounded at initial time. Since $S$
is simply transported along with the flow, the same is true for all
times, thus $S \in [0,S_{\max}]$ for some $S_{\max} \GS 0$. The state
space for density, velocity, and specific entropy $(\RHO, \BV, S)$ is
therefore given by
\begin{equation}
	X := [0,\infty) \times \R^d \times [0,S_{\max}].
\label{E:STATE}
\end{equation}
Equipped with the usual topology, it is a completely regular space.

Our goal is to define a suitable compactification of the state space.
Equivalently, we must specify the set of continuous and bounded
functions on $X$, for which we need to be able to describe weak limits
of compositions with approximate solutions. Let us first consider a
function that represents the total energy and mass. In slight abuse of
notation, we use the same symbols $(\RHO, \BV, S)$ for elements in
$X$. Let
\begin{equation}
  h(\RHO, \BV, S) 
    := \RHO + \Big( \HA \RHO |\BV|^2 + U(\RHO,S) \Big);
\label{E:FEN}
\end{equation}
see Definition~\ref{D:INT}. We now introduce the set
\[
  \W(X) := \left\{ 
    \begin{aligned}
      \varphi + \Bigg( 
        c_\RHO \cdot \begin{pmatrix} \RHO \\ \RHO\BV \end{pmatrix}
        + c_\sigma \cdot \begin{pmatrix} \RHO S \\ \RHO\BV S \end{pmatrix}
        + c_K : \RHO\BV\otimes\BV
        + c_U U(\RHO,S) 
      \Bigg) / h \colon \;\; &
\\
      \text{ $\varphi \in \C_0(X)$, 
        $c_\RHO,c_\sigma \in \R^{d+1}$,
        $c_K \in \SYM{d}$,
        $c_U \in \R$} \; &
    \end{aligned} \right\}.
\]
One can check that the functions in $\W(X)$ are continuous and
bounded. For this, it is convenient to introduce a parameterization of
the state space $X$, similarly to the construction in the the proof of
Lemma~\ref{L:COMML}. One possible choice is
\[
  \RHO(\alpha) := \tan(\alpha),
  \quad
  \BV(u) := \frac{u}{\sqrt{1-|u|^2}}
\]
for $\alpha \in [0,\pi/2)$ and $u \in B := \{ u\in\R^d \colon |u| < 1
\}$. For any $\Phi \in \W(X)$, the map
\[
  (\alpha, u, S) \mapsto \Phi\big( \RHO(\alpha), \BV(u), S) 
\]
can be extended to a bounded function on the compact set $[0,\pi/2]
\times \bar{B} \times [0,S_{\max}]$. This extension may be
discontinuous at parts of the boundary. One can also check that the
set $\W(X)$ is a closed \emph{separable} vector space with respect to the
$\sup$-norm. To this end, notice that $\W(X)$ is a finite-dimensional
augmentation of the vector space $\C_0(X)$, which is known to be
separable; see also Lemma~2 in \cite{BlountKouritzin2010}.

There exists a countable set $\FS$ that is dense in
$\W(X)\cap\C(X,I)$, $I=[0,1]$, and separates points and closed sets.
Indeed consider any closed set $E\subset X$ and any point $u \in
X\setminus E$. One can find a $\Psi \in \C_0(X,I)$ with $\Psi(u)=1$
and $\Psi|E\equiv 0$, and since $\FS$ is dense there exists
$\Phi\in\FS$ with $\|\Phi-\Psi\|_{\C(X)} < \EPS$ for some
$0<\EPS<1/2$. Applying Lemma~\ref{L:COMPACT}, we obtain a
compactification $\BFX$ (a compact, metrizable Hausdorff space) of
\eqref{E:STATE}. The closed subalgebra $\ALG$ in Lemma~\ref{L:COMPACT}
contains the set $\W(X)$.

Recall that $\dot\R^d$ is the one-point compactification of $\R^d$;
see Section~\ref{SS:PC}. Then
\[
  \BBE := \L^1\big( [0,\infty),\C(\dot\R^d\times\BFX) \big)
\]
is the space of (equivalence classes of) measurable maps $\PHI \colon
[0,\infty) \longrightarrow \C(\dot\R^d\times\BFX)$ (i.e., pointwise
limits of sequences of simple functions) with finite norm:
\[
  \|\PHI\|_\BBE 
    := \int_0^\infty \|\PHI(s,\cdot)\|_{\C(\dot\R^d\times\BFX)} \,ds 
    < \infty.
\]
Notice that $\BFX$ is compact and metrizable, hence separable. One can
then show that $\BBE$ is a separable Banach space. Its topological
dual is given by
\[
  \BBE^* := \L^\infty_w\big( [0,\infty), \M_+(\dot\R^d\times\BFX) \big),
\]
the space of (equivalence classes of) $\nu \colon [0,\infty)
\longrightarrow \M_+(\dot\R^d\times\BFX)$ with
\begin{gather*}
  \text{$s \mapsto \int_{\dot\R^d\times\BFX} \phi(x, \FX) \,\nu_s(dx, d\FX)$ 
    measurable for all $\phi \in \C(\dot\R^d\times\BFX)$, and}
\\
  \|\nu\|_{\BBE^*} 
    := \ESUP_{s\in[0,\infty)} \|\nu_s\|_{\M(\dot\R^d\times\BFX)} 
    < \infty
\end{gather*}
(we write $s \mapsto \nu_s$ and $\FX := (\RHO, \BV, S) \in \BFX$). The
duality is induced by the pairing
\begin{equation}
  \langle \nu,\PHI\rangle := \int_0^\infty \int_{\dot\R^d\times\BFX} 
    \PHI(s, x, \FX) \,\nu_s(dx, d\FX) \,ds
\label{E:DUPA}
\end{equation}
for $\PHI \in \BBE$ and $\nu \in \BBE^*$. Bounded closed balls in
$\BBE^*$ endowed with the weak* topology are metrizable and
(sequentially) compact, by Banach-Alaoglu theorem.

For any timestep $\tau>0$, we now define $\nu^1_\tau \in \BBE^*$ by
\begin{align}
  & \int_{\dot\R^d\times\BFX} \phi(x,\FX) \,\nu^1_{\tau,s}(dx,d\FX)
\label{E:UPS}\\
  & \qquad
    := \int_{\R^d} \phi\big( x,r_{\tau,s}(x), \BW_{\tau,s}(x), 
      S_{\tau,s}(x) \big) \, h\big( r_{\tau,s}(x), \BW_{\tau,s}(x), 
      S_{\tau,s}(x) \big) \,dx
\nonumber
\end{align}
for all $\phi \in \C(\dot\R^d\times\BFX)$ and $s\GS 0$; see
\eqref{E:FEN}. As usual, we have
\[
  \RHO_{\tau,s} =: r_{\tau,s} \LEB^d,
  \quad
  \sigma_{\tau,s} =: \RHO_{\tau,s} S_{\tau,s},
\]
with approximate solutions $(\RHO_\tau, \BV_\tau, \sigma_\tau)$
constructed in Section~\ref{SS:I1}. Because of \eqref{E:MNPP}, the
family $\{ \nu^1_\tau \}_{\tau>0}$ is uniformly bounded in $\BBE^*$:
We have
\[
  \|\nu^1_{\tau,s}\|_{\M(\dot\R^d\times\BFX)} 
    = \int_{\dot\R^d\times\BFX} \nu^1_{\tau,s}(dx, d\FX)
    = 1 + \E[\RHO_{\tau,s}, \BW_{\tau,s}, \sigma_{\tau,s}]
\]
for $\tau>0$ and $s\GS 0$. Notice that $\nu^1_{\tau,s}$ is
non-negative. From this, we get the relative (sequential) compactness
of $\{ \nu^1_\tau \}_{\tau>0}$ with respect to the weak* topology.

\medskip

In summary, we have the following result:

\begin{proposition}[Young Measure]\label{P:YUM}
Consider a sequence $\tau_n \longrightarrow 0$ for $n \rightarrow
\infty$ and let $\nu^1_n := \nu^1_{\tau_n} \in \BBE^*$ be defined by
\eqref{E:UPS}. Then there exist $\nu^1 \in \BBE^*$ and a subsequence
(still denoted by $\{\nu^1_n\}_n$ for simplicity) with the property
that
\begin{align*}
  & \lim_{n\rightarrow\infty} \int_0^\infty \int_{\R^d} 
    \PHI\big( s,x,r_{n,s}(x), \BW_{n,s}(x), S_{n,s}(x) \big) \, 
      h\big( r_{n,s}(x), \BW_{n,s}(x), S_{n,s}(x) \big) \,dx \,ds
\\
  & \qquad
    = \int_0^\infty \int_{\dot\R^d\times\BFX} \PHI(s, x, \FX) 
      \,\nu^1_s(dx, d\FX) \,ds
  \quad\text{for all $\PHI \in \BBE$.}
\end{align*}
To simplify the notation, we have used the subscript $n$ instead of
$\tau_n$. We write
\begin{equation}
  \lbrack f(\RHO,\BV,S) \rbrack_s(dx) 
    := \int_{\BFX} f(\FX)/h(\FX) \,\nu^1_s(dx, d\FX)
  \quad\text{for a.e.\ $s\GS 0$}
\label{E:BRACK}
\end{equation}
and for any $f \colon X \longrightarrow \R$ such that $f/h \in \ALG$;
see Lemma~\ref{L:COMPACT}.
\end{proposition}

\begin{remark}\label{R:YUM}
In the same way, we define a second Young measure $\nu^2$, using
piecewise constant instead of piecewise linear interpolation in time.
More precisely, for any $\tau>0$, using the same notation as
above, we define $\nu^2_\tau \in \BBE^*$ by
\begin{align*}
	& \int_{\dot\R^d\times\BFX} \phi(x,\FX) \,\nu^2_{\tau,s}(dx,d\FX)
\\
	& \qquad
		:= \int_{\R^d} \phi\big( x,r_{\tau,s_\tau}(x), \BW_{\tau,s_\tau}(x), 
      		S_{\tau,s_\tau}(x) \big) \, h\big( r_{\tau,s_\tau}(x), 
      			\BW_{\tau,s_\tau}(x), S_{\tau,s_\tau}(x) \big) \,dx
\end{align*}
for all $\phi \in \C(\dot\R^d\times\BFX)$ and $s\GS 0$, where $s_\tau
:= \lfloor s/\tau \rfloor \tau$ denotes the largest integer multiple
of $\tau$ less than or equal to $s$. Passing to the limit along a
suitable sequence $\tau_n \longrightarrow 0$, we obtain the Young
measure $\nu^2 \in \BBE^*$, which again can be used to capture
concentrations/oscillations in weakly convergent sequences of
approximate solutions of \eqref{E:FULL}; see Proposition~\ref{P:YUM}.
Similar to \eqref{E:UPS}, we use double brackets $\llbracket \cdot
\rrbracket$ to indicate the pairing of $\nu^2$ with suitable functions
of $(\RHO,\BV,S)$.
\end{remark}



\subsection{Global Existence}

In this section, we establish the global existence of measure-valued
solutions to \eqref{E:FULL}, using the results of Sections~\ref{SS:I1}
and \ref{SS:YM}.

\begin{proof}[Proof of Theorem~\ref{T:GLOBAL}]
We consider a sequence of timesteps $\tau_n \longrightarrow 0$ as
$n\rightarrow \infty$ with the property that the pointwise in time
convergence in Lemmas~\ref{L:UBDE}/\ref{L:UBDE2} holds and that the
approximate Young measures in Proposition~\ref{P:YUM} and
Remark~\ref{R:YUM} converge to $\nu^1$ and $\nu^2$ along
$\{\tau_n\}_n$. We will use the notation introduced in
Sections~\ref{SS:I1} and \ref{SS:YM}, but with subscript $n$ in place
of $\tau_n$, for simplicity.

By construction, it holds that
\[
  \left.\begin{array}{r}
    \partial_s \RHO_n
      +\nabla\cdot(\RHO_n\BV_n) = 0
\\[1ex]
    \partial_s \sigma_n
      + \nabla\cdot(\sigma_n\BV_n) = 0
  \end{array}\right\}
  \quad\text{in $\Big( \C^1_c\big( [0,T) \big)\otimes\AF \Big)^*$.}
\]
Recall that density $\RHO_{n,s}$ and entropy $\sigma_{n,s}$ have
finite second moments for all $s\GS 0$. Passing to the limit
$n\rightarrow \infty$, we get the first and third equations in
\eqref{E:FULL2}.

It remains to prove the momentum equation. We observe that
\begin{align}
	& -\int_{\R^d} \eta(0) \zeta(x) \cdot \bar{\BV}(x) \,\bar\RHO(dx)
		= \int_0^T \frac{d}{ds} \bigg( \int_{\R^d} \eta(s) \zeta(x) 
			\cdot \BW_{n,s}(x) \, \RHO_{n,s}(dx) \bigg) \, ds 
\nonumber\\
	& \qquad
		= \sum_{k\in\N_0} \int_{s_n^k}^{s_n^{k+1}} \int_{\R^d}
			\eta'(s) \zeta(z) \cdot \BW_{n,s}(z) \,\RHO_{n,s}(dz) \,ds
\label{E:DCPO}\\
	& \qquad\quad
   		+ \sum_{k\in\N_0} \int_{s_n^k}^{s_n^{k+1}} \int_{\R^d} 
      		\eta(s) \nabla\zeta(z) : \Big( \BW_{n,s}(z) \otimes \BV_{n,s}(z) 
      			\Big) \,\RHO_{n,s}(dz) \,ds
\nonumber\\
  & \qquad\quad
    + \sum_{k\in\N_0} \tau_n
      \int_{\R^d} \eta(s_n^k) \nabla\zeta(x) : \Big( \PK_n^k(x) \,dx 
        + \RES_n^{k+1}(dx) \Big)
\nonumber
\end{align}
for any $\eta \in \C^1_c([0,T))$ and $\zeta \in \AF$. We proceed in
four steps.

\medskip

\textbf{Step~1.} In the first term of the right-hand side of
\eqref{E:DCPO}, we directly apply \eqref{E:UPS} and pass to the limit.
Using the definition of $\BV$ in Lemma~\ref{L:UBDE2}, we get that
\begin{align*}
	& \lim_{n\rightarrow\infty} \sum_{k\in\N_0} \int_{s_n^k}^{s_n^{k+1}} 
		\int_{\R^d} \eta'(s) \zeta(z) 
			\cdot \BW_{n,s}(z) \,\RHO_{n,s}(dz) \,ds
\\
	& \qquad
		= \int_0^\infty \int_{\R^d} \eta'(s) \zeta(z) 
			\cdot \BV_s(z) \,\RHO_s(dz) \,ds.
\end{align*}
Recall that the momentum $\BM_{n,s} := \RHO_{n,s}\BV_{n,s}$ has finite
first moment for all $s\GS 0$.

\medskip

\textbf{Step~2.} In the second term on the right-hand side of
\eqref{E:DCPO}, we need to replace the transport velocity $\BV_{n,s}$
by the transported velocity $\BW_{n,s}$ because the approximative
Young measure \eqref{E:UPS} only captures the latter. We first rewrite
\begin{align*}
	& \int_{\R^d} \nabla_z\zeta(z) : \Big( \BW_{n,s}(z) \otimes 
		\big( \BV_{n,s}(z)-\BW_{n,s}(z) \big) \Big) \,\RHO_{n,s}(dz)
\\
	& \qquad
		= \int_{\R^d} \nabla_z\zeta\big( \BT_{n,s}(x) \big) : \Big( 
			W^{k+1}(x) \otimes \big( V^{k+1}(x)-W^{k+1}(x) \big) \Big)
				\,\RHO_n^k(dx)
\end{align*}
for all $s \in (s_n^k, s_n^{k+1})$ and $k\in\N_0$. Then we apply
\eqref{E:DISSK} to estimate
\begin{align}
	& \bigg| \sum_{k\in\N_0} \int_{s_n^k}^{s_n^{k+1}} \int_{\R^d}
		\eta(s) \nabla_z\zeta(z) : \Big( \BW_{n,s}(z) \otimes 
			\big( \BV_{n,s}(z)-\BW_{n,s}(z) \big) \Big) \,\RHO_{n,s}(dz)
				\,ds \bigg|
\nonumber\\
	& \qquad
		\LS C \bigg\{ \EPS T \; 
			\max_{k\in\N_0} \int_{\R^d} |W_n^{k+1}(x)|^2 \,\RHO_n^k(dx)
\label{E:FORTH}\\
	& \qquad\qquad\qquad 
		+ C_\EPS \tau_n \; \sum_{k\in\N_0} 
			\int_{\R^d} |W_n^{k+1}(x)-\BU^k(x)|^2 \,\RHO_n^k(dx) \bigg\}
\nonumber
\end{align}
for any $\EPS>0$ and suitable constant $C_\EPS$. Here $C$ depends on
the $\sup$-norm of $\eta\nabla_z\zeta$, which is finite; recall
\eqref{E:CSTO}. Both the $\max$ and sum on the right-hand side of
\eqref{E:FORTH} are bounded by $\bar\E$ uniformly in $n$, because of
energy equality \eqref{E:DISPP} and \eqref{E:WTQQ}. Since $\EPS$ was
arbitrary, we find that the left-hand side of \eqref{E:FORTH} vanishes
as $n\rightarrow \infty$.

We can now write
\[
	\sum_{k\in\N_0} \int_{s_n^k}^{s_n^{k+1}} \int_{\R^d}
		\eta(s) \nabla\zeta(z) : \Big( \BW_{n,s}(z) \otimes \BW_{n,s}(z) 
			\Big) \,\RHO_{n,s}(dz) \,ds
    = \langle \nu_n,\PHI \rangle
\]
(recall definition \eqref{E:DUPA} of the dual pairing), with test
function
\[
	\PHI(s, x, \FX) := \eta(s) \nabla\zeta(x) : \RHO(\BW \otimes \BW)
		/ h(\RHO,\BW,S)
\]
for all $s\GS 0$, $x\in\dot\R^d$, and $\FX = (\RHO, \BW, S) \in \BFX$.
From Proposition~\ref{P:YUM}, we get
\begin{align*}
	& \lim_{n\rightarrow\infty} \sum_{k\in\N_0}
		\int_{s_n^k}^{s_n^{k+1}} \int_{\R^d} \eta(s) \nabla\zeta(z) : 
			\Big( \BW_{n,s}(z) \otimes \BW_{n,s}(z) 
				\Big) \,\RHO_{n,s}(dz) \,ds
\\
  & \qquad
    = \int_0^\infty \int_{\dot\R^d} \eta(s) \nabla\zeta(x) : 
      \lbrack \RHO\BV\otimes\BV \rbrack_s(dx) \,ds.
\end{align*}
We refer the reader to \eqref{E:BRACK} for notation.

\medskip

\textbf{Step~3.} In the pressure term in \eqref{E:DCPO}, we want to
replace
\[
  \det\big( \nabla\BT_n^{k+1,\S}(x) \big)^{1-\gamma}
    \big( \nabla\BT_n^{k+1,\S}(x) \big)^{-1}
\]
by $\ONE$. Using \eqref{E:DISSL}, we obtain the estimate
\begin{align}
  & \bigg| \tau_n \sum_{k\in\N_0}
    \int_{\R^d} \eta(s_n^k) \nabla\zeta(x) : \Big( \PK_n^k(x)-P\big( r_n^k(x), 
      S_n^k(x) \big) \ONE \Big) \,dx \bigg|
\nonumber\\
  & \qquad
    \LS C \bigg\{ \EPS T \; 
      \max_{k\in\N_0} \int_{\R^d} U\big( r_n^k(x), S_n^k(x) \big) \,dx
\label{E:FVTH}\\
  & \qquad\qquad\qquad 
    + C_\EPS \tau_n \; \sum_{k\in\N_0} 
      \int_{\R^d} P\big( r_n^k(x), S_n^k(x) \big) 
        D_\INT\big( \nabla\BT_n^{k+1,\S}(x)-\ONE \big) \,dx \bigg\}
\nonumber
\end{align}
for any $\EPS>0$ and suitable constant $C_\EPS$. Here $C$ depends on
the $\sup$-norm of $\eta\nabla\zeta$, which is bounded. Both $\max$
and sum on the right-hand side of \eqref{E:FVTH} are bounded by
$\bar\E$ uniformly in $n$, because of the energy equality
\eqref{E:DISPP}. Since $\EPS$ was arbitrary, we conclude that the
left-hand side of \eqref{E:FVTH} vanishes as $n\rightarrow \infty$.

Similarly, we can estimate
\begin{align}
	& \bigg| \sum_{k\in\N_0} \int_{\R^d} 
  		\bigg( \tau_n \eta(s_n^k)-\int_{s_n^k}^{s_n^{k+1}} \eta(s) \,ds \bigg)
			\nabla\zeta(x) : \Big( P\big( r_n^k(x), S_n^k(x) \big) \ONE \Big) 
				\,dx \bigg|
\nonumber\\
  & \qquad
    \LS CT \omega(\tau_n,\eta) \; 
      \max_{k\in\N_0} \int_{\R^d} U\big( r_n^k(x), S_n^k(x) \big) \,dx,
\label{E:SXTH}
\end{align}
with $C$ depending on the $\sup$-norm of $\nabla\zeta$ and modulus of
continuity
\[
	\omega(\tau_n,\eta) := \sup_{\substack{s_1,s_2 \in [0,T] \\ 
		|s_2-s_1| \LS \tau_n}} |\eta(s_2)-\eta(s_1)|.
\]
The $\max$ on the right-hand side of \eqref{E:SXTH} is bounded by
$\bar{\E}$ uniformly in $n$, because of the energy balance
\eqref{E:DISPP}. The left-hand side therefore vanishes as
$n\rightarrow \infty$.

We can now write
\begin{align*}
	& \sum_{k\in\N_0}
		\int_{s_n^k}^{s_n^{k+1}} \int_{\R^d} \eta(s) \nabla\zeta(z) : 
			\Big( P\big( r_n^k(z), S_n^k(z) \big) \ONE \Big) \,dz \,ds
	    = \langle \nu_n,\PHI \rangle
\end{align*}
(recall definition \eqref{E:DUPA} of the dual pairing), with test
function
\[
	\PHI(s, x, \FX) := \eta(s) \nabla\zeta(x) : 
		\Big( P(\RHO,S)\ONE \Big)/h(\RHO,\BW,S)
\]
for all $s\GS 0$, $x\in\dot\R^d$, and $\FX = (\RHO, \BW, S) \in \BFX$.
From Remark~\ref{R:YUM}, we get
\begin{align*}
	& \lim_{n\rightarrow\infty} \sum_{k\in\N_0} 
		\int_{s_n^k}^{s_n^{k+1}} \int_{\R^d} \eta(s) \nabla\zeta(z) : 
			\Big( P\big( r_n^k(z), S_n^k(z) \big) \ONE \Big) \,dz \,ds
\\
  & \qquad
    = \int_0^\infty \int_{\dot\R^d} \eta(s) \nabla\zeta(z) : 
      \llbracket P(\RHO,S)\ONE \rrbracket_s(dz) \,ds.
\end{align*}
We refer the reader to \eqref{E:BRACK} for notation.

\medskip

\textbf{Step~4.} The residual term in \eqref{E:DCPO} can be estimated
as
\[
	\bigg| \tau_n \sum_{k\in\N_0} \int_{\R^d} \eta(s_n^k) \nabla\zeta(x) 
  		: \RES_n^{k+1}(dx) \bigg|
	   		\LS C \tau_n \sum_{k\in\N_0} \int_{\R^d} 
				\TRACE\big( \RES_n^{k+1}(dx) \big),
\]
with the sum on the right-hand side bounded by $\bar{\E}$ uniformly in
$n$, because of the energy balance \eqref{E:DISPP}. Here $C$ is some
constant depending on the $\sup$-norm of $\eta\nabla\zeta$. The
left-hand side therefore vanishes as $n \rightarrow \infty$; see also
Remark~\ref{R:RESA}.

Combining Steps~1--4, we have proved the momentum equation.
\end{proof}


\begin{bibdiv}
\begin{biblist}

\bib{AlbertiAmbrosio1999}{article}{ 
    AUTHOR = {Alberti, G.}, 
    AUTHOR = {Ambrosio, L.},
     TITLE = {A geometrical approach to monotone functions in 
              {$\boldsymbol{R}\sp n$}},
   JOURNAL = {Math. Z.},
    VOLUME = {230}, 
      YEAR = {1999}, 
    NUMBER = {2}, 
     PAGES = {259--316}, 
}

\bib{AmbrosioFuscoPallara2000}{book}{
    AUTHOR = {Ambrosio, L.},
    AUTHOR = {Fusco, N.},
    AUTHOR = {Pallara, D.},
     TITLE = {Functions of bounded variation and free discontinuity
              problems},
    SERIES = {Oxford Mathematical Monographs},
 PUBLISHER = {The Clarendon Press Oxford University Press},
   ADDRESS = {New York},
      YEAR = {2000},
}

\bib{AmbrosioGigli2008}{article}{
    AUTHOR = {Ambrosio, L.},
    AUTHOR = {Gigli, N.},
     TITLE = {Construction of the parallel transport in the {W}asserstein
              space},
   JOURNAL = {Methods Appl. Anal.},
    VOLUME = {15},
      YEAR = {2008},
    NUMBER = {1},
     PAGES = {1--29},
}

\bib{AmbrosioGigliSavare2008}{book}{ 
    AUTHOR = {Ambrosio, L.}, 
    AUTHOR = {Gigli, N.},
    AUTHOR = {Savar\'{e}, G.}, 
     TITLE = {Gradient Flows in Metric Spaces and in the Space of Probability 
              Measures}, 
    SERIES = {Lectures in Mathematics}, 
 PUBLISHER = {Birkh\"{a}user Verlag}, 
   ADDRESS = {Basel}, 
      YEAR = {2008}, 
}

\bib{AmbrosioTrevisan2014}{article}{
    AUTHOR = {Ambrosio, L.},
    AUTHOR = {Trevisan, D.},
     TITLE = {Well-posedness of {L}agrangian flows and continuity equations
              in metric measure spaces},
   JOURNAL = {Anal. PDE},
    VOLUME = {7},
      YEAR = {2014},
    NUMBER = {5},
     PAGES = {1179--1234},
}

\bib{BauschkeWang2009}{article}{
    AUTHOR = {Bauschke, H. H.},
    AUTHOR = {Wang, X.},
     TITLE = {The kernel average for two convex functions and its
              application to the extension and representation of monotone
              operators},
   JOURNAL = {Trans. Amer. Math. Soc.},
    VOLUME = {361},
      YEAR = {2009},
    NUMBER = {11},
}

\bib{BlountKouritzin2010}{article}{
    AUTHOR = {Blount, D.},
    AUTHOR = {Kouritzin, M. A.},
     TITLE = {On convergence determining and separating classes of
              functions},
   JOURNAL = {Stochastic Process. Appl.},
    VOLUME = {120},
      YEAR = {2010},
    NUMBER = {10},
     PAGES = {1898--1907},
}


\bib{BouchitteButtazzoDePascale2010}{article}{
    AUTHOR = {Bouchitt\'{e}, G.},
    AUTHOR = {Buttazzo, G.},
    AUTHOR = {De Pascale, L.},
     TITLE = {The {M}onge-{K}antorovich problem for distributions and
              applications},
   JOURNAL = {J. Convex Anal.},
    VOLUME = {17},
      YEAR = {2010},
    NUMBER = {3-4},
     PAGES = {925--943},
}

\bib{BouchutJames1995}{article}{
    AUTHOR = {Bouchut, F.},
    AUTHOR = {James, F.},
     TITLE = {\'{E}quations de transport unidimensionnelles \`a coefficients
              discontinus},
   JOURNAL = {C. R. Acad. Sci. Paris S\'er. I Math.},
    VOLUME = {320},
      YEAR = {1995},
    NUMBER = {9},
     PAGES = {1097--1102},
}
	
\bib{BouchutJames1999}{article}{
    AUTHOR = {Bouchut, F.},
    AUTHOR = {James, F.},
     TITLE = {Duality solutions for pressureless gases, monotone scalar
              conservation laws, and uniqueness},
   JOURNAL = {Comm. Partial Differential Equations},
    VOLUME = {24},
      YEAR = {1999},
    NUMBER = {11-12},
     PAGES = {2173--2189},
}

\bib{Brenier2009}{article}{
    AUTHOR = {Brenier, Y.},
     TITLE = {{$L^2$} formulation of multidimensional scalar conservation
              laws},
   JOURNAL = {Arch. Ration. Mech. Anal.},
    VOLUME = {193},
      YEAR = {2009},
    NUMBER = {1},
     PAGES = {1--19},
}

\bib{BrenierGangboSavareWestdickenberg2013}{article}{
    AUTHOR = {Brenier, Y.},
    AUTHOR = {Gangbo, W.},
    AUTHOR = {Savar{\'e}, G.},
    AUTHOR = {Westdickenberg, M.},
     TITLE = {Sticky particle dynamics with interactions},
   JOURNAL = {J. Math. Pures Appl. (9)},
    VOLUME = {99},
      YEAR = {2013},
    NUMBER = {5},
     PAGES = {577--617},
}

\bib{BrenierGrenier1998}{article}{
    AUTHOR = {Brenier, Y.},
    AUTHOR = {Grenier, E.},
     TITLE = {Sticky particles and scalar conservation laws},
   JOURNAL = {SIAM J. Numer. Anal.},
    VOLUME = {35},
      YEAR = {1998},
    NUMBER = {6},
     PAGES = {2317--2328 (electronic)},
}




\bib{BressanNguyen2014}{article}{
    AUTHOR = {Bressan, A.},
    AUTHOR = {Nguyen, T.},
     TITLE = {Non-existence and non-uniqueness for multidimensional sticky
              particle systems},
   JOURNAL = {Kinet. Relat. Models},
    VOLUME = {7},
      YEAR = {2014},
    NUMBER = {2},
     PAGES = {205--218},
}


\bib{BrezinaFeireisl2017}{article}{
    AUTHOR = {Brezina, J.},
    AUTHOR = {Eduard, E.},
     TITLE = {Maximal dissipation principle for the complete Euler system},
      YEAR = {2017},
    EPRINT = {arXiv:1712.04761 [math.AP]},
}

\bib{Buckmaster2015}{article}{
    AUTHOR = {Buckmaster, T.}, 
     TITLE = {Onsager's conjecture almost everywhere in time}, 
   JOURNAL = {Comm. Math. Phys.}, 
     YEAR = {2015}, 
   NUMBER = {3}, 
    PAGES = {1175--1198}, 
} 
	
\bib{BuckmasterDeLellisSzekelyhidi2016}{article}{
     AUTHOR = {Buckmaster, T.},
     AUTHOR = {De Lellis, C.},
     AUTHOR = {Sz\'ekelyhidi, Jr., L.},
      TITLE = {Dissipative {E}uler flows with {O}nsager-critical spatial
               regularity},
    JOURNAL = {Comm. Pure Appl. Math.},
     VOLUME = {69},
       YEAR = {2016},
     NUMBER = {9},
      PAGES = {1613--1670},
}

\bib{CastaingRaynaudDeFitteValadier2004}{book}{
    AUTHOR = {Castaing, C.},
    AUTHOR = {Raynaud de Fitte, P.},
    AUTHOR = {Valadier, M.},
     TITLE = {Young measures on topological spaces},
    SERIES = {Mathematics and its Applications},
    VOLUME = {571},
      NOTE = {With applications in control theory and probability theory},
 PUBLISHER = {Kluwer Academic Publishers, Dordrecht},
      YEAR = {2004},
}

\bib{CavallettiSedjroWestdickenberg2014}{article}{
    AUTHOR = {Cavalletti, F.},
    AUTHOR = {Sedjro, M.},
    AUTHOR = {Westdickenberg, M.},
     TITLE = {A Simple Proof of Global Existence for the 1D Pressureless
              Gas Dynamics Equations},
   JOURNAL = {SIAM Math. Anal.},
    VOLUME = {44},
      YEAR = {2015},
    NUMBER = {1},
     PAGES = {66--79},
}

\bib{CavallettiWestdickenberg2014}{article}{
    AUTHOR = {Cavalletti, F.},
    AUTHOR = {Westdickenberg, M.},
     TITLE = {The polar cone of the set of monotone maps},
   JOURNAL = {Proc. Amer. Math. Soc.},
    VOLUME = {143},
      YEAR = {2015},
     PAGES = {781--787},
}

\bib{Chen2000}{incollection}{
    AUTHOR = {Chen, G.-Q.},
     TITLE = {Compactness methods and nonlinear hyperbolic conservation
              laws},
 BOOKTITLE = {Some current topics on nonlinear conservation laws},
    SERIES = {AMS/IP Stud. Adv. Math.},
    VOLUME = {15},
     PAGES = {33--75},
 PUBLISHER = {Amer. Math. Soc., Providence, RI},
      YEAR = {2000},
}

\bib{ChenLeFloch2000}{article}{
    AUTHOR = {Chen, G.-Q.},
    AUTHOR = {LeFloch, P. G.},
     TITLE = {Compressible {E}uler equations with general pressure law},
   JOURNAL = {Arch. Ration. Mech. Anal.},
    VOLUME = {153},
      YEAR = {2000},
    NUMBER = {3},
     PAGES = {221--259},
}

\bib{ChenPerepelitsa2010}{article}{
    AUTHOR = {Chen, G.-Q.},
    AUTHOR = {Perepelitsa, M.},
     TITLE = {Vanishing viscosity limit of the {N}avier-{S}tokes equations
              to the {E}uler equations for compressible fluid flow},
   JOURNAL = {Comm. Pure Appl. Math.},
    VOLUME = {63},
      YEAR = {2010},
    NUMBER = {11},
     PAGES = {1469--1504},
}

\bib{Chiodaroli2014}{article}{
    AUTHOR = {Chiodaroli, E.},
     TITLE = {A Counterexample to Well-Posedness of Entropy Solutions to the
              Compressible Euler System},
   JOURNAL = {Accepted for publication in J. Hyperbolic Differ. Equ.},
      YEAR = {2014},
}

\bib{ChiodaroliKreml2014}{article}{
    AUTHOR = {Chiodaroli, E.},
    AUTHOR = {Kreml, O.},
     TITLE = {On the Energy Dissipation Rate of Solutions to the Compressible
              Isentropic Euler System},
   JOURNAL = {Accepted for publication in Arch. Ration. Mech. Anal.},
      YEAR = {2014},
}

\bib{ChiodaroliDeLellisKreml2014}{article}{
    AUTHOR = {Chiodaroli, E.},
    AUTHOR = {De Lellis, C.},
    AUTHOR = {Kreml, O.},
     TITLE = {Global Ill-Posedness of the Isentropic System of Gas Dynamics},
   JOURNAL = {Accepted for publication in Comm. Pure App. Math.},
      YEAR = {2014},
}

\bib{ChitescuMikulescuNitaIoana2016}{article}{
    AUTHOR = {Chitescu, I.},
    AUTHOR = {Miculescu, R.},
    AUTHOR = {Nita, L.},
    AUTHOR = {Ioana, L.},
     TITLE = {Monge-{K}antorovich norms on spaces of vector measures},
   JOURNAL = {Results Math.},
    VOLUME = {70},
      YEAR = {2016},
    NUMBER = {3-4},
     PAGES = {349--371},
}

\bib{DeLellisSzekelyhidi2009}{article}{
    AUTHOR = {De Lellis, C.},
    AUTHOR = {Sz{\'e}kelyhidi, Jr., L.},
     TITLE = {The {E}uler equations as a differential inclusion},
   JOURNAL = {Ann. of Math. (2)},
    VOLUME = {170},
      YEAR = {2009},
    NUMBER = {3},
     PAGES = {1417--1436},
}

\bib{DeLellisSzekelyhidi2010}{article}{
    AUTHOR = {De Lellis, C.},
    AUTHOR = {Sz{\'e}kelyhidi, Jr., L.},
     TITLE = {On admissibility criteria for weak solutions of the {E}uler
              equations},
   JOURNAL = {Arch. Ration. Mech. Anal.},
    VOLUME = {195},
      YEAR = {2010},
    NUMBER = {1},
     PAGES = {225--260},
}

\bib{Dafermos1973}{article}{
    AUTHOR = {Dafermos, C. M.},
     TITLE = {The entropy rate admissibility criterion for solutions of
              hyperbolic conservation laws},
   JOURNAL = {J. Differential Equations},
    VOLUME = {14},
      YEAR = {1973},
     PAGES = {202--212},
}

\bib{DemouliniStuartTzavaras2001}{article}{
    AUTHOR = {Demoulini, S.},
    AUTHOR = {Stuart, D. M. A.},
    AUTHOR = {Tzavaras, A. E.},
     TITLE = {A variational approximation scheme for three-dimensional
              elastodynamics with polyconvex energy},
   JOURNAL = {Arch. Ration. Mech. Anal.},
    VOLUME = {157},
      YEAR = {2001},
    NUMBER = {4},
     PAGES = {325--344},
}

\bib{DemouliniStuartTzavaras2012}{article}{
    AUTHOR = {Demoulini, S.},
    AUTHOR = {Stuart, D. M. A.},
    AUTHOR = {Tzavaras, A. E.},
     TITLE = {Weak-strong uniqueness of dissipative measure-valued solutions
              for polyconvex elastodynamics},
   JOURNAL = {Arch. Ration. Mech. Anal.},
    VOLUME = {205},
      YEAR = {2012},
    NUMBER = {3},
     PAGES = {927--961},
}

\bib{DeGiorgi1993}{incollection}{
    AUTHOR = {De Giorgi, E.},
     TITLE = {New problems on minimizing movements},
 BOOKTITLE = {Boundary value problems for partial differential equations and
              applications},
    SERIES = {RMA Res. Notes Appl. Math.},
    VOLUME = {29},
     PAGES = {81--98},
 PUBLISHER = {Masson, Paris},
      YEAR = {1993},
}

\bib{Dermoune2005}{article}{
    AUTHOR = {Dermoune, A.},
     TITLE = {{$d$}-dimensional pressureless gas equations},
   JOURNAL = {Teor. Veroyatn. Primen.},
    VOLUME = {49},
      YEAR = {2004},
    NUMBER = {3},
     PAGES = {610--614},
}

\bib{DingChenLuo1987}{article}{
    AUTHOR = {Ding, X. X.},
    AUTHOR = {Chen, G.-Q.},
    AUTHOR = {Luo, P. Z.},
     TITLE = {Convergence of the {L}ax-{F}riedrichs scheme for the system of
              equations of isentropic gas dynamics. {I}},
   JOURNAL = {Acta Math. Sci. (Chinese)},
    VOLUME = {7},
      YEAR = {1987},
    NUMBER = {4},
     PAGES = {467--480},
}

\bib{DingChenLuo1988}{article}{
    AUTHOR = {Ding, X. X.},
    AUTHOR = {Chen, G.-Q.},
    AUTHOR = {Luo, P. Z.},
     TITLE = {Convergence of the {L}ax-{F}riedrichs scheme for the system of
              equations of isentropic gas dynamics. {II}},
   JOURNAL = {Acta Math. Sci. (Chinese)},
    VOLUME = {8},
      YEAR = {1988},
    NUMBER = {1},
     PAGES = {61--94},
}

\bib{DiPerna1983}{article}{
    AUTHOR = {DiPerna, R. J.},
     TITLE = {Convergence of the viscosity method for isentropic gas
              dynamics},
   JOURNAL = {Comm. Math. Phys.},
    VOLUME = {91},
      YEAR = {1983},
    NUMBER = {1},
     PAGES = {1--30},
}

\bib{Dudley1966}{article}{
    AUTHOR = {Dudley, R. M.},
     TITLE = {Convergence of {B}aire measures},
   JOURNAL = {Studia Math.},
    VOLUME = {27},
      YEAR = {1966},
     PAGES = {251--268},
}

\bib{ERykovSinai1996}{article}{
    AUTHOR = {E, W.},
    AUTHOR = {Rykov, Yu. G.},
    AUTHOR = {Sinai, Ya. G.},
     TITLE = {Generalized variational principles, global weak solutions and
              behavior with random initial data for systems of conservation
              laws arising in adhesion particle dynamics},
   JOURNAL = {Comm. Math. Phys.},
    VOLUME = {177},
      YEAR = {1996},
    NUMBER = {2},
     PAGES = {349--380},
}

\bib{Feireisl2014}{article}{
    AUTHOR = {Feireisl, E.},
     TITLE = {Maximal dissipation and well-posedness for the compressible
              {E}uler system},
   JOURNAL = {J. Math. Fluid Mech.},
    VOLUME = {16},
      YEAR = {2014},
    NUMBER = {3},
     PAGES = {447--461},
}


\bib{FeireislGwiazdaSwierczewskaGwiazdaWiedemann2017}{article}{
    AUTHOR = {Feireisl, E.},
    AUTHOR = {Gwiazda, P.},
    AUTHOR = {\'Swierczewska-Gwiazda, A.},
    AUTHOR = {Wiedemann, E.}, 
     TITLE = {Regularity and energy conservation for the compressible 
              {E}uler equations}, 
   JOURNAL = {Arch. Ration. Mech. Anal.}, 
    VOLUME = {223}, 
      YEAR = {2017}, 
    NUMBER = {3}, 
     PAGES = {1375--1395}, 
} 



\bib{FjordholmKaeppeliMishraTadmor2014}{article}{
    AUTHOR = {Fjordholm, U. S.},
    AUTHOR = {Kaeppeli, R.},
    AUTHOR = {Mishra, S.},
    AUTHOR = {Tadmor, E.},
     TITLE = {Construction of approximate entropy measure valued
              solutions for hyperbolic systems of conservation laws},
   JOURNAL = {Preprint},
      YEAR = {2014},
}

\bib{FleischerPorter2001}{article}{
    AUTHOR = {Fleischer, I.},
    AUTHOR = {Porter, J. E.},
     TITLE = {Convergence of metric space-valued {BV} functions},
   JOURNAL = {Real Anal. Exchange},
    VOLUME = {27},
      YEAR = {2001/02},
    NUMBER = {1},
     PAGES = {315--319},
}

\bib{Folland1999}{book}{
    AUTHOR = {Folland, G. B.},
     TITLE = {Real analysis},
    SERIES = {Pure and Applied Mathematics (New York)},
   EDITION = {Second},
      NOTE = {Modern techniques and their applications,
              A Wiley-Interscience Publication},
 PUBLISHER = {John Wiley \& Sons Inc.},
   ADDRESS = {New York},
      YEAR = {1999},
}
	
\bib{GangboNguyenTudorascu2009}{article}{
    AUTHOR = {Gangbo, W.},
    AUTHOR = {Nguyen, T.},
    AUTHOR = {Tudorascu, A.},
     TITLE = {Euler-{P}oisson systems as action-minimizing paths in the
              {W}asserstein space},
   JOURNAL = {Arch. Ration. Mech. Anal.},
    VOLUME = {192},
      YEAR = {2009},
    NUMBER = {3},
     PAGES = {419--452},
}

\bib{GangboWestdickenberg2009}{article}{
    AUTHOR = {Gangbo, W.},
    AUTHOR = {Westdickenberg, M.},
     TITLE = {Optimal transport for the system of isentropic
              Euler equations},
   JOURNAL = {Comm. PDE},
    VOLUME = {34},
      YEAR = {2009},
    NUMBER = {9},
     PAGES = {1041--1073},
}

\bib{Ghoussoub2008}{article}{
    AUTHOR = {Ghoussoub, N.},
     TITLE = {A variational theory for monotone vector fields},
   JOURNAL = {J. Fixed Point Theory Appl.},
    VOLUME = {4},
      YEAR = {2008},
    NUMBER = {1},
     PAGES = {107--135},
}

\bib{Gigli2004}{thesis}{
    AUTHOR = {Gigli, N.},
     TITLE = {On the geometry of the space of probability measures
              endowed with the quadratic optimal transport distance},
      TYPE = {Ph.D. Thesis},
      YEAR = {2004},
}

\bib{Grenier1995}{article}{
    AUTHOR = {Grenier, E.},
     TITLE = {Existence globale pour le syst\`eme des gaz sans pression},
   JOURNAL = {C. R. Acad. Sci. Paris S\'er. I Math.},
    VOLUME = {321},
      YEAR = {1995},
    NUMBER = {2},
     PAGES = {171--174},
}


\bib{Higham1988}{article}{
    AUTHOR = {Higham, N. J.},
     TITLE = {Computing a nearest symmetric positive semidefinite matrix},
   JOURNAL = {Linear Algebra Appl.},
    VOLUME = {103},
      YEAR = {1988},
     PAGES = {103--118},
}

\bib{HilleSzarekWormZiemlanska2017}{article}{
    AUTHOR = {Hille, S. C.},
    AUTHOR = {Szarek, T.},
    AUTHOR = {Worm, D. T. H. },
    AUTHOR = {Ziemlan\'{n}ska, M. A.},
     TITLE = {On a {S}chur-like property for spaces of measures},
      YEAR = {2017},
    EPRINT = {arXiv:1703.00677 [math.FA]},
}

\bib{HuangWang2001}{article}{
    AUTHOR = {Huang, F.},
    AUTHOR = {Wang, Z.},
     TITLE = {Well posedness for pressureless flow},
   JOURNAL = {Comm. Math. Phys.},
    VOLUME = {222},
      YEAR = {2001},
    NUMBER = {1},
     PAGES = {117--146},
}

\bib{Isett2017}{book}{ 
    AUTHOR = {Isett, P.}, 
     TITLE = {H\"older continuous {E}uler flows in three dimensions with 
              compact support in time}, 
    SERIES = {Annals of Mathematics Studies}, 
    VOLUME = {196}, 
 PUBLISHER = {Princeton University Press, Princeton, NJ}, 
      YEAR = {2017}, 
} 

\bib{JordanKinderlehrerOtto1998}{article}{
    AUTHOR = {Jordan, R.},
    AUTHOR = {Kinderlehrer, D.},
    AUTHOR = {Otto, F.},
     TITLE = {The variational formulation of the {F}okker-{P}lanck 
              equation},
   JOURNAL = {SIAM J. Math. Anal.},
    VOLUME = {29},
      YEAR = {1998},
    NUMBER = {1},
     PAGES = {1--17},
}

\bib{KristensenRindler2010}{article}{
    AUTHOR = {Kristensen, J.},
    AUTHOR = {Rindler, F.},
     TITLE = {Characterization of generalized gradient {Y}oung measures
              generated by sequences in {$W^{1,1}$} and {BV}},
   JOURNAL = {Arch. Ration. Mech. Anal.},
    VOLUME = {197},
      YEAR = {2010},
    NUMBER = {2},
     PAGES = {539--598},
}

\bib{LeFlochWestdickenberg2007}{article}{
    AUTHOR = {LeFloch, P. G.},
    AUTHOR = {Westdickenberg, M.},
     TITLE = {Finite energy solutions to the isentropic {E}uler equations
              with geometric effects},
   JOURNAL = {J. Math. Pures Appl. (9)},
    VOLUME = {88},
      YEAR = {2007},
    NUMBER = {5},
     PAGES = {389--429},
}

\bib{LimYuGlimmLiSharp2008}{article}{
    AUTHOR = {Lim, H.},
    AUTHOR = {Yu, Y.},
    AUTHOR = {Glimm, J.},
    AUTHOR = {Li, X. L.},
    AUTHOR = {Sharp, D. H.},
     TITLE = {Chaos, transport and mesh convergence for fluid mixing},
   JOURNAL = {Acta Math. Appl. Sin. Engl. Ser.},
    VOLUME = {24},
      YEAR = {2008},
    NUMBER = {3},
     PAGES = {355--368},
}

\bib{LimIwerksGlimmSharp2010}{article}{
    AUTHOR = {Lim, H.},
    AUTHOR = {Iwerks, J.},
    AUTHOR = {Glimm, J.},
    AUTHOR = {Sharp, D. H.},
     TITLE = {Nonideal {R}ayleigh-{T}aylor mixing},
   JOURNAL = {Proc. Natl. Acad. Sci. USA},
    VOLUME = {107},
      YEAR = {2010},
    NUMBER = {29},
     PAGES = {12786--12792},
}

\bib{LionsPerthameSouganidis1996}{article}{
    AUTHOR = {Lions, P.-L.},
    AUTHOR = {Perthame, B.},
    AUTHOR = {Souganidis, P. E.},
     TITLE = {Existence and stability of entropy solutions for the
              hyperbolic systems of isentropic gas dynamics in {E}ulerian
              and {L}agrangian coordinates},
   JOURNAL = {Comm. Pure Appl. Math.},
    VOLUME = {49},
      YEAR = {1996},
    NUMBER = {6},
     PAGES = {599--638},
}

\bib{LionsPerthameTadmor1994}{article}{
    AUTHOR = {Lions, P.-L.},
    AUTHOR = {Perthame, B.},
    AUTHOR = {Tadmor, E.},
     TITLE = {Kinetic formulation of the isentropic gas dynamics and
              {$p$}-systems},
   JOURNAL = {Comm. Math. Phys.},
    VOLUME = {163},
      YEAR = {1994},
    NUMBER = {2},
     PAGES = {415--431},
}

\bib{Mandelkern1989}{article}{
    AUTHOR = {Mandelkern, M.},
     TITLE = {Metrization of the one-point compactification},
   JOURNAL = {Proc. Amer. Math. Soc.},
    VOLUME = {107},
      YEAR = {1989},
    NUMBER = {4},
     PAGES = {1111--1115},
}
	
\bib{Moutsinga2008}{article}{
    AUTHOR = {Moutsinga, O.},
     TITLE = {Convex hulls, sticky particle dynamics and pressure-less gas
              system},
   JOURNAL = {Ann. Math. Blaise Pascal},
    VOLUME = {15},
      YEAR = {2008},
    NUMBER = {1},
     PAGES = {57--80},
}

\bib{Nash1954}{article}{
    AUTHOR = {Nash, J.},
     TITLE = {$C^1$ isometric imbeddings},
   JOURNAL = {Ann. of Math.},
    VOLUME = {60},
      YEAR = {1954},
    NUMBER = {2},
     PAGES = {383--396},
}

\bib{Nash1956}{article}{
    AUTHOR = {Nash, J.},
     TITLE = {The imbedding problem for Riemannian manifolds},
   JOURNAL = {Ann. of Math.},
    VOLUME = {63},
      YEAR = {1956},
    NUMBER = {2},
     PAGES = {20--63},
}

\bib{NatileSavare2009}{article}{
    AUTHOR = {Natile, L.},
    AUTHOR = {Savar{\'e}, G.},
     TITLE = {A {W}asserstein approach to the one-dimensional sticky
              particle system},
   JOURNAL = {SIAM J. Math. Anal.},
    VOLUME = {41},
      YEAR = {2009},
    NUMBER = {4},
     PAGES = {1340--1365},
}

\bib{NguyenTudorascu2008}{article}{
    AUTHOR = {Nguyen, T.},
    AUTHOR = {Tudorascu, A.},
     TITLE = {Pressureless {E}uler/{E}uler-{P}oisson systems via adhesion
              dynamics and scalar conservation laws},
   JOURNAL = {SIAM J. Math. Anal.},
    VOLUME = {40},
      YEAR = {2008},
    NUMBER = {2},
     PAGES = {754--775},
}

\bib{Onsager1949}{article}{
    AUTHOR = {Onsager, L.},
     TITLE = {Statistical hydrodynamics},
   JOURNAL = {Nuovo Cimento (9)},
    VOLUME = {6},
      YEAR = {1949},
    NUMBER = {Supplemento, 2 (Convegno Internazionale di Meccanica
              Statistica)},
     PAGES = {279--287},
}

\bib{PoupaudRascle1997}{article}{
    AUTHOR = {Poupaud, F.},
    AUTHOR = {Rascle, M.},
     TITLE = {Measure solutions to the linear multi-dimensional transport
              equation with non-smooth coefficients},
   JOURNAL = {Comm. Partial Differential Equations},
    VOLUME = {22},
      YEAR = {1997},
    NUMBER = {1-2},
     PAGES = {337--358},
}

\bib{Rindler2011}{article}{
    AUTHOR = {Rindler, F.},
     TITLE = {Lower semicontinuity for integral functionals in the space of
              functions of bounded deformation via rigidity and {Y}oung
              measures},
   JOURNAL = {Arch. Ration. Mech. Anal.},
    VOLUME = {202},
      YEAR = {2011},
    NUMBER = {1},
     PAGES = {63--113},
}


\bib{Rockafellar1997}{book}{
    AUTHOR = {Rockafellar, R. T.},
     TITLE = {Convex analysis},
    SERIES = {Princeton Landmarks in Mathematics},
      NOTE = {Reprint of the 1970 original,
              Princeton Paperbacks},
 PUBLISHER = {Princeton University Press},
   ADDRESS = {Princeton, NJ},
      YEAR = {1997},
}

\bib{Sever2001}{article}{
    AUTHOR = {Sever, M.},
     TITLE = {An existence theorem in the large for zero-pressure gas
              dynamics},
   JOURNAL = {Differential Integral Equations},
    VOLUME = {14},
      YEAR = {2001},
    NUMBER = {9},
     PAGES = {1077--1092},
}

\bib{Tadmor1986}{article}{
    AUTHOR = {Tadmor, E.},
     TITLE = {A minimum entropy principle in the gas dynamics equations},
   JOURNAL = {Appl. Numer. Math.},
    VOLUME = {2},
      YEAR = {1986},
    NUMBER = {3-5},
     PAGES = {211--219},
}

\bib{Westdickenberg2010}{article}{
    AUTHOR = {Westdickenberg, M.},
     TITLE = {Projections onto the cone of optimal transport maps and 
              compressible fluid flows},
   JOURNAL = {J. Hyperbolic Differ. Equ.},
    VOLUME = {7},
      YEAR = {2010},
     PAGES = {605--649},
}


\bib{Wolansky2007}{article}{
    AUTHOR = {Wolansky, G.},
     TITLE = {Dynamics of a system of sticking particles of finite size on
              the line},
   JOURNAL = {Nonlinearity},
    VOLUME = {20},
      YEAR = {2007},
    NUMBER = {9},
     PAGES = {2175--2189},
}

\bib{Zarantonello1971}{article}{
    AUTHOR = {Zarantonello, E. H.},
     TITLE = {Projections on convex sets in Hilbert space and
              spectral theory. I. Projections on convex sets},
CONFERENCE = {
     TITLE = {Contributions to nonlinear functional analysis (Proc.
              Sympos., Math. Res. Center, Madison, WI, 1971)},
             },
      BOOK = {
 PUBLISHER = {Academic Press},
     PLACE = {New York},
             },
      DATE = {1971},
     PAGES = {237--341},
}

\bib{Zeldovich1970}{article}{
    AUTHOR = {Zel'dovich, Ya. B.},
     TITLE = {Gravitational instability: An approximate theory for 
              large density perturbations},
   JOURNAL = {Astro. Astrophys.},
    VOLUME = {5},
      YEAR = {1970},
     PAGES = {84--89},
}
  
\end{biblist}
\end{bibdiv}

\end{document}